\newtheorem{prop}{Proposition}[section]
\newtheorem{LM}{Lemma}[section]
\newtheorem{thm}{Theorem}[section]
\newtheorem{df}{Definition}[section]
\newtheorem{df-prop}{Definition-Proposition}[section]
\newtheorem{cor}{Corollary}[section]
\newtheorem{ass}{Assumption}[section]
\newtheorem{conj}{Conjecture}[section]
\newtheorem*{claim}{Claim}
\newtheorem*{conj*}{Conjecture \ref{distgen}}
\newtheorem*{thme*}{Corollary \ref{asaigen}}
\newtheorem*{thme**}{Theorem \ref{asaiprinc}}
\newtheorem*{thme***}{Theorem \ref{equiv}}
\newtheoremstyle{pourlesremarques}{\topsep}{\topsep}{\normalfont}{}{\bfseries}{.}{ }{}
\theoremstyle{pourlesremarques}
\newtheorem{rem}{Remark}[section]
\def\adots{\mathinner{\mkern2mu\raise 1pt\hbox{.}\mkern 3mu\raise
4pt\hbox{.}\mkern1mu\raise 7pt\hbox{{.}}}}
\title {Conjectures about distinction and Asai $L$-functions of generic representations of general linear groups over local fields}
\author{Nadir Matringe}
\begin{document}
 \maketitle

\begin{abstract}
 Let $K/F$ be a quadratic extension of p-adic fields. The Bernstein-Zelevinsky's classification asserts that generic representations are parabolically induced from quasi-square-integrable representations. We show, following a method developed by Cogdell and Piatetski-Shapiro, that expected equality of the Rankin-Selberg type Asai L-function of a generic representation of $GL(n,K)$, and the Asai $L$-function of its Langlands parameter is equivalent to the truth of a conjecture about classification of distinguished generic representations in terms of the inducing quasi-square-integrable representations. As the conjecture is true for principal series representations, this gives the expression of the Asai L-function of such representations.

\end{abstract}

\section*{Introduction}

Given $K/F$ a quadratic extension of $p$-adic fields, we denote by $\sigma$ the non trivial element of the Galois group of $K$ over $F$. This work concerns itself with two interrelated themes, the first being the properties of the Rankin-Selberg type Asai $L$-functions of generic representations of the group $GL(n,K)$, the second being to know how to recognize when such a representation admits on its space a nonzero $GL(n,F)$-invariant linear form, i.e. when it is distinguished.\\

The local Asai $L$-function of an irreducible representation $\pi$ of $GL(n,K)$, denoted by $L_F^K(\pi)$, defined as the gcd of functions obtained as meromorphic extension of Rankin-Seberg integrals, appears implicitly in Flicker's paper \cite{F1}. Its basic properties and its functional equation are established in \cite{F4}, in the manner that this latter is obtained for $L$-functions of pairs by Jacquet, Piatetski-Shapiro and Shalika in \cite{JPS}. The study of its poles is related with the distinction of the representation $\pi$.\\
There are two other ways to associate an Asai $L$-function to a representation $\pi$ of the group $GL(n,K)$.\\
 The first is by considering the $n$-dimensional representation $\rho$ of the Weil-Deligne $W'_K$ of $K$, associated to $\pi$ by local Langlands correspondence. One then defines by multiplicative induction a representation of the Weil-Deligne group $W'_F$ (which contains $W'_K$ as a subgroup of index $2$), of dimension $n^2$, denoted by $M_{W'_K}^{W'_F}(\rho)$. The Asai $L$-function corresponding to $\pi$, and denoted by $L_F^{K,W}(\pi,s)$ is by definition the classical $L$-function of the representation $M_{W'_K}^{W'_F}(\rho)$, which we denote by $L_F^K(\rho)$.\\
 The second, called the Langlands-Shahidi method, is introduced in \cite{Sh}. We denote by $L_F^{K,U}(\pi)$ the meromorphic function obtained by this process, and the study of its poles is this time related to the fact of knowing when a representation $\pi$ is obtained by base change lift from a unitary group. This work is done in \cite{Go}. We will not concern ourselves with this aspect of distinguished representations here.\\
It is conjectured that these three functions are actually the same (cf. \cite{He}, \cite{K}, \cite{AR}). Henniart proved in \cite{He} that the functions $L_F^{K,U}$ and $L_F^{K,W}$ are equal. Anandavardhanan and Rajan proved in \cite{AR} that the functions $L_F^K$ and $L_F^{K,U}$ coincide for the representations of the discrete series of $GL(n,K)$. These two equalities have global proofs. In this work, as an application of these equalities, we will give in Proposition \ref{asaidiscrete} an explicit computation of the Asai L-function of essentially discrete series representations of the group $GL(n,K)$. \\

We recall that a representation $\pi$ of $GL(n,K)$ is called distinguished if there is a nonzero $GL(n,F)$-invariant linear form on its space (cf. Definition \ref{dfdist}, where we define more generally $\mu$-distinction for a character $\mu$ of $F^*$). In the local context which we are interested in, the first link between distinction and the Asai $L$-function $L_F^K$ is brought to light by Kable in \cite{K}. He shows that if the Asai $L$-function $L_F^K(\pi)$ of a discrete seris representation $\pi$ admits a pole at zero, then the representation $\pi$ is distinguished. Then Anandanvardhanan, Kable and Tandon
show in \cite{AKT} that the Asai $L$-function of a distinguished tempered representation admits a pole at zero. Eventually, it is proved in \cite{M2} that a generic representation of $GL(n,K)$ is distinguished if and only if its Asai $L$-function admits an exceptional pole (in the terminology of Cogdell and Piatetski-Shapiro) at zero.\\

 In this paper, we adapt the method developed by Cogdell and Piatetski-Shapiro in \cite{CP}, for computing $L$-functions of pairs of generic representations, to the framework of Asai $L$-functions.\\
In \cite{CP}, the computation of the function $L(\pi\times \pi',s)$ for two generic representations $\pi$ and $\pi'$ of the group $GL(n,K)$ is boiled down to understand when the representation $\pi \otimes \pi'$ of the product group $GL(n,K)\times GL(n,K)$ is distinguished by the subgroup $GL(n,K)$ diagonally embedded. In this case, this comes down to say that the smooth dual $\pi^{\vee}$ of the representation $\pi$, is isomorphic to the representation $\pi'$. Hence one obtains a simple necessary and sufficient condition on the quasi-square-integrable representations which, parabolically induced, give birth to the representations $\pi$ and $\pi'$. In our case, One doesn't know a necessary and sufficient condition on the inducing data, characterizing distinction, but the following statement is conjectured.

\begin{conj*}
Let $\pi$ be a generic representation of the group $GL(n,K)$, obtained by normalised parabolic induction of quasi-square-integrable representations $\Delta_1, \dots ,\Delta_t$. It is distinguished if and only if there exists a reordering of the ${\Delta _i}$'s, and an integer $r$ between $1$ and $t/2$, such that we have $\Delta_{i+1}^{\sigma} = \Delta_i^{\vee} $ for $i=1,3,..,2r-1$, and $\Delta_{i}$ is distinguished for $i > 2r$.
\end{conj*}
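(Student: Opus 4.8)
The plan is to prove the two implications of the equivalence separately; the ``if'' direction is accessible in general, while the ``only if'' direction is the genuine obstacle, and is the reason the statement is only a conjecture.

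For the sufficiency direction I would construct a nonzero $GL(n,F)$-invariant form explicitly from the combinatorial data. After reordering, write $\pi = \Delta_1 \times \cdots \times \Delta_t$ with $\Delta_{2j}^{\sigma} \cong \Delta_{2j-1}^{\vee}$ for $j = 1, \dots, r$ and $\Delta_i$ distinguished for $i > 2r$, and regroup the inducing parabolic so that its Levi is $\bigl(\prod_{j=1}^{r} GL(2m_j,K)\bigr) \times \bigl(\prod_{i>2r} GL(m_i,K)\bigr)$ and the inducing representation is $\bigotimes_j (\Delta_{2j-1} \times \Delta_{2j}) \otimes \bigotimes_{i>2r} \Delta_i$. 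The two base cases are: (i) a block $\Delta \times \Delta''$ of $GL(2m,K)$ with $(\Delta'')^{\sigma} \cong \Delta^{\vee}$ is $GL(2m,F)$-distinguished --- on the open $GL(2m,F)$-orbit of $P_{(m,m)}\backslash GL(2m,K)$ the intersection with the Levi is a twisted diagonal copy of $GL(m,K)$, the modulus characters match, and $\mathrm{Hom}_{GL(m,K)}(\Delta \otimes (\Delta'')^{\sigma}, \mathbb{C}) \neq 0$ is precisely the hypothesis $(\Delta'')^{\sigma} \cong \Delta^{\vee}$; and (ii) each $\Delta_i$ with $i>2r$ is distinguished by assumption. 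I would then propagate distinction through parabolic induction by the open-orbit (Mackey / geometric lemma) argument: at each step the relevant $GL$-orbit is open, its stabilizer is a product of the block groups over $F$, and extending the tensor product of the block forms by zero off that orbit yields a nonzero invariant form on the induced representation. The only care needed is that intermediate inductions may be reducible, so one genuinely uses openness (not irreducibility) to see the form survives; this is routine.

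For the necessity direction I would follow the Cogdell--Piatetski-Shapiro philosophy of \cite{CP} advertised in the introduction and analyse a hypothetical nonzero $GL(n,F)$-invariant form $\ell$ on $\pi$ geometrically. Realising $\pi$ on the flag variety $P\backslash GL(n,K)$ of the inducing parabolic, $GL(n,F)$ acts with finitely many orbits; filtering $\pi|_{GL(n,F)}$ by orbit closures forces $\ell$ to restrict to a nonzero invariant form on the subquotient attached to some orbit $\mathcal{O}$, hence to a nonzero $\mathrm{Hom}_{H}$ with $H = GL(n,F)\cap gPg^{-1}$, $g \in \mathcal{O}$. The aim is to show that such a nonzero Hom forces the claimed normal form: the $\Delta_i$ that $\mathcal{O}$ ``pairs'' satisfy $\Delta_{i+1}^{\sigma} \cong \Delta_i^{\vee}$, the unpaired ones are distinguished, and one can peel off a pair (or a distinguished block) and induct on $t$. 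Equivalently, using Proposition~\ref{asaidiscrete} and the characterisation of distinction of generic representations by an exceptional pole of $L_F^K$ at $0$ from \cite{M2}, one factors the Rankin--Selberg Asai integral through the inducing data, expresses $L_F^K(\pi)$ in terms of the $L_F^K(\Delta_i)$ and the pair factors $L(\Delta_i \times \Delta_j^{\sigma})$, and locates the exceptional pole. The hard part --- and why the statement is a conjecture --- is exactly the last step: ruling out that a \emph{non-open}, mixed orbit $\mathcal{O}$ supports an invariant form without the open-orbit combinatorics already holding, i.e. ruling out a cancellation among ordinary $L$-factors that mimics an exceptional pole; this would require vanishing (Ext-type) statements for Jacquet modules of general quasi-square-integrable $\Delta_i$ that are not available. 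I expect the argument to close unconditionally when every $\Delta_i$ is a character: then $P$ is a Borel, the $GL(n,F)$-orbits on $GL(n,K)/B$ form an explicit finite set of twisted-involution type, each stabilizer is a product of $GL(1,F)$- and $GL(2,F)$-type blocks, the relevant $\mathrm{Hom}$-spaces are at most one-dimensional, and the recursion bottoms out at the trivial condition $\chi|_{F^{\times}} = 1$ for a character $\chi$ of $K^{\times}$ --- which is exactly why the conjecture, and hence the Asai $L$-function formula, can be proved for principal series.
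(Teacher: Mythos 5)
This statement is Conjecture~\ref{distgen}, and the paper does not prove it: the paper's content is to show the conjecture \emph{equivalent} to the inductivity relation for the Rankin--Selberg Asai $L$-function (Theorem~\ref{equiv}), and to observe that it is known for principal series by~\cite{M1}. So there is no ``paper's own proof'' to compare your attempt against, and your framing --- that this is a genuine conjecture whose ``only if'' direction is the open problem --- is accurate and matches the paper's own point of view.

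Two comments on the substance of your sketch. For the ``if'' direction, the open-orbit/Mackey plan is the right one, and your two base cases ($\Delta\times\Delta''$ with $(\Delta'')^{\sigma}\cong\Delta^{\vee}$; a distinguished $\Delta_i$) are correct. But labelling the propagation step ``routine'' undersells it: extending a form by zero off the open $GL(n,F)$-orbit and getting a genuinely invariant functional on the full induced representation requires convergence or analytic continuation arguments, or a filtration argument showing the lower strata do not obstruct. This is precisely what the author's own~\cite{M3} is for, and it is implicitly invoked in the last line of the proof of Theorem~\ref{equiv}. For the ``only if'' direction, your second route (factor the Asai integral through the inducing data and locate the exceptional pole) is exactly the argument of Theorem~\ref{equiv}; but, as the paper makes clear, that argument needs the inductivity of $L_F^K$ as input, and inductivity (Conjecture~\ref{conjasai}) is itself shown equivalent to the present conjecture, so as a stand-alone proof it is circular. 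Your geometric first route is the more honest attempt, and you correctly identify the obstruction as excluding invariant forms supported on non-open orbits; the sharper way to phrase the missing step, as the paper does just before stating the conjecture, is that one must rule out that an unpaired $\Delta_i$ is $\eta_{K/F}$-distinguished rather than distinguished, which is exactly the refinement the conjecture makes over what Propositions~\ref{autodual} and~\ref{distdiscr} give unconditionally. Your closing observation that the argument closes for principal series agrees with the paper's appeal to~\cite{M1}.
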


 One then shows in the corollary of Theorem \ref{asailang}, that up to this conjecture, the functions $L_F^K$ and $L_F^{K,W}$ are equal for generic representations.

\begin{thme*}
 If the preceeding Conjecture is true, then the functions $L_F^K$ and $L_F^{K,W}$ are equal for all generic representations of the group $GL(n,K)$.
\end{thme*}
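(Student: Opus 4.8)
The plan is to compute $L_F^{K,W}(\pi,s)$ and $L_F^K(\pi,s)$ separately in terms of the quasi-square-integrable inducing data $\Delta_1,\dots,\Delta_t$ and to match the two, using Conjecture \ref{distgen} to control the exceptional contributions to the analytic Asai $L$-function. On the Galois side, I would set $\rho=\rho_1\oplus\cdots\oplus\rho_t$ for the parameter of $\pi$, with $\rho_i$ that of $\Delta_i$. Since multiplicative induction satisfies $M_{W'_K}^{W'_F}(\rho'\oplus\rho'')\cong M_{W'_K}^{W'_F}(\rho')\oplus M_{W'_K}^{W'_F}(\rho'')\oplus\mathrm{Ind}_{W'_K}^{W'_F}(\rho'\otimes(\rho'')^{\sigma})$, iterating gives $M_{W'_K}^{W'_F}(\rho)\cong\bigoplus_i M_{W'_K}^{W'_F}(\rho_i)\ \oplus\ \bigoplus_{i<j}\mathrm{Ind}_{W'_K}^{W'_F}(\rho_i\otimes\rho_j^{\sigma})$, whence, by inductivity of Artin $L$-factors, $L_F^{K,W}(\pi,s)=\prod_i L_F^K(\rho_i,s)\cdot\prod_{i<j}L_K(\rho_i\times\rho_j^{\sigma},s)$. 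By Proposition \ref{asaidiscrete} (which rests on \cite{AR} and \cite{He}) the factors $L_F^K(\rho_i,s)$ equal $L_F^K(\Delta_i,s)$, and by the local Langlands correspondence for pairs over $K$ the factors $L_K(\rho_i\times\rho_j^{\sigma},s)$ equal $L_K(\Delta_i\times\Delta_j^{\sigma},s)$; hence $L_F^{K,W}(\pi,s)=\prod_i L_F^K(\Delta_i,s)\cdot\prod_{i<j}L_K(\Delta_i\times\Delta_j^{\sigma},s)$.

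It then remains to prove that $L_F^K(\pi,s)$ equals this same product. Theorem \ref{asailang} reduces this to showing that the order of pole of $L_F^K(\pi,s)$ at every point is at least that of the product (the opposite inequality being furnished by that theorem, or by the multiplicativity upper bound for the analytic Asai $L$-function combined with Proposition \ref{asaidiscrete}). The poles coming from the pair factors $L_K(\Delta_i\times\Delta_j^{\sigma},s)$ are regular and are produced by the analytic theory behind Theorem \ref{asailang}; the poles of the factors $L_F^K(\Delta_i,s)$ are exceptional, and this is where Conjecture \ref{distgen} is used. By \cite{M2}, a generic representation $\tau$ is distinguished \ssi $L_F^K(\tau,s)$ has an exceptional pole at $s=0$; applying this to unramified twists of $\pi$ and to the (twisted) sub-inductions $\Delta_{i_1}\times\cdots\times\Delta_{i_k}$ thereof, the positions and orders of the exceptional poles of $L_F^K(\pi,s)$ are governed by distinction of such representations, which Conjecture \ref{distgen} pins down: via a reordering with $\Delta_{i+1}^{\sigma}=\Delta_i^{\vee}$ on disjoint pairs together with distinguished singletons $\Delta_i$. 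A pair contributes an exceptional pole at the point where it becomes $\sigma$-selfdual; but that point is also a pole of $L_K(\Delta_i\times\Delta_j^{\sigma},s)$ and is thereby already accounted for, while a distinguished singleton $\Delta_i$ (up to the relevant twist) contributes exactly the pole of $L_F^K(\Delta_i,s)$.

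The final step is a bookkeeping of orders: at each $s_0$, compare the order of $\prod_i L_F^K(\Delta_i,s)\prod_{i<j}L_K(\Delta_i\times\Delta_j^{\sigma},s)$ at $s_0$ with the number of $\sigma$-dual pairs and distinguished singletons that Conjecture \ref{distgen} extracts from the twisted family $\{\Delta_i\}$ at $s_0$, and deduce that $L_F^K(\pi,s)$ has a pole there of the full required order; the functional equation, together with the corresponding inequality applied to $\pi^{\vee}$ and the multiplicativity of the Asai $\gamma$-factor, should upgrade ``pole present'' to ``pole of the correct order'', exactly as in the Rankin--Selberg argument of \cite{CP}. The main obstacle is precisely this passage from Conjecture \ref{distgen}, a criterion for distinction of a single generic representation, to the claim that when several $\Delta_i$'s and/or several pairs contribute at the same point the exceptional poles of $L_F^K(\pi,s)$ add with the correct total multiplicity rather than merely occurring; handling this requires invoking Conjecture \ref{distgen} for all the relevant twisted sub-inductions of $\pi$ and checking compatibility with the derivative/filtration arguments underlying Theorem \ref{asailang}.
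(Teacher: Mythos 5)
Your Galois-side computation matches the paper exactly: the decomposition $M_{W'_K}^{W'_F}(\rho_1\oplus\cdots\oplus\rho_t)\cong\bigoplus_i M_{W'_K}^{W'_F}(\rho_i)\oplus\bigoplus_{i<j}\mathrm{Ind}_{W'_K}^{W'_F}(\rho_i\otimes\rho_j^\sigma)$ (Lemma 7.1 of \cite{P}) gives $L_F^{K,W}(\pi,s)=\prod_i L_F^K(\Delta_i,s)\prod_{i<j}L(\Delta_i\times\Delta_j^\sigma,s)$, and that part is fine.

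The problem is in the analytic half. You write that ``Theorem \ref{asailang} reduces this to showing that the order of pole of $L_F^K(\pi,s)$ at every point is at least that of the product (the opposite inequality being furnished by that theorem\dots),'' but Theorem \ref{asailang} is not a one-sided bound: under Conjecture \ref{distgen} it asserts the \emph{full equality} $L_F^K(\pi,s)=\prod_{i<j}L(\Delta_i\times\Delta_j^\sigma,s+u_i+u_j)\prod_k L_F^K(\Delta_k,s+2u_k)$ for Langlands type $\pi$. You appear to have conflated it with Proposition \ref{div}, which gives only the divisibility. Once Theorem \ref{asailang} is read correctly, the proof of the corollary is complete after your first paragraph: both sides equal the same explicit product, exactly as in the paper. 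Your remaining two paragraphs are therefore unnecessary, and insofar as they try to re-derive the matching of pole orders they are also imprecise: they do not engage with the actual mechanism the paper uses to get the order count right, namely the ``general position'' hypothesis and the deformation-to-general-position step via Bernstein's rationality theorem (Subsection \ref{subsectionder}), which is what upgrades ``pole present'' to ``pole of correct order.'' You correctly flag this as the main obstacle but leave it unresolved, when in fact Theorem \ref{asailang} has already resolved it and is available to cite.
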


Now as Conjecture \ref{distprincgn} is proved in \cite{M1} for irreducible principal series representation. The preceeding theorem has as a consequence, the following result. 

\begin{thme**}
Let $\pi$ be an irreducible principal series representation of $GL(n,K)$. Then the functions $L_F^K(\pi)$ and $L_F^{K,W}(\pi)$ are equal.
\end{thme**}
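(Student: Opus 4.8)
The plan is to deduce the statement from Corollary \ref{asaigen} by observing that, when $\pi$ is an irreducible principal series, the only instances of Conjecture \ref{distgen} that actually enter the proof of Theorem \ref{asailang} are instances about principal series representations, and for these the conjecture is the theorem of \cite{M1}. Concretely, write $\pi$ as the normalised parabolic induction of quasi-square-integrable representations $\Delta_1,\dots,\Delta_t$; since $\pi$ is a principal series, each $\Delta_i$ is a character $\chi_i$ of $K^*$, so $t=n$, and every representation obtained by inducing a sub-multiset of $\{\chi_1,\dots,\chi_t\}$ is again a principal series representation of some $GL(m,K)$. Thus the class of representations over which we shall need the conjecture is closed under the recursion carried out in Theorem \ref{asailang}.

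\textbf{Running the Cogdell--Piatetski-Shapiro machine.} Following the proof of Theorem \ref{asailang}, the computation of $L_F^K(\pi)$ goes by induction on $t$: using the multiplicativity properties of the Rankin--Selberg Asai integrals one expresses $L_F^K(\pi)$ in terms of Asai $L$-functions and Rankin--Selberg $L$-functions of pairs attached to the $\chi_i$ and to inductions of proper sub-multisets of $\{\chi_1,\dots,\chi_t\}$, the precise shape of the answer being dictated by which of these sub-inductions are distinguished and by whether $\chi_j^{\sigma}=\chi_i^{\vee}$ for pairs $i\neq j$. On the Galois side $L_F^{K,W}(\pi)=L_F^K(\rho)$ with $\rho=\chi_1\oplus\dots\oplus\chi_t$, and the behaviour of multiplicative induction on a direct sum gives a decomposition of $M_{W'_K}^{W'_F}(\rho)$ into the $M_{W'_K}^{W'_F}(\chi_i)$ together with the induced pieces $\mathrm{Ind}_{W'_K}^{W'_F}(\chi_i\otimes\chi_j^{\sigma})$ for $i<j$, so that $L_F^{K,W}(\pi)$ factors into the Asai $L$-functions of the $\chi_i$ and the Rankin--Selberg $L$-functions $L(\chi_i\times\chi_j^{\sigma},s)$; the pole contributed by the $(i,j)$ cross-term is exactly the one detected on the automorphic side by $\chi_j^{\sigma}=\chi_i^{\vee}$. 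The base case $t=1$ is the equality $L_F^K(\chi)=L_F^{K,W}(\chi)$ for a character of $K^*$, which is elementary and is contained in Proposition \ref{asaidiscrete}. Since every distinction statement invoked along the induction concerns a principal series representation, hence is true by \cite{M1}, the induction goes through and yields $L_F^K(\pi)=L_F^{K,W}(\pi)$.

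\textbf{Main obstacle.} The delicate point is not the Galois-theoretic bookkeeping but making sure the inductive reduction in Theorem \ref{asailang} really stays inside the class covered by \cite{M1}: the intermediate inductions of sub-multisets of characters need not be irreducible, and one must check that the Asai $L$-function is being read off the (unique) generic constituent and that the generic form of the conjecture used in Corollary \ref{asaigen} reduces, for these, to the irreducible-principal-series statement proved in \cite{M1}. Equivalently, one has to verify that the dichotomy ``pair $\Delta_i$ with a Galois-dual partner, or else $\Delta_i$ is itself distinguished'' is stable under the recursion and matches term by term the factorisation of $L_F^{K,W}$ recorded above. Once this compatibility is established, the theorem follows at once from Corollary \ref{asaigen} specialised to principal series.
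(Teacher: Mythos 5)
Your proof is correct and takes essentially the same route as the paper: both rest on the observation that derivatives and sub-inductions of a principal series are again built from characters, so the only instances of Conjecture \ref{distgen} that actually enter the Cogdell--Piatetski-Shapiro recursion (i.e.\ the proof of Theorem \ref{asgen}, extended to all principal series via the deformation of Subsection \ref{subsectionder}) concern principal series, which \cite{M1} covers. The irreducibility concern you flag in your last paragraph is handled exactly by the deformation step: in general position all the relevant derivatives are irreducible principal series, and the non-general-position case is recovered by rationality and divisibility of the deformed integrals, as in Propositions \ref{div}--\ref{gammauptounit}.
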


Eventually, we prove the converse of Corollary \ref{asaigen}.

\begin{thme***}
 The equality of Asai $L$-functions of generic representations, and Asai $L$-functions of their Langlands parameter, is equivalent to the truth of Conjecture \ref{distgen}. 
\end{thme***}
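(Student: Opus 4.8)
The implication ``Conjecture~\ref{distgen} $\Rightarrow$ equality of $L_F^K$ and $L_F^{K,W}$ for all generic representations'' is precisely Corollary~\ref{asaigen}, so the whole content is the reverse implication, and I describe the plan for it. Assume from now on that $L_F^K(\pi,s)=L_F^{K,W}(\pi,s)$ for every generic representation $\pi$, and write $\pi$ as the normalised parabolic induction of quasi-square-integrable representations $\Delta_1,\dots,\Delta_t$; the right-hand side is then completely explicit by multiplicative induction of the Langlands parameter, namely $L_F^{K,W}(\pi,s)=\prod_i L_F^{K,W}(\Delta_i,s)\prod_{i<j}L(\Delta_i\times\Delta_j^{\sigma},s)$, and by Proposition~\ref{asaidiscrete} (i.e. \cite{AR},\cite{He}) each $L_F^{K,W}(\Delta_i,s)$ equals $L_F^K(\Delta_i,s)$. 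We must deduce Conjecture~\ref{distgen}. The ``if'' half of the conjecture, that the combinatorial pattern forces $\pi$ to be distinguished, is the easy one: a pair $\Delta_i\times\Delta_i^{\vee\sigma}$ is $GL$-distinguished, a product of distinguished generic representations is distinguished, and these two facts combine through the hereditary behaviour of distinction under parabolic induction; so it suffices to prove the converse, that a distinguished generic $\pi$ satisfies the pattern. I would do this by induction on $n=\deg\pi$, so that the pattern-characterisation of distinction is available for all generic representations of smaller rank.

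Let then $\pi=\Delta_1\times\cdots\times\Delta_t$ be generic and distinguished. By \cite{M2}, $L_F^K(\pi,s)$ has an exceptional pole at $s=0$; in particular it has a pole there, and by our hypothesis and the explicit form of $L_F^{K,W}(\pi,s)$ recalled above this pole must come from one of the factors $L_F^K(\Delta_i,s)$ or $L(\Delta_i\times\Delta_j^{\sigma},s)$. Since for essentially square-integrable data $L_F^K(\Delta_i,s)$ has a pole exactly when $\Delta_i$ is distinguished (up to unramified twist) --- by \cite{K},\cite{AKT} --- and $L(\Delta_i\times\Delta_j^{\sigma},s)$ has a pole exactly when $\Delta_j^{\sigma}\simeq\Delta_i^{\vee}$ (up to unramified twist), we conclude, after reordering, that either some $\Delta_i$ is distinguished or some pair satisfies $\Delta_2^{\sigma}\simeq\Delta_1^{\vee}$. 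The plan is then to \emph{peel off} the corresponding block: setting $\pi'$ to be the induction of the remaining segments, one wants $\pi'$ to be again distinguished, so that the induction hypothesis gives it the combinatorial pattern, which is then reassembled with the peeled block to give the pattern for $\pi$. The delicate point --- and the place where the equality of $L$-functions is genuinely used, rather than the mere existence of a pole --- is this converse hereditary statement: that $\pi=(\text{distinguished block})\times\pi'$ being distinguished forces $\pi'$ distinguished. To reach it I would follow the exceptional pole of $L_F^K(\pi,s)$ at $0$ through the Cogdell--Piatetski-Shapiro recursion used to prove Theorem~\ref{asailang}: the exceptional pole is produced, at the top of the recursion, by a distinguished derivative of $\pi$; the product shape of $L_F^K(\pi,s)$ imposed by the hypothesis identifies which factor of the product it is charged to, leaving the complementary factor $L_F^K(\pi',s)$ obliged to carry an exceptional pole, whence, by \cite{M2} in rank $<n$, the distinction of $\pi'$. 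One has to run the same argument with twists (an exceptional pole at $s_0\neq0$ records distinction of a twist $\pi\,|\det|^{\bullet}$) and to organise the bookkeeping when segments repeat, several pairs competing for the same $\Delta_i$.

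The main obstacle is exactly this converse hereditary step: isolating, inside the identity $L_F^K(\pi,s)=\prod_i L_F^K(\Delta_i,s)\prod_{i<j}L(\Delta_i\times\Delta_j^{\sigma},s)$, the contribution accounting for the exceptional pole of $\pi$ from the contribution that must persist as an exceptional pole of a genuinely smaller distinguished representation. The remaining ingredients --- the easy half of the conjecture, the multiplicative-induction computation of $L_F^{K,W}(\pi,s)$, the base cases $n\le 2$, the input of \cite{M1} for principal series, and the final reassembly of the pattern --- are routine once that step is secured. I expect the cleanest path is first to upgrade \cite{M2} to a description of the \emph{exceptional} poles of $L_F^K(\pi,s)$ in terms of distinguished derivatives of $\pi$, and then to feed that, together with the exact factorisation of $L_F^K(\pi,s)$ now furnished by the hypothesis, into the induction.
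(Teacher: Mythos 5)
Your reading of the statement is correct: one direction is Corollary~\ref{asaigen}, and the task is to deduce Conjecture~\ref{distgen} from the factorisation $L_F^K(\pi,s)=\prod_i L_F^K(\Delta_i,s)\prod_{i<j}L(\Delta_i\times\Delta_j^{\sigma},s)$. But your plan for the converse is not the one the paper carries out, and the step you yourself flag as the ``main obstacle'' --- the converse hereditary statement that if $\pi=(\text{distinguished block})\times\pi'$ is distinguished then $\pi'$ is distinguished --- is precisely where the proposal has a genuine gap: you do not supply the argument, only the intention to ``follow the exceptional pole through the CPS recursion.'' That is the hard content, and leaving it as a plan is leaving the theorem unproved. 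Your ``easy half'' claim also deserves caution: that a product of distinguished generic representations is distinguished is nontrivial (the paper invokes the separate reference \cite{M3} for exactly this kind of statement) and is not in the literature in the generality your sketch takes for granted.

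The paper's route is different and avoids your obstacle entirely. It does \emph{not} induct on $n$, and it does not try to peel off the block that accounts for the pole. Instead it starts from the unconditional weak pattern coming from Galois-autoduality (Proposition~\ref{autodual}) and Kable's dichotomy (Proposition~\ref{distdiscr}): after reordering, the unpaired $\Delta_i$ are each either distinguished or $\eta_{K/F}$-distinguished and may be taken pairwise non-isomorphic. Writing $\pi=\pi_d\times\pi_u$, where $\pi_u$ gathers the $\eta_{K/F}$-distinguished-but-not-distinguished factors, the paper shows $\pi_u$ must be empty. The mechanism is not the recursion you envisage but a divisibility statement: using Proposition~9.1 and Lemma~9.2 of \cite{JPS} together with an integration formula, the paper proves directly that $L_F^K(\pi_d,s)$ divides $L_{(0)}(\pi,s)$ when $\pi_u\neq 0$. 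Since the assumed factorisation shows that $s=0$ has the same order in $L_F^K(\pi,s)$ as in $L_F^K(\pi_d,s)$ (because $L_F^K(\pi_u,s)$ and $L(\pi_d\times\pi_u^{\sigma},s)$ are holomorphic at $0$ by the non-isomorphism and dichotomy hypotheses), this forces the full multiplicity of the pole at $0$ to be accounted for by $L_{(0)}(\pi,s)$, so $L_{Rad(ex)}(\pi,s)$ has no pole at $0$ and $\pi$ is not distinguished --- contradiction. This is a one-shot argument, not an induction, and it never needs the converse hereditary step. You should replace the heart of your plan with this divisibility argument, or else actually prove the hereditary statement you need; as it stands the proposal does not close the loop.
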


The first part of Section 1 (Subsections 1.1 to 1.3) concerns itself with results of Bernstein and Zelevinsky about classification of generic representations of $GL(n,K)$ and derivatives of such representations (cf.\cite{BZ},\cite{Z}), results of Jacquet, Piatetski-Shapiro and Shalika about $L$-functions of pairs of Whittaker representations of $GL(n,K)$ (cf.\cite{JPS}), and basic results of Cogdell and Piatetski-Shapiro about Whittaker models of derivatives of Whittaker representations of $GL(n,K)$, and exceptional poles of $L$-functions of pairs.
The end of first section is a reminder of classical results about distinguished representations of $GL(n,K)$, and we state Conjecture \ref{distgen} about classification of distinguished generic representations.\\

In the second section, we first recall standard facts from \cite{F4} about Asai $L$-functions of generic representations such as their functional equation. Then we recall results from \cite{M2}, in particular Theorem \ref{pole2}, which asserts that a generic representation of $G_n(K)$ is distinguished if and only if its Rankin-Selberg type Asai $L$-function admits a pole at $0$.\\ 

The last section is devoted to adapt the method Cogdell and Piatetski-Shapiro (see \cite{CP}) for the computation, assuming Conjecture \ref{distgen}, of Asai $L$-functions of generic representations of $GL(n,K)$. The result is obtained in Theorem \ref{asailang} and its corollary, and has as an application, the computation of Asai $L$-functions of principal series representations of $GL(n,K)$.

\section{Preliminary results}

\subsection{Basic properties of studied groups and representations}

We fix until the end of this paper a local field $F$ of characteristic zero, and an algebraic closure $\bar{F}$ of $F$. All the finite extensions of $F$ we will consider will be subfields of $\bar{F}$. We fix a quadratic extension $K$ of $F$.\\

If $A$ is a ring, $M$ and $M'$ two $A$-modules, we denote by $Hom_A(M,M')$ the space of morphisms of $A$-modules from $M$ two $M'$. If $G$ is a group acting on two vector spaces $V$ and $V'$, then $Hom_G(V,V')$ designates the space of $G$-equivariant maps from $V$ to $V'$. \\
If $E$ is a field, 
 and if $E'$ is a Galois extension of $E$, we then denote by $Gal_E(E')$ 
the Galois group of $E'$ over $E$. We denote by $\sigma$ the non trivial element in $Gal_F(K)$.\\
If $E$ is a finite extension of $F$, we denote by $v_E$ the discrete valuation of $E$, which verifies that $v_E(\pi_E)$ is $1$ if $\pi_E$ is a prime element of $E$. We denote by $q_E$ the cardinality of the residual field of $E$. We denote by $|\ |_E$ the absolute value of $E$ defined by $|x|_E = q_E^{-v_E (x)}$, for $x$ in $E$. We denote by $R_E$ the valuation ring of $E$, and by $P_E$ the maximal ideal of $R_E$. Finally we denote by $W_E$ the Weil group of $E$ (cf. \cite{T}), and by $W'_E$ the Weil-Deligne group of $E$. The group $W'_E$ is the semidirect product group $W_E \rtimes SL(2,\mathbb{C})$, with $W_E$ acting by its quotient group $q_E^\mathbb{Z}$ on $SL(2,\mathbb{C})$, such that if we take a Frobenius element $\phi_E$ in $W_E$, the action of $\phi_E$ on $SL(2,\mathbb{C})$ is given by conjugation by the matrix $\left( \begin{array}{cc}
q_E & 0\\
0 & 1 \end{array}\right)$.

Let $G$ be an affine algebraic group defined on the field $F$, and if $E$ is an extension of $F$, we denote by $G(E)$ the group of the points of $G$ over $E$. Such a group is locally compact and totally disconnected, we will call it an $l$-group. More generally if $G$ is an algebraic group defined over $\mathbb{Z}$ and if $A$ is a ring, we denote by $G(A)$ the group of its points over $A$.\\ \\
Let $n$ be a positive integer, we denote by $M_n= M_n(\bar{F})$ the additive group of $n\times n$ matrices with entries in $\bar{F}$, and we denote by $G_n$ the general linear group $GL(n,\bar{F})$ of invertible matrices of $M_n(\bar{F})$. If $M$ belongs to $M_n$, we denote its determinant by $det(M)$.\\
We call partition af a positive integer $n$, a family $\bar{n}=(n_1,\dots,n_t)$ of positive integers (for a certain $t$ in $\mathbb{N}-\left\lbrace 0\right\rbrace $), such that the sum $n_1+\dots+n_t$ is equal to $n$. To such a partition, we associate an algebraic subgroup of $G_n$ denoted by $P_{\bar{n}}$, given by matrices of the form $$\left (\begin{array}{cccccccc}
g_1 & \star & \star & \star & \star \\
    & g_2 & \star &  \star & \star \\
    &  & \ddots & \star & \star \\  
    &  &     & g_{t-1} &  \star \\
    &  &     &   & g_t
 \end{array}\right),$$ with $g_i$ in $G_{n_i}$ for $i$ between $1$ and $t$. We call it the standard parabolic subgroup associated with the partition $\bar{n}$. We call parabolic subgroup any conjugate of a standard parabolic subgroup. We denote by $N_{\bar{n}}$ its unipotent radical subgroup, given by the matrices $$\left (\begin{array}{cccccccc}
I_{n_1} & \star & \star\\
     &  \ddots & \star\\  
    &  & I_{n_t}
 \end{array}\right)$$ and by $M_{\bar{n}}$ its Levi subgroup given by the matrices $$\left (\begin{array}{cccccccc}
g_{1} &   &  \\
     & \ddots  &  \\  
    &  & g_{t}
 \end{array}\right) $$ with the $g_i$'s in $G_{n_i}$. The group $P_{\bar{n}}$ identifies with the semidirect product $N_{\bar{n}}\rtimes M_{\bar{n}}$.\\
If the partition $\bar{n}$ is $(1,\dots,1)$, we denote by $B_n$ the group $P_{\bar{n}}$, which we call the standard Borel subgroup of $G_n$, we denote by $N_n$ its unipotent radical.\\

We denote by $P_n$ the subgroup of $G_n$ given by the matrices $\left (\begin{array}{cccccccc}
g & \star\\  
    & 1
 \end{array}\right)$, with $g$ in $G_{n-1}$, we call it the standard affine subgroup (sometimes called mirabolic subgroup) of $G_n$. We note $U_n$ its unipotent radical, given by matrices of the form $\left (\begin{array}{cccccccc}
I_{n-1} & V\\  
  0_{1,n-1}  & 1
 \end{array}\right)$.\\
We denote by $A_n$ the standard maximal torus of $G_n$ of diagonal matrices.\\

If $E$ is a finite extension of the field $F$, we denote by $K_n^E$ the standard maximal compact subgroup $G_n(R_E)$ of $G_n(E)$. This group admits a natural filtration by the compact open subgroups $K_{n,r}^E= I_n + M_n(P_E^r)$, for $r$ in $\mathbb{N}-\left\lbrace 0\right\rbrace$.\\



Let $G$ be an $l$-group, we denote by $d_G g$ or simply $dg$ if the context is clear, a left Haar measure $G$. For $x$ in $G$, we denote by $\Delta_G (x)$ the positive number defined by the relation $d_g (gx)= \Delta_G (x)d_g (g)$. The module $\Delta_G$ defines a morphism from $G$ into $\mathbb{R}_{>0}$. We denote by $\delta_G$ the morphism from $G$ into $\mathbb{R}_{>0}$ defined by $x \mapsto \Delta_G(x^{-1})$.\\
If $H$ is a closed subgroup of $G$, we denote by $d_{H\backslash G} g$ (resp.  $d_{G/H} g$) a right invariant (resp. left invariant) measure on the quotient space $H\backslash G$ (resp. $G/H$), we simply note it $dg$ if there is no ambiguity.\\
We usually note $d^*x$ the Haar measure when $x$ varies in the multiplicative group of a field, or in a product of such groups.\\
Let $\bar{n}$ be a partition of $n$ and $E$ a finite extension of $F$, the module of $P_{\bar{n}}(E)$ is given by $$\Delta_{P_{\bar{n}}}: \left( \begin{array}{cccccccc}
m_1 & \star & \star & \star & \star \\
    & m_2 & \star &  \star & \star \\
    &  & \ddots & \star & \star \\  
    &  &     & m_{t-1} &  \star \\
    &  &     &   & m_t
 \end{array}\right)  \mapsto \prod_{i=1}^t |det(m_i)|_E^{n_1+\dots+n_{i-1}-n_{i+1}-\dots-n_t}.$$

Let $G$ be an $l$-group, and $H$ a subgroup of $G$, a representation $(\pi,V)$ of $G$ is said to be smooth if for any vector $v$ of the vector space $V$, there is a subgroup $U_v$ of $G$ stabilizing $v$ through $\pi$. We denote by $V^H$ subspace of fixed points of $V$ under $H$. The category of smooth representations of $G$ is denoted by $Alg(G)$. If $(\pi,V)$ is a smooth representation of $G$, we denote by $\pi^{\vee}$ its dual representation in the smooth dual space $\tilde{V}$ of $V$.\\

\textbf{We will only consider smooth representations of $l$-groups.}\\

Let $X$ be a locally closed space of an $l$-group $G$, and $H$ closed subgroup of $G$, with $H.X \subset X$.
If $V$ is a complex vector space, we denote by $C^{\infty}(X,V)$ the space of smooth functions from $X$ to $V$, and by $C_c^{\infty}(X,V)$ the space of smooth functions with compact support from $X$ to $V$ (if one has $V=\mathbb{C}$, we simply denote it by $C_c^{\infty}(X)$).\\ 
If $\rho$ is a complex representation of $H$ in  $V_{\rho}$, we denote by $C^{\infty}(H \backslash X, \rho, V_{\rho})$ the space of functions $f$ from $X$ to $V_{\rho}$, fixed under the action by right translation of some compact open subgroup $U_f$ of $G$, and which verify $f(hx)=\rho(h)
 f(x)$ for $h \in H$, and $x \in X$ (if $\rho$ is a character, we denote this space by $C^{\infty}(H \backslash X, \rho)$. We denote by $C_c^{\infty}(H \backslash X, \rho, V_{\rho})$ subspace of functions with support compact modulo $H$ of $C^{\infty}(H \backslash X, \rho, V_{\rho})$.\\ 
We denote by $Ind _H^G (\rho)$ the representation by right translation of $G$ in $C^{\infty}(H \backslash G, \rho,
 V_{\rho})$ and by $ind _H^G (\rho)$ the representation by right translation of $G$ in $C_c^{\infty}(H \backslash G,  \rho,
 V_{\rho})$. We denote by ${Ind'} _H^G (\rho)$ the normalized induced representation $Ind _H^G ((\Delta _G /\Delta _H)^{1/2} \rho)$ and by ${ind'} _H^G (\rho)$ the normalized induced representation $ind _H^G ((\Delta _G /\Delta _H)^{1/2} \rho)$.\\ 
Let $n$ be a positive integer, and $\bar{n}=(n_1,\dots,n_t)$ be a partition of $n$, and suppose that we have a representation $(\rho_i, V_i)$ of $G_{n_i}(K)$ for each $i$ between $1$ and $t$. Let $\rho$ be the extension to $P_{\bar{n}}$ of the natural representation $\rho_1 \otimes \dots \otimes \rho_t$ of $M_{\bar{n}}\simeq G_{n_1}(K) \times \dots \times G_{n_t}(K)$, by taking it trivial on $N_{\bar{n}}$. We denote by $\rho_1 \times \dots \times \rho_t$ the representation ${Ind'} _{P_{\bar{n}}(K)}^{G_n (K)} (\rho)$.

\subsection{Classification of generic representations of $GL(n,K)$}

{From now on, we will assimilate a representation to its isomorphism class, hence two isomorphic representations can be said to be equal}.\\
We recall in this subsection classical results of Bernstein and Zelevinsky about classification of irreducible representations of the group $G_n(K)$.\\
If $\pi$ is an irreducible representation of $G_n(K)$, one denotes by $c_{\pi}$ its central character.\\
We recall that an irreducible representation of $G_n(K)$ is called supercuspidal if it doesn't occur as a quotient of any proper parbolically induced representation, which is equivalent to the fact that it has a coefficient with support compact modulo the center $Z_n(K)$ of the group $G_n(K)$.\\

An irreducible representation $\pi$ of the group $G_n(K)$ is called quasi-square-integrable, if there exists a positive character $\chi$ of the multiplicative group $K^*$, such that one of the coefficients $g\mapsto c(g)$ of $\pi$ verifies that $c(g)\chi(det(g))$ is a square-integrable function for a Haar measure of $G_n(K)/Z_n(K)$. One says that the representation $\pi$ is  square-integrable (or belongs to the discrete series of $G_n(K)$) if one can choose $\chi$ to be trivial.\\
If $\rho$ is a supercuspidal representation of $G_r(K)$ for a positive integer $r$, on denoteq by $\rho|\ |_K$ the representation obtained by twisting with the character $|det(\ )|_K$. We call segment a list $\Delta$ of supercuspidal representations of the form $$\Delta=[\rho,\rho|\ |_F,\dots,\rho|\ |_F^{l-1}] $$ for a positive integer $l$. We call length of the segment the integer $rl$. We denote by $\Delta$ the unique irreducible quotient of the representation $\rho \times \rho|\ |_F \times \dots \times \rho|\ |_F^{l-1}$ of $G_{rl}(K)$. It is also common to denote by $St_l(\rho)$ the representation $\Delta$, and to call it generalized Steinberg representation attached to $\rho$. 
We have the following theorem (Theorem 9.3 of \cite{Z}).

\begin{thm} A representation $\Delta$ of the group $G_n(K)$ is quasi-square-integrable if and only if there is $r\in \left\lbrace 1,\dots, n \right\rbrace$ and $l\in \left\lbrace 1,\dots, n \right\rbrace$ such that $lr=n$, and $\rho$ a supercuspidal representation of $G_r(K)$ such that the representation $\Delta$ is equal to $St_l(\rho)$.\end{thm}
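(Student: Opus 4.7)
The plan is to prove both implications using Casselman's square-integrability criterion, which characterizes square-integrability (modulo the center) of an irreducible admissible representation in terms of the real parts of the central characters appearing in the Jacquet modules along standard parabolics. After twisting by a suitable positive character of $K^*$, the quasi-square-integrable case reduces at once to the square-integrable case, so I will work with the unnormalized criterion.

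For the sufficiency direction, starting from a supercuspidal $\rho$ of $G_r(K)$ with $lr=n$ and $\pi = \rho \times \rho|\ |_K \times \cdots \times \rho|\ |_K^{l-1}$, I would first establish, using the geometric lemma of Bernstein-Zelevinsky, the composition series of every Jacquet module of $\pi$. Selecting the unique irreducible quotient $St_l(\rho)$ and tracking its Jacquet modules, one checks that after the central twist by $|\det(\cdot)|_K^{-(l-1)/2}$ every central exponent that appears lies strictly in the negative obtuse chamber prescribed by Casselman, so $St_l(\rho)$ is square-integrable, hence the original $St_l(\rho)$ is quasi-square-integrable.

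For the necessity direction, let $\Delta$ be quasi-square-integrable on $G_n(K)$ with supercuspidal support $(\rho_1,\ldots,\rho_t)$, where $\rho_i$ is supercuspidal on $G_{n_i}(K)$, so that $\Delta$ is an irreducible subquotient of $\rho_1 \times \cdots \times \rho_t$. After an appropriate central twist, I may assume $\Delta$ is square-integrable. Applying Casselman's criterion to each standard parabolic coarser than $P_{(n_1,\ldots,n_t)}$ (and to every reordering of the $\rho_i$'s given by the geometric lemma) produces a system of strict inequalities on the central characters of the $\rho_i$'s relative to the modular character $\delta_P$. A combinatorial analysis of this system forces all the $n_i$ to be equal to a common $r$, and forces the unordered multiset $\{\rho_1,\ldots,\rho_t\}$ to be the segment $\{\rho,\rho|\ |_K,\ldots,\rho|\ |_K^{l-1}\}$ for some supercuspidal $\rho$ of $G_r(K)$, with $l=t$ and $lr=n$. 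Finally, among the irreducible subquotients of $\rho \times \cdots \times \rho|\ |_K^{l-1}$ classified by Zelevinsky in terms of sub-segments, only the Langlands quotient $St_l(\rho)$ satisfies Casselman's inequalities, so $\Delta = St_l(\rho)$.

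The main obstacle is the combinatorial step in the necessity direction: translating Casselman's abstract inequalities into the rigid consecutive-twist structure of a segment. The key input is that two supercuspidals $\sigma$ on $G_a(K)$ and $\sigma'$ on $G_b(K)$ can produce central exponents lying on a shared wall only when $a=b$ and $\sigma' \simeq \sigma |\ |_K^m$ for some integer $m$; combining this with the geometric lemma, so that the criterion applies to all Weyl-translates of the original parabolic, is what simultaneously forces equal block sizes, consecutive integer twists, and the absence of repetitions.
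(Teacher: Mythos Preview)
The paper does not prove this statement: it is quoted as Theorem 9.3 of \cite{Z} (Zelevinsky) and used as background. So there is no ``paper's own proof'' to compare against; the result is simply imported from the literature.

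Your outline is the standard route to this classification and is essentially what one finds in Zelevinsky's paper combined with Casselman's criterion. The sufficiency direction is routine once one knows the Jacquet modules of $St_l(\rho)$ (these are computed in \cite{Z}, and indeed only segment-shaped exponents appear). For the necessity direction your plan is correct in spirit, but the paragraph you flag as the ``main obstacle'' is where all the content lies, and your sketch there is still quite loose. In particular, the assertion that two supercuspidals $\sigma$ on $G_a(K)$ and $\sigma'$ on $G_b(K)$ ``can produce central exponents lying on a shared wall only when $a=b$ and $\sigma'\simeq\sigma|\ |_K^m$'' is not quite the right formulation: Casselman's criterion by itself only sees the absolute values of central characters, not the full supercuspidal, so it cannot directly force $\sigma'$ to be an unramified twist of $\sigma$. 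What actually pins down the segment structure is the interplay between Casselman's inequalities and the \emph{existence} of the relevant Jacquet modules (equivalently, which Weyl-translates of the inducing data actually contribute a subquotient containing $\Delta$), together with Zelevinsky's analysis of when $\rho\times\rho'$ is reducible. If you want a self-contained argument, you should make that step explicit rather than leaving it as a combinatorial black box.
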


 A representation of this type is  square-integrable if and only if it is unitarizable, or equivalently if and only if $\rho|\ |_F^{(l-1)/2}$ is unitarizable (i.e. its central character is unitary). We say that two segments are linked if none of them is a subsegment of the other, but their union is still a segment.\\

We recall that from Theorem 6.1 of \cite{Z}, that every irreducible representation of the group $G_n(K)$, is obtained as the unique irreducible quotient of a representation parabolically induced from quasi-square-integrables. We will focus in this work on representations called of Whittaker type, we thus recall their definition. Before, we recall that if $\psi$ is a character of $K$, we still denote by $\psi$ the character of the group $N_n(K)$, given by $n\mapsto \psi(\sum_{i=1}^{n-1} n_{i,i+1})$.\\

\begin{df}
We say that the representation $(\pi,V)$ of the group $G_n(K)$ is a representation of Whittaker type, if there is a non trivial character $\psi$ of $(K,+)$, such that the space of linear forms $\lambda$ on $V$, which verify $\lambda(\pi(n)v)=\psi(n)v$ for $n$ in $N_n(K)$ and $v$ in $V$, is of dimension $1$.
\end{df}

Hence a representation of Whittaker type is not necessarily irreducible, but it is isomorphic, up to unique (modulo scalars) isomorphism to a submodule of $Ind_{N_n(K)}^{G_n(K)}(\psi)$. We note $W(\pi,\psi)$ this model of $\pi$ on which $G_n(K)$ acts by right translation, and call it the Whittaker model of $\pi$.

The following theorem due to Rodier (cf. \cite{R}) describes the behaviour of the ``Whittaker type'' with respect to parabolic induction.

\begin{thm}\label{rod}
Let $\bar{n}=(n_1,\dots,n_t)$ be a partition of $n$, and let $\pi_i$ be a representation of $G_{n_i}(K)$ for $i$ between $1$ and $t$, then the representation $\pi_1 \times \dots \pi_t$ of the group $G_n(K)$ is of Whittaker type if and only if each $\pi_i$ is.

\end{thm}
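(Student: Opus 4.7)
\bigskip
\noindent\textbf{Proof plan.}
The plan is to compute the dimension of the space of $\psi$-Whittaker functionals on $\pi_1\times\cdots\times\pi_t={Ind'}_{P_{\bar{n}}(K)}^{G_n(K)}(\pi_1\otimes\cdots\otimes\pi_t)$ directly, by a Mackey-type argument, and to show that this dimension equals the product $\prod_{i=1}^t \dim \mathrm{Hom}_{N_{n_i}(K)}(\pi_i,\psi_i)$, where $\psi_i$ is the character of $N_{n_i}(K)$ obtained from $\psi$ by the same formula as for $N_n(K)$. Since a representation is of Whittaker type exactly when this dimension is $1$, and a product of non-negative integers equals $1$ precisely when each factor is $1$, this will prove the theorem.

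First I would reduce, via Frobenius reciprocity for smooth induction, the computation of Whittaker functionals on ${Ind'}_{P_{\bar{n}}(K)}^{G_n(K)}(\pi_1\otimes\cdots\otimes\pi_t)$ to the analysis of $(N_n(K),\psi)$-equivariant distributions on the quotient space $P_{\bar{n}}(K)\backslash G_n(K)$ with values in the coefficient representation. Then I would stratify this quotient by the Bruhat decomposition: the $N_n(K)$-orbits on $P_{\bar{n}}(K)\backslash G_n(K)$ are parameterized by a set of Weyl representatives for $W_{M_{\bar{n}}}\backslash W_{G_n}$, and stratifying by orbits of increasing dimension one obtains a filtration whose graded pieces are, by the geometric lemma of Bernstein--Zelevinsky, induced from the corresponding stabilizers.

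The main technical step, and the place where one actually uses that $\psi$ is a \emph{non-degenerate} character of $N_n(K)$, is to show that every stratum except the open one contributes zero. For a non-open double coset, represented by $w\in W_{G_n}$, the stabilizer in $N_n(K)$ contains a root subgroup $U_\alpha$ on which the character $\psi$ is non-trivial, but such that $w$ sends $\alpha$ into the Lie algebra of $M_{\bar{n}}\cap N_n$ in a way forcing the functional to be $U_\alpha$-invariant; non-triviality of $\psi$ on $U_\alpha$ then kills the contribution. This is the same kind of vanishing argument used by Rodier (and later exploited by Bernstein--Zelevinsky), and it is where I expect the bookkeeping to be most delicate.

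Once only the open cell $P_{\bar{n}}(K)w_{\bar{n}}N_n(K)$ remains, one checks that the intersection $w_{\bar{n}}^{-1}P_{\bar{n}}(K)w_{\bar{n}}\cap N_n(K)$ is exactly the product of the unipotent radicals $N_{n_1}(K)\times\cdots\times N_{n_t}(K)$ of the Borel subgroups of the Levi blocks, and that the restriction of $\psi$ to this product decomposes as $\psi_1\otimes\cdots\otimes\psi_t$. The modular factor coming from the normalised induction cancels against the modulus of the parabolic at $w_{\bar{n}}$, so the open-cell contribution is naturally identified with $\mathrm{Hom}_{N_{n_1}(K)}(\pi_1,\psi_1)\otimes\cdots\otimes \mathrm{Hom}_{N_{n_t}(K)}(\pi_t,\psi_t)$. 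Combining this identification with the vanishing of all other strata yields the desired multiplicativity of the dimension, from which Rodier's criterion follows immediately.
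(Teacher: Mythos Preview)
The paper does not prove this statement at all: it is quoted as a known result due to Rodier, with the reference ``cf.\ \cite{R}'' immediately preceding the statement, and no proof is supplied. So there is no ``paper's own proof'' to compare your proposal against.

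That said, your proof plan is essentially the standard argument, going back to Rodier himself and formalized in the Bernstein--Zelevinsky framework. The strategy of using Frobenius reciprocity, stratifying $P_{\bar n}(K)\backslash G_n(K)$ by $N_n(K)$-orbits via the Bruhat decomposition, killing all non-open strata by finding a root subgroup on which $\psi$ is non-trivial but the functional is forced to be invariant, and then identifying the open-cell contribution with the tensor product of Whittaker functionals on the inducing data, is exactly how this result is proved in the literature. Your outline is correct and complete in spirit; the only caveat is that the vanishing step for the non-open cells is indeed where the careful bookkeeping lies, and one should be precise that for each $w$ not representing the longest coset there exists a simple root $\alpha_i$ (i.e.\ one with $\psi$ non-trivial on the corresponding root subgroup) such that $w\alpha_i$ is still a root of $N_{\bar n}$, forcing triviality. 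With that made explicit, your argument is the textbook one.
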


We now study irreducible representations of Whittaker type.

\begin{df}
A representation $(\pi,V)$ of the group $G_n(K)$ is called generic if it is irreducible and of Whittaker type. 
\end{df}

We have the following theorem due to Zelevinsky (Th. 9.7 of \cite{Z}), which classifies the generic representations of the group $G_n(K)$:

\begin{thm}\label{classgen}
Let $\bar{n}=(n_1,\dots,n_t)$ be a partition of $n$, and let $\Delta_i$ be a quasi-square-integrable of $G_{n_i}(K)$ for $i$ between $1$ and $t$, the representation $\pi=\Delta_1\times \dots \times \Delta_t$ of the group $G_n(K)$ is irreducible if and only if no $\Delta_i$'s are linked, in which case $\pi$ is generic. If $(m_1,\dots,m_{t'})$ is another partition of $n$, and if the $\Delta'_j$'s are unlinked segments of length $m_j$ for $j$ between $1$ and $t'$, then the representation $\pi$ equals $ \Delta'_1 \times \dots \times \Delta'_{t'}$ if and only if $t=t'$, and $\Delta_i=\Delta'_{s(i)}$ for a permutation $s$ of $\left\lbrace 1,\dots,t\right\rbrace$. Eventually, every generic representation of $G_n(K)$ is obtained this way.
\end{thm}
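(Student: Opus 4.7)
The plan is to combine the Bernstein--Zelevinsky classification of irreducible admissible representations (into Langlands-type quotients of parabolic inductions from quasi-square-integrables) with Rodier's Theorem~\ref{rod}. I will prove the four assertions in the order: (i) irreducibility criterion, (ii) genericity of irreducible cases, (iii) uniqueness of the inducing data up to permutation, (iv) every generic representation arises this way.

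The core technical input is the two-segment case: for quasi-square-integrable $\Delta_1,\Delta_2$, the representation $\Delta_1\times\Delta_2$ is irreducible if and only if $\Delta_1$ and $\Delta_2$ are not linked. To prove this, I would compute the Jacquet module of $\Delta_1\times\Delta_2$ along the standard parabolic by means of the geometric (Mackey-type) lemma of Bernstein--Zelevinsky, and compare it with the Jacquet modules of the irreducible subquotients: when the segments are linked, a proper subrepresentation isomorphic to a parabolic induction of a shorter multisegment appears, producing reducibility; when they are unlinked, the Jacquet module decomposes as a direct sum of non-isomorphic irreducibles exchanged by the long intertwining operator, which is then an isomorphism, yielding irreducibility. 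The general case of $t$ factors is reduced to the two-factor case by an induction on $t$, using that any two consecutive factors in a reordering can be swapped when unlinked, together with the fact that if \emph{some} pair is linked one gets a proper subrepresentation exactly as in the two-factor case.

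For (ii), once irreducibility is established, genericity is immediate: each quasi-square-integrable representation $\Delta_i=St_{l_i}(\rho_i)$ is itself generic (being the unique irreducible quotient of an induction of supercuspidals, which is of Whittaker type by Rodier), so by Theorem~\ref{rod} applied iteratively, $\Delta_1\times\cdots\times\Delta_t$ is of Whittaker type; being irreducible, it is generic. For (iii), the uniqueness of $(\Delta_1,\dots,\Delta_t)$ up to permutation follows from the uniqueness of the supercuspidal support of an irreducible representation (a consequence of the Bernstein--Zelevinsky theory), together with the fact that the multiset of segments is reconstructed from the supercuspidal support by grouping into maximal unlinked chains that produce the same induced representation; this reconstruction is unambiguous precisely because no pair in our list is linked.

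For (iv), let $\pi$ be an arbitrary generic representation. By the general Bernstein--Zelevinsky classification, $\pi$ is the unique irreducible quotient of some $\Delta_1\times\cdots\times\Delta_t$ with $\Delta_i$ quasi-square-integrable. By Theorem~\ref{rod} this induced representation is of Whittaker type, so its space of $\psi$-Whittaker functionals is one-dimensional. Since $\pi$ is itself generic, the unique (up to scalar) Whittaker functional of $\Delta_1\times\cdots\times\Delta_t$ factors through the quotient map to $\pi$; but any proper subrepresentation would then carry no Whittaker functional, while the kernel of the quotient map, if nonzero, would itself have to carry one by exactness of the Whittaker functor on the relevant subcategory. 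This forces $\pi=\Delta_1\times\cdots\times\Delta_t$, so the induction is irreducible and, by (i), the segments are pairwise unlinked. The main obstacle is step~(i), specifically the combinatorial bookkeeping inside the geometric lemma needed to pin down precisely which subquotients of the Jacquet module force reducibility; the rest is then a matter of assembling standard ingredients.
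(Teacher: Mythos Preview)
The paper does not give its own proof of this theorem: it is stated as a citation of Zelevinsky (Theorem~9.7 of \cite{Z}), so there is no in-paper argument to compare your proposal against. Your sketch for parts (i)--(iii) follows the standard Bernstein--Zelevinsky line and is essentially correct in outline, though the combinatorics in (i) and the reconstruction in (iii) are where the real work lies.

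Part (iv), however, contains a genuine gap. You write that ``the kernel of the quotient map, if nonzero, would itself have to carry [a Whittaker functional] by exactness of the Whittaker functor.'' Exactness gives the \emph{opposite} conclusion: from the short exact sequence $0 \to K \to \Delta_1\times\cdots\times\Delta_t \to \pi \to 0$ and the fact that both the middle term and $\pi$ have one-dimensional $n$-th derivative, you get $K^{(n)}=0$, i.e.\ $K$ carries \emph{no} Whittaker functional. That is perfectly consistent with $K\neq 0$ and produces no contradiction. To close the argument you need a further input: either Zelevinsky's explicit computation of the highest derivative of the Langlands quotient $\langle a\rangle$ (his Theorem~8.1), which shows $\langle a\rangle^{(n)}=0$ whenever two segments in the multisegment $a$ are linked; or the fact (for $GL_n$ already in Zelevinsky's work, later generalized as the standard module conjecture) that the unique generic subquotient of a standard module always occurs as a \emph{subrepresentation}, so that if the Langlands quotient is generic it must coincide with that subrepresentation and the induced module is irreducible. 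Without one of these ingredients, step (iv) does not go through.
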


In \cite{BZ}, Bernstein and Zelevinsky show the following result:

\begin{thm}\label{kirgn}
Let $\pi$ be a generic representation of $G_n(K)$, and $\psi$ a non trivial character of $K$, the restriction map to the affine subgroup $P_n(K)$ of functions of $W(\pi,\psi)$ is injective. 
\end{thm}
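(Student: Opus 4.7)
The plan is to analyse the restriction map
\[
r : W(\pi,\psi) \longrightarrow \operatorname{Ind}_{N_n(K)}^{P_n(K)}(\psi), \qquad W \mapsto W\vert_{P_n(K)},
\]
show that it is $P_n(K)$-equivariant under right translation, and then prove that its kernel vanishes using the Bernstein--Zelevinsky theory of derivatives. Well-definedness of $r$ follows from the transformation law $W(n p) = \psi(n) W(p)$ for $n \in N_n(K)$ and $p \in P_n(K)$, and the equivariance is immediate. Setting $V_0 := \ker(r)$, we obtain a $P_n(K)$-stable subspace of $W(\pi,\psi)$ which, under the identification $W_v(g) = \lambda(\pi(g) v)$ (where $\lambda$ is the essentially unique Whittaker functional on $\pi$), corresponds to the largest $P_n(K)$-invariant subspace of $\pi$ on which $\lambda$ vanishes.

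The main step is to show $V_0 = 0$. For this I would invoke the Bernstein--Zelevinsky filtration of $\pi$ regarded as a $P_n(K)$-module, whose successive quotients are obtained from the derivatives $\pi^{(k)}$ via the exact functors $\Phi^+$ and $\Psi^+$. Since $\pi$ is generic, the top derivative $\pi^{(n)}$ is one-dimensional, and the smallest non-zero piece of this filtration is isomorphic to the Kirillov representation $\operatorname{ind}_{N_n(K)}^{P_n(K)}(\psi)$, consisting of functions on $P_n(K)$ with compact support modulo $N_n(K)$. On this bottom piece the Whittaker functional $\lambda$ is non-zero on every non-zero vector, essentially because $\lambda$ there is just evaluation at the identity of a compactly supported $(N_n(K),\psi)$-equivariant function on $P_n(K)$.

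From this I would conclude as follows. If $V_0 \neq 0$, then since the Bernstein--Zelevinsky filtration exhausts $\pi$ (this is where admissibility and the fact that all $\pi^{(k)}$ are of finite length are used), the submodule $V_0$ must have a non-zero intersection with some successive quotient, and by descending through the filtration one finds a non-zero $P_n(K)$-stable subspace of $V_0$ mapping injectively into $\operatorname{ind}_{N_n(K)}^{P_n(K)}(\psi) \otimes \pi^{(n)}$. But on this space $\lambda$ is non-vanishing on every non-zero vector, contradicting the characterisation of $V_0$ as the largest $P_n(K)$-invariant subspace annihilated by all translates of $\lambda$. Hence $V_0 = 0$ and $r$ is injective.

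The main obstacle I anticipate is the precise matching between the Bernstein--Zelevinsky filtration and the Whittaker functional: one must verify that $\lambda$ restricted to the bottom piece $\operatorname{ind}_{N_n(K)}^{P_n(K)}(\psi)$ (viewed inside $\pi$ via the filtration) is genuinely non-zero on every non-zero vector, so that no $P_n(K)$-invariant subspace can meet it non-trivially while remaining annihilated by $\lambda$. Once this compatibility is in place, the argument collapses to the statement that a non-zero submodule of $\pi\vert_{P_n(K)}$ cannot avoid the generic support of the Whittaker functional, and the theorem follows.
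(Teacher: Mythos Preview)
The paper does not give its own proof of this statement; it simply attributes the result to Bernstein--Zelevinsky \cite{BZ} and moves on. So there is no ``paper's proof'' to compare against, only your outline.

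Your strategy has the right shape, but there is a real gap in the third paragraph. From the filtration $0 \subset \tau_n \subset \cdots \subset \tau_1 = \pi$ you correctly deduce that a nonzero $P_n(K)$-submodule $V_0$ has nonzero image in \emph{some} successive quotient $\tau_k/\tau_{k+1}$; but nothing in that argument forces $k=n$. Your phrase ``by descending through the filtration one finds a non-zero $P_n(K)$-stable subspace of $V_0$ mapping injectively into $\operatorname{ind}_{N_n(K)}^{P_n(K)}(\psi)\otimes\pi^{(n)}$'' is precisely the hard step, and it is asserted rather than proved. In fact, the observation you make about $\tau_n$---that the $P_n$-translates of $\lambda$ separate points there (note: $\lambda$ itself is evaluation at $e$ and certainly \emph{does} vanish on many nonzero vectors of $\operatorname{ind}_{N_n}^{P_n}(\psi)$; it is the family $\lambda\circ\pi(p)$ that separates points)---shows $V_0 \cap \tau_n = 0$. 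What you would need for a contradiction is the opposite inclusion: that every nonzero $P_n$-submodule of $\pi|_{P_n}$ \emph{contains} $\tau_n$, i.e.\ that $\tau_n$ is the socle of $\pi|_{P_n}$.

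That socle statement is not a formal consequence of the filtration; it is essentially equivalent to the theorem itself and uses the irreducibility of $\pi$ as a $G_n(K)$-module in an essential way. The actual argument in \cite{BZ} (building on Gelfand--Kazhdan) establishes this via the structure theory of $P_n$-modules together with the interaction between $G_n$-irreducibility and the derivative functors, not merely by inspecting the $P_n$-filtration. Your outline would become a proof once you supply this missing ingredient, but as written it assumes exactly what has to be shown.
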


We denote by $K(\pi,\psi)$ the model of $\pi$ obtained this way, we call it its Kirillov model, it has the following important property.

\begin{thm}\label{kirgn2}
Let $\pi$ be a generic representation of the group $G_n(K)$, and $\psi$ a non trivial character of $K$, Then the kirillov model $K(\pi,\psi)$ contains the space $C_c^{\infty}(N_n(K) \backslash P_n(K), \psi)$.

\end{thm}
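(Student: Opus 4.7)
The plan is to exhibit $C_c^{\infty}(N_n(K)\backslash P_n(K),\psi)$ as an irreducible $P_n(K)$-submodule of the Kirillov model, then check that it is nonzero. Set
$$V_0 \;=\; K(\pi,\psi) \,\cap\, C_c^{\infty}(N_n(K)\backslash P_n(K),\psi).$$
Both conditions defining $V_0$ are preserved by right translation by $P_n(K)$: Theorem \ref{kirgn} tells us that $K(\pi,\psi)$ is stable under $P_n(K)$ since $P_n(K) \subset G_n(K)$ acts on the Whittaker model and restriction commutes with this right action, and the condition of having support compact modulo $N_n(K)$ inside $P_n(K)$ is obviously right-$P_n(K)$-stable. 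Hence $V_0$ is a $P_n(K)$-submodule of $C_c^{\infty}(N_n(K)\backslash P_n(K),\psi)$.

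The next step is to invoke Bernstein and Zelevinsky's theorem from \cite{BZ} that the induced representation $\mathrm{ind}_{N_n(K)}^{P_n(K)}(\psi) = C_c^{\infty}(N_n(K)\backslash P_n(K),\psi)$ is irreducible as a $P_n(K)$-module. This is a consequence of the classification of irreducible $P_n$-representations via the derivative functors: $C_c^{\infty}(N_n(K)\backslash P_n(K),\psi)$ is obtained as $(\Phi^+)^{n-1}(\psi)$, which is irreducible by their general analysis. Granting this, it suffices to prove that $V_0 \neq 0$, for then $V_0$ must fill up the whole space.

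To establish non-vanishing, I would use the Bernstein-Zelevinsky derivative filtration of $\pi|_{P_n(K)}$. For a generic representation $\pi$, the highest derivative $\pi^{(n)}$ is one-dimensional (this is precisely the defining non-degenerate Whittaker functional), and this forces the bottom piece of the filtration of $\pi|_{P_n(K)}$ to be exactly $(\Phi^+)^{n-1}\Psi^+(\pi^{(n)}) = \mathrm{ind}_{N_n(K)}^{P_n(K)}(\psi)$, embedded as a $P_n(K)$-submodule. Translating this embedding into the Whittaker realization produces the required nonzero element of $V_0$. Alternatively, and more concretely, one can start with any nonzero $W \in W(\pi,\psi)$, use its invariance under some compact open subgroup $K' \subset K^K_{n,r}$, and apply an averaging construction: the asymptotics of Whittaker functions on the diagonal torus $A_n(K)$ (as developed in \cite{JPS}) show that $W$ restricted to $A_n(K)$ vanishes outside a set controlled by finitely many exponents; combined with the $\psi$-equivariance on $N_n(K)$ and the Iwasawa decomposition of $P_n(K)$, this lets one produce from $W$ a nonzero Whittaker function with support compact modulo $N_n(K)$ in $P_n(K)$. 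I expect this non-vanishing step to be the main obstacle, since it is here that one must combine the abstract derivative machinery with the concrete asymptotic behavior of Whittaker functions; the irreducibility step is purely formal once the derivative theory is granted.
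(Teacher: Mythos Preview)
The paper does not prove this theorem; it is stated as a result of Bernstein--Zelevinsky \cite{BZ} without argument, so there is no ``paper's own proof'' to compare against. Your overall strategy --- reduce to showing $V_0 := K(\pi,\psi)\cap C_c^{\infty}(N_n(K)\backslash P_n(K),\psi)$ is nonzero and then invoke the irreducibility of $\mathrm{ind}_{N_n(K)}^{P_n(K)}(\psi)$ --- is sound and is essentially how one extracts the argument from \cite{BZ}.

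There is, however, a real gap in your justification of $V_0\neq 0$. You correctly note that the bottom piece $\pi_n$ of the derivative filtration of $\pi|_{P_n(K)}$ is abstractly isomorphic to $\sigma:=\mathrm{ind}_{N_n(K)}^{P_n(K)}(\psi)$, and then assert that ``translating this embedding into the Whittaker realization produces the required nonzero element of $V_0$''. But an abstract $P_n(K)$-isomorphism tells you nothing about the supports of the image functions inside $K(\pi,\psi)\subset \mathrm{Ind}_{N_n(K)}^{P_n(K)}(\psi)$; this is precisely where the content lies. The missing ingredient is a uniqueness statement: by Frobenius reciprocity,
\[
\mathrm{Hom}_{P_n(K)}\bigl(\sigma,\ \mathrm{Ind}_{N_n(K)}^{P_n(K)}(\psi)\bigr)\ \cong\ \mathrm{Hom}_{N_n(K)}(\sigma,\psi),
\]
and the right-hand side is one-dimensional since $\sigma^{(n)}$ is one-dimensional (this is the same computation that gives uniqueness of Whittaker functionals). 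Hence every nonzero $P_n(K)$-map $\sigma\to\mathrm{Ind}_{N_n(K)}^{P_n(K)}(\psi)$ is a scalar multiple of the tautological inclusion $\mathrm{ind}\hookrightarrow\mathrm{Ind}$, and so has image exactly $C_c^{\infty}(N_n(K)\backslash P_n(K),\psi)$. With this addition the argument is complete --- indeed it already yields the full containment directly, making your separate irreducibility step redundant. Your alternative route via torus asymptotics of Whittaker functions is too vague as written: those expansions control the support from above in the simple-root directions but not from below, so by themselves they do not manufacture compactly supported Kirillov vectors.
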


\subsection{Whittaker models and derivatives}

We start by recalling results of \cite{BZ} and \cite{Z}. We define four functors $\Phi^{+}$, $\Phi^{-}$, $\Psi^{+}$, $\Psi^{-}$:\\
$\bullet$ $\Psi^{+}$ is a functor from $Alg(G_{n-1}(K))$ into $Alg(P_n(K))$. If $(\sigma, V)$ belongs to $Alg(G_{n-1}(K))$, the space of $\Psi^{+}(\sigma)$ is $V$, and $U_n(K)$ acts trivially by $\Psi^{+}(\sigma)$ whereas $G_{n-1}(K)$ acts by $\Psi^{+}(\sigma)(g)=|det(g)|^{1/2} \sigma(g)$.\\
$\bullet$ $\Psi^{-}$ is a functor from $Alg(P_n(K))$ into $Alg(G_{n-1}(K))$. If $(\tau, W)$ is in $Alg(P_n(K))$, the space of $\Psi^{-}(\tau)$ is the quotient of $W$ by the $G_{n-1}(K)$-module $$W(U_n,1)=\left\lbrace \tau(u)w-w / w \in W, u\in U_n(K) \right\rbrace,$$ and $$\Psi^{-}(\tau)(g)(w+W(U_n,1))= |det(g)|^{-1/2}\tau(g)w+W(U_n,1)$$ for $g$ in $G_{n-1}(K)$.\\
$\bullet$ $\Phi^{+}$ is a functor from $Alg(P_{n-1}(K))$ into $Alg(P_n(K))$. If $(\sigma, V)$ belongs to $Alg(P_{n-1}(K))$, we extend $\sigma$ to $P_{n-1}(K)U_n(K)$ by making $U_n(K)$ act by the character $\psi$. We denote by $\sigma \otimes \psi$ this extension, then we have $$\Phi^{+}(\sigma)= ind_{P_{n-1}(K) U_n(K)}^{P_n(K)}(|\det(\ )|^{1/2}\sigma \otimes \psi).$$
$\bullet$ $\Phi^{-}$ is a functor from $Alg(P_n(K))$ into $Alg(P_{n-1}(K))$. If $(\tau, W)$ is in $Alg(P_n(K))$, the space of $\Phi^{-}(\tau)$  is the quotient of $W$ by the $P_{n-1}(K)$-module $$W(U_n,\psi)=\left\lbrace \tau(u)w-\psi(u)w / w \in W, u\in U_n(K) \right\rbrace,$$ and one has $$\Phi^{-}(\tau)(p)(w+W(U_n,\psi))= |det(p)|^{-1/2}\tau(p)w+W(U_n,\psi)$$ for $p$ in $P_{n-1}(K)$.\\

These four functors are exact and verify the following relations:
\begin{enumerate}\label{functorrel}
 \item $\Phi^{-}\Phi^{+}\simeq Id$ and $\Psi^{-}\Psi^{+}\simeq Id$

\item $\Phi^{-}\Psi^{+}=0$ and $\Psi^{-}\Phi^{+}=0$

\item $0\rightarrow \Phi^{+}\Phi^{-}\rightarrow  Id \rightarrow \Psi^{+}\Psi^{-}\longrightarrow 0$ is exact.

\end{enumerate}

The functors $\Psi^{+}$ and $\Phi^{+}$ send irreducible representations to irreducible representations. Any irreducible representation of $P_n(K)$ is of the form $(\phi^{+})^{k-1}\psi^{+}(\rho)$ for an irreducible representation $\rho$ of $G_{n-k}(K)$. 
For $\tau$ in $Alg(P_n(K))$, we denote by $\tau_{(k)}$ the representation $(\Phi^{-})^k(\tau)\in Alg(P_{n-k}(K))$ and $\tau^{(k)}$ the representation $\Psi^{-}(\Phi^{-})^{k-1}(\tau)=\Psi^{-}(\tau_{(k-1)}) \in Alg(G_{n-k}(K))$.
The representation $\tau^{(k)}$ is called the $k^{th}$ derivative of $\tau$.\\ 
If $\pi$ belongs to $Alg(G_{n}(K))$, we denote by $\pi^{(k)}$ the $k^{th}$ derivative of $\pi$ viewed as a $P_n(K)$-module, and we call it the $k^{th}$ derivative of $\pi$ again.\\
A representation $\tau$ in $Alg(P_n(K))$ admits a filtration of $P_n(K)$-modules $$0 \subset \tau_n \subset \tau_{n-1} \subset \dots \subset \tau_2 \subset \tau_1=\tau$$ with $\tau_k=(\Phi^{+})^{k-1}(\Phi^{-})^{k-1}(\tau)$. Hence the quotient $\tau_k/\tau_{k+1}$ is isomorphic to $$(\Phi^{+})^{k-1}\Psi^{+}(\tau^{(k)}).$$
Let $\psi$ be a non trivial character of $K$, the representation $\tau$ admits a nonzero Whittaker linear form with respect to $\psi$ if and only if $\tau_n$ is nonzero, and the dimension of the space of these linear forms is the dimension of $\tau^{(n)}$. In particular, the representation $\tau$ admits a Whittaker model if and only if $\tau^{(n)}$ is of dimension $1$, in which case all the representations $\tau_k$ admit a Whittaker model.\\ 
For an irreducible representation $\pi$ of the group $G_n(K)$, the dimension of $\pi^{(n)}$ is at most $1$, so that the representation $\pi$ is generic if and only if $\pi^{(n)}$ is nonzero.\\

The following proposition allows to compute the derivatives of generic representations (\cite{Z}, Theorem 4.4 and Lemma 4.5).

\begin{prop}\label{calculder}

\begin{enumerate}
 \item Let $\pi$ be a supercuspidal representation of the group $G_n(K)$, then the representation $\pi^{(k)}$ is zero for $k$ between $1$ and $n-1$, and $\pi^{(n)}=1$.

\item Let $\rho$ be a supercuspidal representation of $G_r(K)$, and $\pi$ a quasi-square-integrable representation $[\rho,\rho|\ |_F,\dots,\rho|\ |_F^{l-1}] $ of $G_{rl}(K)$. Then $\pi^{(k)}$ is zero if $k$ is not a multiple of $r$, and $\pi^{(kr)}=[\rho|\ |_F^k,\dots,\rho|\ |_F^{l-1}] $ for $k$ between $1$ and $l-1$.

\item Let $\bar{n}=(n_1,\dots,n_t)$ be a partition of $n$, and for $i$ between $1$ and $t$, let $\Delta_i$ be an irreducible representation of $G_{n_i}(K)$. We denote by $\pi$ the representation $\Delta_1 \times \dots \times \Delta_t$ of the group $G_n(K)$, then for $k$ between $1$ and $n$, the representation $\pi^{(k)}$ has a composition series, with factors the $\Delta_1^{(k_1)}\times \dots \times \Delta_t^{(k_t)}$ for $k_1+\dots+k_t= n$.

\end{enumerate}
 
\end{prop}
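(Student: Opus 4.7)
The plan is to derive all three parts from the functorial relations for $\Phi^{\pm}, \Psi^{\pm}$ recorded just before the proposition, together with the Bernstein--Zelevinsky geometric lemma applied inside the mirabolic $P_n(K)$.

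For part (1), I would translate $\pi^{(k)} = \Psi^{-}(\Phi^{-})^{k-1}(\pi)$ into a Jacquet-module statement. Iterating $\Phi^{-}$ takes successive twisted coinvariants by the top-row pieces of upper unipotents; capping with $\Psi^{-}$ yields a sub-quotient of the Jacquet module of $\pi$ with respect to the unipotent radical of the standard parabolic of type $(n-k, k)$. Since $\pi$ is supercuspidal, every such proper Jacquet module vanishes, forcing $\pi^{(k)} = 0$ for $1 \le k \le n-1$. For $k=n$, the dimension of $\pi^{(n)}$ equals the dimension of the space of $(N_n(K),\psi)$-equivariant functionals on $\pi$, which is at most $1$; it equals $1$ because a nonzero supercuspidal of $G_n(K)$ is generic (its coinvariants $\pi_{U_n}$ vanish while $\pi$ itself does not, which forces a nonzero $(U_n,\psi)$-coinvariant).

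For part (3), I would apply the geometric lemma to $\pi = {\mathrm{Ind}}'_{P_{\bar n}(K)}^{G_n(K)}(\Delta_1 \otimes \cdots \otimes \Delta_t)$ restricted to $P_n(K)$. The decomposition $P_n(K)\backslash G_n(K)/P_{\bar n}(K)$ produces a filtration of $\pi|_{P_n(K)}$ whose successive quotients one unwinds using the identities $\Phi^{-}\Phi^{+} \simeq \mathrm{Id}$, $\Psi^{-}\Psi^{+} \simeq \mathrm{Id}$, $\Phi^{-}\Psi^{+}=0$, $\Psi^{-}\Phi^{+}=0$, and the short exact sequence $0 \to \Phi^{+}\Phi^{-} \to \mathrm{Id} \to \Psi^{+}\Psi^{-} \to 0$. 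Iterating the exact sequence on each tensor factor produces a Leibniz-type composition series for $\pi^{(k)}$ whose factors are exactly the $\Delta_1^{(k_1)}\times \cdots \times \Delta_t^{(k_t)}$ with $k_1+\cdots+k_t=k$. Part (2) is then obtained by applying (1) and (3) to $\pi' = \rho \times \rho|\ |_F \times \cdots \times \rho|\ |_F^{l-1}$, of which $\pi$ is the unique irreducible quotient: by (1) each factor has all derivatives zero except at level $r$, where the derivative is trivial, so $(\pi')^{(k)}$ vanishes unless $r\mid k$, and for $k=mr$ its composition factors are indexed by the $\binom{l}{m}$ subsets $S \subset \{0,\dots,l-1\}$ of size $m$, each factor being the cross product of the $\rho|\ |_F^j$ with $j \notin S$. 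Exactness of $\Phi^{-}$ and $\Psi^{-}$ makes $\pi^{(mr)}$ a quotient of $(\pi')^{(mr)}$, and only the truncation $[\rho|\ |_F^m,\ldots,\rho|\ |_F^{l-1}]$ (i.e.\ $S=\{0,\dots,m-1\}$) can be a quotient of a derivative of the Zelevinsky quotient $\pi$, which pins down $\pi^{(mr)}$.

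The main obstacle is the combinatorial bookkeeping in part (3): the orbits of $P_n(K)$ on $G_n(K)/P_{\bar n}(K)$ must be matched explicitly with combinations of $\Phi^{+}\Phi^{-}$ and $\Psi^{+}\Psi^{-}$ applied to the individual $\Delta_i$, and the resulting subquotients must be recognised as outer parabolic inductions of inner products of derivatives. Once this Leibniz rule is in hand, the remaining delicate point is the multiplicity-one identification in (2) of the precise composition factor of $(\pi')^{(mr)}$ that descends to $\pi^{(mr)}$, which relies on the Zelevinsky classification theorem recalled above to distinguish the pairwise non-isomorphic candidates.
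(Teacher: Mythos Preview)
The paper does not supply its own proof; it simply cites \cite{Z}, Theorem 4.4 and Lemma 4.5. Your outlines for (1) and (3) are correct and match the standard Bernstein--Zelevinsky arguments (vanishing of proper Jacquet modules for cuspidals; the Leibniz rule via the geometric-lemma filtration of an induced representation restricted to $P_n(K)$).

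Your deduction of (2) from (1) and (3), however, has a gap that you flag as ``delicate'' but do not actually close. Exactness of the derivative functors gives only that $\pi^{(mr)}$ is a quotient of $(\pi')^{(mr)}$, and by (3) the latter is glued from the products $\prod_{j\notin S}\rho|\ |_F^{\,j}$ over $m$-subsets $S\subset\{0,\dots,l-1\}$. These products are themselves reducible whenever $\{0,\dots,l-1\}\setminus S$ contains two adjacent integers, so $(\pi')^{(mr)}$ has many irreducible constituents beyond your $\binom{l}{m}$ pieces. The classification theorem (Theorem \ref{classgen}) only tells you these constituents are pairwise non-isomorphic; it does not tell you which of them survive in the quotient $\pi^{(mr)}$, nor why $\pi^{(mr)}$ should be irreducible, nor even why it is nonzero for $1\le m\le l-1$. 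The sentence ``only the truncation can be a quotient'' is precisely the assertion to be proved and is not a consequence of the classification. In \cite{Z} the argument runs along a different line: one computes the Jacquet modules of the segment $\langle\Delta\rangle$ along each standard Levi directly (each turns out to be a \emph{single} tensor product of two contiguous sub-segments, with no further constituents), and the derivatives are then read off from those Jacquet modules via the general relation between $\Phi^-,\Psi^-$ and Jacquet functors. Your approach could in principle be salvaged by an induction on $l$ that controls the derivatives of the kernel of $\pi'\twoheadrightarrow\pi$, but that kernel is built from shorter segments, so one is led back to the same Jacquet-module input.
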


We end this subsection by recalling few results on Whittaker models and derivatives.

\begin{prop}{(\cite{CP}, proposition 1.2)}
Let $\pi$ be a representation of Whittaker type of the group $G_n(K)$, the for all $k$ between $1$ and $n$, the representation $\pi_{(k-1)}$ of $P_{n-k+1}(K)$, admits for model $$ W(\pi_{(k-1)},\psi)=\left\lbrace g\mapsto |det(g)|_K^{(1-k)/2} W \left( \begin{array}{cc} g &  \\
  & I_k
\end{array} \right)  | W \in W(\pi,\psi), g \in G_{n-k}(K)
 \right\rbrace $$ which we call the Whittaker model of $\pi_{(k-1)}$. 

\end{prop}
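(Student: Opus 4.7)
The plan is to proceed by induction on $k$, using the exactness and iterative structure of the functor $\Phi^{-}$, together with the Kirillov realization of $\pi$.

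\medskip

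\textbf{Base case ($k=1$).} Here $\pi_{(0)}$ is just $\pi$ restricted to $P_n(K)$, and the formula $g\mapsto W(\mathrm{diag}(g,I_1))$ coincides with the Kirillov model $K(\pi,\psi)$ of Theorem \ref{kirgn}: a Whittaker function $W\in W(\pi,\psi)$ is determined on $P_n(K)$ by its values on $G_{n-1}(K)$, since the remaining degrees of freedom are fixed by the $\psi$-equivariance under $U_n(K)\subset N_n(K)$. Injectivity is exactly Theorem \ref{kirgn}.

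\medskip

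\textbf{Inductive step.} Assume the result for $k-1$, so that $\pi_{(k-2)}$ is realised in its $P_{n-k+2}(K)$-module of functions $\phi^W_{k-1}(g)=|\det(g)|_K^{(2-k)/2}W(\mathrm{diag}(g,I_{k-1}))$ on $G_{n-k+1}(K)$. By definition $\pi_{(k-1)}=\Phi^{-}(\pi_{(k-2)})$ is the quotient of $\pi_{(k-2)}$ by $W(U_{n-k+2},\psi)$, followed by the twist by $|\det|_K^{-1/2}$. For each $W\in W(\pi,\psi)$, form the candidate function $\phi^W_k(g)=|\det(g)|_K^{(1-k)/2}W(\mathrm{diag}(g,I_k))$ on $G_{n-k}(K)$, and let $T\colon W(\pi_{(k-2)},\psi)\to\{\phi^W_k\}$ be the map $\phi^W_{k-1}\mapsto \phi^W_k$ obtained by restricting $g$ from $G_{n-k+1}(K)$ to the block $\mathrm{diag}(h,1)\in G_{n-k+1}(K)$ with $h\in G_{n-k}(K)$. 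The shift in the power of $|\det|_K$ from $(2-k)/2$ to $(1-k)/2$ is exactly the twist $|\det|_K^{-1/2}$ built into $\Phi^{-}$. The key compatibility to verify is that $T$ kills $W(U_{n-k+2},\psi)$: for $u=\left(\begin{smallmatrix}I_{n-k+1}&v\\0&1\end{smallmatrix}\right)\in U_{n-k+2}(K)$, a direct conjugation computation using the $N_n(K)$-equivariance of $W$ shows that $\tau(u)\phi^W_{k-1}(\mathrm{diag}(h,1))=\psi(u)\,\phi^W_{k-1}(\mathrm{diag}(h,1))$ for every $h\in G_{n-k}(K)$, since after restriction to $h$ the last column of $v$ lies outside the relevant block and is absorbed by the Whittaker character along $N_n(K)$. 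The induced map $\overline T\colon \pi_{(k-1)}\to\{\phi^W_k\}$ is then $P_{n-k+1}(K)$-equivariant, the $U_{n-k+1}(K)$-action on the target being $\psi$-character multiplication and the $G_{n-k}(K)$-action being right translation (itself checked by absorbing conjugations into $N_n(K)$).

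\medskip

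\textbf{Main obstacle.} Surjectivity of $\overline T$ is built into the definition; the crux is injectivity, i.e.\ that vanishing of $\phi^W_k$ on $G_{n-k}(K)$ forces $W$ to lie in $\ker[(\Phi^{-})^{k-1}]$. This is where the filtration $0\subset\tau_n\subset\cdots\subset\tau_1=\tau$ with $\tau_j/\tau_{j+1}\simeq (\Phi^{+})^{j-1}\Psi^{+}(\tau^{(j)})$ and the relations $\Phi^{-}\Phi^{+}\simeq\mathrm{Id}$, $\Phi^{-}\Psi^{+}=0$ are used: a class in $\pi_{(k-1)}$ mapping to zero under the restriction comes from $\tau_k$, hence lives in the kernel of the iterated Jacquet functor by exactness of $\Phi^{-}$ and the inductive hypothesis applied to $\pi_{(k-2)}$. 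The remaining bookkeeping is a diagram chase combining the exact sequence in relation (3) of the functor identities with the fact that the Whittaker functional on $\pi$ is unique up to scalars, so that the model is faithful at each stage.
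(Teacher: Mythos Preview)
The paper does not give its own proof of this proposition; it is quoted from \cite{CP}, Proposition~1.2, and used as a black box. So there is nothing in the paper to compare your argument against.

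On the merits of your sketch: the inductive set-up and the verification that $T$ annihilates $W(U_{n-k+2},\psi)$ are correct --- the conjugation you describe does produce the factor $\psi(v_{n-k+1})=\psi(u)$ as required. The problem is your injectivity paragraph. The sentence ``a class in $\pi_{(k-1)}$ mapping to zero under the restriction comes from $\tau_k$'' confuses levels: the $\tau_j$ are submodules of $\tau_1=\pi|_{P_n}$, not of $\pi_{(k-2)}$ or $\pi_{(k-1)}$, and nothing in the functor identities $\Phi^-\Phi^+\simeq\mathrm{Id}$, $\Phi^-\Psi^+=0$, or the exact sequence in~(3) tells you that a function $\phi^W_{k-1}$ on $G_{n-k+1}(K)$ which happens to vanish on the subgroup $G_{n-k}(K)\hookrightarrow G_{n-k+1}(K)$ must lie in the $(U_{n-k+2},\psi)$-kernel. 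That implication is precisely the content you need, and it does not follow from a diagram chase in the derivative formalism alone.

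What is actually required (and what \cite{CP} supplies) is an explicit identification of $W(U_{n-k+2},\psi)$ inside the function model, via the Jacquet projection $\phi\mapsto \int_{U_0}\psi^{-1}(u)\,\tau(u)\phi\,du$ over large compact open $U_0\subset U_{n-k+2}(K)$, together with the smoothness of $W$ under right translation by small unipotents in $G_n(K)$. Concretely, one shows that for any $\phi^W_{k-1}$ and any $g\in G_{n-k+1}(K)$, this projection evaluated at $g$ depends only on the values $\phi^W_{k-1}(\mathrm{diag}(h,1))$ for $h$ ranging over $G_{n-k}(K)$; hence vanishing on that subgroup forces the projection to vanish, which is the standard criterion for lying in $W(U_{n-k+2},\psi)$. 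Your final paragraph gestures at the right objects but does not carry out this step.
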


In this model, Proposition 1.4. of \cite{CP} asserts that the subspace $$ W(\pi_{(k-1)},\psi)(U_{n-k+1},1)=\left\lbrace \pi(u)W - W | u \in U_{n-k+1} , W \in W(\pi_{(k-1)},\psi) \right\rbrace $$ is equal to $$ \aligned \lbrace g \mapsto |det(g)|_K^{(1-k)/2} W \left( \begin{array}{cc} g &  \\
  & I_k
\end{array} \right) \text{| there exists } N>0  \text{ such that si } \underset{i}{max} |g_{n-k,i}|<q^{-N},\\ \text{ then }  W \left( \begin{array}{cc} g &  \\
  & I_k
\end{array} \right) =0 \rbrace . \endaligned $$ 

Now let $\pi_{0}^{(k)}$ be an irreducible subrepresentation of the representation $\pi^{(k)}$ with model the quotient normalised representation $W(\pi_{(k-1)},\psi)/W(\pi_{(k-1)},\psi)(U_{n-k+1},1)$. We denote by $\tau_0$ the inverse image of $\pi_{0}^{(k)}$ in $\pi_{(k-1)}$, and $W(\tau_0,\psi)$ the corresponding subspace $W(\pi_{(k-1)},\psi)$ (which contains $W(\pi_{(k-1)},\psi)(U_{n-k+1},1)$).\\

The following proposition implies that $\pi_{0}^{(k)}$ is generic.

\begin{prop}{(\cite{CP}, Proposition 1.7)}\label{whitder}
Let $\pi$ be a generic representation of the group $G_n(K)$, let $k$ be an integer $1$ and $n-1$, and $\pi_{0}^{(k)}$ an irreducible subspace of $\pi^{(k)}$ with central character $\omega_{0}$. One defines $\tau_{0}$ as above, then for any function $F$ in $W(\tau_0,\psi)$, the function $$a\mapsto \omega_{0}^{-1}(a)|a|_K^{(k-n)/2} F\left( \begin{array}{cc} a I_{n-k} &  \\
  & I_k
\end{array} \right)$$ is constant when $a\in K$ is in a sufficiently small neighbourhood of zero. The linear form on $W(\tau_0,\psi)$ defined by $$F \mapsto \lim\limits_{a \to 0} \omega_{0}^{-1}(a)|a|_K^{(k-n)/2} F\left( \begin{array}{cc} a I_{n-k} &  \\
  & I_k
\end{array} \right),$$ factors through the quotient by $W(\pi_{(k-1)},\psi)(U_{n-k+1},1)$, and defines a (nonzero) Whittaker linear form on the Whittaker model of $\pi_{0}^{(k)}$. 
\end{prop}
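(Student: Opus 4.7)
The argument is structured around the exact sequence
\[
0 \longrightarrow W(\pi_{(k-1)},\psi)(U_{n-k+1},1) \longrightarrow W(\tau_0,\psi) \longrightarrow \pi_0^{(k)} \longrightarrow 0
\]
of $P_{n-k+1}(K)$-modules, where the functional $\phi_F(a):=\omega_0^{-1}(a)|a|_K^{(k-n)/2}F(\mathrm{diag}(aI_{n-k},I_k))$ must be shown to be locally constant near $0$, to factor through the quotient, and to produce a nonzero Whittaker form on $\pi_0^{(k)}$.

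\textbf{Step 1 (the kernel).} For $F'\in W(\pi_{(k-1)},\psi)(U_{n-k+1},1)$, the explicit characterisation recalled before the statement provides an integer $N=N(F')$ with $F'(\mathrm{diag}(g,I_k))=0$ whenever $\max_i|g_{n-k,i}|_K<q_K^{-N}$. Specialising to $g=aI_{n-k}$ (last row $(0,\dots,0,a)$) gives $F'(\mathrm{diag}(aI_{n-k},I_k))=0$ for $|a|_K<q_K^{-N}$, so $\phi_{F'}\equiv 0$ near $0$. This both gives the factorisation through the quotient and the existence (with limit $0$) on the kernel.

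\textbf{Step 2 (general $F$).} For $F\in W(\tau_0,\psi)$ with image $\bar F\in\pi_0^{(k)}$, the central character identity $\Psi^-(\pi_{(k-1)})(aI_{n-k})\bar F=\omega_0(a)\bar F$, together with the $|\det|^{-1/2}$-normalisation of $\Psi^-$, gives
\[
H_a(g)\;:=\;F(\mathrm{diag}(ag,I_k))-|a|_K^{(n-k)/2}\omega_0(a)F(\mathrm{diag}(g,I_k))\;\in\;W(\pi_{(k-1)},\psi)(U_{n-k+1},1)
\]
as a function of $g\in G_{n-k}(K)$. Applying Step~1 to $H_a$ at $g=cI_{n-k}$ produces $H_a(cI_{n-k})=0$ once $|c|_K<q_K^{-N(H_a)}$, which rearranges to the scaling identity $\phi_F(ac)=\phi_F(c)$ in that range. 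The smoothness of $F$ under a compact open $K_0\subset G_{n-k}(R_K)$ makes every $H_a$ right-$K_0$-invariant; one extracts a uniform bound $N(H_a)\le N_0$ for $a$ in a small punctured neighbourhood of $0$, and iterating the scaling identity with this uniform bound forces $\phi_F$ to be constant on a fixed punctured neighbourhood of $0$.

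\textbf{Step 3 (Whittaker property and non-vanishing).} Setting $\Lambda(\bar F):=\lim_{a\to 0}\phi_F(a)$, Step~1 yields a well-defined $\Lambda\in\mathrm{Hom}_{\mathbb{C}}(\pi_0^{(k)},\mathbb{C})$. To verify the $\psi$-Whittaker equivariance under $N_{n-k}(K)$, take $\nu\in N_{n-k}(K)$; the matrix identity $\mathrm{diag}(a\nu,I_k)=\mathrm{diag}(\nu,I_k)\mathrm{diag}(aI_{n-k},I_k)$ combined with the left $\psi$-equivariance of $W\in W(\pi,\psi)$ and the identification $\psi(\mathrm{diag}(\nu,I_k))=\psi(\nu)$ (from $\psi(n)=\psi(\sum n_{i,i+1})$) gives $F(a\nu)=\psi(\nu)F(aI_{n-k})$, and hence $\Lambda(\pi_0^{(k)}(\nu)\bar F)=\psi(\nu)\Lambda(\bar F)$. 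Non-vanishing of $\Lambda$ follows from Theorem~\ref{kirgn2}: since $K(\pi,\psi)$ contains $C_c^\infty(N_n(K)\backslash P_n(K),\psi)$, one selects $W\in W(\pi,\psi)$ whose restriction to the ray $a\mapsto\mathrm{diag}(aI_{n-k},I_k)$ equals a nonzero multiple of $\omega_0(a)|a|_K^{(n-k)/2}$ on a neighbourhood of $0$, producing an $F\in W(\tau_0,\psi)$ with $\Lambda(F)\neq 0$.

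\textbf{Main obstacle.} The subtle point is the uniformity in $a$ of the support threshold $N(H_a)$ in Step~2: a priori this threshold depends on $H_a$ and hence on $a$, so the scaling identity alone only yields pointwise (per-$a$) information. Exploiting the smoothness of $F$ to confine the whole family $\{H_a\}$ to a $K_0$-fixed subspace of $W(\pi_{(k-1)},\psi)(U_{n-k+1},1)$ where a single threshold suffices is what converts the scaling identity into genuine constancy of $\phi_F$ on a fixed neighbourhood of $0$.
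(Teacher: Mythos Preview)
The paper does not give its own proof of this proposition; it is quoted from \cite{CP} without argument. So there is no internal proof to compare against, and I will assess your attempt on its own terms.

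Steps 1 and the Whittaker-equivariance part of Step 3 are fine. Two points, however, require attention.

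\emph{Step 2, uniformity.} Your formulation of the key uniformity is not quite right. You claim that smoothness confines $\{H_a\}$ to a $K_0$-fixed subspace ``where a single threshold suffices'' and that one extracts a bound $N(H_a)\le N_0$ for $a$ in a punctured neighbourhood of $0$. But $K_0\subset G_{n-k}(K)$ is not compact open in $P_{n-k+1}(K)$, and $\pi_{(k-1)}$ is typically not admissible as a $P_{n-k+1}(K)$-module, so the $K_0$-fixed subspace need not be finite-dimensional and no single threshold is guaranteed. The correct argument uses only \emph{finitely many} values of $a$: fix a uniformiser $\varpi$ and a finite set $u_1,\dots,u_r$ of representatives of $R_K^\times$ modulo a subgroup on which both $\omega_0$ and the right action on $F$ are trivial. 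Each $H_\varpi,H_{u_1},\dots,H_{u_r}$ lies in the kernel, giving thresholds $N_0,\dots,N_r$; with $N=\max_i N_i$ one gets $\phi_F(\varpi c)=\phi_F(c)$ and $\phi_F(u_ic)=\phi_F(c)$ for $|c|<q^{-N}$, and since any two elements of $\{0<|c|<q^{-N}\}$ differ by a word in $\varpi,u_i$ and an element of the small subgroup, $\phi_F$ is constant there. Your ``main obstacle'' paragraph points at the right difficulty but resolves it incorrectly.

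\emph{Step 3, non-vanishing.} Your argument here is genuinely wrong. Functions in $C_c^\infty(N_n(K)\backslash P_n(K),\psi)$ have compact support modulo $N_n(K)$, and the ray $a\mapsto\mathrm{diag}(aI_{n-k},I_k)$ leaves every compact set of $N_n(K)\backslash P_n(K)$ as $a\to 0$. Hence any $W$ chosen from this subspace satisfies $W(\mathrm{diag}(aI_{n-k},I_k))=0$ for $|a|$ small, and the limit you compute is $0$, not nonzero. The non-vanishing of $\Lambda$ is a substantive point (it is what shows that $\pi_0^{(k)}$ is generic); it requires a different mechanism, for instance the Jacquet--Shalika asymptotic expansion of Whittaker functions along the torus, in which the coefficient attached to the central character $\omega_0$ is nonzero for some $W$ precisely because $\pi_0^{(k)}$ occurs in $\pi^{(k)}$. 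Theorem \ref{kirgn2} alone cannot supply this.
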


A consequence of this proposition is the following

\begin{cor}\label{whittder}{(\cite{CP}, corollary of Proposition 1.7)}
Let $\pi$ be a generic representation of the group $G_n(K)$, let $k$ be an integer between $1$ and $n-1$, and $\pi_{(0)}^{(n-k)}$ a sub-$G_{k}(K)$-module of $\pi^{(n-k)}$. We denote by $\tau_0$ the inverse image of $\pi_{(0)}^{(n-k)}$ in $\pi_{(n-k-1)}$ for the natural projection from $\pi_{(n-k-1)}$ on $\pi^{(n-k)}$. Then if $\psi$ is a non trivial character of $K$, for all $W_0$ in $W(\pi_{(0)}^{(n-k)},\psi)$, there exists $W$ in $W(\tau_0,\psi)$ and $\Phi_0 \in C_c^{\infty}(K^k)$, nonvanishing at zero such that
$$ W_0(g) \Phi_0(e_kg)= |det(g)|^{-(n-k)/2} W \left( \begin{array}{cccccccc}
h &  \\
  & I_{n-k}
 \end{array}\right)\Phi_0(e_kg).$$ 
Moreover, for all $W$ in $W(\tau_0,\psi)$ and for all $\Phi_0$ in $C_c^{\infty}(K^k)$, with support sufficiently small around zero, there exists $W_0$ in $W(\pi_{(0)}^{(n-k)},\psi)$ such that
$$  W \left( \begin{array}{cccccccc}
h &  \\
  & I_{n-k}
 \end{array}\right)\Phi_0(e_kg)= |det(g)|^{(n-k)/2} W_0(g) \Phi_0(e_kg).$$
\end{cor}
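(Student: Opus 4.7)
The plan is to deduce the corollary directly from Proposition \ref{whitder} by translating the Whittaker functional it produces into the associated Whittaker model.

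First I would apply Proposition \ref{whitder} with its parameter $k$ replaced by $n-k$. This yields, for $F\in W(\tau_{0},\psi)$ (identified with a Whittaker function of $\pi$ via the preceding model of \cite{CP}), a nonzero Whittaker linear form
$$\Lambda_{0}(\bar F)=\lim_{a\to 0}\omega_{0}^{-1}(a)|a|_{K}^{-(n-k)/2}F\begin{pmatrix} aI_{k} & \\ & I_{n-k}\end{pmatrix}$$
on $\pi_{(0)}^{(n-k)}$. The crucial quantitative content is that for each fixed $F$ this limit has stabilized as soon as $a$ lies in a sufficiently small neighborhood of $0$, the size of which depends only on $F$ thanks to the explicit description of $W(\pi_{(n-k-1)},\psi)(U_{k+1},1)$ recalled just before Proposition \ref{whitder}.

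Second I would express the Whittaker model of $\pi_{(0)}^{(n-k)}$ through $\Lambda_{0}$ by $W_{0}(g)=\Lambda_{0}(\pi_{(0)}^{(n-k)}(g)\bar F)$ for $g\in G_{k}(K)$. The action of $G_{k}(K)$ on $\pi_{(0)}^{(n-k)}$ is the right translation by $\begin{pmatrix} g & \\ & 1\end{pmatrix}\in P_{k+1}(K)$ on the model of $\pi_{(n-k-1)}$, twisted by $|\det g|_{K}^{-1/2}$ coming from $\Psi^{-}$. Using that $aI_{k}$ is central to commute the $a$ past $g$ and combining the normalizations, a short computation shows that for $g$ whose last row $e_{k}g$ lies in a sufficiently small neighborhood of $0$, the stabilization already holds at $a=1$ and one obtains
$$W_{0}(g)=|\det g|_{K}^{-(n-k)/2}W\begin{pmatrix} g & \\ & I_{n-k}\end{pmatrix}.$$
Multiplying both sides by a cutoff $\Phi_{0}(e_{k}g)$ supported in the stabilization neighborhood yields both formulas of the corollary. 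In the first direction, given $W_{0}\in W(\pi_{(0)}^{(n-k)},\psi)$ I would lift it to some $W\in W(\tau_{0},\psi)$ via the natural surjection $W(\tau_{0},\psi)\twoheadrightarrow W(\pi_{(0)}^{(n-k)},\psi)$ and choose $\Phi_{0}$ nonvanishing at $0$ with sufficiently small support. In the converse direction, given $W$ and sufficiently small $\Phi_{0}$, one simply takes $W_{0}$ to be the image of the class of $W$ under this surjection.

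The main delicacy is bookkeeping of the normalization factors, namely the twist $|\det h|_{K}^{-(n-k-1)/2}$ implicit in the model of $\pi_{(n-k-1)}$, the factor $|\det g|_{K}^{-1/2}$ from $\Psi^{-}$, and the weight $|a|_{K}^{-(n-k)/2}$ appearing in the limit, which must collapse into the single exponent $(n-k)/2$ on the right-hand side. One must also check that the stabilization neighborhood can be taken uniformly in $g$ within the support of $\Phi_{0}(e_{k}\cdot)$; this is transparent since the vanishing criterion characterizing $W(\pi_{(n-k-1)},\psi)(U_{k+1},1)$ depends only on the last row.
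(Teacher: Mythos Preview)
The paper does not supply its own proof of this corollary; it is simply quoted from \cite{CP} as the corollary of Proposition~1.7 there, immediately after Proposition~\ref{whitder}, with no argument given. Your proposal is precisely the intended derivation: pass from the Whittaker functional $\Lambda_0$ of Proposition~\ref{whitder} to the Whittaker model via $W_0(g)=\Lambda_0(\pi_{(0)}^{(n-k)}(g)\bar F)$, commute the central element $aI_k$ past $g$, and use that the stabilization of Proposition~\ref{whitder} is already attained at $a=1$ once the last row $e_kg$ lies in a small enough neighborhood of $0$; multiplying by a cutoff $\Phi_0(e_kg)$ then gives both directions. This is exactly how the corollary is obtained in \cite{CP}.

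One minor bookkeeping slip worth flagging: the direct substitution $k\mapsto n-k$ in the weight $|a|_K^{(k-n)/2}$ of Proposition~\ref{whitder} yields $|a|_K^{-k/2}$, not $|a|_K^{-(n-k)/2}$ as you wrote. The exponent $(n-k)/2$ in the corollary emerges only after combining this with the twist $|\det|_K^{(1-(n-k))/2}$ built into the model of $W(\pi_{(n-k-1)},\psi)$ and the factor $|\det|_K^{-1/2}$ from $\Psi^-$, together with the relation $\omega_0(a)=c_{\pi_{(0)}^{(n-k)}}(a)$. You yourself identify this collapse of normalizations as the main delicacy, so this is a slip in the displayed formula rather than a gap in the argument.
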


\subsection{Functional equations for representations of Whittaker type}

We first recall that an Euler factor is a function of the complex variable $s$, of the form $1/P(q_K^{-s})$ (or $1/P(q_F^{-s})$ later in the framework of Asai $L$ functions) with $P$ in $\mathbb{C}[X]$ and $P(0)=1$.\\

We now summerize the results of Section 2 of \cite{JPS}.\\
Let $\pi$ and $\pi'$ be two representations of Whittaker type of the group $G_n(K)$,
  and $\psi$ a non trivial character of $K$. Let  $W$ belong $W(\pi,\psi)$, $W'$ belong to $W(\pi',\psi^{-1})$,
 and $\phi$ be in $C_c^{\infty}(K^n)$. We denote by $\eta_n$ the row vector with $n$ entries $(0,\dots,0,1)$. Then, there exists a real number $r_{\pi,\pi'}$ independant of $(W,W',\phi)$, such that the following integral is absolutely convergent whenever $Re(s)\geq r_{\pi,\pi'}$: $$I(W,W',\phi,s)=\int_{G_n(K)/N_n(K)}W(g)W'(g)\phi(\eta_n g)|det(g)|_K^s dg.$$

The integrals $I(W,W',\phi,s)$ 
 define for $Re(s)$ large enough rational functions in $q_K^s$ and $q_K^{-s}$, which have a Laurent series developpement in $q_K^{-s}$. Their analytic extensions define meromorphic functions on $\mathbb{C}$. The $\mathbb{C}$-vector space generated by the functions $I(W,W',\phi,s)$
 for $(W,W',\phi)$ in $W(\pi,\psi)\times W(\pi',\psi^{-1})\times C_c^{\infty}(K^n)$
  is actually a fractional ideal of $\mathbb{C}[q_K^s,q_K^{-s}]$, having a unique generator which is an Euler facteur. We denote by $L(\pi \times \pi',s)$
 this generator which is independant of $\psi$. \\

We denote by $w_n$ the element $\left( \begin{array}{ccc}
 &  & 1 \\
  & \adots & \\ 
 1 &  &  
 \end{array}\right)$ of $G_n(F)$.\\
	If $\pi$ is a representation of Whittaker type of the group $G_n(K)$, and if $W$ belongs to $W(\pi,\psi)$, then the function $\tilde{W}$ sending $g$ in $ G_n (K)$ to $W(w_n {}^t\! g^{-1})$ belongs to $W(\pi^{\vee},\psi^{-1})$. If $\psi$ is a non trivial character of a finite extension $E$ over $F$, it defines naturally a character of $E^n$ that we denote by $\psi$ again, we denote by $d_{\psi} x$ the autodual Haar measure on $E^n$. If $\phi$ belongs to $C_c^{\infty}(E^n)$, we denote by $\widehat{\phi}^{\psi}$ or by $\widehat{\phi}$ if there is no ambiguity, the Fourier transform $ y\mapsto \int_{x\in E^n}\phi(x)\psi(xy)d_{\psi} x$.  

\begin{prop}[Functional equations for representations of Whittaker type]\label{eqgn}

Let $\psi$ be a non trivial character of $(K,+)$, $\pi$ and $\pi'$ two representations of Whittaker type of the group $G_n(K)$.
 If $(W,W',\phi)$ belongs to $W(\pi,\psi)\times W(\pi',\psi^{-1})\times C_c^{\infty}(K^n)$, there exists a factor $\epsilon(\pi \times \pi', \psi, s)$, 
 complex multiple of a (maybe negative) power of $q_K^s$, such that the following functional equation is satisfied:
$$\frac{ I(\tilde{W}, \tilde{W'},\widehat{\phi}, 1 -s)}{ L(\pi^{\vee}\times {\pi'}^{\vee},1-s)}  = c_{\pi'}(-1)^{n-1}\epsilon(\pi \times \pi', \psi, s) \frac{ I(W, W',\phi, s)}{ L(\pi \times {\pi'} ,s)}.$$ 

We then put $$\gamma(\pi \times \pi',\psi,s)= c_{\pi'}(-1)^{n-1}\epsilon(\pi \times \pi', \psi, s)\frac{L(\pi^{\vee}\times {\pi'}^{\vee},1-s)}{L(\pi \times {\pi'} ,s)}.$$ 
 \end{prop}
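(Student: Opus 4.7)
The plan is to follow the method of Jacquet, Piatetski-Shapiro and Shalika from Section 2 of \cite{JPS}, which reduces the functional equation to a local multiplicity-one statement for quasi-invariant trilinear forms.

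First I would introduce the normalized trilinear form
$$\Psi(W,W',\phi;s)=\frac{I(W,W',\phi,s)}{L(\pi\times\pi',s)}.$$
By the very definition of $L(\pi\times\pi',s)$ as the generator of the fractional ideal spanned by the $I(W,W',\phi,s)$, this quantity lies in $\mathbb{C}[q_K^s,q_K^{-s}]$. The change of variable $g\mapsto gh$ in the defining integral gives, for $h\in G_n(K)$,
$$\Psi(\pi(h)W,\pi'(h)W',R(h)\phi;s)=|\det h|_K^{-s}\Psi(W,W',\phi;s),$$
so $\Psi$ defines a $G_n(K)$-quasi-invariant trilinear form of character $|\det|_K^{-s}$ on $W(\pi,\psi)\times W(\pi',\psi^{-1})\times C_c^\infty(K^n)$, with $G_n(K)$ acting diagonally and $R$ denoting right translation.

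Next I would study the dual side
$$\widetilde\Psi(W,W',\phi;s)=\frac{I(\widetilde W,\widetilde{W'},\widehat\phi,1-s)}{L(\pi^\vee\times(\pi')^\vee,1-s)}$$
and check that it transforms under $G_n(K)$ by the same character. This requires combining the Fourier identity $\widehat{R(h)\phi}(y)=|\det h|_K^{-1}\widehat\phi(y\,{}^th^{-1})$ with the relation $\widetilde{\pi(h)W}(g)=\widetilde W(g w_n\,{}^th^{-1}w_n^{-1})$ and a further change of variable in the integral. The factor $c_{\pi'}(-1)^{n-1}$ arises precisely when reconciling the central characters introduced by these manipulations on the two sides.

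The key step, and the main obstacle, is the multiplicity-one theorem of \cite{JPS}: the space of trilinear forms on $W(\pi,\psi)\times W(\pi',\psi^{-1})\times C_c^\infty(K^n)$ which are quasi-invariant under the diagonal $G_n(K)$-action with character $|\det|_K^{-s}$ is at most one-dimensional over the field $\mathbb{C}(q_K^{-s})$. This is proved by Frobenius reciprocity combined with an orbit analysis in the Bruhat decomposition, and exploits the uniqueness of the Whittaker functional on each factor. The uniqueness then forces $\widetilde\Psi$ and $\Psi$ to be proportional, producing a meromorphic scalar $\epsilon(\pi\times\pi',\psi,s)$ and the claimed functional equation.

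Finally, to see that $\epsilon$ is a complex multiple of a power of $q_K^s$, I would apply the functional equation twice. Using $\widetilde{\widetilde W}=W$ and $\widehat{\widehat\phi}(x)=\phi(-x)$, and comparing with the analogous equation for $\pi^\vee\times(\pi')^\vee$ with character $\psi^{-1}$, one obtains a relation of the form $\epsilon(\pi\times\pi',\psi,s)\,\epsilon(\pi^\vee\times(\pi')^\vee,\psi^{-1},1-s)=1$ up to a nonzero constant. This forces $\epsilon$ to be a unit in $\mathbb{C}[q_K^s,q_K^{-s}]$, hence of the stated form.
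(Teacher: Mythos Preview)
The paper does not actually prove this proposition: it is stated as a summary of results from Section~2 of \cite{JPS}, with no argument given. Your sketch correctly outlines the JPS method that underlies the cited result --- normalizing by the $L$-factor, establishing the same quasi-invariance on both sides, invoking local multiplicity one for such trilinear forms, and deducing the monomial shape of $\epsilon$ by iterating the functional equation. So your proposal is consistent with what the paper relies on, and nothing is missing relative to the paper since the paper offers no proof of its own to compare against.
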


We now recall the notion of exceptional pole introduced in \cite{CP}.\\
Let  $\pi$ and $\pi'$ be two representations of Whittaker type of the group $G_n(K)$, and let $s_0$ be a pole of order $d$ of $L(\pi \times \pi',s)$, the Laurent developement $$ I(W, W',\phi, s)=T_{s_0}(W,W',\phi)/(q_K^s - q_K^{s_0})^d+ \ higher \ order \ terms $$
defines a linear form on $W(\pi,\psi)\times W(\pi',\psi^{-1})\times C_c^{\infty}(K^n)$ satisfying the quasi-invariance: $$T_{s_0}(\pi(g)W,\pi'(g)W',\rho(g)\phi,s)=|det(g)|_K^{-s_0}T_{s_0}( W, W', \phi,s).$$

\begin{df}\label{polexpaire}
In the preceeding situation, we say that a pole $s_0$ of $L(\pi \times \pi',s)$ is exceptional if the trilinear form $T_{s_0}$ vanishes on $W(\pi,\psi)\times W(\pi',\psi^{-1})\times C_c^{\infty}(K^n -\left\lbrace 0\right\rbrace) $.
\end{df}

It is shown in \cite{CP} that if $\pi$ and $\pi'$ are irreducible, and if $s_0$ is an exceptional pole of $L(\pi \times \pi',s)$, then $\pi^{\vee}=|\ |_K^{s_0} \pi'$. A proof analoguous to that of Theorem 2.1 of \cite{M2}, appealing to Theorem A of \cite{Ber} instead  of Proposition 1.1 of \cite{M2}, would show the converse of this fact.

\begin{thm}\label{distsplit}
Let  $\pi$ and $\pi'$ be two generic representations of the group $G_n(K)$, then the function $L(\pi \times \pi',s)$ admits an exceptional at $s_0$ if and only if we have $\pi^{\vee}=|\ |_K^{s_0} \pi'$. 
\end{thm}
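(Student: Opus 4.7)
The forward direction is the recalled result of Cogdell-Piatetski-Shapiro: an exceptional pole at $s_0$ makes $T_{s_0}$ factor through the evaluation $\phi\mapsto\phi(0)$ (by its vanishing on $\phi\in C_c^{\infty}(K^n-\left\lbrace 0\right\rbrace)$), producing a nonzero bilinear form on $W(\pi,\psi)\otimes W(\pi',\psi^{-1})$ transforming by $|\det g|_K^{-s_0}$ under the diagonal action of $G_n(K)$; irreducibility of $\pi$ and $\pi'$ together with Schur's lemma then yield $\pi^{\vee}\cong|\ |_K^{s_0}\pi'$.

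For the converse, assuming $\pi^{\vee}\cong|\ |_K^{s_0}\pi'$, I would first obtain from the canonical pairing on $\pi\otimes\pi^{\vee}$ a nonzero $G_n(K)$-quasi-invariant bilinear form $B$ on $W(\pi,\psi)\otimes W(\pi',\psi^{-1})$ satisfying
$$B(\pi(g)W,\pi'(g)W')=|\det g|_K^{-s_0}B(W,W'),$$
and then introduce the trilinear form
$$T(W,W',\phi)=B(W,W')\,\phi(0)$$
on $W(\pi,\psi)\times W(\pi',\psi^{-1})\times C_c^{\infty}(K^n)$. Since $\rho(g)\phi(0)=\phi(0\cdot g)=\phi(0)$, this $T$ is nonzero, satisfies the same quasi-invariance as any leading Laurent coefficient $T_{s_0}$ of $I(W,W',\phi,s)$ at $s_0$, and visibly vanishes when $\phi\in C_c^{\infty}(K^n-\left\lbrace 0\right\rbrace)$, so that once $T$ is realised as $T_{s_0}$ up to a nonzero scalar, both the existence of the pole and its exceptionality follow at once.

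The crux of the argument is thus to identify $T$, up to scalar, with the actual $T_{s_0}$ attached to the family $I(W,W',\phi,s)$. Following the strategy of Theorem~2.1 of \cite{M2} but substituting Theorem~A of \cite{Ber} for Proposition~1.1 of \cite{M2}, one analyses the $G_n(K)$-orbit decomposition $K^n=\left\lbrace 0\right\rbrace\sqcup(K^n-\left\lbrace 0\right\rbrace)$ and applies Bernstein's theorem to control the space of trilinear forms on $W(\pi,\psi)\otimes W(\pi',\psi^{-1})\otimes C_c^{\infty}(K^n)$ having quasi-invariance character $|\det|_K^{-s_0}$: the contribution of the open orbit $K^n-\left\lbrace 0\right\rbrace$ is responsible for the regular part of $I$ at $s_0$, while the closed-orbit contribution $T$ can be realised through the meromorphic family $I$ only as a true residue at $s_0$. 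Since $T$ is nonzero by construction, this forces $L(\pi\times\pi',s)$ to have a pole at $s_0$ whose leading coefficient $T_{s_0}$ is a nonzero scalar multiple of $T$, giving the exceptional pole. The main obstacle is exactly this Bernstein-theoretic step: one has to make rigorous, via Theorem~A of \cite{Ber} and the functional equation and meromorphic continuation recalled in Proposition~\ref{eqgn}, that the closed-orbit form $T$ can be attained only as a residue of $I/L$ and not via a regular specialization at $s_0$, so that the abstract existence of $B$ is converted into an actual pole of the Rankin-Selberg family.
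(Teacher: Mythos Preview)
Your proposal aligns with the paper's own treatment: the paper does not give a detailed proof but states that the forward direction is in \cite{CP} and that the converse follows by the method of Theorem~2.1 of \cite{M2} with Theorem~A of \cite{Ber} substituted for Proposition~1.1 of \cite{M2}, which is exactly the route you outline. Your sketch of the converse (constructing the closed-orbit trilinear form $T(W,W',\phi)=B(W,W')\phi(0)$ and invoking Bernstein's theorem to realise it as the leading Laurent coefficient of the Rankin--Selberg family) is precisely the mechanism the paper is pointing to, so there is no discrepancy in approach.
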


In the same context, we call $L_{ex}(\pi\times \pi',s)$ the product of the $1/(1-q^{(s_0-s)})^d$ when $s_0$ describes the set of the exceptional poles of $L(\pi \times \pi',s)$, where $d$ is the ordrer of $s_0$ in $L(\pi \times \pi',s)$. We denote by $L_{Rad(ex)}(\pi\times \pi',s)$ the product of the $1/(1-q^{(s_0-s)})$ when $s_0$ describes the set of the exceptional poles of $L(\pi \times \pi',s)$.

\subsection{Classical results and conjectures about distinguished representations}\label{sectconj}

We first define the notion of distinction.

\begin{df}\label{dfdist}
Let $\mu$ be a character of $F^*$, a representation $(\pi,V)$ of the group $G_n(K)$ is said to be $\mu$-distinguished if there exists on $V$ a nonzero linear form $L$, satisfying the relation $L(\pi(g)v)=\mu(det(g))L(v)$ for $g$ in $G_n(F)$ and $v$ in $V$. We say that $\pi$ is distinguished if the character $\mu$ is trivial.
\end{df}

If $\pi$ is a homomorphism from a group $G$ to another group, and $\phi$ a homomorphism from $G$ to itself, we denote by $\pi^{\phi}$ the homomorphism $\pi \circ \phi$. The irreducible distinguished representations have two fondamental properties.

\begin{prop}{(\cite{F2}, proposition 11)}
Let $\pi$ be an irreducible distinguished representation of the group $G_n(K)$, then the dimension of the space of $G_n(F)$-invariant linear forms on the space of $\pi$ is $1$. 
\end{prop}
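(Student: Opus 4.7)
The strategy follows the Gelfand--Kazhdan method. Introduce the anti-involution $\theta$ of $G_n(K)$ given by $\theta(g)={}^t g$, which stabilizes $G_n(F)$ setwise. Recall the classical Gelfand--Kazhdan theorem for the group $G_n$: setting $\iota(g)={}^t g^{-1}$, one has $\pi\circ\iota\simeq \pi^{\vee}$ for every irreducible $\pi$, so that $\pi\circ\theta$ is the twist of $\pi^{\vee}$ by $g\mapsto g^{-1}$.

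The plan is to reduce the statement to the following distributional claim: every distribution on $G_n(K)$ invariant under the two-sided action of $G_n(F)\times G_n(F)$ is preserved by $\theta$. Granting this, the Gelfand--Kazhdan argument runs as follows. From two nonzero $G_n(F)$-invariant linear forms $\ell_1,\ell_2$ on the space $V$ of $\pi$, and from vectors $v\in V$, $\tilde v\in\tilde V$ with nontrivial pairing, one builds two bi-$G_n(F)$-invariant distributions on $G_n(K)$ via the matrix coefficients $f\mapsto \ell_i(\pi(f)v)$ and a dual construction. Their $\theta$-invariance, combined with the identification of $\pi\circ\theta$ with a twist of $\pi^{\vee}$, converts into a symmetry relation which forces $\ell_1$ and $\ell_2$ to be proportional.

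The heart of the argument is therefore the distributional claim. Parametrize the orbits of the two-sided $G_n(F)\times G_n(F)$-action on $G_n(K)$ via the map $g\mapsto g\sigma(g)^{-1}$, which by Hilbert 90 identifies $G_n(F)\backslash G_n(K)/G_n(F)$ with the set of $G_n(F)$-conjugacy classes on the symmetric variety $X=\{x\in G_n(K):x\sigma(x)=1\}$. Check that $\theta$ preserves each $G_n(F)$-conjugation orbit on $X$, essentially because $\theta$ commutes with $\sigma$. Then apply Bernstein's localisation principle, together with a Harish-Chandra style descent on stabilisers, to upgrade orbitwise $\theta$-invariance of bi-invariant distributions into global $\theta$-invariance on $G_n(K)$.

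The main obstacle is this last descent onto the non-regular strata of $X$, where the stabilisers of points are inner forms of products of smaller $GL_m(F)$'s. To conclude one must verify that on each such stratum no nonzero bi-$G_n(F)$-equivariant distribution can be $\theta$-antisymmetric; this reduces, via the slice theorem on the normal bundle to the orbit, to the analogous multiplicity one assertion in strictly smaller rank, making the entire proof an induction on $n$ whose base case is the trivial rank one situation.
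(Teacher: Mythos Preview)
The paper does not supply its own proof of this proposition; it merely records the statement and cites Flicker \cite{F2}, Proposition 11, as the source. There is therefore no argument in the paper to compare against.

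Your sketch follows the Gelfand--Kazhdan method, which is indeed the approach taken in Flicker's original paper. The overall architecture is correct: one reduces multiplicity one to the assertion that every bi-$G_n(F)$-invariant distribution on $G_n(K)$ is invariant under the anti-involution $\theta(g)={}^t g$, and then proves this distributional statement by analysing the double cosets $G_n(F)\backslash G_n(K)/G_n(F)$ via the map $g\mapsto g\sigma(g)^{-1}$ onto the symmetric variety $X=\{x:x\sigma(x)=1\}$. A few points in your outline deserve tightening. First, the claim that $\theta$ preserves each $G_n(F)$-conjugacy class on $X$ is not automatic from the fact that $\theta$ commutes with $\sigma$; one has to argue, for instance via rational canonical form over $F$, that ${}^t x$ is $G_n(F)$-conjugate to $x$ for $x\in X$. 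Second, in the actual execution (as in Flicker) the passage from orbitwise invariance to global invariance does not require an inductive descent through smaller $GL_m$'s in the way you describe; rather one shows directly, via a Frobenius-type argument on each orbit, that the modular characters match so that no $\theta$-skew distribution can be supported there. Your inductive picture is not wrong in spirit, but it is more elaborate than what is needed.
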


\begin{prop}{(\cite{F2}, proposition 12)}\label{autodual}
Let $\pi$ be an irreducible distinguished representation of the group $G_n(K)$, then $\pi^{\vee}$ is isomorphic to $\pi^{\sigma}$.
\end{prop}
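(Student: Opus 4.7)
The plan is to construct a nonzero element of $\mathrm{Hom}_{G_n(K)}(\pi^{\sigma},\pi^{\vee})$. Both representations being irreducible (the former because $\sigma$ is an automorphism of the ambient group, the latter as the smooth contragredient of an irreducible admissible representation), Schur's lemma will automatically upgrade any such morphism to the desired isomorphism. The first observation, which costs nothing, is that the distinguishing form $L$ on $V$ is also a nonzero $G_n(F)$-invariant linear form on $\pi^{\sigma}$, since $\sigma$ acts trivially on $G_n(F)$; consequently the preceding uniqueness statement applies to $\pi^{\sigma}$ as well.

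The natural route is via the Gelfand--Kazhdan theorem for $\mathrm{GL}_n$ over a $p$-adic field: for every irreducible smooth representation $\tau$ of $G_n(K)$, the automorphism $\iota\colon g\mapsto {}^t g^{-1}$ satisfies $\tau\circ\iota\simeq \tau^{\vee}$. Applied to $\pi$, this reduces the conclusion $\pi^{\vee}\simeq\pi^{\sigma}$ to $\pi\circ\iota\simeq\pi^{\sigma}$, i.e.\ to $\pi\simeq\pi\circ(\sigma\iota)$ where $\sigma\iota$ is the automorphism $g\mapsto \sigma({}^t g^{-1})$ of $G_n(K)$. Since an irreducible smooth representation is determined by its distribution character $\Theta_{\pi}$, and since every element of $G_n(K)$ is conjugate to its transpose (so characters are transpose-invariant), this last condition is equivalent to the identity $\Theta_{\pi}(g)=\Theta_{\pi}(\sigma(g)^{-1})$ on regular semisimple elements, i.e.\ to the invariance of $\Theta_{\pi}$ under the anti-involution $\theta\colon g\mapsto \sigma(g)^{-1}$ of $G_n(K)$.

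The heart of the proof is then the establishment of this character identity, which I would obtain via a Gelfand-trick argument tailored to the symmetric pair $(G_n(K),G_n(F))$. The anti-involution $\theta$ restricts to $g\mapsto g^{-1}$ on $G_n(F)$, and its crucial geometric property is that $\theta(g)\in G_n(F)\,g\,G_n(F)$ for every $g\in G_n(K)$, i.e.\ every $G_n(F)$-bi-orbit is $\theta$-stable. This is the same anti-involution that underlies the uniqueness statement of the preceding proposition; establishing its orbit-stability is the main technical obstacle and requires a careful analysis of the orbit structure of $G_n(F)\backslash G_n(K)/G_n(F)$. Once this is settled, a standard distribution-theoretic manipulation, applied to the bi-$G_n(F)$-invariant distribution built from $L$ and the matrix coefficients of $\pi$, forces $\Theta_{\pi}$ to be $\theta$-invariant, and the chain of reductions above closes the argument.
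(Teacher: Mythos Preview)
The paper does not supply its own proof of this proposition: it is quoted verbatim from \cite{F2}, Proposition~12, and no argument is given here. So there is no ``paper's proof'' to compare against beyond Flicker's original.

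Your outline is essentially Flicker's strategy: choose an anti-involution $\theta$ of $G_n(K)$ preserving $G_n(F)$ and stabilising every $G_n(F)$-double coset, deduce that the relevant bi-invariant distributions are $\theta$-invariant, and read off the desired symmetry of the character. Your choice $\theta(g)=\sigma(g)^{-1}$ is the right one (the alternative $g\mapsto{}^t\sigma(g)$ already fails for $n=1$), and you are correct that the stability of double cosets under $\theta$ is the genuine technical core; this is exactly the lemma Flicker establishes. One small redundancy: the preliminary reduction through the Gelfand--Kazhdan isomorphism $\pi^{\vee}\simeq\pi\circ\iota$ is not needed, since once bi-$G_n(F)$-invariant distributions are known to be $\theta$-invariant, the relation $\Theta_{\pi}(g)=\Theta_{\pi}(\sigma(g)^{-1})=\Theta_{\pi^{\vee}}(\sigma(g))=\Theta_{\pi^{\sigma}}^{\vee}(g)$ drops out directly.

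Where your sketch is thinnest is the last step. The phrase ``standard distribution-theoretic manipulation'' hides a real point: to pass from $\theta$-invariance of bi-$H$-invariant distributions to $\theta$-invariance of $\Theta_{\pi}$, one must build a specific bi-$H$-invariant distribution from $\pi$ whose $\theta$-invariance forces the character identity. This requires producing an $H$-invariant form on $\pi^{\vee}$ as well (to form the relative character $f\mapsto L'(\pi^{\vee}(f)L)$, or equivalently a spherical matrix coefficient), and explaining why that relative character determines $\Theta_{\pi}$ up to the symmetry in question. None of this is circular, but it is not a one-liner either; Flicker carries it out carefully, and your write-up should at least name the distribution and the mechanism rather than gesturing at it.
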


We denote by $\eta_{K/F}$ the order two $2$ character of $F^*$, trivial on the norms of the multiplicative group  $K^*$. Kable showed in the discrete series case, some sort of converse to the preceeding theorem.

\begin{prop}\label{distdiscr}
Let $\pi$ be a discrete series representation of the group $G_n(K)$ such that the dual representation $\pi^{\vee}$ is isomorphic to $\pi^{\sigma}$, then $\pi$ is distinguished or $\eta_{K/F}$-distinguished. A representation of the discrete series of the group $G_n(K)$ cannot be distinguished and $\eta_{K/F}$-distinguished at the same time.
\end{prop}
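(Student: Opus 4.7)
The plan is to follow Kable's argument (\cite{K}), whose heart is the identity
$$L(s,\pi\times\pi^{\sigma})=L_F^K(\pi,s)\cdot L_F^{K}(\pi,\eta_{K/F},s) \quad (\star)$$
where $L_F^{K}(\pi,\eta_{K/F},s)$ denotes the Asai L-function twisted by $\eta_{K/F}$. This factorization is most transparent on the Galois side: if $\rho$ is the Langlands parameter of $\pi$, then the relation $M_{W'_K}^{W'_F}(\rho)|_{W'_K}\simeq \rho\otimes\rho^{\sigma}$ together with Frobenius reciprocity yields the $W'_F$-decomposition
$$\mathrm{Ind}_{W'_K}^{W'_F}(\rho\otimes\rho^{\sigma})\simeq M_{W'_K}^{W'_F}(\rho)\oplus \bigl(M_{W'_K}^{W'_F}(\rho)\otimes\eta_{K/F}\bigr);$$
inductivity of Artin L-factors, combined with the Anandavardhanan--Rajan equality $L_F^K=L_F^{K,W}$ for discrete series recalled in the Introduction, transports this identity to the automorphic side to give $(\star)$.

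Once $(\star)$ is in hand the argument becomes pole-counting. Because $\pi^{\vee}\simeq\pi^{\sigma}$ and $\pi$ lies in the discrete series, the left-hand side of $(\star)$ equals $L(s,\pi\times\pi^{\vee})$, which by the theory of Jacquet--Piatetski-Shapiro--Shalika has a \emph{simple} pole at $s=0$. The two factors on the right of $(\star)$ are Euler factors, hence nowhere vanishing, so at least one of them---and by simplicity exactly one---carries that pole. Kable's theorem (\cite{K}) then asserts that a pole at $s=0$ of $L_F^K(\pi,s)$ implies that $\pi$ is distinguished, and its twisted analogue (or equivalently an identical argument using $\eta_{K/F}$-period integrals) gives that a pole of $L_F^{K}(\pi,\eta_{K/F},s)$ implies that $\pi$ is $\eta_{K/F}$-distinguished. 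This yields the first assertion.

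For the dichotomy, suppose $\pi$ were simultaneously distinguished and $\eta_{K/F}$-distinguished. The Anandavardhanan--Kable--Tandon converse \cite{AKT} would then force both factors on the right of $(\star)$ to have a pole at $s=0$, giving a pole of order at least $2$ on the left, contradicting the simplicity of the pole of $L(s,\pi\times\pi^{\vee})$ there. I expect the main obstacle to be the factorization $(\star)$ itself: the Galois-side decomposition is elementary, but transferring it to Rankin--Selberg integrals rests on the nontrivial identification of the two flavours of Asai L-function in the discrete series case; the remainder of the proof is essentially bookkeeping with poles.
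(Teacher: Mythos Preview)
The paper does not supply its own proof here; it simply attributes the result to Kable \cite{K} (with \cite{AT} for the case of $G_2$), noting that the general argument is global. Your sketch is exactly Kable's strategy---factor $L(s,\pi\times\pi^{\sigma})$ as in $(\star)$ and count poles at $s=0$---so in that sense you and the paper agree. The one genuine difference is how $(\star)$ is obtained: Kable establishes it directly by a global argument (his paper predates \cite{AR}), whereas you deduce it on the Galois side and then transport it via the Anandavardhanan--Rajan and Henniart identifications recalled in the introduction. Both routes are valid; yours is cleaner but leans on heavier inputs, and you should check that no circularity is introduced (the proofs in \cite{AR} and \cite{He} do not rely on this dichotomy). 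Two small remarks: for the twisted factor you also need the analogue of the AR equality for $L_F^K(\pi,\eta_{K/F},s)$, which follows by applying the untwisted statement to $\pi\otimes\tilde\eta$ for any extension $\tilde\eta$ of $\eta_{K/F}$ to $K^*$; and for the dichotomy you may cite Proposition \ref{K} (Kable's Theorem 4) directly for both implications in the discrete series case, rather than invoking \cite{AKT} separately for the converse.
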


We find a local proof in \cite{AT} (Proposition 3.1) for the case of the group $G_2(K)$, the proof in the general case uses global methods, and is due to Kable (cf. \cite{K}).\\

A consequence of Proposition \ref{autodual} and Theorem \ref{classgen} is the following. If $\pi=\Delta_1 \times \dots \times \Delta_t$ is a generic representation as in the statement of Theorem \ref{classgen} and if it is distinguished, then there exists a reordering of the ${\Delta _i}$'s, and an integer $r$ between $1$ and $t/2$, such that $\Delta_{i+1}^{\sigma} = \Delta_i^{\vee} $ for $i=1,3,..,2r-1$, and $\Delta_{i}^{\sigma} = \Delta_i ^{\vee}$ for $ i > 2r$. According to Proposition \ref{distdiscr}, this means that there exists a reordering of the ${\Delta _i}$'s, and an integer $r$ between $1$ and $t/2$, such that $\Delta_{i+1}^{\sigma} = \Delta_i^{\vee} $ for $i=1,3,..,2r-1$, and such that $\Delta_{i}$ is distinguished or $\eta_{K/F}$-distinguished for $i > 2r$. The following statement is actually conjectured (cf. \cite{F3}):

\begin{conj}\label{distgen} 
Let $\pi=\Delta_1 \times \dots \times \Delta_t$ a generic representation of the group $G_n(K)$ as in Theorem \ref{classgen}, it is distinguished if and only if il there is a reordering of the ${\Delta _i}$'s, and an integer $r$ between $1$ and $t/2$, such that $\Delta_{i+1}^{\sigma} = \Delta_i^{\vee} $ for $i=1,3,..,2r-1$, and $\Delta_{i}$ is distinguished for $i > 2r$.
\end{conj}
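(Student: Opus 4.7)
The plan is to attack the two implications of the conjecture separately; they have quite different character. The ``if'' direction is an existence statement and should be accessible by constructing a $G_n(F)$-invariant linear form on $\pi$ by parabolic induction. The ``only if'' direction is a rigidity statement: from the mere existence of one invariant form on $\pi$, one must extract structural information about each inducing segment $\Delta_i$.

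For the ``if'' direction, assume after reordering that $\Delta_{i+1}^\sigma = \Delta_i^\vee$ for $i = 1,3,\ldots,2r-1$ and $\Delta_i$ is distinguished for $i > 2r$. First I would prove the key building block: for any quasi-square-integrable $\Delta$ of $G_m(K)$, the product $\Delta \times (\Delta^\vee)^\sigma$ is distinguished. The argument is geometric: the $G_{2m}(F)$-action on $G_{2m}(K)/P_{(m,m)}(K)$ has an open orbit whose stabilizer meets the Levi $G_m(K)\times G_m(K)$ along a $\sigma$-twisted diagonal copy of $G_m(K)$, and a matrix-coefficient pairing of $\Delta$ against $\Delta^\vee$ relative to this subgroup produces the invariant form by integration over that orbit. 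Combined with the distinction of each unpaired $\Delta_i$, a Mackey-style transitivity then assembles an invariant form on $\pi$ itself by gluing along the open $G_n(F)$-orbit in $P_{\bar{n}}(K)\backslash G_n(K)$.

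For the ``only if'' direction, the starting point is already spelled out in the paper: Proposition \ref{autodual} together with the uniqueness part of Theorem \ref{classgen} force the multiset $\{\Delta_i\}$ to be stable under $\Delta \mapsto (\Delta^\sigma)^\vee$, so after reordering either $\Delta_{i+1}^\sigma = \Delta_i^\vee$ (paired) or $\Delta_i^\sigma = \Delta_i^\vee$ (isolated), and Proposition \ref{distdiscr} says each isolated $\Delta_i$ is distinguished or $\eta_{K/F}$-distinguished. To promote ``or $\eta_{K/F}$-distinguished'' to pure distinction, I would apply the Bernstein--Zelevinsky geometric lemma to stratify $\mathrm{Hom}_{G_n(F)}(\pi,\mathbb{C})$ by $G_n(F)$-orbits on $P_{\bar{n}}(K)\backslash G_n(K)$: each orbit supporting a nonzero invariant form imposes a compatibility condition on the Levi components and their central characters, and the goal is to show that no orbit can support such a form when an isolated $\Delta_i$ is $\eta_{K/F}$-distinguished but not distinguished.

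The main obstacle, and the reason this is stated only as a conjecture, is precisely that last step. The orbit filtration controls only the associated graded of the space of invariant forms, and ruling out the $\eta_{K/F}$-distinguished isolated factors requires controlling extension classes between successive strata. A promising route combines the derivative functors $\Phi^\pm$ and $\Psi^\pm$ recalled in Section~1.3 with the Asai $L$-function criterion (Theorem \ref{pole2}, invoked later in the paper) linking distinction to an exceptional pole at zero; but the finest vanishing needed --- that certain higher Ext groups between the relevant Jacquet modules are zero --- seems beyond currently available tools, which is why the conjecture has so far been established only in restricted classes such as principal series (\cite{M1}).
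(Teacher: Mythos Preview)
The statement you are attempting to prove is labeled as a \emph{conjecture} in the paper and is not proved there. Section~3 is conditional on it, and Theorem~\ref{equiv} shows it is equivalent to the inductivity of Rankin--Selberg type Asai $L$-functions, but no direct proof is offered. So there is no ``paper's own proof'' against which to compare your attempt.

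That said, the partial results you invoke are exactly what the paper records. The paragraph immediately preceding Conjecture~\ref{distgen} already derives your weakened ``only if'': Propositions~\ref{autodual} and~\ref{distdiscr} together force, after reordering, that each unpaired $\Delta_i$ is distinguished or $\eta_{K/F}$-distinguished. The ``if'' direction is cited in the proof of Theorem~\ref{equiv} as a consequence of \cite{M3}, and your open-orbit construction (building the invariant form from the $\sigma$-twisted diagonal pairing on each $\Delta_i \times (\Delta_i^\vee)^\sigma$ and the given invariant forms on the remaining $\Delta_i$) is the correct outline of that argument.

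Your identification of the remaining gap is also accurate: eliminating the $\eta_{K/F}$-distinguished possibility for the isolated factors is precisely the open point, and the geometric-lemma filtration controls only the associated graded of the $\mathrm{Hom}$-space, not the extensions between strata. This is not a flaw in your reasoning---it is why the statement is a conjecture in the paper, established unconditionally only for principal series via \cite{M1}.
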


\section{Conjectures and known results on Rankin-Selberg type Asai L-functions}\label{sectasaibasic}

We first recall standard results from \cite{F4}.\\
We fix a nonzero element $\delta$ in the kernel of $Tr_{K/F}$. A non trivial character $\psi$ of $(K,+)$, trivial on $F$ corresponds uniquely to a character $\psi'$ of $F$, by $\psi(x)=\psi'(Tr_{K/F}(\delta x))$ for $x$ in $K$.\\
Let $\pi$ be a representation of Whittaker type of the group $G_n(K)$. If $W$ belongs to the Whittaker model $W(\pi,\psi)$, for $\psi$ a nontrivial character of $K$ trivial on $F$, and  $\phi$ belongs to $C_c^{\infty}(F^n)$, then there exists a real number $r_{\pi}$, depending only on $\pi$, such that the following integral converges:

$$I(W,\phi,s)=\int_{N_n(F)\backslash G_n(F)} W(g)\phi(\eta_n g){|det(g)|_F}^s dg.$$

This integral as a function of $s$ has a meromorphic extension to
 $\mathbb{C}$.\\
For $s$ of real part greater than $r_{\pi}$, the function $I(W,\phi,s)$ is a
 rational function in $q_F^{-s}$, which actually has a Laurent series
 development.\\
The $\mathbb{C}$-vector space generated by these functions is in fact
 a fractional ideal $I(\pi)$ of $\mathbb{C}[q_F^{-s},q_F^s]$.
This ideal $I(\pi)$ is principal, and has a unique generator of the form
 $1/P(q_F^{-s})$, where $P$ is a polynomial with $P(0)=1$.

\begin{df}
We denote by $L_F^K(\pi,s)$ the generator of $I(\pi)$ defined just above,
 and call it the Asai $L$-function of $\pi$.
\end{df}


 For $\phi$ in $C_c^{\infty}(F^n)$, we denote by $\widehat{\phi}$ the function $\widehat{\phi}^{\psi'}$, we then have the following functional equation.\\

\begin{thm}{(\textbf{Functional equation})}{(Th. of \cite{F4})}\label{Asaieq}

Let $\pi$ be a representation of Whittaker type of the group $G_n(K)$ and $\psi$ a non trivial character of $(K,+)$, trivial on $F$, corresponding to the character $\psi'$ by the fixed $\delta$. There exists an epsilon factor $\epsilon_F^K(\pi,\psi' ,s)$ which
 is, up to scalar, a (maybe negative) power of $q_F^s$, such that the
 following functional equation is satisfied for any $W$ in
 $W(\pi,\psi)$ and
 any $\phi$ in $C_c^{\infty}(F^n)$:

$$I(\tilde{W},\widehat{\phi}^{\psi'},1-s)/L_F^K(\pi^{\vee},1-s) =
 c_{\pi}(-1)^{n-1} \epsilon_F^K(\pi,\psi',s)I(W,\phi,s)/L_F^K(\pi,s).$$

\end{thm}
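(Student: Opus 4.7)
The plan is to follow the Jacquet--Piatetski-Shapiro--Shalika strategy of \cite{JPS}, adapted to the Asai setting as originally done by Flicker in \cite{F4}: the functional equation is reduced to a uniqueness theorem for quasi-invariant bilinear forms on $W(\pi,\psi) \times C_c^\infty(F^n)$. Convergence of $I(W,\phi,s)$ for $\Re(s) \geq r_\pi$ and the fractional ideal structure of its span are prerequisites, obtained via the Iwasawa decomposition $G_n(F) = N_n(F) A_n(F) K_n^F$ combined with the Jacquet--Shalika gauge bound for the restriction of $W$ to the split diagonal torus $A_n(F) \subset A_n(K)$.

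The heart of the proof is as follows. Set
$$B_s(W,\phi) := I(W,\phi,s)/L_F^K(\pi,s), \qquad B'_{1-s}(W,\phi) := I(\tilde W, \widehat{\phi}^{\psi'}, 1-s)/L_F^K(\pi^{\vee}, 1-s),$$
both of which are regular in $s$ by the very definition of the $L$-factors. A direct change of variables on $N_n(F) \backslash G_n(F)$ shows
$$B_s(\pi(g)W,\, g\cdot\phi) = |\det g|_F^{-s}\, B_s(W,\phi) \qquad (g \in G_n(F)),$$
where $g\cdot\phi(x) := \phi(xg)$ denotes right translation on $C_c^\infty(F^n)$. Using the standard transformation of the Fourier transform under $g \cdot \phi$ together with that of $\tilde W$ under the right action of $G_n(F)$ (the sign $c_\pi(-1)^{n-1}$ entering through the involution $g \mapsto w_n\, {}^tg^{-1}\, w_n^{-1}$ and the central character), one checks that $B'_{1-s}$ satisfies the same quasi-invariance, up to the scalar $c_\pi(-1)^{n-1}$. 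A Bernstein-type uniqueness theorem then ensures that the space of $(G_n(F), |\det|_F^{-s})$-quasi-invariant bilinear forms on $W(\pi,\psi) \otimes C_c^\infty(F^n)$ is at most one-dimensional outside a countable set of $s$, so $B'_{1-s}$ and $B_s$ must be proportional. The proportionality factor is a ratio of nonvanishing rational functions in $q_F^{-s}$ whose inverse is also rational, hence a unit in $\mathbb{C}[q_F^s, q_F^{-s}]$, i.e.\ a nonzero scalar times a (possibly negative) power of $q_F^s$; this is precisely $c_\pi(-1)^{n-1}\epsilon_F^K(\pi,\psi',s)$, and the functional equation follows.

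The main obstacle is the uniqueness step. In the pair setting of \cite{JPS}, uniqueness of invariant trilinear forms under the diagonal $G_n(K)$ on $W(\pi,\psi) \otimes W(\pi',\psi^{-1}) \otimes C_c^\infty(K^n)$ reduces to Whittaker uniqueness applied to $\pi \otimes \pi'$. In the Asai setting the relevant symmetric pair is $(G_n(K), G_n(F))$: one must analyze the $G_n(F)$-orbits on a suitable double-coset space attached to $N_n(K)\backslash G_n(K) \times F^n$, show that the generic (open) orbit supports a one-dimensional space of invariant functionals, and then promote this generic uniqueness to all admissible $s$ by the meromorphic continuation of families of invariant distributions à la Bernstein (Theorem~A of \cite{Ber}, as invoked elsewhere in the paper). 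Controlling the contribution of the degenerate orbits, which can in principle enlarge the space of invariant functionals at special values of $s$, is the delicate point.
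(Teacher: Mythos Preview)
The paper does not prove this statement: it is simply quoted from Flicker \cite{F4}, with no argument given. Your sketch is a faithful outline of Flicker's proof there (itself modeled on \cite{JPS}): reduce to a uniqueness statement for $(G_n(F),|\det|_F^{-s})$-quasi-invariant bilinear forms on $W(\pi,\psi)\times C_c^\infty(F^n)$, establish generic uniqueness via an orbit analysis on $N_n(K)\backslash G_n(K)\times F^n$, and conclude by Bernstein's meromorphic continuation. So there is nothing to compare against in this paper, and your reconstruction of the cited argument is accurate.
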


We denote by $\gamma_F^K(\pi,\psi' ,s)$ the function $c_{\pi}(-1)^{n-1} \epsilon_F^K(\pi,\psi',s)L_F^K(\pi^{\vee},1-s)/L_F^K(\pi,s).$\\

Now suppose $L_F^K(\pi,s)$ has a pole at $s_0$, its order $d$ is the
 highest order pole of the family of functions of $I(\pi)$.\\
Then we have the following Laurent expansion at $s_0$:  

\begin{equation}\label{dvlaurent} I(W,\phi,s)=
 B_{s_0}(W,\phi)/(q_F^s-q_F^{s_0})^d + \ higher \ order \
 terms\end{equation}

The quantity $B_{s_0}(W,\phi)$ defines a nonzero bilinear form on
 $W(\pi,\psi) \times C_c^{\infty}(F^n)$, satisfying the quasi-invariance:

$$ B_{s_0}(\pi(g)W,\rho(g)\phi)= |det(g)|_F^{- s_0} B_{s_0}(W,\phi), \ g\in G_n(F).$$

Following \cite{CP} for the split case $K=F\times F$, we state the following definition:

\begin{df} A pole $s_0$ of the Asai $L$-function $L_F^K(\pi,s)$ is called exceptional if the
 associated bilinear form $B_{s_0}$ vanishes on $W(\pi,\psi) \times
  C_c^{\infty}(F^n - \left\lbrace 0 \right\rbrace)$. \end{df}

As an immediate consequence, if $s_0$ is an exceptional pole of
 $L_F^K(\pi,s)$, then $B_{s_0}$ is of the form $B_{s_0}(W,\phi)=
 \lambda
 _{s_0}(W) \phi(0)$, where $\lambda _{s_0}$ is a nonzero $|det(\ )|_F^{-
 s_0}$-invariant linear form on $W(\pi,\psi)$.\\




For more convenience, we introduce a second $L$-function: for $W$ in $W(\pi,\psi)$, by standard arguments, the following integral
 is convergent for $Re(s)$ large, and defines a rational function in $q^{-s}$, which has a Laurent series development in $q^{-s}$:

$$I_{(0)}(W,s)=\int_{N_n(F)\backslash P_n(F)} W(p){|det(p)|_F}^{s-1} dp.$$

By standard arguments again, the vector space generated by the
functions $I_{(0)}(W,s)$, for $W$ in $W(\pi,\psi)$, is a fractional ideal $I_{(0)} (\pi)$ of
 $\mathbb{C}[q_F^{-s},q_F^s]$, which has a unique generator which is an Euler factor.

\begin{df}\label{defL0}
Let $\pi$ be a representation of the group $G_n(K)$ which is of Whittaker type, we denote by $L_{(0)}(\pi,s)$ the Euler factor which generates the ideal spanned by the functions $I_{(0)}(W,s)$ for $W$ in $W(\pi,\psi)$. 

\end{df}

We now recall Lemma 2.1 of \cite{M2}.

\begin{LM}\label{LM}(Compare to \cite{JPS} p. 393)

Let $W$ be in $W(\pi,\psi)$, one can choose $\phi$ with support small enough around $(0,\dots,0,1)$ such that $I(W,\phi,s)=I_{(0)}(W,s)$.
 
\end{LM}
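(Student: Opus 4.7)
The strategy is to use the small support of $\phi$ around $\eta_n = (0,\ldots,0,1)$ to restrict the integration in $I(W,\phi,s)$ to a neighborhood of the stabilizer $P_n(F)$ of $\eta_n$, and then a local change of variables reduces the integral to $I_{(0)}(W,s)$. The shift from $|\det g|_F^s$ to $|\det p|_F^{s-1}$ arises from the modular character of the mirabolic, which one computes to be $\Delta_{P_n}(p) = |\det p|_F$.

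Since $W$ is smooth, choose $r \geq 1$ so that $W$ is right-invariant under $K_{n,r}^F = I_n + M_n(P_F^r)$, and set $\phi = c \cdot \mathbf{1}_{\eta_n + (P_F^r)^n}$ for a constant $c$ to be fixed at the end. Because $\eta_n + (P_F^r)^n = \eta_n K_{n,r}^F$ and the right stabilizer of $\eta_n$ is exactly $P_n(F)$, the support of $g \mapsto \phi(\eta_n g)$ is contained in $P_n(F) K_{n,r}^F$. On this open set, every $g$ factors uniquely as $g = p \cdot k_v$ with $p \in P_n(F)$, $v = \eta_n g$, and $k_v \in K_{n,r}^F$ the element whose upper $(n-1) \times (n-1)$ block is $I_{n-1}$ and whose last row is $v$.

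Write $p = \left(\begin{smallmatrix} m & w \\ 0 & 1\end{smallmatrix}\right)$; a block calculation gives $\det(pk_v) = v_n \det m$. A direct Jacobian computation (comparing $dg = dM/|\det g|_F^n$, where $dM$ is the additive Haar on $M_n(F)$, to the product $dm\,dw\,dv$ under the change of variables $(p,v) \mapsto pk_v$, and using $|v_n|_F = 1$ on the support) yields
\[ dg = d_\ell p \cdot dv \qquad \text{on } P_n(F) K_{n,r}^F, \]
where $d_\ell p$ is a left Haar measure on $P_n(F)$ and $dv$ is the additive Haar on $F^n$. Combined with $W(pk_v) = W(p)$ and $|\det(pk_v)|_F = |\det p|_F$ (both from $K_{n,r}^F$-invariance of $W$ and $|\det k_v|_F = |v_n|_F = 1$), Fubini gives
\[ I(W,\phi,s) = \Bigl(c \int_{F^n} \mathbf{1}_{\eta_n + (P_F^r)^n}(v)\,dv\Bigr) \int_{N_n(F) \backslash P_n(F)} W(p)\,|\det p|_F^{s}\, d\bar p, \]
where $d\bar p$ is the descent of $d_\ell p$ to $N_n(F) \backslash P_n(F)$. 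Since $d_\ell p = |\det p|_F^{-1}\, d_r p$, the measure $d\bar p$ equals $|\det p|_F^{-1}$ times the right-$P_n(F)$-invariant measure on $N_n(F) \backslash P_n(F)$ that is implicit in the definition of $I_{(0)}$; this accounts for the shift of the exponent from $s$ to $s-1$. Choosing $c$ so the volume factor in parentheses equals $1$ yields the desired equality $I(W,\phi,s) = I_{(0)}(W,s)$.

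The main technical point is the measure-theoretic bookkeeping: verifying the Jacobian identity $dg = d_\ell p \cdot dv$ on $P_n(F) K_{n,r}^F$ and identifying the modular character $\Delta_{P_n}(p) = |\det p|_F$ of the mirabolic, which is not one of the standard parabolics covered by the modular character formula recalled earlier in the paper, and so must be computed directly (by the same method as for the affine group). Once these are in hand, the rest of the argument is a routine support-reduction.
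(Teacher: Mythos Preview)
Your argument is correct. The paper does not actually prove this lemma; it merely cites Lemma~2.1 of \cite{M2} and refers to \cite{JPS}, p.~393. The standard argument implicit in those references (and visible in the integration formula the paper uses later, e.g.\ in the proof of Proposition~\ref{rat}) proceeds via the decomposition $G_n(F)=P_n(F)Z_n(F)K_n^F$: one writes
\[
I(W,\phi,s)=\int_{K_n^F}\int_{N_n(F)\backslash P_n(F)}W(pk)\,|\det p|_F^{s-1}\,dp\int_{F^*}\phi(\eta_n ak)\,c_\pi(a)\,|a|_F^{ns}\,d^*a\,dk,
\]
and then the small support of $\phi$ collapses the $a$- and $k$-integrals. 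Your approach is a local repackaging of the same idea: rather than invoking Iwasawa globally, you parametrize the open set $P_n(F)K_{n,r}^F$ by the explicit section $v\mapsto k_v$ of the fibration $g\mapsto \eta_n g$ and compute the Jacobian by hand. Both routes exploit that $P_n(F)$ is the stabilizer of $\eta_n$ and that the exponent shift $s\to s-1$ comes from the non-unimodularity of the mirabolic; yours is slightly more self-contained, while the JPS version reuses a formula already needed elsewhere.

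One minor point of bookkeeping: with the paper's convention $d_g(gx)=\Delta_G(x)\,d_g(g)$ for \emph{left} Haar measure, one finds $\Delta_{P_n}(p)=|\det p|_F^{-1}$ rather than $|\det p|_F$ (the latter is $\delta_{P_n}$ in the paper's notation). This does not affect your proof, since the relation you actually use, $d_\ell p=|\det p|_F^{-1}\,d_r p$, is stated correctly and yields the right exponent shift.
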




Hence we have the inclusion $I_{(0)} (\pi) \subset I(\pi)$, which implies
 that $L_{(0)}(\pi,s)= L_F^K(\pi,s) R(q_F^s,q_F^{-s})$ for some $R$ in $\mathbb{C}[q_F^{-s},q_F^s]$.
But because $L_{(0)}$ and $L_F^K$ are both Euler factors, $R$ is actually
 just a polynomial in $q_F^{-s}$, with constant term equal to one.
Denoting by $L_{Rad(ex)} (\pi,s)$ its inverse (which is an Euler factor), we have
 $L_F^K(\pi,s)=L_{(0)}(\pi,s)L_{Rad(ex)} (\pi,s)$, we will say that $L_{(0)}$ divides
 $L_F^K$. 

We now recall Proposition 2.2 of \cite{M2} which gives a characterisation of exceptional poles:

\begin{prop}
 
A pole of $L_F^K(\pi,s)$ is exceptional if and only if it is a pole of the function $L_{Rad(ex)} (\pi,s)$ defined just above.

\end{prop}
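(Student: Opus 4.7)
The plan is to compare the maximal pole order at $s_0$ of the two families of integrals $\{I(W,\phi,s)\}$ and $\{I_{(0)}(W,s)\}$, and to detect exceptionality through a strict drop between them. Let $d$ denote the order of the pole of $L_F^K(\pi,s)$ at $s_0$ and $d_0$ that of $L_{(0)}(\pi,s)$ at $s_0$. Since $L_F^K = L_{(0)}\cdot L_{Rad(ex)}$ as Euler factors, the pole order of $L_{Rad(ex)}$ at $s_0$ equals $d-d_0$, so the statement reduces to proving that $s_0$ is exceptional if and only if $d_0 < d$.

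For the direct implication, assume $s_0$ is exceptional. Given any $W \in W(\pi,\psi)$, Lemma \ref{LM} produces a $\phi$ supported in a small neighbourhood of $\eta_n$, hence lying in $C_c^{\infty}(F^n - \{0\})$, with $I(W,\phi,s) = I_{(0)}(W,s)$. The exceptionality hypothesis forces $B_{s_0}(W,\phi) = 0$, so the Laurent expansion (\ref{dvlaurent}) of $I(W,\phi,s) = I_{(0)}(W,s)$ at $s_0$ has vanishing coefficient at $(q_F^s - q_F^{s_0})^{-d}$; its pole order at $s_0$ is therefore at most $d - 1$. Since $W$ is arbitrary and $L_{(0)}(\pi,s)$ generates the ideal $I_{(0)}(\pi)$, we conclude $d_0 \leq d - 1 < d$.

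For the converse, assume $d_0 < d$. The main step is the bound: for every $W \in W(\pi,\psi)$ and every $\phi \in C_c^{\infty}(F^n - \{0\})$, $I(W,\phi,s)$ has pole order at most $d_0$ at $s_0$. Granting this, $B_{s_0}(W,\phi) = \lim_{s\to s_0}(q_F^s - q_F^{s_0})^d I(W,\phi,s) = 0$ since $d > d_0$, proving exceptionality. To prove the bound, decompose $\phi = \sum_j \phi_j$ as a finite sum of smooth functions, each $\phi_j$ supported in a small compact open neighbourhood of some $v_j \in F^n - \{0\}$. Pick $g_j \in G_n(F)$ with $\eta_n g_j = v_j$; the substitution $g = g' g_j$ in the integral defining $I(W,\phi_j,s)$ yields
\[ I(W,\phi_j,s) = |det(g_j)|_F^{\,s}\, I(\pi(g_j)W, \rho(g_j)\phi_j, s), \]
where $\rho(g_j)\phi_j\colon w\mapsto \phi_j(wg_j)$ is supported in a small neighbourhood of $v_j g_j^{-1} = \eta_n$. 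Applying Lemma \ref{LM} in its natural strengthening, namely that for any $W'\in W(\pi,\psi)$ and any $\phi'$ with sufficiently small support around $\eta_n$ the integral $I(W',\phi',s)$ is a scalar multiple of $I_{(0)}(W',s)$, one sees that each $I(\pi(g_j)W, \rho(g_j)\phi_j, s)$ has pole order at most $d_0$ at $s_0$. Since $|det(g_j)|_F^{\,s}$ is everywhere analytic and nonvanishing, summing over $j$ gives the bound.

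The main obstacle is precisely this bound in the converse direction, resting on the strengthening of Lemma \ref{LM} just mentioned: one must check that the construction underlying that lemma, namely the Iwasawa-type decomposition reducing the $G_n(F)$ integral to a $P_n(F)$ integral when $\phi(\eta_n g)$ is supported in a small enough neighbourhood of $\eta_n$, works uniformly in $W$ and produces a scalar multiple of $I_{(0)}(W,s)$. Once this reduction is made, both directions of the equivalence follow immediately from the comparison of the pole orders of $L_F^K$ and $L_{(0)}$ at $s_0$.
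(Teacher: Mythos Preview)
The paper does not itself prove this proposition; it is merely recalled from \cite{M2}, Proposition~2.2, so there is no proof here to compare against. Your argument is correct and is the standard route.

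The forward direction is immediate from Lemma~\ref{LM}, exactly as you write. For the converse, the point you isolate is the right one: for $\phi\in C_c^\infty(F^n-\{0\})$ the integral $I(W,\phi,s)$ already lies in the ideal $I_{(0)}(\pi)$, hence has pole order at most $d_0<d$ at $s_0$, forcing $B_{s_0}(W,\phi)=0$. The ``strengthening'' of Lemma~\ref{LM} that you flag is routine: if $K'\subset K_n^F$ is a compact open subgroup fixing $W$, then for any $\phi$ supported in $\eta_n K'$ the condition $\phi(\eta_n g)\neq 0$ forces $g\in P_n(F)K'$; writing $g=pk$ with $p\in P_n(F)$ and $k\in K'$ one has $W(g)=W(p)$ and $|\det(g)|_F=|\det(p)|_F$, and the usual measure decomposition on $N_n(F)\backslash P_n(F)K'$ then yields $I(W,\phi,s)=c\cdot I_{(0)}(W,s)$ with $c$ a positive constant independent of~$s$. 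One small point to watch in your partition-and-translate step: the smallness required for $\rho(g_j)\phi_j$ depends on $\pi(g_j)W$ rather than on $W$ itself; but since the $g_j$ may be taken in a fixed compact set determined by the support of $\phi$, a single compact open subgroup fixes all the $\pi(g_j)W$ simultaneously, and one refines the partition accordingly. With this adjustment the argument is complete.
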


We denote by $L_{ex}(\pi,s)$ (as in \cite{CP} for the split case, cf. Definition \ref{polexpaire}) the product function $\prod_{s_i} (1-q_F^{s_i-s})^{d_i}$, where the $s_i$'s are the exceptional poles of $L_F^K(\pi,s)$ and the $d_i$'s their order in $L_F^K(\pi,s)$.


 



Theorem 2.1 of \cite{M2} then links the notions of exceptional pole and distinction.

\begin{thm}\label{pole2} 
A generic representation $\pi$ of the group $G_n(K)$ is
 distinguished if and only if the function $L_F^K(s,\pi)$ admits an exceptional
 pole at zero.

\end{thm}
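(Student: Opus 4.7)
The plan is to establish the equivalence by treating the two implications separately, with the easier direction coming almost formally from the residue bilinear form, and the harder direction requiring a uniqueness-of-invariant-functionals argument.

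For the implication ``exceptional pole at $0$ $\Rightarrow$ $\pi$ distinguished'': I would start from the Laurent expansion (\ref{dvlaurent}) at $s_0 = 0$. The leading coefficient $B_0(W,\phi)$ is nonzero by definition of a pole, and satisfies the quasi-invariance $B_0(\pi(g)W,\rho(g)\phi) = |\det(g)|_F^{0}\, B_0(W,\phi) = B_0(W,\phi)$ for $g \in G_n(F)$. Since the pole is assumed exceptional, $B_0$ vanishes on $W(\pi,\psi)\times C_c^{\infty}(F^n - \{0\})$, so it factors as $B_0(W,\phi) = \lambda_0(W)\phi(0)$ for a nonzero linear form $\lambda_0$ on $W(\pi,\psi)$. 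Evaluating the invariance at $\phi$ with $\phi(0) \neq 0$ and noting $\rho(g)\phi(0) = \phi(0)$ for $g\in G_n(F)$, one obtains $\lambda_0(\pi(g)W) = \lambda_0(W)$, so $\lambda_0$ witnesses the distinction of $\pi$.

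For the converse ``$\pi$ distinguished $\Rightarrow$ exceptional pole at $0$'': Let $L$ be a nonzero $G_n(F)$-invariant linear form on $W(\pi,\psi)$. First I would show that $L_F^K(\pi,s)$ actually has a pole at $s=0$. The natural approach is to exhibit a specific pair $(W,\phi)$ such that $I(W,\phi,s)$ has a pole at $s=0$ whose residue is a nonzero multiple of $L(W)\phi(0)$. Using Theorem \ref{kirgn2}, which guarantees that $C_c^{\infty}(N_n(K)\backslash P_n(K), \psi) \subset K(\pi,\psi)$, I can produce Whittaker functions whose behavior near $\eta_n$ along the $G_n(F)$-orbit is explicit. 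Decomposing $F^n = \{0\} \sqcup (F^n - \{0\})$ and using the right $G_n(F)$-transitivity of the open orbit, I would write $I(W,\phi,s)$ as an integral over $P_n(F)\backslash G_n(F)$ plus a contribution near $0$, the latter producing the pole (this is the Asai analogue of the ``zeta integral near the origin'' computation).

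Next I would show the pole is exceptional, i.e., that the full sub-ideal of $I(\pi)$ generated by $I(W,\phi,s)$ for $\phi \in C_c^{\infty}(F^n - \{0\})$ is regular at $s=0$. Morally, any bilinear form on $W(\pi,\psi) \times C_c^{\infty}(F^n - \{0\})$ satisfying the quasi-invariance with $s_0 = 0$ corresponds, by the orbit structure, to a $P_n(F)$-invariant linear form on $W(\pi,\psi)$ times a smooth measure on $P_n(F)\backslash G_n(F)$; showing that the integrals there remain holomorphic at $s=0$ prevents the exceptional part from coming from anywhere but the boundary at $0$. The cleanest formulation uses Bernstein's uniqueness principle (Theorem A of \cite{Ber}, as invoked in the analogous split argument recalled before Theorem \ref{distsplit}): the space of bilinear forms with the quasi-invariance at $s_0 = 0$ supported on $C_c^{\infty}(F^n - \{0\})$ is strictly smaller than the full space, and any ``extra'' invariant form must be supported at $\{0\}$.

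The main obstacle will be Step~2 of the converse: controlling the analytic behavior of the open-orbit integrals at $s=0$ precisely enough to conclude that the pole really sits in $L_{Rad(ex)}$ and not in $L_{(0)}$. This requires a careful geometric decomposition and an appeal to the Bernstein-type result on the finite-dimensionality/uniqueness of the relevant spaces of invariant functionals. Once this obstacle is overcome, matching the residue bilinear form $B_0$ with $L(\cdot)\phi(0)$ via uniqueness of the distinguishing form (Proposition~11 of \cite{F2}) closes the argument.
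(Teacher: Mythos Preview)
The paper does not prove this theorem itself; it is quoted as Theorem~2.1 of \cite{M2}. Your forward implication (exceptional pole $\Rightarrow$ distinguished) is correct and is exactly what the paper records as an ``immediate consequence'' right after the definition of exceptional pole.

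For the converse, your sketch has the right overall shape --- a uniqueness-of-invariant-functionals argument is indeed the crux, and the paper's parenthetical remark before Theorem~\ref{distsplit} confirms that the proof in \cite{M2} hinges on such a statement (Proposition~1.1 of \cite{M2}, the Asai-side analogue of Bernstein's Theorem~A, rather than Theorem~A itself). However, there is a genuine confusion in your Step~2. You propose to show that ``the full sub-ideal of $I(\pi)$ generated by $I(W,\phi,s)$ for $\phi \in C_c^\infty(F^n \setminus \{0\})$ is regular at $s=0$.'' This is too strong and in general false: by Lemma~\ref{LM}, every $I_{(0)}(W,s)$ arises as some $I(W,\phi,s)$ with $\phi$ supported near $(0,\dots,0,1)$, hence away from the origin; so if $L_{(0)}(\pi,s)$ has a pole at $0$ (which can certainly happen for distinguished $\pi$), such integrals are \emph{not} regular there. ``Exceptional'' does not mean that integrals with $\phi(0)=0$ are holomorphic at $s_0$; it means only that the \emph{top-order} Laurent coefficient $B_{s_0}$ vanishes on such $\phi$, equivalently that the order of the pole in $L_F^K$ strictly exceeds its order in $L_{(0)}$. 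Your later hedge (``the pole really sits in $L_{Rad(ex)}$ and not in $L_{(0)}$'') is closer but still imprecise: the pole may well sit in both factors; what matters is the excess.

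Consequently your two-step plan (produce a pole, then argue it is exceptional) is not the right decomposition. The argument one actually runs compares, via the multiplicity-one input, two $G_n(F)$-invariant bilinear forms on $W(\pi,\psi)\times C_c^\infty(F^n)$ at $s=0$: one built from the given invariant form $L$ as $(W,\phi)\mapsto L(W)\phi(0)$, and one obtained by regularizing $I(W,\phi,s)$ (dividing by $L_{(0)}$). Uniqueness forces the second to have a pole whose leading term is a nonzero multiple of the first, which is exactly the statement that the pole is exceptional.
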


For discrete series representations, the previous theorem has the simpler form:

\begin{prop}\label{K}(\cite{K}, Theorem 4)\\
 
A discrete series representation $\pi$ of the group $G_n(K)$ is
 distinguished if and only if its Asai $L$-function $L_F^K(s,\pi)$ admits a pole at zero.

\end{prop}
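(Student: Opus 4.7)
The plan is to reduce the statement, via Theorem~\ref{pole2}, to an analytic property of the partial $L$-function $L_{(0)}(\pi,s)$, and then to derive that property from the square-integrability of $\pi$. Since discrete series are generic, Theorem~\ref{pole2} asserts that distinction is equivalent to the existence of an \emph{exceptional} pole of $L_F^K(\pi,s)$ at $s=0$, which handles the "only if" direction for free. For the converse, I would use the factorization
\[L_F^K(\pi,s)=L_{(0)}(\pi,s)L_{Rad(ex)}(\pi,s)\]
together with the proposition just preceding Theorem~\ref{pole2}, which identifies the exceptional poles of $L_F^K(\pi,s)$ with the poles of $L_{Rad(ex)}(\pi,s)$. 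Given this, any pole of $L_F^K(\pi,s)$ at $s=0$ is automatically exceptional, and hence forces distinction of $\pi$ via Theorem~\ref{pee2} — \emph{provided} one can show that $L_{(0)}(\pi,s)$ is holomorphic at $s=0$ for every discrete series $\pi$.

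The heart of the proof is therefore the following claim: for $\pi$ square-integrable and every $W\in W(\pi,\psi)$, the integral
\[I_{(0)}(W,s)=\int_{N_n(F)\backslash P_n(F)} W(p)|det(p)|_F^{s-1}dp\]
converges absolutely on a half-plane $Re(s)>-\epsilon$ with $\epsilon>0$. After an Iwasawa decomposition $P_n(F)=N_n(F)(A_n(F)\cap P_n(F))K'$ with $K'$ compact, this reduces to a uniform estimate for $W(a)$ where $a=\mathrm{diag}(a_1,\dots,a_{n-1},1)$ runs over the $F$-diagonal, weighted by $\prod_{i<n}|a_i|_F^{s-1}$ times the modular factor. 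The needed estimate comes from the asymptotic expansion of the Whittaker function along the $K$-split diagonal torus, whose exponents are the central characters of the Jacquet modules of $\pi$ along the standard $K$-parabolics; by Casselman's square-integrability criterion, these exponents are strictly dominated by $\delta^{1/2}$ on the positive chamber. Combined with the usual support bound forcing $|a_i/a_{i+1}|_F$ to remain bounded above, this strict decay yields convergence of $I_{(0)}(W,s)$ strictly to the left of $Re(s)=0$.

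The main obstacle is to transfer this strict dominance from the $K$-side to the $F$-side: the integral is taken over $F$-points, so each $K$-exponent must be restricted to $A_n(F)\subset A_n(K)$ and shown to remain strictly dominated by $\delta^{1/2}$ on the positive $F$-chamber. This is where one uses that the positive $F$-chamber embeds into the positive $K$-chamber, and that the restrictions of the relevant simple coroots to $A_n(F)$ are non-trivial, so that strict inequalities survive the restriction. Once this restriction-of-exponents step is in place, the argument is a standard gauge computation, completing the reduction and hence the proof.
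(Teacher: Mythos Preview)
The paper does not give its own proof of this proposition; it is quoted verbatim from Kable \cite{K}, Theorem 4, and used as a black box thereafter. So there is no ``paper's proof'' to compare against.

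Your argument is nonetheless a correct derivation within the framework the paper sets up. The reduction via Theorem~\ref{pole2} and the factorization $L_F^K=L_{(0)}\cdot L_{Rad(ex)}$ is exactly right: once you know $L_{(0)}(\pi,s)$ is regular at $s=0$, every pole there is exceptional and Theorem~\ref{pole2} finishes both directions. Your convergence argument for $I_{(0)}(W,s)$ on $Re(s)>-\epsilon$ is also sound. Casselman's criterion gives strict dominance of the $K$-exponents by $\delta^{1/2}$ on the positive $K$-chamber; since $|a|_K=|a|_F^2$ for $a\in F^*$, restriction to $A_n(F)$ preserves (indeed doubles) the margin, so the restriction-of-exponents step you flag is straightforward. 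Essentially the same convergence mechanism appears later in the paper as the Claim quoted from \cite{F1} inside the proof of the rationality proposition in Section~\ref{subsectionder}.

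Two small points. First, you should verify there is no circularity: your argument rests on Theorem~\ref{pole2} (from \cite{M2}), so you need that its proof does not itself invoke Kable's theorem; the paper's narrative suggests \cite{M2} is independent, but this is worth checking explicitly. Second, ``Theorem~\textbackslash ref\{pee2\}'' is a typo for Theorem~\ref{pole2}.

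As a comparison: Kable's original route in \cite{K} constructs the $G_n(F)$-invariant form directly as a residue of the zeta integral (giving ``pole $\Rightarrow$ distinguished''), with the converse for tempered $\pi$ supplied by \cite{AKT}. Your route trades that direct construction for the exceptional-pole machinery of \cite{M2}, which is cleaner once Theorem~\ref{pole2} is in hand but is logically downstream of it.
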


Let $s_0$ be in $\mathbb{C}$.
We notice that if the representation $\pi$ is a generic representation of the group $G_n(K)$, it is $|\ |_{F}^{-s_0}$-distinguished if and only if the representation $\pi \otimes |\ |_{K}^{s_0 /2}$ is distinguished, but as $L_F^K(\pi \otimes |\ |_{K}^{s_0 /2},s)$ is equal to $L_F^K(\pi,s +s_0)$, Theorem \ref{pole2} becomes:

\begin{thm}\label{s-dist}

A generic representation $\pi$ of the group $G_n(K)$ is
 $|\ |_{F}^{-s_0}$-distinguished if and only if the function $L_F^K(\pi, s)$ admits an exceptional
 pole at $s_0$.
 
\end{thm}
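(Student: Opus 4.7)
The plan is to derive Theorem \ref{s-dist} from Theorem \ref{pole2} by the standard trick of twisting the representation. First, I would verify the elementary observation that $\pi$ is $|\ |_F^{-s_0}$-distinguished if and only if the twist $\pi' := \pi \otimes |\ |_K^{s_0/2}$ is distinguished. This uses only the identity $|\det(g)|_K = |\det(g)|_F^2$ for $g \in G_n(F)$: a linear form $L$ on the space of $\pi$ satisfies $L(\pi(g)v) = |\det(g)|_F^{-s_0} L(v)$ if and only if it satisfies $L(\pi'(g)v) = L(v)$. By Theorem \ref{pole2} applied to the generic representation $\pi'$, distinction of $\pi'$ is equivalent to $L_F^K(\pi',s)$ having an exceptional pole at $s=0$.

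Next I would match the $L$-functions and their exceptional poles under the twist. Picking $W \in W(\pi,\psi)$, the function $W' : g \mapsto |\det(g)|_K^{s_0/2} W(g)$ lies in $W(\pi',\psi)$, and any element of $W(\pi',\psi)$ arises this way. A direct change of variable in the Asai integral gives
\[
I(W',\phi,s) = \int_{N_n(F)\backslash G_n(F)} W(g)\phi(\eta_n g) |\det(g)|_F^{s+s_0}\,dg = I(W,\phi,s+s_0),
\]
so the fractional ideals $I(\pi')$ and $I(\pi)$ correspond under $s \leftrightarrow s+s_0$, yielding $L_F^K(\pi',s) = L_F^K(\pi,s+s_0)$. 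In particular the poles of $L_F^K(\pi',s)$ at $0$ are in bijection, with preserved order, with the poles of $L_F^K(\pi,s)$ at $s_0$.

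Finally I would check that the \emph{exceptional} character of these poles is preserved. The Laurent expansion of $I(W,\phi,s)$ around $s = s_0$ reads off the bilinear form $B_{s_0}(W,\phi)$, while the expansion of $I(W',\phi,s)$ around $s=0$ gives $B_0(W',\phi)$; by the identity above these forms agree up to the identification $W \leftrightarrow W'$. Consequently $B_0$ vanishes on $W(\pi',\psi) \times C_c^\infty(F^n - \{0\})$ precisely when $B_{s_0}$ vanishes on $W(\pi,\psi) \times C_c^\infty(F^n - \{0\})$, i.e.\ $0$ is an exceptional pole of $L_F^K(\pi',s)$ if and only if $s_0$ is an exceptional pole of $L_F^K(\pi,s)$. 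Combining the three equivalences gives the theorem. I do not expect any real obstacle: the whole argument is a formal twisting reduction, and the only point requiring a moment's care is tracking the quasi-invariance of $B_{s_0}$ through the shift, which follows from the explicit change of variable written above.
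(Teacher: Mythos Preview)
Your proposal is correct and follows exactly the paper's approach: the paper reduces to Theorem \ref{pole2} by the same twist $\pi' = \pi \otimes |\ |_K^{s_0/2}$, using $|\ |_K|_{F^*} = |\ |_F^2$ and the identity $L_F^K(\pi',s) = L_F^K(\pi,s+s_0)$. You supply more detail than the paper (which dispatches this in one sentence), in particular the explicit verification that exceptionality is preserved under the shift, but the idea is identical.
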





 It is shown in Remark 2.1 of \cite{M2} that if $\pi$ is a generic representation of $G_n(K)$, then the function $L_{Rad(ex)}(\pi,s)$ has simple poles, the following proposition specifies this statement.

\begin{prop}\label{LRad(ex)}{(cf. \cite{M2}, Proposition 2.5)}

Let $\pi$ be an generic representation of the group $G_n(K)$, then the Euler factor $L_{Rad(ex)}(\pi,s)$ is equal to $\prod 1/(1-q_F^{s_0-s})$ where the product is taken over the exceptional poles $s_0$ of the function $L_F^K(\pi,s)$, i.e. the $q_F^{s_0}$'s such that the representation $\pi$ is $|\ |_F^{-s_0}$-distinguished. In particular, the function $L_{Rad(ex)}(\pi,s)$ divides $L_{ex}(\pi,s)$.

\end{prop}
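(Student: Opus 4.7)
The plan is to show that each exceptional pole $s_0$ of $L_F^K(\pi,s)$ appears with multiplicity exactly $1$ in $L_{Rad(ex)}(\pi,s)$; the divisibility of $L_{Rad(ex)}$ by $L_{ex}$ will then follow immediately, since by definition each exceptional pole has multiplicity $d\geq 1$ in $L_F^K$, hence in $L_{ex}$. Step 1 is to identify the set of poles: by Theorem \ref{s-dist} together with the proposition preceding it, the poles of $L_{Rad(ex)}(\pi,s)$ are exactly the $s_0$ such that $\pi$ is $|\ |_F^{-s_0}$-distinguished.

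Step 2 is the upper bound on $L_{(0)}$. Fix an exceptional pole $s_0$ of order $d$ in $L_F^K(\pi,s)$, and write the Laurent expansion $I(W,\phi,s)=B_{s_0}(W,\phi)(q_F^s-q_F^{s_0})^{-d}+\dots$. By the exceptionality hypothesis, $B_{s_0}(W,\phi)=\lambda_{s_0}(W)\phi(0)$, where $\lambda_{s_0}$ is a nonzero $|\det(\,)|_F^{-s_0}$-invariant linear form on $W(\pi,\psi)$, unique up to scalar by Flicker's multiplicity one result (the proposition preceding Proposition \ref{autodual}). Given any $W$, Lemma \ref{LM} produces a $\phi$ supported near $(0,\dots,0,1)$, in particular with $\phi(0)=0$, such that $I(W,\phi,s)=I_{(0)}(W,s)$; the leading Laurent coefficient vanishes, so $I_{(0)}(W,s)$ has pole of order at most $d-1$ at $s_0$. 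Hence $L_{(0)}(\pi,s)$ has pole of order $\leq d-1$ at $s_0$, and therefore $L_{Rad(ex)}(\pi,s)$ has pole of order $\geq 1$ at $s_0$.

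Step 3 is the lower bound, which forces equality. Suppose for contradiction that the order of $s_0$ in $L_{Rad(ex)}$ is $m\geq 2$, so that $I_{(0)}(W,s)$ has pole of order at most $d-2$ for every $W$. Then, for the pair $(W,\phi)$ produced by Lemma \ref{LM}, both the leading coefficient $a_{-d}(W,\phi)=\lambda_{s_0}(W)\phi(0)$ and the subleading coefficient $a_{-(d-1)}(W,\phi)$ vanish identically. Unwinding the quasi-invariance identity $I(\pi(g)W,\rho(g)\phi,s)=|\det(g)|_F^{-s}I(W,\phi,s)$ for $g\in G_n(F)$ at the $(d-1)$-st Laurent coefficient, one finds that $a_{-(d-1)}$ is $|\det(\,)|_F^{-s_0}$-quasi-invariant modulo a correction term proportional to $a_{-d}$; restricting to $\phi$ with $\phi(0)=0$ kills the correction, and restricting further to the Lemma \ref{LM} test functions forces the restricted bilinear form to vanish. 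Invoking a Bernstein-type principle (Theorem A of \cite{Ber}, as in the argument following Definition \ref{polexpaire}), the coefficient $a_{-(d-1)}$ must itself factor through $\phi(0)$ and produce a nonzero $|\det(\,)|_F^{-s_0}$-invariant linear form on $W(\pi,\psi)$; comparing with $\lambda_{s_0}$ and accounting for the correction term yields a second such linear form, linearly independent of $\lambda_{s_0}$, contradicting multiplicity one.

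The main obstacle is Step 3: the subleading Laurent coefficient is only quasi-invariant up to an additive correction coming from $a_{-d}$, and one must carefully arrange the supports of $\phi$ and the test vectors $W$ so that the restricted bilinear form extracts a genuinely new invariant linear form, rather than a scalar multiple of $\lambda_{s_0}$ in disguise. The Bernstein-type factorization is the crucial ingredient that turns a mere quasi-invariance of a bilinear form on $W(\pi,\psi)\times C_c^{\infty}(F^n)$ into an honest invariant linear form on $W(\pi,\psi)$, reducing the problem to the one-dimensionality of distinguishing forms.
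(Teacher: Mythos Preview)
Your Steps 1 and 2 are correct: the poles of $L_{Rad(ex)}(\pi,s)$ are exactly the exceptional poles of $L_F^K(\pi,s)$, and at each such pole $s_0$ of order $d$ in $L_F^K$, the factor $L_{(0)}(\pi,s)$ has order at most $d-1$.

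Step 3, however, does not close. Even granting that $a_{-(d-1)}(W,\phi)=\mu(W)\phi(0)$ for some linear form $\mu$, the transformation law you derive is
\[
\mu(\pi(g)W)=|\det g|_F^{-s_0}\,\mu(W)+c(g)\,\lambda_{s_0}(W),
\]
with $c(g)$ proportional to $v_F(\det g)\,q_F^{s_0(v_F(\det g)-1)}$. This makes $\mu$ a \emph{generalized} eigenvector for the $G_n(F)$-action, not a second $|\det|_F^{-s_0}$-invariant form: for no scalar $\alpha$ is $\mu-\alpha\lambda_{s_0}$ invariant, since the correction term $c(g)\lambda_{s_0}(W)$ never cancels. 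Flicker's multiplicity-one result bounds only the dimension of genuine invariant forms, not of such generalized ones, so no contradiction follows. The invocation of ``a Bernstein-type principle'' does not supply the missing step; the result cited after Definition~\ref{polexpaire} concerns upgrading $P_n$-invariance to $G_n$-invariance, not eliminating the additive cocycle.

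The argument the paper has in mind (via \cite{M2}) is both simpler and avoids this obstacle entirely. Use the integration formula quoted in the paper,
\[
I(W,\phi,s)=\int_{K_n^F} I_{(0)}(\rho(k)W,s)\,\Bigl(\int_{F^*}\phi(\eta_n a k)\,c_\pi(a)\,|a|_F^{ns}\,d^*a\Bigr)\,dk.
\]
The inner integral over $F^*$ is a Tate zeta integral for the character $c_\pi|_{F^*}\cdot|\ |_F^{ns}$; it is a Laurent polynomial in $q_F^{-s}$ times the abelian $L$-factor, and hence has at most a \emph{simple} pole at any $s_0$. Since the $k$-integral is a finite sum, the pole order of $I(W,\phi,s)$ at $s_0$ is at most the pole order of $L_{(0)}(\pi,s)$ plus one. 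Thus $d\le d_0+1$, i.e.\ $d_0\ge d-1$, which together with your Step~2 gives $d_0=d-1$ and forces $L_{Rad(ex)}(\pi,s)$ to have a simple pole at $s_0$.
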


Suppose now that the representation $\pi$ is supercuspidal, then the restriction to $P_n(K)$ of any $W$ in $W(\pi,\psi)$ has compact support modulo $N_n(K)$, hence $I_{(0)}(W,s)$ is a polynomial in $q^{-s}$, and $L_{(0)}(\pi,s)$ is equal to $1$. Hence Proposition \ref{LRad(ex)} implies: 

\begin{prop}\label{Lcusp}
Let $\pi$ be an irreducible supercuspidal representation of the group $G_n(K)$, it is $|\ |_F^{-s_0}$-distinguished if and only if its Asai L-function $L_F^K(\pi,s)$ has a pole at $s_0$.
\end{prop}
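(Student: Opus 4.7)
The statement follows from combining three ingredients already in the paper: the factorization $L_F^K(\pi,s) = L_{(0)}(\pi,s) L_{Rad(ex)}(\pi,s)$, the explicit description of $L_{Rad(ex)}$ in Proposition \ref{LRad(ex)}, and Theorem \ref{s-dist} (or equivalently Proposition \ref{LRad(ex)} itself) characterising exceptional poles in terms of $|\ |_F^{-s_0}$-distinction. So the plan is essentially to reduce to the case where the ``non-exceptional'' factor $L_{(0)}$ trivializes.

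First I would verify that $L_{(0)}(\pi,s) = 1$ when $\pi$ is supercuspidal. The key input is the standard fact that a matrix coefficient of a supercuspidal representation has support compact modulo the centre $Z_n(K)$; as a consequence, for any $W \in W(\pi,\psi)$, the restriction $W|_{P_n(K)}$ is compactly supported modulo $N_n(K)$. Plugging this into the definition of $I_{(0)}(W,s)$ gives an integral over a compact set modulo $N_n(K)$, so $I_{(0)}(W,s)$ is a finite Laurent polynomial in $q_F^{-s}$. The fractional ideal $I_{(0)}(\pi)$ is therefore contained in $\mathbb{C}[q_F^s, q_F^{-s}]$, and its generator as an Euler factor can only be $1$, so $L_{(0)}(\pi,s) = 1$.

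From this and the factorization $L_F^K(\pi,s) = L_{(0)}(\pi,s) L_{Rad(ex)}(\pi,s)$ recalled just before Proposition \ref{LRad(ex)}, we deduce the identity $L_F^K(\pi,s) = L_{Rad(ex)}(\pi,s)$. Now Proposition \ref{LRad(ex)} describes $L_{Rad(ex)}(\pi,s)$ as the product of the simple factors $1/(1-q_F^{s_0-s})$ over those $s_0$ such that $\pi$ is $|\ |_F^{-s_0}$-distinguished. Therefore $s_0$ is a pole of $L_F^K(\pi,s)$ if and only if $\pi$ is $|\ |_F^{-s_0}$-distinguished, which is the claim.

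No serious obstacle is expected: the only substantive step is the supercuspidality-implies-$L_{(0)} = 1$ observation, which is classical, and the rest is bookkeeping against the preceding propositions. If one wanted a more self-contained version, one could bypass Proposition \ref{LRad(ex)} and instead invoke Theorem \ref{s-dist} directly, noting that since $L_F^K = L_{Rad(ex)}$ every pole of $L_F^K(\pi,s)$ is automatically exceptional.
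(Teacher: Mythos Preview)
Your proof is correct and follows essentially the same approach as the paper: the paper's argument (given in the paragraph immediately preceding the proposition) also observes that supercuspidality forces $L_{(0)}(\pi,s)=1$ via compact support of Whittaker functions on $P_n(K)$ modulo $N_n(K)$, and then invokes Proposition~\ref{LRad(ex)}. The only cosmetic difference is that you spell out the factorization $L_F^K = L_{(0)}\,L_{Rad(ex)}$ and the alternative route through Theorem~\ref{s-dist} more explicitly.
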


We end this paragraph by stating a conjecture relating the Asai $L$-function $L_F^K$ of generic a representation and a certain canonical $L$ function also denoted by $L_F^K$ of the corresponding representation (through Langlands correspondence) of the Weil-Deligne group of $K$.\\
If $\rho$ is a finite dimensional representation of the Weil-Deligne group $W'_K$, we denote by
 $M_{W'_K}^{W'_F}(\rho)$ the representation of $W'_F$ induced
 multiplicatively from $\rho$. We recall its definition:\\
If $V$ is the space of $\rho$, then the space of
 $M_{W'_K}^{W'_F}(\rho)$ is $V\otimes V$. Denoting by $\tau$ an element of $W_F -W_K$, and $\sigma$ the element
 $(\tau, I_2)$ of $W'_F$, we have:\\

$$ M_{W'_K}^{W'_F}(\rho)(h)(v_1 \otimes v_2) = \rho(h) v_1 \otimes
 \rho^{\sigma}(h) v_2$$ for $h$ in $W'_K$, $v_1$ and $v_2$ in $V$.

$$ M_{W'_K}^{W'_F}(\rho)(\sigma)(v_1 \otimes v_2) = \rho(\sigma ^2) v_2
 \otimes v_1$$ for $v_1$ and $v_2$ in $V$.\\

We refer to section 7 of \cite{P} for the definition and the basic properties
 of multiplicative induction in general.

\begin{df}
The function $L_F^K(\rho,s)$ is by definition the usual $L$-function of the representation $M_{W'_K}^{W'_F}(\rho)$, i.e. $L_F^K(\rho,s)=L(M_{W'_K}^{W'_F}(\rho),s)$. If $\pi$ is the irreducible representation of $G_n(K)$, associated to $\rho$ by the local Langlands correspondence, we denote by $L_F^{K,W}(\pi,s)$ the function $L_F^K(\rho,s)$.
\end{df}

The following conjecture is expected to be true.

\begin{conj}\label{conjasai}
Let $\pi$ be a generic representation of the group $G_n(K)$, and let $\rho$ be the representation of dimension $n$ of the Weil-Deligne group $W'_K$ of $K$, corresponding to $\pi$ through Langlands correspondence. Then we have the following equality of $L$-functions:
$$ L_F^K(\pi,s) = L_F^K(\rho,s) .$$
\end{conj}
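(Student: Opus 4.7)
The plan is to derive the conjectured equality by matching a product decomposition of each side: use multiplicative induction on the Galois side, known results for discrete series and for Rankin--Selberg pairs of representations, and an adaptation of the Cogdell--Piatetski-Shapiro method on the automorphic side.

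Write $\pi = \Delta_1 \times \cdots \times \Delta_t$ with each $\Delta_i$ quasi-square-integrable and corresponding under local Langlands to an $n_i$-dimensional Weil--Deligne representation $\rho_i$, so that $\rho = \rho_1 \oplus \cdots \oplus \rho_t$. A standard computation with multiplicative induction (see Section~7 of \cite{P}) gives
$$ M_{W'_K}^{W'_F}(\rho) \;\simeq\; \bigoplus_{i=1}^{t} M_{W'_K}^{W'_F}(\rho_i) \;\oplus\; \bigoplus_{i<j} \mathrm{Ind}_{W'_K}^{W'_F}\bigl(\rho_i \otimes \rho_j^{\sigma}\bigr), $$
hence $L_F^K(\rho,s) = \prod_i L_F^K(\rho_i,s) \cdot \prod_{i<j} L(\rho_i \otimes \rho_j^{\sigma}, s)$. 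Combining \cite{AR} with \cite{He} gives $L_F^K(\rho_i, s) = L_F^K(\Delta_i, s)$ for each $i$, and local Langlands compatibility with $L$-factors of pairs identifies $L(\rho_i \otimes \rho_j^{\sigma}, s)$ with the Rankin--Selberg factor $L(\Delta_i \times \Delta_j^{\sigma}, s)$. It therefore suffices to prove the automorphic factorization
$$ L_F^K(\pi,s) \;=\; \prod_i L_F^K(\Delta_i,s) \;\cdot\; \prod_{i<j} L(\Delta_i \times \Delta_j^{\sigma}, s). \qquad (\star) $$

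To establish $(\star)$, I would unfold the integrals $I(W,\phi,s)$ along the Bernstein--Zelevinsky filtration of derivatives of $\pi$ (Proposition \ref{calculder}), whose Whittaker models are described in Proposition \ref{whitder} and Corollary \ref{whittder}. Each stage of the filtration contributes either an Asai integral of a single $\Delta_i$ or a Rankin--Selberg pair integral involving $\Delta_i \otimes \Delta_j^{\sigma}$; this yields the divisibility of $L_F^K(\pi,s)$ by the right-hand side of $(\star)$. For the reverse divisibility, the factorization $L_F^K = L_{(0)} \cdot L_{Rad(ex)}$ recalled above and Lemma \ref{LM} reduce the question to matching the exceptional poles of $L_F^K(\pi,s)$, which by Theorem \ref{s-dist} are exactly the shifts $s_0$ at which $\pi$ is $|\cdot|_F^{-s_0}$-distinguished.

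The hard part is precisely this last matching. The right-hand side of $(\star)$ has a pole at $s_0$ exactly when either some shifted $\Delta_i$ is distinguished or some relation $\Delta_i^{\vee} \simeq \Delta_j^{\sigma} |\cdot|_K^{s_0}$ holds; these are the combinatorial configurations that Conjecture \ref{distgen} (applied to the twist $\pi \otimes |\cdot|_K^{s_0/2}$) predicts to imply $|\cdot|_F^{-s_0}$-distinction of $\pi$. Without that conjecture one only obtains the divisibility $L_F^K(\pi,s) \mid \prod_i L_F^K(\Delta_i,s) \cdot \prod_{i<j} L(\Delta_i \times \Delta_j^{\sigma}, s)$, so that Conjecture \ref{conjasai} is equivalent to Conjecture \ref{distgen} as announced in Theorem \ref{equiv}. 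Since the latter is known for principal series by \cite{M1}, this same strategy unconditionally yields the equality $L_F^K(\pi,s) = L_F^K(\rho,s)$ in that case.
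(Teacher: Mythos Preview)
Your reduction to the product formula $(\star)$ via the multiplicative-induction decomposition on the Galois side is correct and matches the paper (Corollary \ref{asaigen}), and you rightly recognise that the full statement remains conditional on Conjecture \ref{distgen}, with the principal-series case unconditional.

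The gaps are in how you propose to establish $(\star)$ on the automorphic side. First, the claim that the derivative filtration ``contributes either an Asai integral of a single $\Delta_i$ or a Rankin--Selberg pair integral'' is not what happens: by Proposition \ref{calculder} the irreducible constituents of $\pi^{(k)}$ are full products $\Delta_1^{(k_1)}\times\cdots\times\Delta_t^{(k_t)}$, and what the filtration analysis actually yields (Theorem \ref{L0}) is that $L_{(0)}(\pi,s)$ is the \emph{lcm} of the exceptional Asai factors of these products, not a product over individual or pairwise pieces. Extracting $(\star)$ from this requires an induction on $n$ together with the $L_{Rad(ex)}$ formula of Proposition \ref{distgen2}, and even then only under a ``general position'' hypothesis (Definition \ref{gen}) ensuring complete reducibility of derivatives and absence of accidental common poles---a hypothesis you do not mention. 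Second, your two divisibility statements point in opposite directions, and the one you call unconditional, $L_F^K(\pi,s)\mid\prod$, is in fact also conditional in the paper (Proposition \ref{div}). Third, and most seriously, the passage from general position to arbitrary $\pi$ is where the real work lies: the paper deforms $\pi$ to $\pi_u$, invokes Bernstein's theorem (Theorem \ref{Ber}) to get rationality of the integrals in $(u,s)$, specialises to obtain one divisibility (Proposition \ref{div}), proves a $\gamma$-factor product relation up to units (Corollary \ref{gammauptounit}), and then uses a Langlands-ordering argument on the real parts of the $u_i$ to separate the zeros of the missing polynomial factors (Theorem \ref{asailang}). None of this deformation/$\gamma$-factor machinery appears in your sketch, and without it the argument does not extend beyond general position.
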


Indeed, this conjecture is true for discrete series representations, it follows from Theorem 1.6 of \cite{AR} and Theorem of Section 1.5 in \cite{He}. The proof uses global methods. For $G_2(K)$, the conjecture is proved in \cite{M2} for ordinary representations, using local methods.

\section{Asai $L$-functions of generic representations}

\textbf{The aim of this section is to show that Conjecture \ref{distgen} of the first section implies the conjecture \ref{conjasai} about Asai L-functions of generic representations.}\\

Assuming Conjecture \ref{distgen}, we will first show in Subsection \ref{sectradex} that Conjecture \ref{conjasai} is true for representations in general position (see Definition \ref{gen}). Then in Subsection \ref{subsectionder}, we will deform a representation $\pi$ in general position by twisting it by an unramified character of the center of a standard Levi subgroup, identified with a complex multivariable $u\in(\mathbb{C}/\frac{2i \pi}{ln(q_K)\mathbb{Z}})^t$, into a representation $\pi_u$. We will then prove that the equality of Conjecture \ref{conjasai} is preserved when $u$ varies. This will allow us to conclude.\\

For irreducible principal series representations (i.e. if the ${\Delta _i}$'s are characters), the conjecture \ref{distgen} is proved in \cite{M1}, hence this will allow us to compute Asai $L$-functions of such representations.\\

\subsection{Asai $L$-functions of quasi-square-integrable representations}\label{sectasaidiscr}

We start this subsection by computing Asai $L$-functions of quasi-square-integrable representations.

\begin{prop}\label{asaidiscrete}
Let $\rho$ be an irreducible supercuspidal representation of $G_m(K)$, and $St_n(\rho)$ the representation $\pi=[\rho|\ |_K^{(1-n)/2},\rho|\ |_K^{(3-n)/2},\dots,\rho|\ |_K^{(n-3)/2},\rho|\ |_K^{(n-1)/2}]$ of $G_{mn}(K)$. We have the following equality:
\begin{equation}\label{egalasaidiscrete} L_F^K(St_n(\rho),s)=\prod_{k=0}^{n-1} L_F^K(\eta_{K/F}^{n-1-k}\rho,s+k).\end{equation}
\end{prop}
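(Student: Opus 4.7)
The plan is to reduce to a purely Galois-side computation. By the combination of Theorem 1.6 of \cite{AR} with the result of Section 1.5 of \cite{He}, recalled just before this section, one has $L_F^K(\pi,s)=L_F^K(\rho,s)$ for any discrete series representation $\pi$ with Langlands parameter $\rho$. Let $\tau$ denote the irreducible $m$-dimensional representation of $W_K$ corresponding to the supercuspidal $\rho$; then the parameter of $St_n(\rho)$ is the Weil--Deligne representation $\tau\otimes \mathrm{Sp}_n$, where $\mathrm{Sp}_n$ is the $n$-dimensional representation of $W'_K$ obtained from $\mathrm{Sym}^{n-1}$ of $SL(2,\mathbb{C})$. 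It thus suffices to compute $L(M_{W'_K}^{W'_F}(\tau\otimes \mathrm{Sp}_n),s)$ and match it with the claimed product.

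A direct inspection of the explicit formulas defining $M_{W'_K}^{W'_F}$ given in this section shows that multiplicative induction is compatible with tensor products: for any $V_1$, $V_2$ in the category of representations of $W'_K$,
\[ M_{W'_K}^{W'_F}(V_1\otimes V_2)\;\cong\;M_{W'_K}^{W'_F}(V_1)\otimes M_{W'_K}^{W'_F}(V_2), \]
the isomorphism being the obvious permutation of tensor factors, which one checks intertwines both the $W'_K$-action and the $\sigma$-swap. Applied here, this yields $M_{W'_K}^{W'_F}(\tau\otimes \mathrm{Sp}_n)\cong M_{W'_K}^{W'_F}(\tau)\otimes M_{W'_K}^{W'_F}(\mathrm{Sp}_n)$. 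Since $\mathrm{Sp}_n^{\sigma}\cong \mathrm{Sp}_n$ (the $SL(2,\mathbb{C})$-factor is trivially $\sigma$-invariant, and the relevant $W_K$-twists are by unramified powers of $|\ |_K$, which are $\sigma$-invariant), the restriction of $M_{W'_K}^{W'_F}(\mathrm{Sp}_n)$ to $W'_K$ is $\mathrm{Sp}_n\otimes \mathrm{Sp}_n$, which decomposes by Clebsch--Gordan as $\bigoplus_{k=0}^{n-1}\mathrm{Sp}_{2k+1}$.

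Next I would analyze the $W'_F$-structure of each summand. The component $\mathrm{Sp}_{2k+1}$ lies inside the symmetric square $\mathrm{Sym}^2\mathrm{Sp}_n$ exactly when $n-1-k$ is even, and inside the exterior square otherwise. Consequently the extension of $\mathrm{Sp}_{2k+1}$ to a $W'_F$-representation produced by the $\sigma$-swap differs from the canonical extension (trivial sign on $\sigma$) by the sign character $(-1)^{n-1-k}$, which by local class field theory is the character $\eta_{K/F}^{n-1-k}$. Combining with the first factor,
\[ M_{W'_K}^{W'_F}(\tau\otimes \mathrm{Sp}_n)\;\cong\;\bigoplus_{k=0}^{n-1}\bigl(\eta_{K/F}^{n-1-k}\otimes M_{W'_K}^{W'_F}(\tau)\bigr)\otimes \mathrm{Sp}_{2k+1}. \]
Taking L-functions summand by summand and using the standard identity relating $L(V\otimes \mathrm{Sp}_j,s)$ to $L(V,s+\text{shift})$ (reduction to the $\ker N$-line), the shifts line up as $k=0,1,\dots,n-1$ and yield the asserted equality.

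The main obstacle is the third step: carefully unwinding the $\sigma$-swap of $M_{W'_K}^{W'_F}$ on each Clebsch--Gordan summand of $\mathrm{Sp}_n\otimes \mathrm{Sp}_n$, and precisely matching the alternation of symmetric versus exterior parity $(-1)^{n-1-k}$ with the twist by $\eta_{K/F}^{n-1-k}$. Everything else reduces either to standard bookkeeping or to well-known properties of Weil--Deligne L-functions.
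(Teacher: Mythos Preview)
Your proposal is correct and follows essentially the same approach as the paper's proof: both reduce to the Galois side via \cite{AR} and \cite{He}, use multiplicativity of $M_{W'_K}^{W'_F}$ with respect to tensor products, decompose $\mathrm{Sp}_n\otimes\mathrm{Sp}_n$ by Clebsch--Gordan, identify the symmetric/alternating pieces with the $\eta_{K/F}$-twists, and read off the $L$-function term by term. The only differences are cosmetic (your indexing by $\mathrm{Sp}_{2k+1}$ versus the paper's $\mathrm{Sp}(2n-1-2k)$, and your phrasing of the parity via the sign character of $W_F/W_K$ rather than directly stating the $W_F$-action on $\mathrm{Sym}$ and $\mathrm{Alt}$).
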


\begin{proof} We know from Proposition Theorem 1.6 of \cite{AR} and Theorem of Paragraph 1.5 in \cite{He} that $L_F^K(\pi,s)$ is equal to the standard $L$-function of $M_{W'_K}^{W'_F}(\rho \otimes Sp(n))$. This latter representation is isomorphic to $M_{W'_K}^{W'_F}(\rho)\otimes M_{W'_K}^{W'_F}(Sp(n))$. Here $SL(2,\mathbb{C})$ acts trivially on $M_{W'_K}^{W'_F}(\rho)$, and $W_F$ acts as $M_{W_K}^{W_F}(\rho)$. The $SL(2,\mathbb{C})$-module $M_{W'_K}^{W'_F}(Sp(n))$ is isomorphic to $Sp(n)\otimes Sp(n)$, which is the direct sum $\oplus_{k=1}^{n} Sp(2k-1)$. In this direct sum, the submodules $Sym(Sp(n))$ and $Alt(Sp(n))$ are respectively isomorphic to $\oplus_{k=0}^{E((n-1)/2)} Sp(2n-1-4k)$ and $\oplus_{k=0}^{E(n/2-1)} Sp(2n-3-4k)$.\\
 As $W_F$ acts trivially on $Sym(Sp(n))$, and by $\eta_{K/F}$ on $Alt(Sp(n))$, we deduce the equality $$M_{W'_K}^{W'_F}(\rho \otimes Sp(n))=\oplus_{k=0}^{n-1} \rho \otimes \eta_{K/F}^{k} Sp(2n-1-2k).$$ 
The wanted equality of Asai $L$-functions follows. \end{proof}

This has for corollary the following result (see Section 4 of \cite{AR}).

\begin{cor}
Let $\rho$ be an irreducible supercuspidal representation of $G_m(K)$. The representation $\pi=St_n(\rho)$ of $G_{mn}(K)$ is distinguished if and only if $\rho$ is $\eta_{K/F}^{n-1}$-distinguished.
\end{cor}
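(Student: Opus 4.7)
The plan is to combine Kable's criterion for discrete series distinction (Proposition \ref{K}), the factorization of Asai $L$-functions given in Proposition \ref{asaidiscrete}, and the supercuspidal pole criterion of Proposition \ref{Lcusp}. First I would reduce to the case where $\rho$ has a unitary central character, so that $\pi=St_n(\rho)$ is a genuine discrete series representation (the non-unitary case follows by the standard twist $\rho=\rho_0\otimes|\,\cdot\,|_K^t$ and the identity $|x|_K=|x|_F^2$ for $x\in F^*$); then Proposition \ref{K} says that $\pi$ is distinguished iff $L_F^K(\pi,s)$ has a pole at $s=0$.

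Inserting the factorization of Proposition \ref{asaidiscrete},
$$L_F^K(\pi,s)=\prod_{k=0}^{n-1}L_F^K(\eta_{K/F}^{n-1-k}\rho,\,s+k),$$
a pole at $s=0$ exists if and only if some factor $L_F^K(\eta_{K/F}^{n-1-k}\rho,s)$ has a pole at $s=k$. Since $\eta_{K/F}^{n-1-k}\rho$ is again supercuspidal, Proposition \ref{Lcusp} translates the existence of such a pole into $\eta_{K/F}^{n-1-k}\rho$ being $|\,\cdot\,|_F^{-k}$-distinguished for some $k\in\{0,\dots,n-1\}$.

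The crucial step is to rule out $k\geq 1$. Evaluating any $|\,\cdot\,|_F^{-k}$-invariant linear form on the central subgroup $F^*I_m\subset G_m(F)$ forces the central character of $\eta_{K/F}^{n-1-k}\rho$ restricted to $F^*$ to equal $|\,\cdot\,|_F^{-mk}$. But $\omega_\rho$ is unitary by assumption and $\eta_{K/F}$ has order two, so this character is unitary on $F^*$; comparing absolute values forces $\mathrm{Re}(mk)=0$, hence $k=0$. Only the $k=0$ factor, namely $L_F^K(\eta_{K/F}^{n-1}\rho,s)$, can therefore contribute a pole at $s=0$, and Proposition \ref{Lcusp} says this happens iff $\eta_{K/F}^{n-1}\rho$ is distinguished, which by Definition \ref{dfdist} is exactly the condition that $\rho$ is $\eta_{K/F}^{n-1}$-distinguished.

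I expect the only delicate point to be notational: one must pin down a convention for extending the quadratic character $\eta_{K/F}$ of $F^*$ to a character of $K^*$ used in the twists $\eta_{K/F}^{n-1-k}\rho$, compatible with the Galois-side identification made in the proof of Proposition \ref{asaidiscrete}. Once this is fixed, the equivalence between ordinary distinction of $\eta_{K/F}^{n-1}\rho$ and $\eta_{K/F}^{n-1}$-distinction of $\rho$ is immediate, because on the image $F^*=\det G_m(F)$ any such extension restricts unambiguously to $\eta_{K/F}$.
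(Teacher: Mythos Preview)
Your proof is correct and follows essentially the same route as the paper's: both combine Proposition~\ref{K}, the factorization of Proposition~\ref{asaidiscrete}, and Proposition~\ref{Lcusp}, then use unitarity of the central character to force $k=0$. The only organizational difference is that the paper derives unitarity of $c_\rho$ separately from each hypothesis (distinction of $\pi$ gives $c_\pi|_{F^*}=1$, hence $c_\rho$ unitary; $\eta_{K/F}^{n-1}$-distinction of $\rho$ gives $c_\rho|_{F^*}$ unitary, hence $c_\rho$ unitary), whereas you phrase this as an upfront reduction via twisting---but the content is the same, and your remark about the convention for extending $\eta_{K/F}$ to $K^*$ is a point the paper leaves implicit.
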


\begin{proof} Suppose $\rho$ is $\eta_{K/F}^{n-1}$-distinguished, then it is unitary because its central character has unitary restriction to $F^*$, hence is unitary itself. Thus $\pi$ is a discrete series representation, and its Asai $L$-function has a pole at zero from Proposition \ref{asaidiscrete}, the representation $\pi$ is then distinguished according to Proposition \ref{K}.\\
On the other hand, if the representation $\pi$ is distinguished, its central character $c_{\pi}$ is trivial on $F^*$, hence unitary, but as $c_{\pi}=c_{\rho}^n$, the central character $c_{\rho}$ is also unitary, and the supercuspidal representation $\rho$ is unitary too. Again $\pi$ is a discrete series representation, and its Asai $L$-function has a pole at zero because it is distinguished according to Proposition \ref{K}. From equality \ref{egalasaidiscrete}, there is $k$ between $0$ and $n-1$ such that $L_F^K(\eta_{K/F}^{n-1-k}\rho,s+k)$ has a pole at zero, which implies from Proposition \ref{Lcusp} that $\rho$ is $\eta_{K/F}^{n-1-k}|\ |_F^{-k}$-distinguished, but as $c_{\rho}$ is unitary, the integer $k$ must be equal to zero, and $\rho$ is $\eta_{K/F}^{n-1}$-distinguished. \end{proof}

\begin{cor}\label{polediscrete}
If $\Delta$ is a discrete series representation of the group $G_n(K)$, the function $L_F^K(\Delta,s)$ has no pole in the half plane $\left\lbrace s/ Re(s)>0 \right\rbrace$. 
\end{cor}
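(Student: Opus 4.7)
The plan is to combine the explicit product formula from Proposition \ref{asaidiscrete} with Proposition \ref{Lcusp} and the unitarity of the central character of a discrete series.

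First I would reduce to the Steinberg case. By Theorem 1.1 of Subsection 1.2, any discrete series $\Delta$ of $G_n(K)$ is of the form $St_l(\rho)$ for some supercuspidal $\rho$ of $G_r(K)$ with $rl=n$, and moreover $\Delta$ lies in the discrete series precisely when $\rho|\ |_F^{(l-1)/2}$ is unitarizable, which forces $c_\rho$ to be unitary. Applying Proposition \ref{asaidiscrete} (after the appropriate unramified twist, since $St_l(\rho)$ in the conjugate-self-dual normalization is compatible with the formula), one obtains
\[
L_F^K(\Delta,s) \;=\; \prod_{k=0}^{l-1} L_F^K\!\left(\eta_{K/F}^{\,l-1-k}\rho,\,s+k\right),
\]
so it suffices to show that for each $k\in\{0,\dots,l-1\}$, the factor $L_F^K(\eta_{K/F}^{\,l-1-k}\rho,\,s+k)$ has no pole with $\mathrm{Re}(s)>0$, i.e.\ $L_F^K(\eta_{K/F}^{\,l-1-k}\rho,w)$ has no pole with $\mathrm{Re}(w)>k\geq 0$.

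Next I would use Proposition \ref{Lcusp}: a pole of $L_F^K(\eta_{K/F}^{\,l-1-k}\rho,w)$ at $w=s_0$ is equivalent to $\eta_{K/F}^{\,l-1-k}\rho$ being $|\ |_F^{-s_0}$-distinguished, hence to $\rho$ being $\mu$-distinguished for $\mu=\eta_{K/F}^{\,l-1-k}|\ |_F^{-s_0}$. If $L$ is a nonzero linear form realizing this distinction, evaluating the equivariance $L(\rho(g)v)=\mu(\det g)L(v)$ on the scalar matrices $zI_r$ with $z\in F^*$ yields the identity $c_\rho(z)=\mu(z)^r$ of characters of $F^*$. The left-hand side is unitary, because $c_\rho$ is unitary; the right-hand side equals $\eta_{K/F}^{\,r(l-1-k)}(z)\,|z|_F^{-rs_0}$. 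Since $\eta_{K/F}$ is unitary, this forces $|z|_F^{-r\,\mathrm{Re}(s_0)}\equiv 1$ on $F^*$, i.e.\ $\mathrm{Re}(s_0)=0$.

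Consequently every pole of the $k$-th factor lies on the vertical line $\mathrm{Re}(w)=0$, so the corresponding poles of $s\mapsto L_F^K(\eta_{K/F}^{\,l-1-k}\rho,s+k)$ lie on $\mathrm{Re}(s)=-k\leq 0$. Taking the product over $k=0,\dots,l-1$, no pole of $L_F^K(\Delta,s)$ occurs in the open half-plane $\mathrm{Re}(s)>0$. The only subtle point in the argument is confirming that a discrete series can be written in the exact normalization of Proposition \ref{asaidiscrete}; this is a harmless unramified twist $\rho\leadsto \rho|\ |_K^{(1-l)/2}$ which leaves unitarity of the central character of the inducing supercuspidal (up to unramified twist) untouched, and the conclusion on the location of poles is insensitive to such shifts since we only need $k\geq 0$.
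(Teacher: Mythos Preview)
Your proof is correct and follows essentially the same approach as the paper: write $\Delta=St_l(\rho)$ with $\rho$ unitary supercuspidal, apply the product formula of Proposition \ref{asaidiscrete}, translate poles into $\mu$-distinction via Proposition \ref{Lcusp}, and use unitarity of $c_\rho$ to force $\mathrm{Re}(s_0)=-k$. Your explicit computation $c_\rho(z)=\mu(z)^r$ on $F^*$ is exactly the step the paper compresses into the phrase ``the central character of $\rho$ being unitary, the real part of $s_0$ must be equal to $-k$''; the final paragraph worrying about normalization is unnecessary, since for a discrete series one may directly take the centered segment with unitary $\rho$.
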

\begin{proof}
There is a unitary supercuspidal representation $\rho$ of $G_m(K)$, and a divisor $d$ of $n$ such that $\Delta = St_d(\rho)$. According to Proposition \ref{asaidiscrete} and Proposition \ref{Lcusp}, the function $L_F^K(\Delta,s)$ has a pole at $s_0$ if and only if there is an integer $k$ between $0$ and $d-1$ such that $\rho$ is $\eta_{K/F}^{n-1-k}|\ |_F^{-s_0 -k}$ distinguished. The central character of $\rho$ being unitary, the real part of $s_0$ must be equal to $-k$.\end{proof}

\begin{cor}\label{divdiscrete}
If $\Delta$ is a quasi-square-integrable representation of the group $G_n(K)$, and $k$ is an integer between $1$ and $n-1$, such that $\Delta^{(k)}$ is nonzero, then the function $L_F^K(\Delta^{(k)},s)$ divides $L_F^K(\Delta,s)$.
\end{cor}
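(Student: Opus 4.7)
The plan is to reduce everything to the explicit product formula of Proposition~\ref{asaidiscrete} and do a straightforward matching of factors.

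Step one: write $\Delta$ in its Zelevinsky segment form. By Theorem~1.1, $\Delta=[\rho_0,\rho_0|\ |_K,\dots,\rho_0|\ |_K^{l-1}]$ for some irreducible supercuspidal $\rho_0$ of $G_r(K)$ with $n=rl$. By Proposition~\ref{calculder}(2), $\Delta^{(k)}$ vanishes unless $k=jr$ for some $1\le j\le l-1$, in which case
\[
\Delta^{(jr)}=[\rho_0|\ |_K^{j},\dots,\rho_0|\ |_K^{l-1}],
\]
which is again a quasi-square-integrable representation, equal (in the symmetric normalization) to $St_{l-j}(\rho_0|\ |_K^{(l+j-1)/2})$; similarly $\Delta=St_l(\rho_0|\ |_K^{(l-1)/2})$.

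Step two: apply Proposition~\ref{asaidiscrete} to both representations, and use the basic identity $L_F^K(\pi\otimes|\ |_K^{a},s)=L_F^K(\pi,s+2a)$ from Section~\ref{sectasaibasic} to strip the central twist. This yields
\[
L_F^K(\Delta,s)=\prod_{k=0}^{l-1} L_F^K(\eta_{K/F}^{\,l-1-k}\rho_0,\ s+k+l-1),
\]
and
\[
L_F^K(\Delta^{(jr)},s)=\prod_{k=0}^{l-j-1} L_F^K(\eta_{K/F}^{\,l-j-1-k}\rho_0,\ s+k+l+j-1).
\]

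Step three: reindex the first product by $k'=k+j$. For $k'\in\{j,\dots,l-1\}$ the $k'$-th factor of $L_F^K(\Delta,s)$ becomes $L_F^K(\eta_{K/F}^{\,l-j-1-(k'-j)}\rho_0,s+(k'-j)+l+j-1)$, which is exactly the $(k'-j)$-th factor of $L_F^K(\Delta^{(jr)},s)$. Hence $L_F^K(\Delta^{(jr)},s)$ is precisely the sub-product of $L_F^K(\Delta,s)$ indexed by $k'=j,\dots,l-1$, and since every factor $L_F^K(\eta^{a}\rho_0,s+b)$ is an Euler factor whose inverse lies in $\mathbb{C}[q_F^{-s}]$ with constant term~$1$, omitting some of these factors still leaves an Euler factor. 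Therefore $L_F^K(\Delta^{(jr)},s)$ divides $L_F^K(\Delta,s)$, as required.

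The only thing that needs care is the bookkeeping of the $|\ |_K$-twists and the $\eta_{K/F}$-twists in Proposition~\ref{asaidiscrete}: one must remember that $\Delta$ and $\Delta^{(jr)}$ are naturally presented as segments starting at $\rho_0$ and at $\rho_0|\ |_K^{j}$ respectively, so the ``center'' of the segment used in $St_{\bullet}(\cdot)$ differs by $j/2$, which is exactly what makes the two products line up after the shift in $s$. No new analytic input beyond Propositions~\ref{calculder} and \ref{asaidiscrete} is needed.
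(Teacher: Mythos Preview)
Your proof is correct and follows exactly the approach the paper indicates: the paper's proof is the single sentence ``It is a consequence of Proposition~\ref{calculder} and Proposition~\ref{asaidiscrete}'', and you have simply written out the explicit factor-matching that this sentence encodes. The only cosmetic point is that the phrase ``reindex the first product by $k'=k+j$'' is slightly misworded (you are really picking out the sub-range $k'\in\{j,\dots,l-1\}$ of the product for $\Delta$ and matching it to the full product for $\Delta^{(jr)}$ via $k=k'-j$), but the computation that follows is correct.
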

\begin{proof} It is a consequence of Proposition \ref{calculder} and Proposition \ref{asaidiscrete}.\end{proof}

\begin{rem} In the same manner, it follows from Proposition \ref{calculder} and Theorem 8.2 of \cite{JPS} (or Theorem 2.3 of \cite{CP}) that if $\Delta$ and $\Delta'$ are respectively two quasi-square-integrable representations of $G_n(K)$ and $G_n'(K)$, then if $k$ and $k'$ are two integers such that $\Delta^{(k)}$ and $\Delta'^{(k')}$ are nonzero, the function $L(\Delta^{(k)}\times \Delta'^{(k')},s)$ divides the function $L(\Delta \times \Delta' ,s)$.\end{rem}

\subsection{Asai $L$-functions and derivatives}

In this subsection, we analyse the function $L_{(0)}(\pi,s)$ for $\pi$ in ``general position''. This is a straightforward transposition of the results of \cite{CP}, to the context of Asai L-functions.\\

 To do this we write the Laurent expansion of $I_{(0)} (W,s)$, for $W$ in $\mathcal{W}(\pi,\psi)$. Suppose that $L_{(0)}(\pi,s)$ (see definition \ref{defL0}) has a pole of order $d$ at $s_0$:

$$I_{(0)}  (W,s)= L_{s_0}(W,s)/(q_F^s-q_F^{s_0})^d + \ higher \ order \ terms.$$

Here the linear form $L_{s_0}$ is a non trivial linear form on $W(\pi,\psi)$, which satisfies the quasi-invariance $L_{s_0}(\pi(p)W)= |det(p)|_F^{1- s_0} L_{s_0}(W)$ for $p$ in $P_n(F)$ and $W$ in $W(\pi,\psi)$.\\

Considering the filtration of $P_n(K)$-submodules $W(\pi_n,\psi) \subset ... \subset W(\pi_2,\psi) \subset W(\pi_1,\psi)=W(\pi,\psi)$, then $L_{s_0}$ must be zero on $W(\pi_n,\psi)$, because restrictions to $P_n(K)$ of functions $W$ in $W(\pi_n,\psi)$ have compact support modulo $N_n(K)$, hence functions $I_{(0)}  (W,s)$ belong to $\mathbb{C}[q_F^{-s}]$. Therefore there exists a smallest $k$ such that $L_{s_0}$ vanishes on $W(\pi_{n-k+1},\psi)$ but is non-zero on $W(\pi_{n-k},\psi)$.\\
Hence the functions $I_{(0)}(W,s)$ with $W$ in $W(\pi_{n-k},\psi)$ account for the pole at $s_0$ with order $d$. From Section 1 of \cite{CP}, functions $W$ of $W(\pi_{n-k},\psi)$ viewed as functions $W(\begin{array}{lll} g &  \\  & 1 \end{array})$ on $G_{n-1}(K)$ have compact support in the last $n-k-1$ rows of $g$ modulo $N_{n-1}(K)$. Using partial Iwasava decomposition to write $g$ in $G_{n-1}(K)$ as $$g=\left
 (\begin{array}{cccccccc}
h &  &  & \\
    & a_{k+1} &  &     \\
    &  & \ddots &   \\  
    &  &     & a_{n-1} 
 \end{array}\right)k' \ (mod \ N_{n-1}(K))$$ with $h$ in $G_k(K)$, $k'$ in $K_{n-1}^K$, the function $W$ has compact multiplicative support in the $a_i$. Then there exists constants $c_i$ and $\beta_i$ such that our integral becomes a finite sum of the form $$ I_{(0)}  (W,s)= \sum_i c_i q^{-\beta_i s} \int_{N_k(F)\backslash G_k(F)} W_i \left(\begin{array}{lll} h &  \\  & I_{n-k}\end{array}\right)  |det(h)|_F^{s-(n-k)} dh.$$  
Set \begin{equation}\label{eqder}I_{(n-k-1)}(W,s)= \int_{N_k(F)\backslash G_k(F)} W\left(\begin{array}{lll} h &  \\  & I_{n-k}\end{array}\right)  |det(h)|_F^{s-(n-k)} dh.\end{equation} 
The pole at $s_0$ of order $d$ must come from an integral $I_{(n-k-1)}(W,s)$. Morover since $W$ enters into these integrals via its restriction to $G_k(F)$, it only depends on its image in $W(\pi_{(n-k-1)},\psi)$. But by Lemma 9.2 of \cite{JPS}, each $I_{(n-k-1)}(W,s)$ actually occurs in $I_{(0)} (W_0,s)$ for some $W_0$ in $W(\pi,\psi)$, hence belongs to $I_{(0)} (\pi)$. These integrals have therefore a pole of order at most $d$ at $s_0$, and a pole of order $d$ for appropriate choice of $W$ in $W(\pi,\psi)$.\\
We now write the expansion of $I_{(n-k-1)}(W,s)$ at $s_0$ for $W$ in $W(\pi,\psi)$: $$ I_{(n-k-1)}(W,s)=L_{(n-k-1),s_0}(W)/(q_F^s-q_F^{s_0})^d + \ higher \ order \ terms.$$
The value at $W\in W(\pi,\psi)$ of the linear form $L_{(n-k-1),s_0}$  depends only of the image of $W$ in $W(\pi_{(n-k-1)},\psi)$. Because of the twists involved in the definitions of the derivatives, the linear form $L_{(n-k-1),s_0}$ still verifies the quasi-invariance under the subgroup $P_{k+1}(F)$: $$L_{(n-k-1),s_0}(\pi_{(n-k-1)}(p)(W))=|det(p)|_F^{1- s_0} L_{(n-k-1),s_0}(W).$$ Furthermore the index $k$ was chosen so that $L_{(n-k-1),s_0}$ is non trivial but vanishes on $W(\pi_{n-k+1},\psi)$, i.e. on $W(\pi_{(n-k-1),2},\psi)$ if we consider it as a linear form on $W(\pi_{(n-k-1)},\psi)$.\\
 As the $P_{k+1}(K)$-module $\pi_{(n-k-1)}/\pi_{(n-k-1),2}$ is isomorphic to $\Psi^{+}(\pi^{(n-k)})$, the functionnal $L_{(n-k-1),s_0}$ defines a non trivial linear form on $W(\Psi^{+}(\pi^{(n-k)}),\psi)$.\\
We now make the same assumption as in \cite{CP}, which is verified for representations in ``general position''. 

\begin{ass}\label{ass} For all $j$ between $1$ and $n$, the derivative $\pi^{(n-j)}$ is completely reducible.\end{ass}

Taking $k$ as before, such that $L_{(n-k-1),s_0}$ defines a non trivial linear form on $W(\Psi^{+}(\pi^{(n-k)}),\psi)$. Then the representation $\pi^{(n-k)}= \oplus \pi_i^{(n-k)}$, with $\pi_i^{(n-k)}$ irreducible, and there exists $i_0$ such that $L_{(n-k-1),s_0}$ restrict non trivially to $W(\Psi^{+}(\pi_{i_0}^{(n-k)}),\psi)$. Now from Corollary \ref{whittder} of Proposition \ref{whitder}, if $W_{i_0}$ belongs to $W(\Psi^{+}(\pi_{i_0}^{(n-k)}),\psi)$, there exists a function $W'_{i_0}$ and a function $\phi_0$ in $C_c^{\infty}(K)$ which is the characteristic function of a sufficiently small neighborhood of $0$, such that for $h$ in $G_k(K)$,
\begin{equation} \label{res} W_{i_0} \left(\begin{array}{lll} h &  \\  & I_{n-k}\end{array}\right) \phi_0(\eta_k h)= W'_{i_0} (h)|det(h)|_K^{(n-k)/2}\phi_0(\eta_k h).\end{equation}
The integral $I_{(n-k-1)}(W_{i_0},s)$ decomposes as $I_{(n-k-1)}(W_{i_0},s)=I_{(n-k-1)}^0(W_{i_0},s)+I_{(n-k-1)}^1(W_{i_0},s)$, where $$I_{(n-k-1)}^0(W_{i_0},s)= \int_{N_k(F)\backslash G_k(F)} W_{i_0}\left(\begin{array}{lll} h &  \\  & I_{n-k}\end{array}\right)\phi_0(\eta_k h) |det(h)|_F^{s} dh$$ and $$I_{(n-k-1)}^1(W_{i_0},s)= \int_{N_k(F)\backslash G_k(F)} W_{i_0}\left(\begin{array}{lll} h &  \\  & I_{n-k}\end{array}\right)(1-\phi_0(\eta_k h)) |det(h)|_F^{s} dh.$$
As the $(1-\phi_0(\eta_k h))$ term restricts the support of the integrand to being compact in the last row of $h$, modulo $N_k(F)$, the integral $I_{(n-k-1)}^1(W_{i_0},s)$ is actually a linear combination of integrals involving restrictions of functions of $W(\pi,\psi)$ to $G_{k-1}(F)$, which depend only on their image in $W(\pi_{(n-k)},\psi)$. But the linear form $L_{(n-k-1),s_0}$ vanishes on this space, hence the term $I_{(n-k-1)}^1(W_{i_0},s)$ cannot contribute to a pole of order $d$ at $s_0$.\\ 
We deduce that integrals $I_{(n-k-1)}^0(W_{i_0},s)= \int_{N_k(F)\backslash G_k(F)} W'_{i_0}(h) \phi_0(\eta_k h)|det(h)|_F^{s} dh$ contribute to a pole of order $d$ at zero. These are integrals defining the Asai L-function of $\pi_{i_0}^{(n-k)}$, and as $\phi_0(0)=1$, the pole $s_0$ of order $d$ is an exceptional pole of $L_F^K(\pi_{i_0}^{(n-k)},s)$. Moreover, from equation \ref{res}, any integral for $\pi_{i_0}^{(n-k)}$ corresponding to an exceptional pole comes from a $I_{(n-k-1)}^0(W_{i_0},s)$ for some good choice of $W_{i_0}$.\\
We thus proved:

\begin{prop} Suppose that all the derivatives of the generic representation $\pi$ are completely reducible. Then any pole of $L_{(0)}(\pi,s)$ occurs as an exceptional pole of some $L_F^K(\pi_{i_0}^{(n-k)},s)$ with same order, for some integer $k$ between $1$ and $n-1$, with $\pi_{i_0}^{(n-k)}$ an irreducible constituent of $\pi^{(n-k)}$. Furthermore any exceptional pole of $L_F^K(\pi_{i_0}^{(n-k)},s)$, for $k$ an integer between $1$ and $n-1$ and $\pi_{i_0}^{(n-k)}$ an irreducible constituent of $\pi^{(n-k)}$, occurs with at least same order in $L_{(0)}(\pi,s)$.\end{prop}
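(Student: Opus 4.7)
The plan is to read off the pole structure of $L_{(0)}(\pi,s)$ from the derivative filtration $W(\pi_n,\psi) \subset \cdots \subset W(\pi_1,\psi) = W(\pi,\psi)$ by Taylor-expanding $I_{(0)}(W,s)$ at each pole $s_0$ and tracking at which stage of the filtration the leading coefficient first becomes non-trivial. Concretely, if $s_0$ is a pole of order $d$, its leading term produces a nonzero $P_n(F)$-quasi-invariant form $L_{s_0}$ on $W(\pi,\psi)$ which must vanish on $W(\pi_n,\psi)$ because on that submodule $I_{(0)}(W,s)$ is already a polynomial in $q_F^{-s}$. Choose $k$ to be the smallest index such that $L_{s_0}$ vanishes on $W(\pi_{n-k+1},\psi)$ but is non-zero on $W(\pi_{n-k},\psi)$.

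Next, restrict attention to $W \in W(\pi_{n-k},\psi)$ and apply partial Iwasawa decomposition to the $G_{n-1}(F)$-variable: the compact-support condition in the last $n-k-1$ rows forced by membership in $W(\pi_{n-k},\psi)$ collapses those coordinates to a finite $q_F$-exponential sum and reduces $I_{(0)}(W,s)$ to a linear combination of the integrals $I_{(n-k-1)}(W,s)$ of equation (\ref{eqder}). By Lemma 9.2 of \cite{JPS}, each such $I_{(n-k-1)}$ still lies in $I_{(0)}(\pi)$, so the pole at $s_0$ of order $d$ must already be carried by $I_{(n-k-1)}$. Extracting its leading coefficient yields a $P_{k+1}(F)$-quasi-invariant form $L_{(n-k-1),s_0}$ which, by minimality of $k$, vanishes on $W(\pi_{(n-k-1),2},\psi)$ and hence descends to a nonzero functional on $W(\Psi^{+}(\pi^{(n-k)}),\psi)$. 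At this point Assumption \ref{ass} intervenes: decompose $\pi^{(n-k)} = \bigoplus_i \pi_i^{(n-k)}$ into irreducibles and fix an index $i_0$ for which the restriction of the functional remains nonzero.

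I would then invoke Corollary \ref{whittder} to transfer the question to the Asai side: for $W_{i_0} \in W(\Psi^{+}(\pi_{i_0}^{(n-k)}),\psi)$, the corollary produces a $W'_{i_0} \in W(\pi_{i_0}^{(n-k)},\psi)$ and a $\phi_0 \in C_c^\infty(K^k)$ supported near $0$ with $\phi_0(0)=1$ satisfying equation (\ref{res}). Split $I_{(n-k-1)}(W_{i_0},s) = I^0 + I^1$ along $\phi_0$ and $1-\phi_0$. The piece $I^1$ has compact support in the last row of $h$ modulo $N_k(F)$, so it depends on $W_{i_0}$ only through its image in $W(\pi_{(n-k)},\psi)$, where $L_{(n-k-1),s_0}$ vanishes by minimality of $k$; consequently $I^1$ cannot contribute a pole of order $d$. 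The remaining piece $I^0$ is exactly the Asai Rankin--Selberg integral for $\pi_{i_0}^{(n-k)}$ against $\phi_0$, which forces $s_0$ to be an exceptional pole of $L_F^K(\pi_{i_0}^{(n-k)},s)$ of order at least $d$. The converse statement follows by reversing the flow, using the second half of Corollary \ref{whittder} to start from an exceptional-pole integral for $\pi_{i_0}^{(n-k)}$ and lift back to a $W \in W(\pi,\psi)$.

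The main technical obstacle will be the Iwasawa-decomposition step: one must verify that the ``auxiliary'' diagonal coordinates $a_{k+1},\dots,a_{n-1}$ really contribute only polynomial-in-$q_F^{\pm s}$ pre-factors, so that no pole of order $d$ at $s_0$ can hide outside the $I_{(n-k-1)}$ integrals. A secondary but delicate bookkeeping point is reconciling the $|\det|_K^{1/2}$ twists built into the definitions of $\Psi^{\pm}$ and $\Phi^{\pm}$ with the Asai weight $|\det|_F^{s}$, so that the quasi-invariance character of the descended functional matches that needed for a $|\det|_F^{-s_0}$-invariant linear form on $W(\pi_{i_0}^{(n-k)},\psi)$. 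Complete reducibility of all the derivatives is indispensable precisely in the step where we pass from a functional on $W(\Psi^{+}(\pi^{(n-k)}),\psi)$ to one supported by an honest irreducible constituent rather than merely by a subquotient.
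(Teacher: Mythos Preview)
Your proposal is correct and follows essentially the same approach as the paper: Laurent-expand $I_{(0)}$ at a pole, locate via the derivative filtration the smallest $k$ where the leading functional survives, collapse via partial Iwasawa to $I_{(n-k-1)}$, descend to $\Psi^{+}(\pi^{(n-k)})$, use complete reducibility to isolate an irreducible constituent, and then apply Corollary~\ref{whittder} with the $\phi_0/(1-\phi_0)$ splitting to identify the surviving piece as an Asai integral with $\phi_0(0)\neq 0$, forcing an exceptional pole. The two technical caveats you flag (polynomial behaviour of the auxiliary diagonal coordinates and the $|\det|_K^{1/2}$ versus $|\det|_F$ bookkeeping) are exactly the points the paper handles en route, and your remark on the converse via the second half of Corollary~\ref{whittder} matches the paper's argument as well.
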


Put in another way, we proved the following:

\begin{thm}\label{L0} Let $\pi$ be a generic representation, such that its derivatives are completely reducible, then the Euler factor $L_{(0)}(\pi,s)$ is equal to the least common multiple $\vee_{i,k} L_{ex}(\pi_i^{(n-k)},s)$, where the l.c.m. is taken over $k$ in $\left\lbrace 1,\dots,n-1 \right\rbrace$ and $\pi_i^{(n-k)}$ in the irreducible components of $\pi^{(n-k)}$. This implies that $L_F^K(\pi,s)$ is equal to the least common multiple $\vee_{i,k} L_{ex}(\pi_i^{(n-k)},s)$, where the l.c.m. is taken over $k$ in $\left\lbrace 0,\dots,n \right\rbrace$ and $\pi_i^{(n-k)}$ in the irreducible components of $\pi^{(n-k)}$ \end{thm}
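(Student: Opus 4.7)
The first equality in the theorem is essentially a repackaging of the proposition immediately preceding it. The plan is to interpret the two halves of that proposition as mutual divisibility statements: the first half shows that each pole of $L_{(0)}(\pi,s)$, counted with multiplicity, is already present as an exceptional pole in some $L_F^K(\pi_i^{(n-k)},s)$ (hence in $L_{ex}(\pi_i^{(n-k)},s)$) for some $k\in\{1,\dots,n-1\}$, so $L_{(0)}(\pi,s)$ divides $\bigvee_{i,k}L_{ex}(\pi_i^{(n-k)},s)$; the second half gives the reverse divisibility. Since both sides are Euler factors, mutual divisibility yields equality.

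For the second equality, the plan is to pass from $L_{(0)}$ to $L_F^K$ using the factorization $L_F^K(\pi,s)=L_{(0)}(\pi,s)\,L_{Rad(ex)}(\pi,s)$ recalled in Section~\ref{sectasaibasic}, and to show
$$L_F^K(\pi,s)=L_{(0)}(\pi,s)\vee L_{ex}(\pi,s).$$
The indexing is then consistent with the statement because the new index $k=n$ gives $\pi^{(0)}=\pi$ and contributes $L_{ex}(\pi,s)$, while $k=0$ gives $\pi^{(n)}$, a one-dimensional representation of the trivial group, whose exceptional Asai factor is $1$ and is absorbed harmlessly into the lcm.

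The key step is a pole-by-pole analysis. Let $s_0$ be a pole of $L_F^K(\pi,s)$ of order $d$, and let $d_{0}$ and $d_{r}$ be its orders in $L_{(0)}(\pi,s)$ and $L_{Rad(ex)}(\pi,s)$ respectively, so $d=d_{0}+d_{r}$. By Proposition~\ref{LRad(ex)}, $L_{Rad(ex)}(\pi,s)$ has only simple poles, and its pole set is exactly the set of exceptional poles of $L_F^K(\pi,s)$. If $s_0$ is exceptional then $d_r=1$, so $d_0=d-1$, while $L_{ex}(\pi,s)$ has order $d$ at $s_0$, giving $\max(d-1,d)=d$ in the lcm. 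If $s_0$ is not exceptional then $d_r=0$, $d_0=d$, and $L_{ex}(\pi,s)$ contributes $0$, giving again $d$. In all cases the orders of $L_F^K(\pi,s)$ and $L_{(0)}(\pi,s)\vee L_{ex}(\pi,s)$ at $s_0$ coincide, so the two Euler factors are equal.

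Combining the two equalities gives the theorem. The main obstacle is conceptual rather than computational: one must keep track of the fact that $L_{Rad(ex)}$ records only the simple radical of the exceptional poles, while $L_{ex}$ records their full multiplicities — the extra multiplicities on top of the simple ones are precisely what $L_{(0)}$ already accounts for, thanks to Proposition~\ref{LRad(ex)}. Once this is observed, combining the lcm over $k\in\{1,\dots,n-1\}$ (which reconstructs $L_{(0)}$) with the term $k=n$ (which contributes $L_{ex}(\pi,s)$) yields exactly $L_F^K(\pi,s)$.
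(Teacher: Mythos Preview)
Your proposal is correct and follows essentially the same approach as the paper. The paper presents the theorem as a direct reformulation of the preceding proposition (introduced by ``Put in another way, we proved the following''), without spelling out the passage from $L_{(0)}$ to $L_F^K$; your pole-by-pole analysis using $L_F^K=L_{(0)}\cdot L_{Rad(ex)}$ together with Proposition~\ref{LRad(ex)} is exactly the missing detail, and your handling of the boundary indices $k=0$ and $k=n$ is the natural one.
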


\begin{cor}\label{minimalex} a) Let $\pi$ be a generic representation of $G_n(K)$, such that its derivatives are completely reducible, then if the complex number $s_0$ is a pole of $L_F^K(\pi,s)$, there is a minimal $k$, such that $s_0$ is a pole of $L_{ex}(\pi_i^{(k)},s)$, for an irreducible component $\pi_i^{(k)}$ of $\pi^{(k)}$.\\
b) If $\pi'$ is a generic representation of $G_{n'}(K)$, for a pole $s_0$ of $L(\pi\times \pi',s)$, there is a minimal $k$, such that $s_0$ is a pole of $L_{ex}(\pi_i^{(k)}\times \pi_j^{(k)},s)$, for some irreducible components $\pi_i^{(k)}$ and $\pi_j^{(k)}$ of $\pi^{(k)}$. \end{cor}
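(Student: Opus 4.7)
The plan is to deduce both parts as direct formal consequences of Theorem \ref{L0} and its natural analogue for Rankin-Selberg $L$-functions of pairs of generic representations.

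For part (a), Theorem \ref{L0} expresses $L_F^K(\pi,s)$ as the least common multiple $\bigvee_{k,i} L_{ex}(\pi_i^{(k)},s)$ indexed by $k\in\{0,\dots,n\}$ and by the irreducible constituents $\pi_i^{(k)}$ of the $k$-th derivative $\pi^{(k)}$. A pole $s_0$ of $L_F^K(\pi,s)$ must therefore be a pole of at least one factor on the right, so the set of indices $k$ for which $s_0$ is a pole of some $L_{ex}(\pi_i^{(k)},s)$ is a non-empty subset of the finite set $\{0,\dots,n\}$, and hence admits a minimum.

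For part (b), I would first establish the pairs analogue of Theorem \ref{L0}, namely a factorisation of the form
\[
L(\pi\times\pi',s)=\bigvee_{k,i,j}L_{ex}\bigl(\pi_i^{(k)}\times\pi_j^{(k)},s\bigr),
\]
by transposing the derivative-by-derivative argument of the previous subsection to the Rankin-Selberg setting of Section 2 of \cite{CP}. Concretely, one Laurent-expands the pair integrals $I(W,W',\phi,s)$ at $s_0$, uses the Bernstein-Zelevinsky filtrations of $\pi$ and $\pi'$ simultaneously (they share the same integration variable in $G_n(K)/N_n(K)$, so the filtration level is common) to locate the smallest level $k$ at which the leading coefficient survives, applies Corollary \ref{whittder} to both Whittaker factors to rewrite the residual integrals as Rankin-Selberg integrals of a pair of derivatives twisted by a Schwartz function supported near $0$, and invokes Lemma 9.2 of \cite{JPS} to see that every exceptional pair integral of $\pi_i^{(k)}\times\pi_j^{(k)}$ is recovered in $L(\pi\times\pi',s)$. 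Part (b) then follows by the same minimum argument as part (a).

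The main obstacle is really the simultaneous filtration step in the pairs case: one must verify that the leading coefficient $T_{s_0}$ factors through the quotient by $W(\pi_{n-k+1},\psi)\otimes W(\pi'_{n-k+1},\psi^{-1})$ for the same index $k$ on both sides, and that the integration can be restricted accordingly through a common partial Iwasawa decomposition. Once this compatibility is checked, the reduction to irreducible components of each derivative and the application of Corollary \ref{whittder} proceed exactly as in the proof of Theorem \ref{L0}, and the rest is bookkeeping.
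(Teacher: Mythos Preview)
Your proposal is correct and matches the paper's approach: part (a) is exactly the immediate consequence of Theorem~\ref{L0} that you describe, and part (b) is the Rankin--Selberg analogue. The only difference is one of economy: the paper does not re-derive the pairs version but simply cites Theorems~1.1 and~2.2 of \cite{CP}, where the simultaneous-filtration argument you sketch is already carried out in full; so your ``main obstacle'' is in fact already handled in the literature and need not be redone here.
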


\begin{proof} For a), the assertion follows from Theorem \ref{L0}. Assertion b) follows from Theorems 1.1 and 2.2 of \cite{CP}.\end{proof}

\begin{cor}\label{L0discr} If $\Delta=St_m(\rho)$ is a generalized Steinberg representation of $G_{mr}(K)$ associated to the supercuspidal representation $\rho$ of $G_{r}(K)$, then one has $L_{(0)}(\Delta)=L_F^K(\Delta^{(r)})$. In the same manner, if $\Delta'=St_m'(\rho')$ is a generalized Steinberg representation of $G_{m'r}(K)$ associated to the supercuspidal representation $\rho'$ of $G_{r}(K)$, then one has $L_{(0)}(\Delta\times \Delta')=L_F^K(\Delta^{(r)}\times \Delta'^{(r)})$.\end{cor}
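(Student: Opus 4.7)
The plan is to apply Theorem \ref{L0} twice: once to $\Delta$ to compute $L_{(0)}(\Delta,s)$, and once to $\Delta^{(r)}$ to compute $L_F^K(\Delta^{(r)},s)$. The crucial observation is that $\Delta^{(r)}$ is itself a generalized Steinberg representation whose nontrivial derivatives are exactly the higher derivatives $\Delta^{(jr)}$ of $\Delta$, so the two applications of Theorem \ref{L0} produce essentially the same l.c.m., up to a single endpoint term that I will show is trivial.

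By Proposition \ref{calculder}, the derivatives $\Delta^{(jr)}$ of $\Delta=St_m(\rho)$ are the irreducible quasi-square-integrable representations $[\rho|\ |_F^{j},\dots,\rho|\ |_F^{m-1}]$ for $1\le j\le m-1$, while $\Delta^{(k)}=0$ whenever $r\nmid k$ and $\Delta^{(mr)}=1$. In particular every derivative of $\Delta$ is either irreducible or zero, so $\Delta$ has completely reducible derivatives, and Theorem \ref{L0} yields
$$L_{(0)}(\Delta,s)=\bigvee_{j=1}^{m-1} L_{ex}(\Delta^{(jr)},s).$$

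The representation $\Delta^{(r)}=St_{m-1}(\rho|\ |_F)$ is a generalized Steinberg of $G_{(m-1)r}(K)$. Applying Proposition \ref{calculder} again gives $(\Delta^{(r)})^{(ir)}=\Delta^{((i+1)r)}$ for $0\le i\le m-2$, while $(\Delta^{(r)})^{((m-1)r)}$ is the trivial representation of $G_0(K)$. Applying the $L_F^K$-version of Theorem \ref{L0} (which includes the endpoints $k=0$ and $k=n$) to $\Delta^{(r)}$ gives
$$L_F^K(\Delta^{(r)},s) = L_{ex}(1_{G_0(K)},s) \vee \bigvee_{j=1}^{m-1} L_{ex}(\Delta^{(jr)},s).$$
The Rankin-Selberg Asai integral defining $L_F^K(1_{G_0(K)},s)$ reduces to a constant (integration over a one-point space), so $L_F^K(1_{G_0(K)},s)=1$ and a fortiori $L_{ex}(1_{G_0(K)},s)=1$. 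Comparing the two formulas gives the first equality $L_{(0)}(\Delta,s) = L_F^K(\Delta^{(r)},s)$.

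The pair statement is handled in exactly the same way, invoking the pair analog of Theorem \ref{L0} (Theorem 2.3 of \cite{CP}, already used in Corollary \ref{minimalex} b)): $L_{(0)}(\Delta\times\Delta',s)$ is an l.c.m. of $L_{ex}(\Delta^{(k)}\times\Delta'^{(k)},s)$ over intermediate $k$, the only nonzero simultaneous derivatives occur at $k=jr$, and $L_F^K(\Delta^{(r)}\times\Delta'^{(r)},s)$ differs only by an endpoint term involving $1_{G_0(K)}\times 1_{G_0(K)}$, which is again a trivial factor. The only step requiring a direct definitional check is the vanishing of these endpoint contributions, but this is immediate because the Rankin-Selberg integrals degenerate to a single point; aside from this, the proof is bookkeeping between the two l.c.m.\ expressions.
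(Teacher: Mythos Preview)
Your proof is correct and follows essentially the same route as the paper, which simply says ``It is a consequence of Theorem \ref{L0} and Proposition \ref{calculder}.'' You have unpacked that one-line proof: Proposition \ref{calculder} tells you the only nonzero derivatives of $\Delta=St_m(\rho)$ are the $\Delta^{(jr)}$, which are themselves irreducible, so Theorem \ref{L0} applies and the two l.c.m.'s (for $L_{(0)}(\Delta)$ and for $L_F^K(\Delta^{(r)})$) run over the same set of $L_{ex}$-factors, up to the harmless $G_0$ endpoint. The pair case is the same bookkeeping using the Cogdell--Piatetski-Shapiro analog of Theorem \ref{L0}, exactly as you indicate.
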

\begin{proof} It is a consequence of Theorem \ref{L0} and Proposition \ref{calculder}.\end{proof}

\subsection{$\mathbf{L_{Rad(ex)}}$ for generic representations}\label{sectradex}

In this subsection, we prove that Conjecture \ref{distgen} implies Conjecture \ref{conjasai} about Asai L-functions for representations in general position (see Definition \ref{gen}).\\

We first translate Conjecture \ref{distgen} in terms of functions $L_{Rad(ex)}$ of the ${\Delta _i}$'s. We will need some notations.\\
For each integer $r\leq t/2$, we denote by $A(r)$ the set of subsets of $\left\lbrace 1,\dots,t\right\rbrace$ with $2r$ elements. If $I_r$ belongs to $A(r)$, we denote by $P(I_r)$ the set of partitions of $I_r$ in pairs $\left\lbrace i_1,j_1 \right\rbrace \amalg \dots \amalg \left\lbrace i_r,j_r \right\rbrace $, and we denote by $I_r^c$ the complement of $I_r$ in $\left\lbrace 1,\dots,t\right\rbrace$. Finally if $P_r=\left\lbrace i_1,j_1 \right\rbrace \amalg \dots \amalg \left\lbrace i_r,j_r \right\rbrace$ belongs to $P(I_r)$, we denote by $\left\lbrace P_r\right\rbrace$ the set $\left\lbrace \left\lbrace i_1,j_1 \right\rbrace, \dots, \left\lbrace i_r,j_r \right\rbrace \right\rbrace$. 

\begin{prop}\label{distgen2} 
The following conjecture is equivalent to the conjecture \ref{distgen}. Conjecture: Let $\pi=\Delta_1 \times \dots \times \Delta_t$ be a generic representation of the group $G_n(K)$ as in Theorem \ref{classgen}, then the Euler factor $L_{Rad(ex)}(\pi,s)$ is equal to the lcm for 
$r$ between $1$ and $t/2$, $I_r$ in $A(r)$ and $P_r$ in $P(I_r)$ of the functions $\wedge_{\left\lbrace i,j\right\rbrace \in \left\lbrace P_r \right\rbrace} L_{Rad(ex)}(\Delta_{i}\times \Delta_{j}^{\sigma},s) \wedge_{i' \in I_r^c} L_{Rad(ex)}(\Delta_{i'},s)$. This gives the formula
\begin{equation}L_{Rad(ex)}(\pi,s)=\label{radex}\vee_{r\leq t/2}\vee_{I_r \in A(r)} \vee_{P_r \in P(I_r)}(\wedge_{\left\lbrace i,j\right\rbrace \in \left\lbrace P_r \right\rbrace} L_{Rad(ex)}(\Delta_{i}\times \Delta_{j}^{\sigma},s) \wedge_{i' \in I_r^c} L_{Rad(ex)}(\Delta_{i'},s)).\end{equation}
\end{prop}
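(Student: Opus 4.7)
The plan is to show that both sides of (\ref{radex}) are Euler factors having only simple poles, and that their sets of poles coincide assuming Conjecture \ref{distgen}, with the converse following by the same translation. The whole argument is essentially bookkeeping over Proposition \ref{LRad(ex)}, Theorem \ref{distsplit}, Theorem \ref{pole2}, and Theorem \ref{s-dist}, combined with a standard twist.

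First I would unpack the two sides in terms of pole data. By Proposition \ref{LRad(ex)}, the function $L_{Rad(ex)}(\pi,s)$ has only simple poles, located exactly at those $s_0$ for which $\pi$ is $|\ |_F^{-s_0}$-distinguished; the same statement holds for each $L_{Rad(ex)}(\Delta_{i'},s)$. For the pair factors, the definition of $L_{Rad(ex)}(\Delta_i\times\Delta_j^\sigma,s)$ given at the end of Subsection 1.4 (a product of simple factors $1/(1-q_F^{s_0-s})$ over exceptional poles), combined with Theorem \ref{distsplit}, shows that $L_{Rad(ex)}(\Delta_i\times\Delta_j^\sigma,s)$ has a simple pole at $s_0$ precisely when $\Delta_i^\vee\simeq|\ |_K^{s_0}\Delta_j^\sigma$. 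Since gcds and lcms of Euler factors with only simple poles remain Euler factors with only simple poles, the right-hand side of (\ref{radex}) is a simple-pole Euler factor whose poles are exactly those $s_0$ for which \emph{some} triple $(r,I_r,P_r)$ makes every factor of the corresponding gcd have a pole at $s_0$ simultaneously.

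Next comes the twisting step connecting $|\ |_F^{-s_0}$-distinction of $\pi$ to the untwisted form of Conjecture \ref{distgen}. The key identity, already invoked before Theorem \ref{s-dist}, is that $\pi$ is $|\ |_F^{-s_0}$-distinguished if and only if $\pi\otimes|\ |_K^{s_0/2}$ is distinguished; and this twisted representation decomposes as $(\Delta_1\otimes|\ |_K^{s_0/2})\times\dots\times(\Delta_t\otimes|\ |_K^{s_0/2})$ in the sense of Theorem \ref{classgen}. Applying Conjecture \ref{distgen} to this twist and untwisting reveals: $\pi$ is $|\ |_F^{-s_0}$-distinguished if and only if there exist a reordering of the $\Delta_i$'s and an integer $r\leq t/2$ with $\Delta_i^\vee=|\ |_K^{s_0}\Delta_{i+1}^\sigma$ for $i=1,3,\dots,2r-1$, and with $\Delta_i$ being $|\ |_F^{-s_0}$-distinguished for $i>2r$. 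By Theorem \ref{distsplit} and Theorem \ref{s-dist}, this is exactly the statement that every factor in the gcd indexed by some triple $(r,I_r,P_r)$ has a pole at $s_0$.

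Combining the two previous steps gives the forward implication: $L_{Rad(ex)}(\pi,s)$ and the right-hand side of (\ref{radex}) have the same simple poles, hence are equal as Euler factors. For the converse, if formula (\ref{radex}) holds, then by Theorem \ref{pole2} and Proposition \ref{LRad(ex)} the representation $\pi$ is distinguished if and only if the lcm at $s=0$ has a pole, if and only if some triple $(r,I_r,P_r)$ makes the gcd pole at $s=0$, which via Theorem \ref{distsplit} and the $s_0=0$ case of Theorem \ref{s-dist} is exactly the conclusion of Conjecture \ref{distgen}. I anticipate no serious obstacle; the only point requiring care is the translation between a ``reordering of the $\Delta_i$'s together with an integer $r$'' and a triple $(r,I_r,P_r)$, which is a straightforward bijection once one takes $I_r$ to be the set of indices that land in the first $2r$ slots under the reordering and $P_r$ to be the induced pairing.
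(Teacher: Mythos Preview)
Your proposal is correct and follows essentially the same approach as the paper's own proof: both directions are obtained by translating the distinction condition into a pole condition via Proposition \ref{LRad(ex)} and Theorem \ref{pole2}, twisting by $|\ |_K^{s_0/2}$ to pass from distinction to $|\ |_F^{-s_0}$-distinction, invoking Theorem \ref{distsplit} for the pair factors, and concluding by the simple-pole Euler-factor argument. The only cosmetic difference is that the paper treats the implication ``formula $\Rightarrow$ Conjecture \ref{distgen}'' first and cites Proposition \ref{K} rather than Theorem \ref{s-dist} for the quasi-square-integrable pieces, but the substance is identical.
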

\begin{proof} Assume the preceeding conjecture. According to Theorem \ref{pole2}, the representation $\pi$ is distinguished if and only if $L_{Rad(ex)}(\pi,s)$ admits a pole at zero. From equation \ref{radex}, it is equivalent to say that there is an integer $r\leq t/2$, a set $I_r$ with $2r$ elements, and a partition with $\left\lbrace i_1,j_1 \right \rbrace \amalg \dots \amalg \left\lbrace i_r,j_r \right\rbrace$ of $I_r$ such that for every $k$ between $1$ and $r$, the function $L_{Rad(ex)}(\Delta_{i_k}\times \Delta_{j_k}^{\sigma},s)$ has a pole at zero, and for every $i'$ in $\left\lbrace 1,\dots,t\right\rbrace -\left\lbrace i_1,j_1 , \dots, i_r,j_r \right\rbrace$ the function $L_{Rad(ex)}(\Delta_{i'},s)$ has a pole at zero. This is nothing else than the statement of Conjecture \ref{distgen} according to Theorem \ref{distsplit} and Proposition \ref{K}.\\
Now suppose that the conjecture \ref{distgen} is true. Then tensoring by the character $|\ |_K^{s_0/2}$, for a complex number $s_0$, the more general (though equivalent) statement is also true: the representation $\pi=\Delta_1 \times \dots \times \Delta_t$ is $|\ |_F^{-s_0}$ distinguished if and only if there exists a reordering of the ${\Delta _i}$'s, and $r \leq t/2$, such that $\Delta_{i+1}^{\sigma} = |\ |_K^{-s_0}\Delta_i^{\vee} $ for $i=1,3,..,2r-1$, and $\Delta_{i}$ is $|\ |_F^{-s_0}$-distinguished for $ i > 2r$. Hence $s_0$ is a pole of $L_{Rad(ex)}(\pi,s)$ if and only if there exists an integer $r \leq t/2$, a set $I_r \in A_r$, and a partition in pairs $P_r \in P(I_r)$, such that $s_0$ is a pole of $L_{Rad(ex)}(\Delta_{i}\times \Delta_{j}^{\sigma},s)$ for every $\left\lbrace i,j\right\rbrace$ in $\left\lbrace P_r \right\rbrace$, and a pole of $L_{Rad(ex)}(\Delta_{i'},s)$ for every $i'$ in $I_r^c$. Hence both functions $L_{Rad(ex)}(\pi,s)$ and $$\vee_{r\leq t/2}\vee_{I_r \in A(r)} \vee_{P_r \in P(I_r)}(\wedge_{\left\lbrace i,j\right\rbrace \in \left\lbrace P_r \right\rbrace} L_{Rad(ex)}(\Delta_{i}\times \Delta_{j}^{\sigma},s) \wedge_{i' \in I_r^c} L_{Rad(ex)}(\Delta_{i'},s))$$ have the same poles, but as both have simple poles and are Euler factors, they are equal.\end{proof}

\begin{rem} The Asai $L$-function of a quasi-square-integrable representation $\Delta$ of the group $G_n(K)$ is computed in Proposition \ref{asaidiscrete} of Subsection \ref{sectasaidiscr}, it has simple poles, and so is the case for $L(\Delta \times \Delta',s)$ for quasi-square-integrable representations $\Delta$ and $\Delta'$ (see for example Theorem 2.3 of \cite{CP}). Hence in the statement of Conjecture \ref{distgen2}, the functions $L_{Rad(ex)}(\Delta_{i}\times \Delta_{j}^{\sigma},s)$ and $L_{Rad(ex)}(\Delta_{i'},s)$ are respectively equal to $L_{ex}(\Delta_{i}\times \Delta_{j}^{\sigma},s)$ and $L_{ex}(\Delta_{i'},s)$.\end{rem}



We next define what we mean by ``in general position'' for a representation of the group $G_n(K)$. There is actually a slight difference between our definition and the definition in \cite{CP} (we add condition 3.).

\begin{df}\label{gen}
Let $\pi=\Delta_1 \times \dots \times \Delta_t$ be a representation of the group $G_n(K)$, obtained by normalized parabolic induction of quasi-square-integrable representations $\Delta_i$ of $G_{n_i}(K)$, for positive integers $n_i$ such that their sum is equal to $n$.\\ We say that it is in general position, if it verifies the following properties: 

\begin{enumerate}
\item It is generic (hence irreducible).
\item For all $k$ between $1$ and $n-1$, the central characters of the irreducible components of $\pi^{(n-k)}$ are different. Thus, for such a representation, the assumption \ref{ass} is satisfied.
\item If $(i,j,k)$ are three different integers between $1$ and $t$, and $(l_i,l_j,l_k)$ three integers in $\{0,\dots,n_i-1\} \times \{0,\dots,n_j-1\}\times \{0,\dots,n_k-1\}$, then the functions $L(\Delta_i^{(l_i)}\times (\Delta_j^{(l_j)})^{\sigma})$ and $L(\Delta_i^{(l_i)}\times (\Delta_k^{(l_k)})^{\sigma})$ have no common poles, and the functions \\ $L(\Delta_i^{(l_i)}\times (\Delta_j^{(l_j)})^{\sigma})$ and $L_F^K(\Delta_i^{(l_i)})$ have no common poles.
 
\end{enumerate}
\end{df}

We are now able to state the following theorem.

\begin{thm}\label{asgen} Assume Conjecture \ref{distgen}. Let $\pi=\Delta_1 \times \dots \times \Delta_t$ be a generic representation of the group $G_n(K)$ as in Theorem \ref{classgen}, and suppose that it is in general position, then the following equality holds:
\begin{equation}\label{lasgen}L_F^K(\pi,s)= \prod_{1\leq i<j \leq t} L(\Delta_i \times \Delta_{j}^{\sigma},s)\prod_{1\leq k\leq t}L_F^K(\Delta_k,s)\end{equation}
 \end{thm}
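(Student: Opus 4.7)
The plan is to combine Theorem \ref{L0}, the derivative formula of Proposition \ref{calculder}(3), and Conjecture \ref{distgen} (in the reformulated form of Proposition \ref{distgen2}), and to compare the resulting lcm with the product on the right-hand side. By Theorem \ref{L0}, $L_F^K(\pi,s)$ equals the least common multiple of $L_{ex}(\pi_i^{(m)},s)$ as $m$ ranges over $\{0,\dots,n\}$ and $\pi_i^{(m)}$ over the irreducible constituents of $\pi^{(m)}$. Condition (2) of Definition \ref{gen} makes each $\pi^{(m)}$ completely reducible, and by Proposition \ref{calculder}(3) its constituents are precisely the nonzero representations $\Delta_1^{(k_1)}\times\cdots\times\Delta_t^{(k_t)}$ with $k_1+\cdots+k_t=m$. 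Each such constituent inherits the general-position property, so Proposition \ref{distgen2} rewrites its exceptional Euler factor as an lcm, indexed by pairings, of products of pair-twist factors $L_{ex}(\Delta_a^{(k_a)}\times (\Delta_b^{(k_b)})^\sigma, s)$ (for paired indices) and singleton Asai factors $L_{Rad(ex)}(\Delta_c^{(k_c)},s)$ (for the unpaired indices).

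To establish that $L_F^K(\pi,s)$ divides the right-hand side of \eqref{lasgen}, I would use the Remark following Corollary \ref{divdiscrete} to see that each pair-twist factor $L_{ex}(\Delta_a^{(k_a)}\times (\Delta_b^{(k_b)})^\sigma, s)$ divides $L(\Delta_a\times \Delta_b^\sigma, s)$, and combine Corollary \ref{divdiscrete} with Proposition \ref{LRad(ex)} to see that $L_{Rad(ex)}(\Delta_c^{(k_c)}, s)$ divides $L_F^K(\Delta_c, s)$. Each term in the lcm description of $L_{ex}(\pi_i^{(m)},s)$ is thus a gcd of divisors of specific factors on the right-hand side, hence divides the full product there; it follows that the lcm of all these terms also divides the right-hand side.

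For the reverse divisibility, each pole of a factor on the right-hand side must be realised as a pole of the left-hand side. Given a pole $s_0$ of $L(\Delta_i\times\Delta_j^\sigma, s)$, Corollary \ref{minimalex}(b) yields a minimal $k$ such that $s_0$ is an exceptional pole of $L(\Delta_i^{(k)}\times (\Delta_j^{(k)})^\sigma, s)$. Selecting the derivative component of $\pi^{(m)}$ in which the $i$-th and $j$-th factors are taken to their $k$-th derivative while all other $\Delta_l$ are taken to their top derivative (which equals the trivial one-dimensional representation of the trivial group, hence is $|\ |_F^{-s_0}$-distinguished for every $s_0$), Conjecture \ref{distgen} makes this component $|\ |_F^{-s_0}$-distinguished via the pairing $\{i,j\}$; thus $s_0$ is an exceptional pole of its $L_F^K$, contributing to the lcm on the left. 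Poles of $L_F^K(\Delta_k, s)$ are handled symmetrically via Corollary \ref{minimalex}(a) and the derivative component in which only $\Delta_k^{(l)}$ is non-trivial.

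The principal obstacle is multiplicity bookkeeping: one needs equality of Euler factors, not merely equality of pole sets. Condition (3) in Definition \ref{gen} is designed precisely to forbid any single $s_0$ from appearing simultaneously in two distinct pair factors or in a pair factor together with a singleton Asai factor; this makes the product on the right non-redundant and ensures its simple poles match those of the left-hand side with correct multiplicity. A subsidiary verification is that the top derivatives $\Delta_l^{(n_l)}$ contribute as trivially distinguished singletons with trivial local Asai factor, so that they disappear cleanly from the distinction condition applied to each chosen derivative component.
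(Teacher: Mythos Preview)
Your argument has two genuine gaps, and both stem from bypassing the induction that the paper uses.

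First, you conflate $L_{ex}$ and $L_{Rad(ex)}$. Theorem \ref{L0} expresses $L_F^K(\pi,s)$ as an lcm of the factors $L_{ex}(\pi_i^{(m)},s)$, but Proposition \ref{distgen2} computes only $L_{Rad(ex)}(\pi_i^{(m)},s)$. By Proposition \ref{LRad(ex)} one has that $L_{Rad(ex)}$ divides $L_{ex}$, not the reverse; so showing (via Proposition \ref{distgen2}) that $L_{Rad(ex)}(\pi_i^{(m)},s)$ divides the right-hand side does \emph{not} show that $L_{ex}(\pi_i^{(m)},s)$ does. To control $L_{ex}$ of a derivative component you need to know its full Asai $L$-function, and that is exactly what the paper obtains from the induction hypothesis: once $L_F^K(\pi_i^{(m)},s)$ equals the product formula for the smaller representation, the divisibility of $L_{ex}(\pi_i^{(m)},s)$ into the right-hand side of \eqref{lasgen} follows via Corollary \ref{divdiscrete} and its Remark.

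Second, your multiplicity claim is incorrect: condition (3) in Definition \ref{gen} only forbids a common pole between two factors that share an index (e.g.\ $L(\Delta_i\times\Delta_j^{\sigma})$ and $L(\Delta_i\times\Delta_k^{\sigma})$, or $L(\Delta_i\times\Delta_j^{\sigma})$ and $L_F^K(\Delta_i)$). It says nothing about disjoint pairs such as $L(\Delta_1\times\Delta_2^{\sigma})$ and $L(\Delta_3\times\Delta_4^{\sigma})$, nor about $L_F^K(\Delta_1)$ and $L_F^K(\Delta_2)$; these \emph{can} share a pole $s_0$, and then the right-hand side has a pole of order $d\ge 2$ at $s_0$. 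Your construction (take top derivatives of all but two factors) exhibits only one derivative component for which $s_0$ is exceptional, hence gives order $\ge 1$ in $L_F^K(\pi,s)$, not $\ge d$. The paper handles this by induction: it locates a single derivative component $\pi_i^{(m)}$ (with $m>0$, or with fewer than $t$ factors) whose Asai $L$-function, by the induction hypothesis, already carries $s_0$ with order $d$ (Case 1), or with order $d-1$ while the remaining unit of order comes from $L_{Rad(ex)}(\pi,s)$ itself (Case 2). Matching the upper bound (order $\le d$) again uses the inductive formula for $L_F^K$ of each derivative component. Without that inductive input, neither bound is available.
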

\begin{proof} We first recall from Proposition \ref{calculder} that if the representation $\pi_i^{(n-k)}$ is an irreducible component of the derivative $\pi^{(n-k)}$ of $\pi=(\Delta_1 \times \dots \times \Delta_n)$ in general position, it is of the form $\Delta_{i_1}^{(t_1)} \times \dots \times \Delta_{i_k}^{(t_n)}$ for a subset $\left\lbrace i_1,\dots, i_k\right\rbrace$ of $\left\lbrace 1,\dots, n\right\rbrace$, hence it is still in general position.
We proove the theorem \ref{asgen} by induction on $n$. Case $n=1$ is trivial, and $n=2$ is treated in \cite{M2}.\\
Suppose that the theorem is true for any positive integer strictly less than $n$. By definition of $L_{(0)}$ and $L_{Rad(ex)}$, we have $L_F^K(\pi,s) =L_{(0)}(\pi,s)L_{Rad(ex)}(\pi,s)$. From Theorem \ref{L0} and Proposition \ref{distgen2}, this is equivalent to the function $L_F^K(\pi,s)$ being equal to $$[\vee_{l,k} L_{ex}(\pi_l^{(n-k)},s)][\vee_{r\leq t/2}\vee_{I_r \in A(r)} \vee_{P_r \in P(I_r)}(\wedge_{\left\lbrace i,j\right\rbrace \in \left\lbrace P_r \right\rbrace} L_{ex}(\Delta_{i} \times \Delta_{j}^{\sigma},s) \wedge_{i' \in I_r^c} L_{ex}(\Delta_{i'},s))] $$ for $k$ in $\left\lbrace 1,\dots,n-1 \right\rbrace$ and $\pi_l^{(n-k)}$ in the irreducible components of $\pi^{(n-k)}$.\\
By induction hypothesis, any pole of $L_{(0)}(\pi,s)$ is a pole of $$\prod_{1\leq i<j \leq t} L(\Delta_i \times \Delta_j^{\sigma},s)\prod_{1\leq k\leq t}L_F^K(\Delta_k,s).$$ The same holds for poles of $L_{Rad(ex)}(\pi,s)$ according to Equation \ref{lasgen}.\\
Hence to prove Equation \ref{lasgen}, as we are dealing with Euler factors, it suffices to prove that any pole of order $d$ of $\prod_{1\leq i<j \leq t} L(\Delta_i \times \Delta_{j}^{\sigma},s)\prod_{1\leq k\leq t}L_F^K(\Delta_k,s)$ occurs with same order in $L_F^K(\pi,s)$.\\
Let $s_0$ be such a pole, because $\pi$ is in general position, and because the $L_F^K(\Delta_k,s)$'s and the $L(\Delta_i \times \Delta_j^{\sigma},s)$'s have simple poles, there are two integers $p$ and $q$ such that the pole $s_0$ occurs exactly in $p$ functions $$L(\Delta_{i_1} \times \Delta_{j_1}^{\sigma},s),\dots, L(\Delta_{i_p} \times \Delta_{j_p}^{\sigma},s),$$ and $q$ functions $$L_F^K(\Delta_{k_1},s),\dots, L_F^K(\Delta_{k_q},s),$$ where $p+q=d$ and $I_{2p+q}=\left\lbrace i_1,j_1,\dots,i_p,j_p,k_1,\dots,k_q \right\rbrace$ is a subset of cardinality $2p+q$ of $\left\lbrace 1,\dots,t \right\rbrace$.\\
From Corollary \ref{minimalex}, there exist minimal integers $a_i$'s, $b_j$'s and $c_k$'s, such that $s_0$ is a pole of each function $$L_{ex}(\Delta_{i_1}^{(a_1)} \times (\Delta_{j_1}^{(b_1)})^{\sigma},s),\dots, L_{ex}(\Delta_{i_p}^{(a_p)} \times (\Delta_{j_p}^{(b_p)})^{\sigma},s),$$ and each function $$L_{ex}(\Delta_{k_1}^{(c_1)},s),\dots, L_{ex}(\Delta_{k_q}^{(c_q)},s).$$

There are two cases:\\
1) If $2p+q < t$ or if an element of the family $(a_i,b_j,c_k)_{i,j,k}$ is positive:\\
Then by induction hypothesis, the product of the $L_{ex}(\Delta_{i_r}^{(a_r)} \times (\Delta_{j_r}^{(b_r)})^{\sigma},s)$'s and of the $L_{ex}(\Delta_{k_{r'}}^{(c_{r'})},s)$'s for $r$ and $r'$ respectiveley between $1$ and $p$ and $1$ and $q$, divides the function $$L_F^K(\Delta_{i_1}^{(a_1)} \times \Delta_{j_1}^{(b_1)}\times \dots\times \Delta_{i_p}^{(a_p)} \times \Delta_{j_p}^{(b_p)}\times \Delta_{k_1}^{(c_1)}\times \dots \times \Delta_{k_q}^{(c_q)}).$$
Hence $s_0$ is a pole of order $d$ of this function, and it is exceptional from Proposition \ref{distgen2}.\\
Proposition \ref{L0} then implies that $s_0$ is a pole of order at least $d$ of $L_{(0)}(\pi,s)$.\\
 Suppose $\pi_i^{(l)}$ is an irreducible component of $\pi^{(l)}$ with $l$ positive, then it is of the form \\
$\Delta_1^{(d_1)}\times \dots \times \Delta_t^{(d_t)},$ with the sum of the $d_i$'s being equal to $l$. Now by induction hypothesis, the function $L_{ex}(\Delta_1^{(d_1)}\times \dots \times \Delta_t^{(d_t)},s)$ divides the function $\prod_{1\leq i<j \leq t} L(\Delta_i^{(d_i)} \times (\Delta_{j}^{(d_j)})^{\sigma},s)\prod_{1\leq k\leq t}L_F^K(\Delta_k^{(d_k)},s)$, which in turn divides $\prod_{1\leq i<j \leq t} L(\Delta_i \times \Delta_{j}^{\sigma},s)\prod_{1\leq k\leq t}L_F^K(\Delta_k,s)$ because of Corollary \ref{divdiscrete}. Hence the order of $s_0$ in $L_{ex}(\pi^{(l)})$ is at most $d$.\\
 We just proved, according to Proposition \ref{L0}, that the order of $s_0$ in $L_{(0)}(\pi)$ is at most $d$, hence $d$.\\
 As the pole $s_0$ doesn't occur in $L_{Rad(ex)}(\pi)$ because of Proposition \ref{distgen2}, it has order $d$ in $L_F^K(\pi)$.\\

2) The integer $2p+q $ is equal to $t$ and all the $a_i$'s, $b_j$'s and $c_j$'s are zero:\\
We first reorder the $\Delta_i$'s such that $i_r= 2r-1$, $j_r= 2r$ for $r\leq p$, and $k_{r'}= 2p+r'$ for $r'\leq q$. \\
In this case $s_0$ is an axceptional pole of $L_F^K(\pi,s)$ from Proposition \ref{distgen2}. It thus occurs with order $1$ in the function $L_{Rad(ex)}(\pi,s)$.\\
 Considering the $d-1$ last functions of $$L_{ex}(\Delta_{1} \times \Delta_{2}^{\sigma},s),\dots, L_{ex}(\Delta_{2p-1} \times \Delta_{2p}^{\sigma},s),L_{ex}(\Delta_{2p+1},s),\dots, L_{ex}(\Delta_{2p+q},s)$$ if $p\geq1$, and the $d-1$ first otherwise, we show by induction hypothesis, that $s_0$ is a pole of order $d-1$ of either $L_F^K(\Delta_{3} \times \dots \times \Delta_{t},s)$ if $p\geq1$, or of $L_F^K(\Delta_{1} \times \dots \times \Delta_{t-1},s)$ otherwise. In both situations, the pole must be exceptional according to Proposition \ref{distgen2}, and we deduce from Proposition \ref{L0}, that $s_0$ is a pole of order at least $d-1$ of $L_{(0)}(\pi,s)$.\\
Now let $\pi_i^{(k)}$ be an irreducible component of $\pi^{(k)}$ for a positive integer $k$. Then $\pi_i^{(k)}$ is equal to $\Delta_1^{(d_1)}\times\dots\times\Delta_t^{(d_t)}$ for non negative integers $d_i$ with sum equal to $k$. The function $L_{ex}(\Delta_1^{(d_1)}\times\dots\times\Delta_t^{(d_t)})$ divides by induction hypothesis the function $$\prod_{1\leq i<j \leq t} L(\Delta_i^{(d_i)} \times (\Delta_{j}^{(d_j)})^{\sigma},s)\prod_{1\leq k\leq t}L_F^K(\Delta_k^{(d_k)},s).$$ 
As each function $L(\Delta_i^{(d_i)} \times (\Delta_{j}^{(d_j)})^{\sigma},s)$ (resp. $L_F^K(\Delta_k^{(d_k)},s)$) divides $L(\Delta_i\times\Delta_{j}^{\sigma},s)$ (resp. $L_F^K(\Delta_k,s)$), and as $s_0$ occurs exactly in the functions $L(\Delta_{2i-1}\times\Delta_{2i}^{\sigma},s)$, for $i$ in ${1,\dots,p}$, and in the functions $L_F^K(\Delta_{2p+j},s)$, for $j$ in ${1,\dots,q}$, the order of $s_0$ in $L_{ex}(\Delta_1^{(d_1)}\times\dots\times\Delta_t^{(d_t)})$ is at most $d$.\\
Suppose it is $d$, then $s_0$ must occur in each $L(\Delta_{2i-1}^{(d_{2i-1})} \times (\Delta_{2i}^{(d_{2i})})^{\sigma},s)$ for $i$ in ${1,\dots,p}$, and in the functions $L(\Delta_{2p+j}^{(d_{2p+j})},s)$, for $j$ in ${1,\dots,q}$, and no other. But as $s_0$ is a pole of $L_{Rad(ex)}(\Delta_1^{(d_1)}\times\dots\times\Delta_t^{(d_t)})$, Proposition \ref{distgen2} implies that it is actually a pole of $L_{ex}(\Delta_{2i-1}^{(d_{2i-1})} \times (\Delta_{2i}^{(d_{2i})})^{\sigma},s)$ for $i$ in ${1,\dots,p}$, and  $L_{ex}(\Delta_{2p+j}^{(d_{2p+j})},s)$ for $j$ in ${1,\dots,q}$.\\
 As at least one of the $d_i$'s is positive, this would implie that $s_0$ is a pole of $L_{ex}(\Delta_{2i-1}^{(d_{2i-1})} \times (\Delta_{2i}^{(d_{2i})})^{\sigma},s)$ and $L_{ex}(\Delta_{2i-1} \times \Delta_{2i}^{\sigma} ,s)$ at the same time for some positive $d_{2i-1}$ or $d_{2i}$, or of $L_{ex}(\Delta_{2p+j}^{(d_{2p+j})},s)$ and $L_{ex}(\Delta_{2p+j},s)$ at the same time for some positive $d_{2p+j}$. But as $L_{ex}(\Delta_{2i-1}^{(d_{2i-1})} \times (\Delta_{2i}^{(d_{2i})})^{\sigma},s)$ divides $L_{(0)}(\Delta_{2i-1}\times \Delta_{2i}^{\sigma},s)$ and $L_{ex}(\Delta_{2p+j}^{(d_{2p+j})},s)$ divides $L_{(0)}(\Delta_{2p+j},s)$, this would imply that $s_0$ is a pole of order $2$ of $L(\Delta_{2i-1}\times\Delta_{2i}^{\sigma},s)$ or $L_F^K(\Delta_{2p+j},s)$, which contradicts the fact that these functions have simple poles.\\
Finally the order in any $L_{ex}(\pi_i^{(k)},s)$, for any positive $k$, and any irreducible component $\pi_i^{(k)}$ of $\pi^{(k)}$ is at most $d-1$. We deduce that $s_0$ occurs in $L_{(0)}(\pi)$ with order $d-1$ exactly, hence it occurs with order $d$ in $L_F^K(\pi)= L_{Rad(ex)}(\pi)L_{(0)}(\pi)$.

\end{proof}

\subsection{Deformation, derivatives and local factors}\label{subsectionder}

We again recall a few facts from \cite{CP}, Sections 3 and 4, about deformations of representations and behaviour of the derivatives.\\
Let $t$ be a positive integer, the map $u=(u_1,\dots,u_t)\mapsto q_K^u=(q_K^{u_1},\dots,q_K^{u_t})$ defines an isomorphism of varieties between $(\mathcal{D}_K)^t=(\mathbb{C}/\frac{2i \pi}{ln(q_K)\mathbb{Z}})^t$ and $(\mathbb{C}^*)^t$. We also denote by $D_F$ the variety $(\mathbb{C}/\frac{2i \pi}{ln(q_F)\mathbb{Z}})$ which the isomorphism $s\mapsto q_F^{-s}$ identifies to $(\mathbb{C}^*)^t$.\\

\begin{df-prop}\label{typeA} Let $(n_1,\dots,n_t)$ denote a partition of $n$ by positive integers, and let $\Delta_i$ be a quasi-square-integrable representation of $G_{n_i}(K)$ for $i$ between $1$ and $t$. As quasi-square-integrable representations are generic from Theorem \ref{classgen}, and according to \cite{R}, the representation $\pi=\Delta_1 \times\dots\times \Delta_t$ of the group $G_n(K)$ is of Whittaker type, though it is not necessarily irreducible. We say that such a representation $\pi$ is of parabolic type.\end{df-prop}
 
For $u \in \mathcal{D}_t$, and $\pi$ of parabolic type, we define the deformed representation $\pi_u=\Delta_1 |\ |_K^{u_1}\times\dots\times \Delta_t |\ |_K^{u_t}$.

\begin{df-prop}\label{gen2} Let $\pi$ be a parabolic type representation of the group $G_n(K)$, then outside a finite number of hyperplanes in $u$, the representation $\pi_u$ is in general position (see Definition \ref{gen}). If $\pi$ is fixed, we say that $u$ is in general position if the representation $\pi_u$ is.\end{df-prop}
\begin{proof} The fact that conditions 1. and 2. of Definition \ref{gen} are verified follows from the discussion before Proposition 3.5 of \cite{CP}. For condition 3., let $i$, $j$ and $k$ be three different integers between $1$ and $t$. It is enough to show that outside a finite number of hyperplanes in $u$, the functions $L(\Delta_i |\ |_K^{u_i} \times \Delta_j^{\sigma} |\ |_K^{u_j},s)$ and $L_F^K(\Delta_i |\ |_K^{u_i},s)$, and the functions $L(\Delta_i |\ |_K^{u_i} \times \Delta_j^{\sigma} |\ |_K^{u_j},s)$ and $L(\Delta_i |\ |_K^{u_i} \times \Delta_k^{\sigma} |\ |_K^{u_k},s)$ have no common pole. To do this we write $\Delta_i$ as $St_{m_i}(\rho_i)$ and $\Delta_j$ as $St_{m_i}(\rho_j)$. According to Theorem (8.2) of $\cite{JPS}$, we know that $L(\Delta_i |\ |_K^{u_i} \times \Delta_j |\ |_K^{u_j},s)$ is equal to $1$ unless $\rho_i$ and $\rho_j$ are supercuspidal representations of the same group $G_{m'}(K)$. So we suppose it is the case, and from the same proposition of $\cite{JPS}$, one has the equality $L(\Delta_i |\ |_K^{u_i} \times \Delta_j^{\sigma} |\ |_K^{u_j},s)=\prod_{k=0}^{m_i -1} L(\rho_i \times \rho_j^{\sigma}, s + u_i +u_j + (m_i-m_j)/2 + k)$. Suppose $L(\Delta_i |\ |_K^{u_i} \times \Delta_j^{\sigma} |\ |_K^{u_j},s)$ and $L_F^K(\Delta_i |\ |_K^{u_i},s)$ have a common pole at $s_0$, then $\rho_i^{\vee}=\rho_j^{\sigma}|\ |_K^{s_0+ u_i+u_j+ (m_i-m_j)/2 + l}$ for an integer $l$ between $0$ and $m_j-1$, and according to Proposition \ref{asaidiscrete} and Proposition \ref{Lcusp}, there exists $l'$ between $0$ and $m_i-1$ such that $\eta_{K/F}^{m_i -1-l'}|\ |_K^{(s_0+l')/2 +u_i}\rho_i$ is distinguished. We deduce from Proposition \ref{autodual} that $\rho_i^{\vee}$ must be equal to $|\ |_F^{s_0+l'+2u_i}\rho_i^{\sigma}$. This eventually implies that $\rho_j$ is equal to $\rho_i|\ |_K^{(m_j-m_i)/2 + l'-l +u_j- u_i}$. As $-(m_j+m_i)/2\leq (m_j-m_i)/2 + l'-l\leq (m_j+m_i)/2$, considering central characters, we deduce that $L(\Delta_i |\ |_K^{u_i} \times \Delta_j^{\sigma} |\ |_K^{u_j},s)$ and $L_F^K(\Delta_i |\ |_K^{u_i},s)$ have no common poles outside a finite number of hyperplanes in $(u_i,u_j)$.\\
We show in a similar way that $L(\Delta_i |\ |_K^{u_i} \times \Delta_j^{\sigma} |\ |_K^{u_j},s)$ and $L(\Delta_i |\ |_K^{u_i} \times \Delta_k^{\sigma} |\ |_K^{u_k},s)$ have no common pole outside a finite number of affine hyperplanes in $(u_i,u_j,u_k)$. This concludes the proof of the proposition.\end{proof}

Now we are going to prove that whenever the representation $\pi$ is of parabolic type, then the factor $\gamma_F^K(\pi_u,\psi,s)$ (see after Theorem \ref{Asaieq}) is a rational function of $q_K^{-{u}}$ and $q_F^{-{s}}$.\\

We denote by $Q$ the standard parabolic subgroup of the group $G_n(K)$ corresponding to the partition $(n_1,\dots,n_t)$, by $M$ the associated Levi subgroup, which is isomorphic to $G_{n_1}(K)\times\dots\times G_{n_t}(K)$, and $N_Q$ the unipotent radical of $Q$. Whenever $u$ is in $\mathcal{D_K}$, the representation $\pi_u$ is of Whittaker type from Proposition-Definition \ref{typeA}. For $m$ in $M$, we denote by $|m|_K^{u}$ the positive number $|det(m_1)|_K^{u_1}\dots|det(m_t)|_K^{u_t}$.\\
We first recall from \cite{CP} that $\pi_u$ can be realised in the space $\mathcal{F}_{\pi}$ of smooth funtions of restrictions to $K_{n}^K$ of functions $$f:G_n(K)\rightarrow W(\Delta_1,\psi)\otimes \dots \otimes W(\Delta_t,\psi)$$ satisfying $$f(hg,m)=\Delta_Q (m_h)^{-1/2}f(g,mm_h)$$ for $h$ in $Q$, $g$ in $G$, $m$ in $M$ and $m_h$ in $M$ such that $h=nm_h$ for $n$ in $N_Q$.\\
A function $f$ in $\mathcal{F}_{\pi}$ determines a function $f_u$ in the space of smooth funtions from $G_n(K)$ to $W(\Delta_1 |\ |_K^{u_1},\psi)\otimes \dots \otimes W(\Delta_t |\ |_K^{u_1},\psi)$, satisfying $f_u(hg,m)=\Delta(m_h)^{-1/2}f_u(g,mm_h)$ as before. To do this, we define for $g=n'm'k'$ with $n'$ in $N_Q$, $m'$ in $M$ and $k'$ in $K_n^K$, the value of the function $f_u$ to be $f_u(g,m)= |m'|_K^{u}\Delta(m')^{-1/2}f(k',mm')$. This indentifies the set of representations $\pi_u$ with $u$ in $\mathcal{D}_K$ with the trivial vector bundle $\mathcal{D}_K\times\mathcal{F}_{\pi}$, with $G_n(K)$ acting in each fibers with different action.\\
Considering $\pi_u$ as acting on $\mathcal{F}$, a Whittaker functional $$\Lambda_u(f)=\int_{N_f}(\pi_u(n)f)(w_Q,e)\psi^{-1}(n)dn$$ is defined in Section 3 of \cite{CP}, to which we refer for notations. For $f\in \mathcal{F}$, the Whittaker function $$W_{f,u}(g)=\Lambda_u(\pi_u(g)f)=\Lambda_0(\pi(g)f_u)=W_{f_u}(g)$$ is defined and describes $W(\pi_u,\psi)$ when $f$ describes $\mathcal{F}$.\\
 It is shown in \cite{CP} that for fixed g, the function $W_{f,u}(g)$ belongs to $\mathbb{C}[q_K^{\pm u_1},\dots,q_K^{\pm u_t}]$. 
As in \cite{CP}, we define $W_{\pi}^{(0)}$ the complex vector space generated by the functions $(u,g)\mapsto W_{{f_u}}(gg')$ for $g'\in G_n(K)$ and $f\in \mathcal{F}_{\pi}$. It is shown in \cite{CP} that the action of the group $G_n(K)$ on $W_{\pi}^{(0)}$ by right translation is a smooth representation, and we denote by $W_{\pi,(0)}$ the space of restrictions of functions of $W_{\pi}^{(0)}$ to $P_n$. As in \cite{CP}, we denote by $\mathcal{P}_0$ the vector subspace of $\mathbb{C}[q_K^{\pm{u}}]$ consisting of all Laurent polynomials of the form $u\mapsto W(u,I_n)$ for some $W\in W_{\pi}^{(0)}$. We now state Proposition 3.1 of \cite{CP} which we will use later.

\begin{prop}\label{incl}
Let $\pi$ be a representation of parabolic type, the complex vector space $W_{\pi,(0)}$ defined above contains the space $C_c^{\infty}(N_n(K)\backslash P_n(K), \mathcal{P}_0, \psi)$.
\end{prop}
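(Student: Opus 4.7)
The plan is to adapt the standard proof of Theorem \ref{kirgn2} (Bernstein--Zelevinsky) -- which asserts that the Kirillov model of a generic representation of $G_n(K)$ contains $C_c^{\infty}(N_n(K)\backslash P_n(K),\psi)$ -- to the family $\{\pi_u\}_u$, while keeping track of the polynomial dependence on $u$. For $u$ in general position, $\pi_u$ is generic by Definition-Proposition \ref{gen2}, so fiberwise the inclusion already follows from Theorem \ref{kirgn2}; what needs upgrading is that the constructed Whittaker function depends polynomially on $u\in(\mathcal{D}_K)^t$, and can be prescribed to have value at $I_n$ an arbitrary given element of $\mathcal{P}_0$.

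First I would recall, as in Subsection 1.3, the $P_n(K)$-filtration $0=\pi_{n+1}\subset \pi_n\subset\dots\subset\pi_1=\pi$ for a representation of Whittaker type. Since $\pi$ is of parabolic type, hence of Whittaker type by Definition-Proposition \ref{typeA}, $\pi^{(n)}$ is one-dimensional and the bottom piece $\pi_n$ is a single copy of the compactly induced $P_n(K)$-module $\mathrm{ind}_{N_n(K)}^{P_n(K)}\psi$ (up to the standard normalization). In the Whittaker model its image consists precisely of those $W$ whose restriction to $P_n(K)$ lies in $C_c^{\infty}(N_n(K)\backslash P_n(K),\psi)$; this is the content of Theorem \ref{kirgn2} in the generic case, and the argument is purely $P_n(K)$-representation-theoretic, so it extends verbatim to parabolic-type $\pi$.

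Next, given any $P\in\mathcal{P}_0$, by definition of $\mathcal{P}_0$ choose $W_P\in W_\pi^{(0)}$ with $W_P(u,I_n)=P(u)$. Given $\phi_0\in C_c^{\infty}(N_n(K)\backslash P_n(K),\psi)$, I would mimic the proof of Theorem \ref{kirgn2}: decompose $\phi_0$ as a finite combination of averages of right-translates of $W_P$ by elements of $P_n(K)$ against characters of small compact-open subgroups. Because right translation on $W_\pi^{(0)}$ preserves the polynomial structure in $u$ -- the $G_n(K)$-action on the trivialization $\mathcal{F}_\pi$ is independent of $u$, and the dependence on $u$ is carried solely by the Whittaker functional $\Lambda_u$, which is itself in $\mathbb{C}[q_K^{\pm u_1},\dots,q_K^{\pm u_t}]$ -- each such average still lies in $W_\pi^{(0)}$. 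Its restriction to $P_n(K)$ equals $p\mapsto \phi_0(p)P(u)$, and summing over a basis of $\mathcal{P}_0$ yields arbitrary elements of $C_c^{\infty}(N_n(K)\backslash P_n(K),\mathcal{P}_0,\psi)$.

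The main obstacle is to ensure that the averaging/cutoff procedure used in the proof of Theorem \ref{kirgn2} genuinely produces elements of the \emph{global} space $W_\pi^{(0)}$ -- i.e.\ finite sums of $(u,g)\mapsto W_{f_u}(gg')$ -- rather than functions lying only in each fiber $W(\pi_u,\psi)|_{P_n(K)}$ for each $u$ individually. This is precisely the technical content of \cite{CP}, Section 3: the smooth compact-open stabilizer of $W_P$ in $\mathcal{F}_\pi$ does not depend on $u$, so a single averaging operator, realized as a finite combination of right translates on $W_\pi^{(0)}$, produces the desired compactly supported function on $P_n(K)$ while preserving the polynomial dependence on $u$. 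Once this compatibility is verified, the inclusion follows.
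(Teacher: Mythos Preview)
The paper does not give a proof of this proposition; it simply records it as Proposition~3.1 of \cite{CP}. Your outline is essentially a sketch of the argument there, and the key point you isolate --- that the compact-open stabilizer of a fixed $f\in\mathcal{F}_\pi$ under right translation is independent of $u$, so that a single finite averaging operator works uniformly across the whole family $\{\pi_u\}$ --- is exactly the heart of the matter.

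Two places where the sketch could be tightened. First, one does not directly decompose an \emph{arbitrary} $\phi_0$ as a finite combination of averages of translates of $W_P$; rather, the cut-off procedure produces one specific $\phi_1\in C_c^\infty(N_n(K)\backslash P_n(K),\psi)$ (supported near the identity coset) with $W'(u,p)=\phi_1(p)P(u)$, and then the irreducibility of $\mathrm{ind}_{N_n(K)}^{P_n(K)}\psi$ as a $P_n(K)$-module, together with the $u$-independence of right $P_n(K)$-translation on $W_\pi^{(0)}$, yields all of $C_c^\infty(N_n(K)\backslash P_n(K),\psi)\cdot P$. Second, the assertion that the averaging preserves the specific polynomial $P$ (not merely polynomial dependence on $u$) requires that the averaging subgroup lie inside the stabilizer of $W_P$ and that $\psi$ be trivial on it; this is easily arranged, but should be said. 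With these adjustments your argument is correct and matches \cite{CP}.
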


In order to prove rationality properties of the Asai-gamma factor of $\pi_u$, we will show that if $f$ belongs to $\mathcal{F}_{\pi}$, and $\phi$ belongs to $C_c^{\infty}(F^n)$, the integral $I(W_{f_u},\phi,s)$ is a rational function in $q_F^{-u}$ and $q_F^{-s}$. To do this, we use as in \cite{CP} a theorem of Bernstein about solutions of a polynomial family of affine equations.\\

Let $V$ be a complex vector space of countable dimension. Let $R$ be an index set, and let $\Xi$ be a collection $\left\lbrace (x_r, c_r)| r\in R \right\rbrace$ with $x_r \in V$ and $c_r\in \mathbb{C}$. A linear form $\lambda$ in $V^*= Hom_{\mathbb{C}}(V,\mathbb{C})$ is said to be a solution of the system $\Xi$ if $\lambda(x_r)=c_r$ for all $r$ in $R$.\\ 
Let $\mathcal{D}$ be an irreducible algebraic variety over $\mathbb{C}$, and suppose that to each $d$, a system $\Xi_d =\left\lbrace (x_r(d), c_r(d))| r\in R \right\rbrace$ with the index set $R$ independant of $d$ in $\mathcal{D}$. We say that the family of systems $\left\lbrace \Xi_{d}, d \in \mathcal{D}\right\rbrace $ is polynomial if $x_r(d)$ and $c_r(d)$ belong respectively to $ \mathbb{C}[\mathcal{D}]\otimes_{\mathbb{C}} V$ and $\mathbb{C}[\mathcal{D}]$. Let $\mathcal{M}=\mathbb{C}(\mathcal{D})$ be the field of fractions of $\mathbb{C}[\mathcal{D}]$, we denote by $V_{\mathcal{M}}$ the space $\mathcal{M} \otimes_{\mathbb{C}} V$ and by $V_{\mathcal{M}}^*$ the space $Hom_{\mathcal{M}}(V_{\mathcal{M}},\mathcal{M})$. The theorem is the following (see Subsection 3.2. of \cite{CP}).

\begin{thm}{(Bernstein)}\label{Ber}
In the above situation, suppose that there exists a non-empty subset $\Omega \subset \mathcal{D}$ open in the usual complex topology of $\mathcal{D}$, such that for each $d$ in $\Omega$, the system $\Xi_d$ has a unique solution $\lambda_{d}$. Then the system $\Xi=\left\lbrace (x_r(d),c_r(d))|r\in R \right\rbrace$ over the field $\mathcal{M}=\mathbb{C}(\mathcal{D})$ has a unique solution $\lambda(d)$ in $V_{\mathcal{M}}^*$, and $\lambda(d)=\lambda_d$ is the unique solution of $\Xi_d$ on a subset of $\mathcal{D}$ which is the complement of a countable number of hypersurfaces. 
\end{thm}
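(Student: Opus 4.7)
The plan is to reduce to the finite-dimensional case by exhausting $V$ with an increasing chain of finite-dimensional subspaces, then to express the solution on each piece by Cramer's rule, which automatically produces rational (hence $\mathcal{M}$-valued) coefficients. The crucial input is that the uniqueness of $\lambda_{d}$ on $\Omega$ is equivalent to the family $\{x_{r}(d)\}_{r\in R}$ spanning $V$ at every $d\in\Omega$: otherwise, adding any nonzero linear form vanishing on the span to $\lambda_{d}$ would give a second solution. Since $\Omega$ is non-empty and open in the usual topology of the irreducible variety $\mathcal{D}$, it is Zariski-dense, and this density will drive the polynomial-identity arguments throughout the proof.

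For uniqueness over $\mathcal{M}$, I would take two hypothetical solutions $\lambda^{(1)},\lambda^{(2)}\in V_{\mathcal{M}}^{*}$ and form $\nu=\lambda^{(1)}-\lambda^{(2)}$. Fixing a countable basis $\{e_{i}\}$ of $V$, the values $\nu(e_{i})$ are rational functions on $\mathcal{D}$ satisfying $\nu(x_{r}(d))=0$ for every $r$. Specializing at any $d\in\Omega$, where the $x_{r}(d)$ span $V$, kills $\nu(e_{i})$ pointwise on a Zariski-dense set, so each $\nu(e_{i})$ vanishes identically and $\nu=0$.

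For existence, I would fix an exhausting filtration $V_{1}\subset V_{2}\subset\cdots$ with $\bigcup_{n}V_{n}=V$ and each $V_{n}$ finite-dimensional, with a basis $e_{1},\dots,e_{k_{n}}$ of $V_{n}$. Using the spanning property on $\Omega$, I can choose finitely many indices $r_{1},\dots,r_{N}$ such that the relevant block of the matrix expressing $x_{r_{1}}(d),\dots,x_{r_{N}}(d)$ in the basis $e_{i}$ has generically maximal rank, with nonzero determinant $D_{n}(d)$. Cramer's rule then produces a candidate $\lambda_{n}\in V_{n,\mathcal{M}}^{*}$ solving the selected equations. The remaining equations $\lambda_{n}(x_{r}(d))=c_{r}(d)$ become identities of rational functions of $d$, holding on the Zariski-dense subset where $D_{n}(d)\neq 0$ by uniqueness of $\lambda_{d}$, hence holding identically. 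Uniqueness over $\mathcal{M}$ applied on $V_{n}$ forces $\lambda_{n+1}|_{V_{n}}=\lambda_{n}$, so the $\lambda_{n}$ glue into a single $\lambda\in V_{\mathcal{M}}^{*}$ defined outside the countable union of hypersurfaces $\{D_{n}=0\}$.

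The main obstacle is ensuring this gluing works: one needs both to verify that each restricted system on $V_{n}$ still admits unique solutions over $\mathcal{M}$, so that the uniqueness step can be re-applied at each level of the filtration, and to show that the Cramer-rule candidate satisfies the infinitely many unused equations involving $x_{r}(d)$'s not lying in $V_{n}$. Both reductions rest on using the Zariski density of $\Omega$ to upgrade pointwise statements to identities of rational functions; carefully tracking the countable collection of hypersurfaces produced along the way then yields the exceptional locus described in the final statement.
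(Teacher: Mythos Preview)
Your uniqueness argument over $\mathcal{M}$ matches the paper's Step~2. The existence argument, however, is organized very differently. The paper does not filter $V$ or use Cramer's rule; it checks directly the compatibility condition
\[
(\ast)\qquad \sum_r a_r\, x_r = 0 \ \text{in } V_{\mathcal{M}} \ \Longrightarrow\ \sum_r a_r\, c_r = 0 \ \text{in } \mathcal{M}
\]
for finitely supported families $(a_r)$ in $\mathcal{M}$. This is immediate: clear denominators so that $a_r \in \mathbb{C}[\mathcal{D}]$; if $\sum a_r c_r \neq 0$, evaluate at some $d \in \Omega$ where it is nonzero, contradicting the existence of $\lambda_d$. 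With $(\ast)$ in hand one defines $\lambda$ unambiguously on the $\mathcal{M}$-span of the $x_r$ and extends arbitrarily to $V_{\mathcal{M}}$. The final clause (uniqueness of the solution of $\Xi_d$ off a countable union of hypersurfaces) then follows because uniqueness over $\mathcal{M}$ forces $\{x_r\}$ to span $V_{\mathcal{M}}$, so each basis vector $\xi_i$ is a finite $\mathcal{M}$-combination of the $x_r$'s; off the poles of those coefficients and of the $\lambda(\xi_i)$, $\Xi_d$ is determined.

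Your Cramer plan has a real gap. You filter $V$ by arbitrary finite-dimensional $V_n$, pick $r_1,\dots,r_N$, and invert ``the relevant block'' of the matrix of the $x_{r_j}(d)$ in the basis $e_1,\dots,e_{k_n}$. But the $x_{r_j}(d)$ need not lie in $V_n$, and truncating to the $V_n$-block replaces the equation $\lambda(x_{r_j})=c_{r_j}$ by $\lambda_n\!\big(\mathrm{proj}_{V_n}x_{r_j}\big)=c_{r_j}$, which is a different system. The resulting $\lambda_n$ has no reason to equal $\lambda_d|_{V_n}$ on $\Omega$, so neither the verification of the remaining equations nor the compatibility $\lambda_{n+1}|_{V_n}=\lambda_n$ follows. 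Concretely: with $V=\mathbb{C}^2$, $V_1=\mathbb{C} e_1$, and $x_{r_1}(d)=e_1+d\,e_2$, your block is the $1\times 1$ matrix $[1]$ and Cramer gives $\lambda_1(e_1)=c_{r_1}(d)$, whereas the true solution has $\lambda_d(e_1)=c_{r_1}(d)-d\,\lambda_d(e_2)$. The natural repair is to filter instead by the $\mathcal{M}$-span of finitely many $x_r$'s; but then the only thing left to check is well-definedness, i.e.\ exactly $(\ast)$, and you recover the paper's argument without the Cramer detour. Note that your own uniqueness step already proves $\{x_r\}$ spans $V_{\mathcal{M}}$, so $(\ast)$ is the missing ingredient, not finite-dimensional linear algebra.
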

\begin{proof} We reproduce the proof of Bernstein, which is extracted from a letter to Piatetski-Shapiro. It proceeds in three steps.

\begin{itemize}

 \item[Step 1:] The system $\Xi$ has a solution $f$. It is enough to check\\

(*) For each collection $\left\lbrace a_r \in \mathcal{M} | r \in R \right\rbrace $ in which all but a finite number of $a_r$ are $0$, then $\sum a_r x_r = 0$ implies $\sum a_r c_r = 0$.\\

Indeed, if we know (*), we can unambiguously define $f$ on the $\mathcal{M}$-linear span of $\left\lbrace x_r\right\rbrace$ and
then extend it arbitrarily to $V_{\mathcal{M}}$.\\
Suppose that (*) is not true, i.e., there exists a linear combination $\sum a_r x_r$ which is zero such
that $\sum a_r c_r \neq 0$. Multiplying ar by a polynomial we can assume that $a_r \in \mathbb{C}[D]$ and $\sum a_r \lambda_r \neq 0$. Then at some point $d \in \Omega$, the sum $\sum a_r \lambda_r (d)$ is different from $0$, i.e., $\Xi_d$ does not have a solution, which is a contradiction.\\

\item[Step 2:] The solution $f$ is unique.\\

Let $\xi_i$ be a $\mathbb{C}$-basis of $V$. then $f$ is defined by a countable number of functions $f(\xi_i) \in \mathcal{M}$. If
there are two solutions $f$, $f'$, then outside of a countable number of hypersurfaces $f(d)$ and
$f'(d)$ are defined and are sloutions of $\Xi_d$. Since on $\Omega$, the system $\Xi_d$ has a unique solution, we obtain $f(d) = f'(d)$ for $d$ outside of a countable number of hypersurfaces of $\Omega$. This implies that $f$ is equal to $f'$.\\

\item[Step 3:] Outside of a countable number of hypersurfaces, the system $\Xi_d$ has a unique solution which is equal to
$f(d)$.\\

Since $\Xi$ has a unique solution, each vector $\xi_i$ can be written as a finite linear combination
$x_r = \sum a_{i,r}\xi_i$. Put $D'= {\left\lbrace d | \ a_{i,r} \textrm{ and f are defined at d}\right\rbrace }$. Then for all $d\in \Omega$, the function $f(d)$ is the
unique solution of $\Xi_d$.

\end{itemize} \end{proof}

In order to apply this theorem, we will need the following proposition.

\begin{prop}
Let $\pi$ be a representation of parabolic type, there are $t$ affine linear forms $L_i$, for $i$ between $1$ and $t$, with $L_i$ depending on the variable $u_i$, such that if the $L_i(u_i)$'s and $s$ have positive real parts, the integral $I(W,\phi,s)$ is convergent for any $W$ in $W(\pi_u,\psi)$ and any $\phi$ in $C_c^{\infty}(F^n)$. 
\end{prop}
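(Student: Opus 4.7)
The plan is to follow the strategy of Proposition 3.3 of \cite{CP}, adapted to the Asai setting where the integration takes place over $N_n(F)\backslash G_n(F)$ rather than the split group. The key ingredients are the Iwasawa decomposition and the standard asymptotic estimates for Whittaker functions on the diagonal torus. I will first use Iwasawa decomposition $G_n(F)=N_n(F)A_n(F)K_n^F$ to rewrite
$$I(W,\phi,s)=\int_{K_n^F}\int_{A_n(F)}W(ak)\phi(\eta_n ak)|\det(a)|_F^{s}\,\delta_{B_n}^{-1}(a)\,da\,dk.$$
Because $K_n^F$ is compact, convergence is equivalent to convergence of the inner integral, uniformly in $k\in K_n^F$. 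The factor $\phi(\eta_n ak)$ has compact support in the last row, which, once $k$ is chosen and $a=\mathrm{diag}(a_1,\dots,a_n)$, forces $|a_n|_F$ to stay bounded above.

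Next I would apply the asymptotic behaviour of Whittaker functions on the torus. Writing $\pi_u=\Delta_1|\ |_K^{u_1}\times\dots\times\Delta_t|\ |_K^{u_t}$ and using the realisation of $W$ as $W_{f_u}$ for some $f\in\mathcal{F}_\pi$, one has the standard estimate (cf.~Section 2 of \cite{JPS} or the analogue in Section 3 of \cite{CP}): for $a=\mathrm{diag}(b_1 I_{n_1},\dots,b_t I_{n_t})\cdot a'$ with $a'$ in the torus of the Levi of $M=M_{\bar n}$, the Whittaker function $W(ak)$ vanishes unless all simple roots $|\alpha_j(a)|_K$ stay in a fixed cone $|\alpha_j(a)|_K\leq C$, and in that cone it is bounded by
$$|W(ak)|\leq C\,\delta_{Q}^{1/2}(a)\,\prod_{i=1}^{t}|b_i|_K^{\mathrm{Re}(u_i)+\gamma_i}\,\Xi(a'),$$
where $\gamma_i\in\mathbb{R}$ depends only on $\Delta_i$ and $\Xi$ is a gauge with compact support modulo the unitary characters of the torus of $M$. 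The point is that this bound is uniform in $k\in K_n^F$ and the exponents depend \emph{affinely} on $u$.

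Feeding the bound into the integral and changing variables, the convergence question reduces to the convergence of a finite product of one-dimensional integrals of the form
$$\int_{|b|_F\leq C_i}|b|_F^{s+\mathrm{Re}(u_i)+c_i}\,d^{*}b,$$
each of which converges if and only if the corresponding exponent is strictly positive. This gives the required affine forms $L_i(u_i)=\mathrm{Re}(u_i)+c_i+\mathrm{Re}(s)\cdot\varepsilon_i$ (with $\varepsilon_i\geq 0$), and thus convergence whenever $\mathrm{Re}(s)>0$ and $\mathrm{Re}(L_i(u_i))>0$ for all $i$.

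The main obstacle, as always in this sort of statement, is producing a \emph{uniform} Whittaker gauge estimate whose exponents depend affinely (rather than in some uncontrolled way) on the deformation parameter $u$. This uniformity is not automatic from the general theory but follows, by the same kind of argument used in Proposition 3.2 of \cite{CP}, from the explicit formula for $W_{f_u}$ obtained through the Whittaker functional $\Lambda_u$ and the fact that the Whittaker models of the quasi-square-integrable factors $\Delta_i|\ |_K^{u_i}$ are obtained from those of $\Delta_i$ by multiplication by the character $|\det|_K^{u_i}$. Once this uniformity is in hand, the convergence statement follows directly.
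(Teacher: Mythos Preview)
Your approach is correct in spirit but takes a genuinely different route from the paper. The paper does not use the full Iwasawa decomposition with Whittaker gauge estimates on the torus. Instead it uses the decomposition $G_n(F)=P_n(F)Z_n(F)K_n^F$ to write
\[
I(W,\phi,s)=\int_{K_n^F}\int_{N_n(F)\backslash P_n(F)}W(pk)|\det(p)|_F^{s-1}\,dp\int_{F^*}\phi(\eta_n ak)c_{\pi_u}(a)|a|_F^{ns}\,d^*a\,dk,
\]
so that the $\phi$-integral over $F^*$ converges once $\mathrm{Re}(s)>0$, and the remaining mirabolic integral $\int_{N_n(F)\backslash P_n(F)}W'(p)|\det(p)|_F^{s-1}\,dp$ is handled by quoting a lemma of Flicker (\cite{F1}, Section 4): such an integral converges as soon as all central exponents of the derivatives $\tau^{(k)}$ are positive. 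Since twisting $\Delta_i$ by $|\ |_K^{u_i}$ shifts those exponents linearly in $u_i$, one immediately obtains the affine forms $L_i(u_i)=u_i-e_\pi-1$, where $e_\pi$ is the maximal central exponent of $\pi$. This is considerably shorter because the hard analytic work is packaged into Flicker's result; your route via explicit Whittaker asymptotics on $A_n(F)$ essentially re-proves that lemma by hand. One minor point: your displayed formula for $L_i$ mixes $s$ into the affine form, which contradicts the statement; in the paper the conditions on $s$ and on the $u_i$ are cleanly separated by the $P_n\cdot Z_n$ decomposition.
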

\begin{proof}
Let $W$ belong to $W(\pi_u,\psi)$ and $\phi$ belong to $C_c^{\infty}(F^n)$, because of the formula $$\int_{N_n(F)\backslash G_n(F)}\!\!\!\!\!\!\!\!\!\!\!\!\!\!\!\!\!\!\!\!\!\!\!W(g)\phi(\eta_n g)|det(g)|_F^s dg=\!\!\!\int_{K_{n,F}}\!\!\int_{N_n(F)\backslash P_n(F)}\!\!\!\!\!\!\!\!\!\!\!\!\!\!\!\!\!\!\!\! \!\!\!W(pk)|det(p)|_F^{s-1} dp \!\! \int_{F^*}\!\!\!\phi(\eta_nak)c_{\pi}(a)|a|_F^{ns}d^*a dk,$$ one deduces that the absolute convergence of the integral $$\int_{N_n(F)\backslash G_n(F)} W(g)\phi(\eta_n g)|det(g)|_F^s dg$$ is guaranteed by those of $\int_{N_n(F)\backslash P_n(F)} W'(p)|det(g)|_F^{s-1} dp$ for any $W'$ in $W(\pi_u,\psi)$, and of $\int_{F^*}\phi'(\eta_naI_n)|a|_F^{ns}d^*a$ for any $\phi'$ in $C_c^{\infty}(F^n)$.\\
 It is classical that the latter integrals converge as soon as $Re(s)>0$, so we focus on integrals of the form $\int_{N_n(F)\backslash P_n(F)} W'(p)|det(p)|_F^{s-1} dp$ for $W'$ in $W(\pi_u,\psi) $.\\
We now recall the following claim, which is proved in the lemma of Section 4 of \cite{F1}.

\begin{claim}
Let $\tau$ be a sub-$P_n(K)$-module of $C^{\infty}(N_n(K)\backslash P_n(K), \psi)$, such that for every $k$ between $0$ and $n$, the central exponents of $\tau^{(k)}$ are positive (i.e. the central characters of all the irreducible subquotients of $\tau^{(k)}$ have positive real parts), then whenever $W$ belongs to $\tau$, the integral $\int_{N_n(F)\backslash P_n(F)} W(p)dp$ is absolutely convergent.
\end{claim}

Applying this to our situation, and noting $e_{\pi}$ the maximal element of the set of central exponents of $\pi$ (see Section 7.2 of \cite{Ber}), we deduce that as soon as $u$ is such that $L_i(u)=u_i-e_{\pi} -1$ has positive real part for $i$ between $1$ and $t$, and as soon as $s$ has positive real part, the integral $\int_{N_n(F)\backslash P_n(F)} W'(p)|det(p)|_F^{s-1} dp$ converges for all $W'$ in $W(\pi_u,\psi)$.
\end{proof}

We now can prove the following:

\begin{prop}\label{rat}
Let $\pi$ be a representation of parabolic type, for every $W$ in $W_{\pi}^{(0)}$, and $\phi \in C_c^{\infty}(F^n)$, the function $I(W,\phi,s)$ belongs to $\mathbb{C}(q_F^{-u},q_F^{-s})$.
\end{prop}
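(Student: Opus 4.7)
The plan is to apply Bernstein's rationality theorem (Theorem \ref{Ber}) to a polynomial family of systems of affine equations whose unique generic solution encodes the integrals $I(W_{f_u},\phi,s)$. Set $V = \mathcal{F}_\pi\otimes_\mathbb{C} C_c^\infty(F^n)$ and let $\mathcal{D} = (\mathcal{D}_K)^t\times D_F$, so that $\mathbb{C}[\mathcal{D}] = \mathbb{C}[q_K^{\pm u},q_F^{\pm s}]$. In the region of absolute convergence furnished by the preceding proposition, the assignment $\lambda_{u,s}(f\otimes\phi):=I(W_{f_u},\phi,s)$ defines a linear form on $V$ which, by the change of variable $h\mapsto hg^{-1}$ in the defining integral, satisfies
$$\lambda_{u,s}\bigl(\pi_u(g)f\otimes\rho(g)\phi\bigr) \;=\; |\det(g)|_F^{-s}\,\lambda_{u,s}(f\otimes\phi) \qquad (g\in G_n(F)).$$

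Encode this as a polynomial family $\Xi_{u,s}$ indexed by $R = G_n(F)\times\mathcal{F}_\pi\times C_c^\infty(F^n)$: assign to each $(g,f,\phi)$ the pair
$$\bigl(\pi_u(g)f\otimes\rho(g)\phi - q_F^{-s\,v_F(\det g)}\,f\otimes\phi,\;0\bigr) \in (\mathbb{C}[\mathcal{D}]\otimes V)\times\mathbb{C}[\mathcal{D}].$$
The family is polynomial because $\pi_u(g)f$ depends polynomially on $q_K^u$, by the explicit construction of $f_u$ recalled before Proposition \ref{incl}. Supplement the system with one normalization equation: using Proposition \ref{incl} to choose $f_0\in\mathcal{F}_\pi$ so that $W_{f_u}|_{P_n(K)}$ is a fixed element of $C_c^\infty(N_n(K)\backslash P_n(K),\psi)$ independent of $u$, and then picking $\phi_0$ of sufficiently small support around $\eta_n$ as in Lemma \ref{LM} so that $I(W_{f_u},\phi_0,s) = I_{(0)}(W_{f_u},s)$ equals a prescribed nonzero polynomial $c_0(u,s)\in\mathbb{C}[\mathcal{D}]$, we add the equation $(f_0\otimes\phi_0,\,c_0)$.

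The essential step is uniqueness on a non-empty Zariski-open $\Omega\subset\mathcal{D}$. For $\Omega$ take the locus where $u$ is in general position (Proposition-Definition \ref{gen2}), the integral converges absolutely, and $\pi_u$ is \emph{not} $|\ |_F^{-s}$-distinguished; the last condition is Zariski-open since, by Theorem \ref{s-dist}, it is equivalent to $s$ not being an exceptional pole of $L_F^K(\pi_u,\,\cdot\,)$. On $\Omega$ the space of $|\det|_F^{-s}$-quasi-invariant functionals on $W(\pi_u,\psi)\otimes C_c^\infty(F^n)$ is at most one-dimensional: a Mackey/Frobenius decomposition, exploiting that $F^n\setminus\{0\}$ is a single $G_n(F)$-orbit (identifiable with $G_n(F)/P_n^-(F)$), bounds by one the dimension of such forms on $W(\pi_u,\psi)\otimes C_c^\infty(F^n\setminus\{0\})$, and any extension across $\{0\}$ would produce a $|\det|_F^{-s}$-invariant form on $W(\pi_u,\psi)$ itself, which is forbidden on $\Omega$.

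Bernstein's theorem then yields a unique solution $\lambda\in V^*_\mathcal{M}$ with $\mathcal{M}=\mathbb{C}(\mathcal{D})$, and $\lambda$ agrees with $\lambda_{u,s}$ on $\Omega$; evaluating on any $f\otimes\phi$ shows $I(W_{f_u},\phi,s)\in\mathbb{C}(q_K^{-u},q_F^{-s})$. A general $W\in W_\pi^{(0)}$ is a finite sum $\sum_i c_i\,W_{f^{(i)}_u}(\,\cdot\, g^{(i)}) = \sum_i c_i\,W_{(\pi_u(g^{(i)})f^{(i)})_u}$; since each $\pi_u(g^{(i)})f^{(i)}$ depends polynomially on $q_K^u$, the rationality of $I(W,\phi,s)$ follows by linearity. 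The main obstacle is the uniqueness step: the Mackey analysis on the open orbit $F^n\setminus\{0\}$ is classical, but correctly identifying the obstruction at the origin with the $|\ |_F^{-s}$-distinction of $\pi_u$, via Theorem \ref{s-dist}, is what ensures the locus $\Omega$ is genuinely Zariski-open and non-empty.
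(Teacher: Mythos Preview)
Your overall strategy---applying Bernstein's theorem to the polynomial family of invariance equations plus a normalization equation coming from Proposition \ref{incl}---is exactly the paper's approach. The gap is in the uniqueness step. Frobenius reciprocity on the open orbit $F^n\setminus\{0\}\cong P_n(F)\backslash G_n(F)$ reduces the question to bounding $\dim\mathrm{Hom}_{P_n(F)}\bigl(W(\pi_u,\psi)|_{P_n(F)},\chi\bigr)$ for a suitable character $\chi$, but you assert without argument that this is at most one. That is not automatic: $\pi_u$ is a representation of $G_n(K)$, and multiplicity one for $P_n(F)$-equivariant functionals on its Whittaker model is a genuine theorem, not a formality. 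The paper obtains it by citing Flicker's analysis in \cite{F4}, which runs through the Bernstein--Zelevinsky derivative filtration of $W(\pi_u,\psi)|_{P_n(K)}$ and shows uniqueness off the explicit hyperplanes $c_{\rho_u}|z|^{j(s-1)}=1$ (for $\rho_u$ an irreducible constituent of $\pi_u^{(n-j)}$, $j>0$) together with $c_{\pi_u}|z|^{ns}=1$. It is these derivative conditions, not non-distinction, that carve out the correct $\Omega$.

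There is also a circularity in your choice of $\Omega$: you invoke Theorem \ref{s-dist} to identify the non-distinguished locus with the complement of the exceptional poles of $L_F^K(\pi_u,\cdot)$ and then declare this Zariski-open in $(u,s)$; but knowing that the exceptional-pole locus is algebraic in $(u,s)$ jointly presupposes the very rationality you are proving. The paper sidesteps this by using the explicit hyperplane conditions above, which are visibly polynomial in $q_K^{\pm u}$ and $q_F^{\pm s}$. A minor point: Proposition \ref{incl} only guarantees restrictions to $P_n(K)$ of the form $F(p)P(q_K^{\pm u})$ with $P\in\mathcal{P}_0$, so you cannot arrange $W_{f_u}|_{P_n(K)}$ to be independent of $u$; the normalization equation still works, but the right-hand side $c_0$ must carry the factor $P(q_K^{\pm u})$, as in the paper's proof.
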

\begin{proof}
In our situation, the underlying vector space is $V=\mathcal{F}_{\pi} \otimes C_c^{\infty}(F^n)$ and is of countable dimension because $\pi$ is admissible. The invariance property satisfied by the functional $I$,
 is \begin{equation}\label{inv} I(\pi_u(g)W_{f,u},\rho(g)\phi,s)=|det(g)|_F^{s}I(W_{f,u},\phi,s)\end{equation} for $f$ in $\mathcal{F}_{\pi}$ and $\phi$ in $C_c^{\infty}(F^n)$, and $g$ in $G_n(F)$.\\
From the proofs of Lemma 8 and of the unique proposition of \cite{F4}, it follows that out of the hyperplanes in $(u,s)$ defined by $c_{\rho_u}|z|^{j(s-1)}=1$, for $\rho_u$ in the irreducible components of $\pi_u^{(n-j)}$, and for $j>0$, and out of the hyperplane $c_{\pi_u}|z|^{ns}=1$, the space of solutions of equation \ref{inv} is of dimension at most one. If we take a basis of $(f_{\alpha})_{\alpha\in A}$ of $\mathcal{F}_{\pi}$, and a basis $(\phi_{\beta})_{\beta \in B}$ of $C_c^{\infty}(F^n)$, the polynomial family over the irreducible complex variety $\mathcal{D}=(\mathcal{D}_K)^t\times \mathcal{D}_F$ of systems $\Xi'_d$, for $d=(u,s)\in \mathcal{D}$ expressing the invariance of $I$ is given by:

$$\Xi'_d  = \left\lbrace \begin{array}{lc} ( \pi_u(g)\pi_u(g_i)W_{f_{\alpha},u}\otimes \rho(g)\phi_\beta - |det(g)|_F^{s} \pi_u(g_i) W_{f_{\alpha},u} \otimes \phi_\beta,0), \\ \alpha\in A, \beta \in B, g\in G_n(F), g_i \in G_n(K)   
\end{array} \right\rbrace $$
 
Now we define $\Omega$ to be the intersection of the three following subets of $\mathcal{D}$: 
\begin{itemize}
 \item the intersection of the complements of the hyperplanes such that $\pi_u$ is in general position on this intersection (see Proposition \ref{gen2}),
\item the intersection of the complements of the hyperplanes on which uniqueness up to scalar fails,
\item the intersection of the domains $\left\lbrace Re(L_i(u))>0 \right\rbrace $ and $\left\lbrace Re(s)>0 \right\rbrace $, on which $I(W_{f,u},\phi,s)$ is given by an absolutely convergent integral.
\end{itemize}

The functional $I$ is the unique solution up to scalars of the system $\Xi'$, in order to apply Theorem \ref{Ber}, we add for each $d\in \mathcal{D}$ a normalization equation $E_d$ depending polynomially on $d$. This is done as follows, we recall the following integration formula, which is valid for $W(u,g)\in W_{\pi}^{(0)}$ with $u\in \Omega$ and $\phi\in C_c^{\infty}(F^n)$:

$$\aligned I(W,\phi,u,s) & = \int_{N_n(F)\backslash G_n(F)} W(u,g)\phi(\eta_n g)|det(g)|_F^s dg\\
                                           & = \int_{K_n^F} \int_{N_n(F)\backslash P_n(F)} W(u, pk)|det(p)|_F^{s-1}dp \int_{F^*}
 \phi(\eta
 ak)c_{\pi}(a)|a|_F^{ns} d^*a dk. \endaligned$$

Now from Proposition \ref{incl}, if $F$ is a positive function in $C_c^{\infty}(N_n(K)\backslash P_n(K), \psi)$, we choose a $W$ in $W_{\pi}^{(0)}$ such that its restriction to $P_n(K)$ is of the form $W(u,p)=F(p) P(q_K^{\pm{u}})$ for some nonzero $P$ in $\mathcal{P}_0$. Let $K'$ be a sufficiently small subgroup of of $K_n^F$, such that $K'\cap P_n(K)$ stabilizes $W$, and let $\phi'$ be the characteristic function of $\eta_n K'$, then one has 
$$I(W,\phi',u,s)= r\int_{N_n(F)\backslash P_n(F)} F(p)|det(p)|_F^{s-1}dp P(q^{\pm{u}}),$$ for some positive constant $r$. Calling $c$ the constant $r\int_{N_n(F)\backslash P_n(F)} F(p)|det(p)|_F^{s-1}dp$, this latter equality becomes $I(W,\phi',u,s)=cP(q_K^{\pm{u}})$. Now as $W$ is in $W_{\pi}^{(0)}$, it can be expressed as a finite linear combination $W(g,u)= \sum_k \pi_u(g_i) W_{f_{\alpha},u}(g)$ for appropriate $g_{\alpha}\in G_n(K)$. Hence our polynomial family of normalization equations (which is actually independant of $s$) can be written $$E_{(u,s)}=\left\lbrace (\sum_{\alpha}  \pi_u(g_{\alpha}) W_{f_{\alpha},u} \otimes \phi', cP(q_K^{\pm{u}}) \right\rbrace.$$ 
We now call $\Xi$ the system given by $\Xi'$ and $E$, it satisfies the hypotheses of Theorem \ref{Ber}. We thus conclude that there is a functional $I'$ which is a solution of $\Xi$ such that $(u,s)\mapsto I'([u\mapsto W(u)],\phi,s)$ is a rational function of $q_F^{\pm u}$ and $q_F^{\pm s}$ for $W\in W_{\pi}^{(0)}$ and $\phi$ in $C_c^{\infty}(F^n)$. We also know that $I'([u\mapsto W(u)],\phi,s)$ is the unique solution of $\Xi'_{(u,s)}$ outside a countable number of hypersurfaces in $(u,s)$. In particular, on the intersection of $\Omega$ and of the complementary set of the removed hypersurfaces, the functionals $I$ and $I'$ agree. As $\mathcal{D}$ is locally compact, it is a Baire space, and this intersection contains . As they are rational functions of $q_F^{-s}$, they are equal for all $s$, and we conclude that $(u,s)\mapsto I(W_{f,u},\phi,s)$ belongs to $\mathbb{C}(q_F^{-u},q_F^{-s})$.\end{proof}

An immediate corollary is the following.

\begin{prop}
Let $\pi$ be a parabolic type representation of the group $G_n(K)$, then the factor $\gamma_F^K(\pi_u,s)$ belongs to $\mathbb{C}(q_F^{-u}, q_F^{- s})$. 
\end{prop}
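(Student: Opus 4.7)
The proposition follows directly from the functional equation (Theorem \ref{Asaieq}) combined with Proposition \ref{rat}. By definition of $\gamma_F^K(\pi_u,\psi',s)$, for any $W\in W(\pi_u,\psi)$ and any $\phi\in C_c^{\infty}(F^n)$ such that $I(W,\phi,s)$ is not identically zero, one has
\[
\gamma_F^K(\pi_u,\psi',s)\;=\;\frac{I(\tilde W,\widehat\phi^{\psi'},1-s)}{I(W,\phi,s)}.
\]
The plan is to produce a particular pair $(W,\phi)$ for which both the numerator and the denominator belong to $\mathbb{C}(q_F^{-u},q_F^{-s})$, and for which the denominator is a nonzero element of that field.

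First, I would exhibit a specific $(W,\phi)$ with $I(W,\phi,s)$ a nonzero rational function in $(u,s)$. The end of the proof of Proposition \ref{rat} gives exactly this: starting from any positive $F\in C_c^{\infty}(N_n(K)\backslash P_n(K),\psi)$, Proposition \ref{incl} provides $W\in W_{\pi}^{(0)}$ whose restriction to $P_n(K)$ is $W(u,p)=F(p)P(q_K^{\pm u})$ with $P\in\mathcal P_0$ nonzero; taking for $\phi$ the characteristic function of a sufficiently small neighborhood of $\eta_n$, the computation produces
\[
I(W,\phi,u,s)\;=\;r\left(\int_{N_n(F)\backslash P_n(F)}F(p)|\det(p)|_F^{s-1}dp\right)P(q_K^{\pm u}),
\]
which is a nonzero element of $\mathbb{C}(q_F^{-u},q_F^{-s})$ by positivity of $F$ together with standard Tate-type non-vanishing. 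Proposition \ref{rat} ensures that $I(W,\phi,s)$ is rational in $(q_F^{-u},q_F^{-s})$.

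Next I would handle the numerator. The map $W\mapsto \tilde W$ defined by $\tilde W(g)=W(w_n{}^t g^{-1})$ sends $W(\pi_u,\psi)$ into $W((\pi_u)^{\vee},\psi^{-1})$. The contragredient $(\pi_u)^{\vee}$ is, up to a reordering of the factors, the deformation $(\pi^{\vee})_{-u}$ of the parabolic type representation $\pi^{\vee}=\Delta_t^{\vee}\times\dots\times\Delta_1^{\vee}$; in particular $\pi^{\vee}$ is again of parabolic type in the sense of Proposition-Definition \ref{typeA}. Moreover the Fourier transform $\phi\mapsto \widehat\phi^{\psi'}$ preserves $C_c^{\infty}(F^n)$ (the $p$-adic Schwartz--Bruhat space is stable under Fourier transform). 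Applying Proposition \ref{rat} to $\pi^{\vee}$ with parameter $-u$ and replacing $s$ by $1-s$, I obtain that $I(\tilde W,\widehat\phi^{\psi'},1-s)$ also belongs to $\mathbb{C}(q_F^{-u},q_F^{-s})$.

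Dividing the two, $\gamma_F^K(\pi_u,\psi',s)$ is a ratio of two elements of $\mathbb{C}(q_F^{-u},q_F^{-s})$ with nonzero denominator, hence itself lies in $\mathbb{C}(q_F^{-u},q_F^{-s})$. The only genuinely non-routine point is the verification that the dualizing map $W\mapsto \tilde W$, together with the reparametrization $u\mapsto -u$, sends the Bernstein deformation space $W_{\pi}^{(0)}$ into the analogous space $W_{\pi^{\vee}}^{(0)}$, so that Proposition \ref{rat} can indeed be applied to the dual side; this is the main book-keeping obstacle, but it is essentially formal given the explicit description of both spaces via induction from $\mathcal F_\pi$ and $\mathcal F_{\pi^{\vee}}$.
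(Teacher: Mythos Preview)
Your proposal is correct and matches the paper's approach: the paper simply states this as ``an immediate corollary'' of Proposition \ref{rat} without further argument, and you have spelled out exactly that corollary by applying Proposition \ref{rat} to both sides of the functional equation and dividing. The book-keeping point you flag about $\tilde W$ landing in $W_{\pi^{\vee}}^{(0)}$ is indeed the only thing to check, and the paper tacitly assumes it as well.
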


We now prove a first step towards our main result.

\begin{prop}\label{div}
 Assuming Conjecture \ref{distgen}, let $\pi=\Delta_1 \times \dots \times \Delta_t$ be a parabolic type representation of the group $G_n(K)$, then the Euler factor $L_F^K(\pi,s)$ divides $\prod_{1\leq i<j \leq t} L(\Delta_i \times \Delta_{j}^{\sigma},s)\prod_{1\leq k\leq t}L_F^K(\Delta_k,s)$.
\end{prop}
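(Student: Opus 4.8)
The plan is to run the deformation argument of \cite{CP}. Write $\pi=\Delta_1\times\dots\times\Delta_t$ and, for $u=(u_1,\dots,u_t)$ in the torus $(\mathcal{D}_K)^t$, form the parabolic type representation $\pi_u=\Delta_1|\ |_K^{u_1}\times\dots\times\Delta_t|\ |_K^{u_t}$, so that $\pi_0=\pi$. Set
$$\mathcal{L}(\pi_u,s)=\prod_{1\leq i<j\leq t}L\!\left(\Delta_i|\ |_K^{u_i}\times(\Delta_j|\ |_K^{u_j})^{\sigma},s\right)\prod_{1\leq k\leq t}L_F^K\!\left(\Delta_k|\ |_K^{u_k},s\right).$$
Computing the factors with Theorem 8.2 of \cite{JPS} and Proposition \ref{asaidiscrete}, the reciprocal $1/\mathcal{L}(\pi_u,s)$ is a polynomial in $q_F^{-s}$ whose coefficients are Laurent polynomials in $q_F^{-u_1},\dots,q_F^{-u_t}$; in particular $\mathcal{L}(\pi_u,s)\in\mathbb{C}(q_F^{-u},q_F^{-s})$. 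By Definition-Proposition \ref{gen2}, $\pi_u$ is in general position for $u$ outside a finite union of hyperplanes, and there Theorem \ref{asgen} --- which is where Conjecture \ref{distgen} is used --- yields the equality $L_F^K(\pi_u,s)=\mathcal{L}(\pi_u,s)$. It remains to transport this to $u=0$.

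I would realise all the $\pi_u$ in the common space $\mathcal{F}_\pi$ of Subsection \ref{subsectionder}, and for $f\in\mathcal{F}_\pi$ and $\phi\in C_c^\infty(F^n)$ put
$$h_{f,\phi}(u,s)=\frac{I(W_{f_u},\phi,s)}{\mathcal{L}(\pi_u,s)},$$
which lies in $\mathbb{C}(q_F^{-u},q_F^{-s})$ by Proposition \ref{rat} and the remark above. For $u$ in general position $\mathcal{L}(\pi_u,s)=L_F^K(\pi_u,s)$ generates the fractional ideal $I(\pi_u)$ spanned by the integrals $I(W,\phi,s)$, so $h_{f,\phi}(u,\cdot)\in\mathbb{C}[q_F^{s},q_F^{-s}]$ has no pole at any $q_F^{-s}\in\mathbb{C}^{*}$. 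Since general position is Zariski-dense, the $q_F^{-s}$-denominator of $h_{f,\phi}$, written in lowest terms over $\mathbb{C}(q_F^{-u})$, is forced to be a monomial in $q_F^{-s}$; hence there are a finite set $\Sigma\subset\mathbb{Z}$ and rational functions $c_i\in\mathbb{C}(q_F^{-u})$ with $h_{f,\phi}(u,s)=\sum_{i\in\Sigma}c_i(u)\,q_F^{-is}$ as rational functions.

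The remaining step --- and the \textbf{main obstacle} --- is to legitimately specialise at $u=0$, that is, to check that each $c_i$ is regular there, so that no spurious pole in $s$ is created in the limit. For this one uses that for $\mathrm{Re}(s)$ large the integral $I(W_{f_u},\phi,s)$ is absolutely convergent, uniformly for $u$ in a neighbourhood of $0$ (the abscissa of convergence varies continuously with $u$, as in the convergence proposition preceding Proposition \ref{rat}); hence $h_{f,\phi}(u,s)$ is holomorphic in $u$ near $0$ for such $s$, and its value at $u=0$ is the honest Asai quotient $I(W_{f_0},\phi,s)/\mathcal{L}(\pi_0,s)$. Evaluating at $|\Sigma|$ distinct values of $s$ of large real part and inverting the resulting Vandermonde-type system in the $q_F^{-is}$ exhibits each $c_i$ as a finite $\mathbb{C}$-linear combination of values of $h_{f,\phi}(\cdot,s)$, hence holomorphic at $u=0$. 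Therefore $h_{f,\phi}(0,s)=I(W_{f_0},\phi,s)/\mathcal{L}(\pi_0,s)$ lies in $\mathbb{C}[q_F^{s},q_F^{-s}]$, i.e.\ $\mathcal{L}(\pi_0,s)$ absorbs every pole of the Asai integral $I(W_{f_0},\phi,s)$. As $f$ runs over $\mathcal{F}_\pi$ the functions $W_{f_0}$ describe the Whittaker model $W(\pi,\psi)$ (cf.\ \cite{CP}), so every pole of $L_F^K(\pi,s)$ --- the generator of $I(\pi)$, whose poles are the union, with multiplicity, of those of the $I(W_{f_0},\phi,s)$ --- is a pole of $\mathcal{L}(\pi_0,s)$; both being Euler factors, this means precisely that $L_F^K(\pi,s)$ divides $\mathcal{L}(\pi_0,s)=\prod_{1\leq i<j\leq t}L(\Delta_i\times\Delta_j^{\sigma},s)\prod_{1\leq k\leq t}L_F^K(\Delta_k,s)$.
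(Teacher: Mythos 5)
Your proposal is correct and follows essentially the same route as the paper's proof: realize all $\pi_u$ on the common space $\mathcal{F}_\pi$, invoke Theorem \ref{asgen} (via Conjecture \ref{distgen}) on the Zariski-dense general-position locus to see that $I(W_{f_u},\phi,s)/\mathcal{L}(\pi_u,s)$ has trivial $q_F^{-s}$-denominator, observe that the removed hyperplanes depend on $u$ only, and then specialize at $u=0$. The only difference is presentational: the paper asserts directly that the ratio lies in $\mathbb{C}[q_F^{\pm u},q_F^{\pm s}]$ and then sets $u=0$, whereas you make the specialization step explicit through the Vandermonde argument and a uniform-convergence claim near $u=0$; this is a slightly more careful rendering of the same step, and both proofs rest on the same implicit local-boundedness of the abscissa of convergence in $u$.
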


\begin{proof}
We know from Theorem \ref{asgen} that $$L_F^K(\pi_u,s)=\prod_{1\leq i<j \leq m} L(\Delta_i \times \Delta_{j}^{\sigma},u_i +u_j+s)\prod_{1\leq k\leq m}L_F^K(\Delta_k,2 u_k +s)$$ when $u$ is in general position, hence for any $f\in \mathcal{F}_{\pi}$, and any $\phi$ in $C_c^{\infty}(F^n)$, the function $$\frac{I(W_{f,u},\phi,s)}{\prod_{1\leq i<j \leq m} L(\Delta_i \times \Delta_{j}^{\sigma},u_i +u_j+s)\prod_{1\leq k\leq m}L_F^K(\Delta_k,2u_k +s)}$$ has no poles when $u$ is in general position. But the removed hyperplanes defining general position don't depend on $s$, but only on $u$. As we know that for fixed $u$, the integral defining $I(W_{f,u},s)$ converges for $Re(s)$ large enough, we deduce that no polar locus of $I(W_{f,u},s)$ can lie in those removed hyperplanes. This implies that the rational function $I(W_{f,u},s)$ of $q_F^{-u}$ and $q_F^{-s}$ has no poles, hence belongs to $\mathbb{C}[q_F^{\pm u},q_F^{\pm s}]$. We now specialize at $u=0$, to get that $$\frac{I(W,\phi,s)}{\prod_{1\leq i<j \leq m} L(\Delta_i \times \Delta_{j}^{\sigma}, s)\prod_{1\leq k\leq m}L_F^K(\Delta_k, s)}$$  belongs to $\mathbb{C}[q_F^{\pm s}]$ for every $W$ in $W(\pi,\psi)$ and $\phi$ in $C_c^{\infty}(F^n)$. The conclusion follows.\end{proof}

\begin{prop} Assuming Conjecture \ref{distgen}, let $\pi=\Delta_1 \times \dots \times \Delta_t$ be a parabolic type representation of the group $G_n(K)$, the two rational functions $\gamma_F^K(\pi_u,\psi,s)$ and $\prod_{1 \leq i < j \leq t} \gamma(\Delta_i \times \Delta_j^{\sigma}, s+u_i +u_j) \prod_{k=1}^t \gamma_F^K(\Delta_k , s+ 2u_k)$ are equal up to a unit in $\mathbb{C}[q_F^{-u},q_F^{-s}]$.\end{prop}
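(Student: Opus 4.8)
The plan is to reduce to $u$ in ``general position'', where Theorem \ref{asgen} (which is where Conjecture \ref{distgen} is used) computes both $L_F^K(\pi_u,s)$ and $L_F^K(\pi_u^{\vee},1-s)$, then to unfold the definitions of all the $\gamma$-factors so that these $L$-factors cancel exactly, leaving only a ratio of $\epsilon$-factors. First I would observe that both functions in question are rational in $q_F^{-u}$ and $q_F^{-s}$: for $\gamma_F^K(\pi_u,\psi',s)$ this is the rationality of $\gamma_F^K(\pi_u,s)$ established just above (a corollary of Proposition \ref{rat}), and for $P(u,s):=\prod_{1\le i<j\le t}\gamma(\Delta_i\times\Delta_j^{\sigma},\psi,s+u_i+u_j)\prod_{k=1}^{t}\gamma_F^K(\Delta_k,\psi',s+2u_k)$ it is clear, each factor being $\gamma$ or $\gamma_F^K$ of a fixed quasi-square-integrable representation evaluated at an argument affine in $s$ and the $u_i$. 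Hence it suffices to establish the identity on the complement of finitely many hyperplanes in $u$, and there I would take $u$ so that \emph{both} $\pi_u$ and $\pi_u^{\vee}=(\Delta_1^{\vee}|\ |_K^{-u_1})\times\dots\times(\Delta_t^{\vee}|\ |_K^{-u_t})$ are in general position; this is still a Zariski-dense condition, by Definition-Proposition \ref{gen2} applied to the parabolic type representations $\pi$ and $\pi^{\vee}$.

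For such $u$, Theorem \ref{asgen} gives $L_F^K(\pi_u,s)=\prod_{i<j}L(\Delta_i\times\Delta_j^{\sigma},s+u_i+u_j)\prod_k L_F^K(\Delta_k,s+2u_k)$, using that $|\ |_K$ is $\sigma$-invariant and that $L_F^K(\tau\otimes|\ |_K^{a},s)=L_F^K(\tau,s+2a)$; applied to $\pi_u^{\vee}$ it gives $L_F^K(\pi_u^{\vee},1-s)=\prod_{i<j}L(\Delta_i^{\vee}\times(\Delta_j^{\vee})^{\sigma},1-s-u_i-u_j)\prod_k L_F^K(\Delta_k^{\vee},1-s-2u_k)$. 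Substituting these into $\gamma_F^K(\pi_u,\psi',s)=c_{\pi_u}(-1)^{n-1}\epsilon_F^K(\pi_u,\psi',s)L_F^K(\pi_u^{\vee},1-s)/L_F^K(\pi_u,s)$ and comparing with the definitions of the factors of $P(u,s)$ coming from Proposition \ref{eqgn} and Theorem \ref{Asaieq} — in particular $(\Delta_j^{\sigma})^{\vee}=(\Delta_j^{\vee})^{\sigma}$ — one checks that the entire $L$-ratio of $\gamma_F^K(\pi_u,\psi',s)$ coincides with the product of the $L$-ratios of the $\gamma(\Delta_i\times\Delta_j^{\sigma},\psi,s+u_i+u_j)$ and the $\gamma_F^K(\Delta_k,\psi',s+2u_k)$. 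Thus, on the general-position locus,
\[
\frac{\gamma_F^K(\pi_u,\psi',s)}{P(u,s)}=(\pm 1)\,\frac{\epsilon_F^K(\pi_u,\psi',s)}{\prod_{i<j}\epsilon(\Delta_i\times\Delta_j^{\sigma},\psi,s+u_i+u_j)\ \prod_k\epsilon_F^K(\Delta_k,\psi',s+2u_k)},
\]
the sign coming from the central characters evaluated at $-1$.

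It remains to see that the right-hand side is a unit of the Laurent polynomial ring $\mathbb{C}[q_F^{-u},q_F^{-s}]$ and to conclude by Zariski density. Each $\epsilon$-factor in the denominator is, by Theorem \ref{Asaieq} and Proposition \ref{eqgn}, a nonzero scalar times a power of $q_F$ in its argument, hence, after the substitutions, a unit; and $\epsilon_F^K(\pi_u,\psi',s)$ is likewise such a unit, being for each individual $u$ a nonzero scalar times a power of $q_F^s$ (Theorem \ref{Asaieq}) and having, since twisting the blocks by the unramified characters $|\ |_K^{u_i}$ alters neither the conductor nor hence the $q_F^s$-degree of the Asai $\epsilon$-factor, that power of $q_F^s$ independent of $u$. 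Therefore $\gamma_F^K(\pi_u,\psi',s)/P(u,s)$, a rational function of $q_F^{-u},q_F^{-s}$ agreeing with a unit on a Zariski-dense set, equals that unit identically, which is the assertion. I expect the main obstacle to lie in the middle step: checking that Theorem \ref{asgen} genuinely applies to both $\pi_u$ and $\pi_u^{\vee}$ for generic $u$, and that under the dualizations $(\Delta_j^{\sigma})^{\vee}=(\Delta_j^{\vee})^{\sigma}$ and the argument shifts $s\mapsto s+u_i+u_j$, $s\mapsto s+2u_k$ the two $L$-ratios match term by term; controlling the $u$-dependence of $\epsilon_F^K(\pi_u,\psi',s)$ is the remaining (minor) subtlety, handled as indicated.
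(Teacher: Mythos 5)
Your overall plan is the same as the paper's: reduce to $u$ in general position, apply Theorem \ref{asgen} to compute $L_F^K(\pi_u,s)$ and $L_F^K(\pi_u^\vee,1-s)$, cancel the $L$-ratios in the two $\gamma$-expressions, and conclude by a density argument. Where you part ways is in the crucial final step, showing that the residual $\epsilon$-ratio is actually a unit of $\mathbb{C}[q_F^{\pm u},q_F^{\pm s}]$, and there your argument has a real gap.

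You argue that $\epsilon_F^K(\pi_u,\psi',s)$ is a unit because, for each individual $u$, it is a nonzero scalar times a power of $q_F^{-s}$, and because (you assert) unramified twists of the blocks do not alter the conductor and hence the $q_F^{-s}$-degree is $u$-independent. Two things are missing. First, the conductor-stability assertion is left unproved, and it is not at all routine here: $\pi_u$ is a possibly reducible representation of Whittaker type, and the relation between a conductor of $\pi_u$ (whatever that should mean) and the degree of the Asai $\epsilon$-factor is not an ingredient that the paper has set up. Second, and more seriously, even granting a $u$-independent degree $b$ so that $\epsilon_F^K(\pi_u,\psi',s)=c(u)\,q_F^{-bs}$ for each $u$ in general position, nothing you have said forces the coefficient $c(u)$ to be a nonzero \emph{monomial} in $q_F^{-u}$; a priori it is merely some rational function of $q_F^{-u}$ without zeros on a dense set, and such functions need not be units of the Laurent polynomial ring (e.g. $1+q_F^{-u_1}$). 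Your closing sentence, ``a rational function agreeing with a unit on a Zariski-dense set,'' presupposes exactly this unidentified fixed unit, so the conclusion is not established.

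The paper sidesteps conductor considerations entirely. It defines $\epsilon_F^{K,0}(\pi_u,\psi,s)$ to be the ratio $\gamma_F^K(\pi_u,\psi,s)$ times the candidate $L$-product over the dual candidate $L$-product, observes using Theorem \ref{asgen} that this equals $\epsilon_F^K(\pi_u,\psi,s)$ for $u$ in general position, and then applies the functional equation (Theorem \ref{Asaieq}) \emph{twice} to obtain the identity $\epsilon_F^{K,0}(\pi_u,\psi,s)\,\epsilon_F^{K,0}(\pi_u^{\vee},\psi^{-1},1-s)=1$, first for $u$ in general position and then identically as rational functions. It is this multiplicative relation, a purely algebraic consequence of the functional equation, that forces $\epsilon_F^{K,0}$ to be a unit; it replaces and makes rigorous exactly the step you tried to handle by the conductor heuristic. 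Your proof would be repaired by substituting that double application of the functional equation for your degree-independence claim.
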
 
\begin{proof} We define the ratio $\epsilon_F^{K,0}(\pi_u,\psi,s)$ to be the function $$\frac{\gamma_F^K(\pi_u,\psi,s)\prod_{1\leq i<j \leq t} L(\Delta_i \times \Delta_{j}^{\sigma},s+u_i +u_j)\prod_{1\leq k\leq t}L_F^K(\Delta_k,s+2u_k)}{\prod_{1\leq i<j \leq t} L(\Delta_i^{\vee} \times (\Delta_{j}^{\sigma})^{\vee},1-s-u_i-u_j)\prod_{1\leq k\leq t}L_F^K(\Delta_k^{\vee},1-s-2u_k)}.$$
Hence for $u$ in general position, we have the equality $\epsilon_F^{K,0}(\pi_u,\psi,s)= \epsilon_F^K(\pi_u,\psi,s)$, which imples this second equality. Applying the functional equation \ref{Asaieq} twice, we deduce that for $u$ in general position, one has $\epsilon_F^{K,0}(\pi_u,\psi,s)\epsilon_F^{K,0}(\pi_u^{\vee},\psi^{-1}, 1-s)=1$. This equality of rational functions being true on the Zariski open subset of $u$ in general position, it is always true, and $\epsilon_F^{K,0}(\pi_u,\psi,s)$ is therefore a unit in $\mathbb{C}[q_F^{-u},q_F^{-s}]$. Combining the following equalities, $$\gamma_F^K(\pi_u,\psi,s)= $$ $$ \frac{\epsilon_F^{K,0}(\pi_u,\psi,s)\displaystyle \prod_{1\leq i<j \leq t} L(\Delta_i^{\vee} \times (\Delta_{j}^{\sigma})^{\vee},1-s-u_i-u_j)\displaystyle \prod_{1\leq k\leq t}L_F^K(\Delta_k^{\vee},1-s-2u_k)}{\displaystyle \prod_{1\leq i<j \leq t} L(\Delta_i \times \Delta_{j}^{\sigma},s+u_i +u_j)\displaystyle \prod_{1\leq k\leq t}L_F^K(\Delta_k,s+2u_k)},$$ 
 $$\gamma(\Delta_i \times \Delta_j^{\sigma}, s+u_i +u_j, \psi) = \frac{\epsilon (\Delta_i \times \Delta_j^{\sigma},s+u_i +u_j,\psi)L(\Delta_i^{\vee} \times (\Delta_{j}^{\sigma})^{\vee},1-s-u_i-u_j)}{L(\Delta_i \times \Delta_{j}^{\sigma},s+u_i +u_j)}$$ and $$\gamma_F^K(\Delta_k, s+2u_k, \psi) = \frac{\epsilon (\Delta_k, s+ 2u_k, \psi)L(\Delta_k^{\vee} ,1-s-2u_k)}{L(\Delta_k,s+2u_k)},$$ we conclude that $\gamma_F^K(\pi_u,\psi,s)$ and $\displaystyle \prod_{1 \leq i < j \leq t} \gamma(\Delta_i \times \Delta_j^{\sigma}, s+u_i +u_j) \prod_{k=1}^t \gamma_F^K(\Delta_k , s+ 2u_k)$ are equal up to a unit in $\mathbb{C}[q_F^{-u},q_F^{-s}]$.\end{proof} 

This proposition has an immediate corollary.

\begin{cor}\label{gammauptounit}
Assuming Conjecture \ref{distgen}, let $\pi=\Delta_1 \times \dots \times \Delta_t$ be a parabolic type representation of the group $G_n(K)$, the two rational functions $\gamma_F^K(\pi,\psi,s)$ and $\prod_{1 = i < j =t} \gamma(\Delta_i \times \Delta_j^{\sigma}, s) \prod_{k=1}^t \gamma_F^K(\Delta_k , s)$ are equal up to a unit in $\mathbb{C}[q_F^{-s}]$. 
\end{cor}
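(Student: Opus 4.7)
The plan is to deduce the corollary by evaluating at $u = 0$ the identity provided by the preceding proposition. Concretely, that proposition produces a unit $U(u,s) \in \mathbb{C}[q_F^{\pm u_1}, \dots, q_F^{\pm u_t}, q_F^{\pm s}]$, i.e.\ a Laurent monomial $c \cdot q_F^{a_1 u_1 + \dots + a_t u_t + b s}$ with $c \in \mathbb{C}^*$ and exponents in $\mathbb{Z}$, such that
\[
\gamma_F^K(\pi_u, \psi, s) = U(u,s) \prod_{1 \leq i < j \leq t} \gamma(\Delta_i \times \Delta_j^{\sigma}, s+u_i+u_j, \psi) \prod_{k=1}^t \gamma_F^K(\Delta_k, s+2u_k, \psi)
\]
as an identity of rational functions in $q_F^{-u}$ and $q_F^{-s}$.

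The main step is to check that all three quantities occurring in this identity admit a well-defined specialization at $u = 0$. The unit $U(u,s)$ is a Laurent monomial, so its specialization $U(0,s) = c \cdot q_F^{bs}$ is again a Laurent monomial in $q_F^{\pm s}$, hence a unit in $\mathbb{C}[q_F^{\pm s}]$. On the right-hand side, each factor $\gamma(\Delta_i \times \Delta_j^{\sigma}, s+u_i+u_j, \psi)$ (respectively $\gamma_F^K(\Delta_k, s+2u_k, \psi)$) is rational in the variables $q_F^{\pm s}, q_F^{\pm u_i}, q_F^{\pm u_j}$ (respectively $q_F^{\pm s}, q_F^{\pm u_k}$), and its value at $u = 0$ is simply $\gamma(\Delta_i \times \Delta_j^{\sigma}, s, \psi)$ (respectively $\gamma_F^K(\Delta_k, s, \psi)$). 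For the left-hand side, the construction of $\gamma_F^K(\pi_u, \psi, s)$ from the Asai functional equation (Theorem \ref{Asaieq}), combined with the rationality statement of Proposition \ref{rat}, shows that $(u,s) \mapsto \gamma_F^K(\pi_u, \psi, s)$ is a rational function of $q_F^{-u}$ and $q_F^{-s}$, whose specialization at $u = 0$ coincides with the gamma factor of $\pi = \pi_0$ itself, since the underlying zeta integrals and $L$-factors both vary continuously in $u$ and reduce at $u = 0$ to those defining $\gamma_F^K(\pi, \psi, s)$.

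Evaluating the identity at $u = 0$ therefore yields
\[
\gamma_F^K(\pi, \psi, s) = U(0,s) \prod_{1 \leq i < j \leq t} \gamma(\Delta_i \times \Delta_j^{\sigma}, s, \psi) \prod_{k=1}^t \gamma_F^K(\Delta_k, s, \psi),
\]
which is exactly the asserted equality up to a unit in $\mathbb{C}[q_F^{\pm s}]$. There is no serious obstacle here: the substantive content lies entirely in the preceding proposition, and what remains is the observation that a unit in the bigger Laurent polynomial ring specializes to a unit in the smaller one once we set $u=0$.
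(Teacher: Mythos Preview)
Your proposal is correct and follows precisely the approach the paper intends: the corollary is labelled an ``immediate corollary'' of the preceding proposition, and you have spelled out the specialization at $u=0$, observing that a unit in $\mathbb{C}[q_F^{\pm u},q_F^{\pm s}]$ is a Laurent monomial and hence restricts to a unit in $\mathbb{C}[q_F^{\pm s}]$. One small remark: your justification that the left-hand side specializes correctly should rest on the zeta integrals alone (the gamma factor is the ratio $I(\tilde{W}_{f,u},\hat\phi,1-s)/I(W_{f,u},\phi,s)$, both of which are rational in $(q_F^{-u},q_F^{-s})$ by Proposition~\ref{rat} and reduce to the integrals for $\pi$ at $u=0$); the $L$-factors themselves need not vary continuously in $u$, so it is cleaner not to invoke them here.
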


We will denote by $\sim$ the fact of being equal up to a unit in $\mathbb{C}[q_F^{-s}]$, hence we can write $\gamma_F^K(\pi,\psi,s)\sim\prod_{1 = i < j =t} \gamma(\Delta_i \times \Delta_j^{\sigma}, s) \prod_{k=1}^t \gamma_F^K(\Delta_k , s)$.

\begin{prop}\label{div2}
Suppose that $\pi_1$ and $\pi_2$ are representations of Whittaker type of $G_{n_1}(K)$ and $G_{n_2}(K)$, and $\pi=\pi_1\times \pi_2$ the corresponding representation of the group $G_n(K)$, for $n=n_1 +n_2$. Then the Euler factor $L_F^K(\pi_2,s)$ divides $L_F^K(\pi,s)$. 
\end{prop}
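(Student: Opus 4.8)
The plan is to reduce the divisibility to an inclusion of the fractional ideals of local Asai zeta integrals, and to prove that inclusion by unfolding the Asai integral of $\pi$ in its parabolically induced model, following the method of \cite{JPS} and \cite{CP}.

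First, the reduction. Write $L_F^K(\pi,s)=1/P(q_F^{-s})$ and $L_F^K(\pi_2,s)=1/P_2(q_F^{-s})$ for the Euler-factor generators of the fractional ideals $I(\pi)$ and $I(\pi_2)$ of $\mathbb{C}[q_F^{-s},q_F^s]$ spanned by the integrals $I(W,\phi,s)$. If $I(\pi_2)\subseteq I(\pi)$, then $1/P_2=A/P$ for some $A\in\mathbb{C}[q_F^{-s},q_F^s]$, i.e. $AP_2=P$; evaluating at $q_F^{-s}=0$, where $P$ and $P_2$ equal $1$, forces $A$ to be a genuine polynomial in $q_F^{-s}$ with $A(0)=1$, so that $P_2$ divides $P$ and $L_F^K(\pi,s)/L_F^K(\pi_2,s)=1/A$ is an Euler factor. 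Hence it suffices to show that $I(W_2,\phi_2,s)\in I(\pi)$ for every $W_2\in W(\pi_2,\psi)$ and $\phi_2\in C_c^{\infty}(F^{n_2})$; since $I(\pi)$ is an ideal of $\mathbb{C}[q_F^{-s},q_F^s]$, it is even enough, for each such pair, to exhibit one $W\in W(\pi,\psi)$ and $\phi\in C_c^{\infty}(F^n)$ with $I(W,\phi,s)=u(q_F^{\pm s})\,I(W_2,\phi_2,s)$ for some unit $u$.

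To exhibit these, realise $\pi=\pi_1\times\pi_2={Ind'}^{G_n(K)}_{Q(K)}(\pi_1\otimes\pi_2)$ with $Q=P_{(n_1,n_2)}$, Levi $M\simeq G_{n_1}(K)\times G_{n_2}(K)$ and opposite unipotent radical $\overline{N}_Q$. As recalled in Subsection \ref{subsectionder} (and in \cite{CP}), a section $f$ in the induced space gives rise, via a Jacquet integral, to a Whittaker function $W_f\in W(\pi,\psi)$, and for a suitable $f$ built from chosen $W_1\in W(\pi_1,\psi)$ and $W_2\in W(\pi_2,\psi)$ the restriction of $W_f$ to $M(K)$ equals, up to the modulus $\Delta_Q^{1/2}$, the product $W_1(g_1)W_2(g_2)$. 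Substitute $W=W_f$ into $I(W,\phi,s)=\int_{N_n(F)\backslash G_n(F)}W(g)\phi(\eta_n g)|det(g)|_F^s\,dg$, develop the domain along the big cell $\overline{N}_Q(F)M(F)$, and take $\phi=\phi_1\otimes\phi_2$ with $\phi_1\in C_c^{\infty}(F^{n_1})$ the characteristic function of a small neighbourhood of $0$, choosing the support of $f$ small relative to that of $\phi_1$. Then a standard manipulation (as in \cite{JPS} and \cite{CP}) using these supports contracts the integrations over $\overline{N}_Q(F)$ and over the $G_{n_1}(F)$-variable, which become integrations over fixed compact sets, into a single unit $u(q_F^{\pm s})$ --- the modulus $\Delta_Q^{1/2}$ being absorbed by the Jacobian of the big-cell coordinates --- leaving $I(W_f,\phi,s)=u\cdot\int_{N_{n_2}(F)\backslash G_{n_2}(F)}W_2(g_2)\phi_2(\eta_{n_2}g_2)|det(g_2)|_F^s\,dg_2=u\,I(W_2,\phi_2,s)$. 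This gives $I(\pi_2)\subseteq I(\pi)$, and the proposition follows.

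The crux is this last step: one has to make the right choice of section $f$ (adapted to the smoothness of $W_1,W_2$ and to the supports of $\phi_1,\phi_2$) and check, after the big-cell change of variables, that the auxiliary integrations really do contract to a unit and not to a genuinely $s$-dependent meromorphic factor. This is precisely the unfolding computation of \cite{JPS}, adapted in \cite{CP} to the split Rankin--Selberg situation, transposed without essential change to the Asai integral over $N_n(F)\backslash G_n(F)$; since one works throughout with $W(\pi_i,\psi)$ as a submodule of $Ind_{N_{n_i}(K)}^{G_{n_i}(K)}(\psi)$, the argument needs no irreducibility of $\pi_1$ or $\pi_2$.
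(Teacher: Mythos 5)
Your reduction to the ideal inclusion $I(\pi_2)\subseteq I(\pi)$ is correct, and in fact a bit more careful than the paper, which leaves the passage from the inclusion to divisibility implicit. The real content is therefore in establishing the inclusion, and here your route genuinely diverges from the paper's, and is where I see a gap.

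The paper does not unfold the Asai integral over a big cell. Instead it plugs into machinery already set up in the section: from the discussion surrounding formula (\ref{eqder}), the integral $I_{(n_1-1)}(W,s)=\int_{N_{n_2}(F)\backslash G_{n_2}(F)} W\bigl(\begin{smallmatrix} h & \\ & I_{n_1}\end{smallmatrix}\bigr)|\det(h)|_F^{s-n_1}\,dh$ already lies in $I(\pi)$ for every $W\in W(\pi,\psi)$ (this invokes Lemma 9.2 of \cite{JPS}, carried over to the Asai integral). Then Proposition 9.1 of \cite{JPS} produces, for any pair $(W_2,\phi)\in W(\pi_2,\psi)\times C_c^\infty(K^{n_2})$, a $W\in W(\pi,\psi)$ with $W\bigl(\begin{smallmatrix} h & \\ & I_{n_1}\end{smallmatrix}\bigr)=W_2(h)\phi(\eta_{n_2}h)|\det(h)|_K^{n_1/2}$. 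Since $|\cdot|_K^{n_1/2}$ restricts to $|\cdot|_F^{n_1}$ on $F^*$, the exponent shift exactly cancels and $I(W_2,\phi,s)=I_{(n_1-1)}(W,s)\in I(\pi)$. That's the whole proof --- no unfolding of the domain $N_n(F)\backslash G_n(F)$, no choice of section in the induced model, no appeal to the Jacquet integral.

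The gap in your argument is the claim that, with $\phi_1$ supported near $0$ and the section $f$ chosen with small support, ``the integrations over $\overline{N}_Q(F)$ and over the $G_{n_1}(F)$-variable... contract into a single unit.'' This is asserted, not proved, and it is precisely where the Asai setting differs essentially from the split Rankin--Selberg setting you invoke. In the split case of \cite{JPS}, \cite{CP} the zeta integral runs over $N_n(K)\backslash G_n(K)$, the same group over which the Jacquet integral lives, so the unipotent from the Jacquet functional folds cleanly against the domain of integration. Here the zeta integral is over $N_n(F)\backslash G_n(F)$ while the Whittaker functional is an integral over a $K$-unipotent. When you write $g=\overline{n}m$ with $\overline{n}\in\overline{N}_Q(F)$, $m\in M(F)$, the value $W_f(\overline{n}m)$ is still a $K$-Jacquet integral whose argument is shifted by the $F$-points of the opposite unipotent; and the cutoff $\phi_1(\eta_n g)$ produces a constraint on $\eta_{n_2}Xm_1$ that couples $X\in\overline{N}_Q(F)$ with $m_1\in G_{n_1}(F)$, so the auxiliary integrations do not a priori factor off as a fixed compact-set contribution. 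Moreover your intermediate claim that for suitable $f$ the restriction of $W_f$ to $M(K)$ \emph{equals} $\Delta_Q^{1/2}W_1\otimes W_2$ is not quite right as stated --- what Proposition 9.1 of \cite{JPS} actually gives is that restriction to the relevant block is onto $W_2(h)\phi(\eta_{n_2}h)|\det h|_K^{n_1/2}$, and the cutoff $\phi$ is an indispensable part of the statement. So the shape of your argument is fine, and you correctly identify that only $I(\pi_2)\subseteq I(\pi)$ is needed and that no irreducibility is required, but the unfolding step would need to be replaced by (or carefully reduced to) the two JPS lemmas the paper cites, together with the norm-compatibility $|\cdot|_K^{n_1/2}\big|_{F^*}=|\cdot|_F^{n_1}$, which does not appear in your write-up but is the reason the exponents match.
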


\begin{proof}We know from discussion after the formula (\ref{eqder}) that if $W$ belongs to $W(\pi,\psi)$, then the function $$I_{(n_1-1)}(W,s)= \int_{N_{n_2}(F)\backslash G_{n_2}(F)} W\left(\begin{array}{lll} h &  \\  & I_{n_1}\end{array}\right)  |det(h)|_F^{s-n_1} dh$$ belongs to $I(\pi)$. But from Proposition 9.1 of \cite{JPS}, for each couple $(W_2,\phi)$ in $W(\pi_2,\psi) \times C_c^{\infty}(K^{n_2})$, there exists $W$ in $W(\pi,\psi)$, such that $W\left(\begin{array}{lll} h &  \\  & I_{n_1}\end{array}\right)=W_2(h)\phi(\eta_{n_2}h)|det(h)|_K^{n_1 /2}$ for $h$ in $G_{n_2}(K)$. Hence each integral $$I(W_2,\phi,s)= \int_{N_{n_2}(F)\backslash G_{n_2}(F)}W_2(h)\phi(\eta_{n_2}h)|det(h)|_F^{s}dh $$ is equal to $I_{(n_1-1)}(W,s)$ for some $W$ in $W(\pi,\psi)$ because $|\ |_K^{n_1 /2}$ restricts as $|\ |_F^{n_1}$ to $F^*$. Hence every function $I(W_2,\phi,s)$ belongs to $I(\pi)$, which is $I(\pi_2)\subset I(\pi)$ and the conclusion follows.\end{proof}    

We are now able to show our main result.
\begin{thm}\label{asailang}
Let $(n_1,\dots,n_t)$ be a partition of the positive integer $n$, and for each $i$ between $1$ and $t$, let $\Delta_i$ be a square-integrable representation of $G_{n_i}(K)$. We take $\pi$ to be the Langlands type representation $\Delta_1|\ |^{u_1}\times \dots \times \Delta_1|\ |^{u_t}$ with the $u_i$'s real numbers ordered decreasingly. Assuming Conjecture \ref{distgen}, then one has the equality $$L_F^K(\pi,s)=\prod_{1\leq i<j \leq t} L(\Delta_i \times \Delta_{j}^{\sigma},s+u_i+u_j)\prod_{1\leq k\leq t}L_F^K(\Delta_k,s+2u_k).$$  
\end{thm}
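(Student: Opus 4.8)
Write $\Gamma_k=\Delta_k|\ |_K^{u_k}$; each $\Gamma_k$ is quasi-square-integrable, so $\pi=\Gamma_1\times\dots\times\Gamma_t$ is a representation of parabolic type in the sense of Definition-Proposition \ref{typeA}, and since $|\ |_K$ is $\sigma$-invariant the twisting identities $L(\Gamma_i\times\Gamma_j^{\sigma},s)=L(\Delta_i\times\Delta_j^{\sigma},s+u_i+u_j)$ and $L_F^K(\Gamma_k,s)=L_F^K(\Delta_k,s+2u_k)$ identify the asserted right-hand side with
$$P(s):=\prod_{1\leq i<j\leq t}L(\Gamma_i\times\Gamma_j^{\sigma},s)\prod_{1\leq k\leq t}L_F^K(\Gamma_k,s).$$
The statement is thus the equality of Euler factors $L_F^K(\pi,s)=P(s)$, and the plan is to prove the two divisibilities. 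The divisibility $L_F^K(\pi,s)\mid P(s)$ is Proposition \ref{div} applied to $\pi=\Gamma_1\times\dots\times\Gamma_t$ (this uses Conjecture \ref{distgen}); consequently $P(s)=L_F^K(\pi,s)A(s)$ for some $A\in\mathbb{C}[q_F^{-s}]$ with $A(0)=1$, and it remains to show $A\equiv 1$.

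For the reverse divisibility I would argue by induction on $t$, the case $t=1$ being the twisting identity for $L_F^K$. Write $\pi=\Gamma_1\times\pi^{-}$ with $\pi^{-}=\Gamma_2\times\dots\times\Gamma_t$, and observe that $\pi^{-}$ as well as the contragredients $\pi^{\vee}=\Gamma_t^{\vee}\times\dots\times\Gamma_1^{\vee}$ and $(\Gamma_1\times\dots\times\Gamma_{t-1})^{\vee}$ are again of Langlands type (duals of square-integrable representations are square-integrable, and negating and reversing a decreasing family of exponents again gives a decreasing family). By Proposition \ref{div2}, $L_F^K(\pi^{-},s)$ divides $L_F^K(\pi,s)$, and by the induction hypothesis $L_F^K(\pi^{-},s)=\prod_{2\leq i<j\leq t}L(\Gamma_i\times\Gamma_j^{\sigma},s)\prod_{2\leq k\leq t}L_F^K(\Gamma_k,s)$; applying Proposition \ref{div2} to $\pi^{\vee}$ and the induction hypothesis to $(\Gamma_1\times\dots\times\Gamma_{t-1})^{\vee}$ gives, after $s\mapsto 1-s$, the analogous divisibility for $L_F^K(\pi^{\vee},1-s)$. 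Feeding these into the functional equation of Theorem \ref{Asaieq} together with the $\gamma$-factor factorisation of Corollary \ref{gammauptounit} (again assuming Conjecture \ref{distgen}), and cancelling the common factors coming from $\pi^{-}$ and from $(\Gamma_1\times\dots\times\Gamma_{t-1})^{\vee}$, reduces the claim $A\equiv 1$ to the single assertion that the ``new'' factor $L_F^K(\Gamma_1,s)\prod_{2\leq j\leq t}L(\Gamma_1\times\Gamma_j^{\sigma},s)$ divides $L_F^K(\pi,s)$ with the same order.

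This last step is where the decreasing order of the $u_i$ must be used. Because $u_1\geq u_j$ for every $j$, Corollary \ref{polediscrete} together with the known location of the poles of the $L$-functions of pairs of discrete series representations shows that the poles of $L_F^K(\Delta_1,s+2u_1)$ and of the $L(\Delta_1\times\Delta_j^{\sigma},s+u_1+u_j)$ lie in a region separated from those contributed by $\pi^{-}$; the degree balance imposed by the functional equation then leaves no room for cancellation, forcing $A\equiv 1$ and hence $L_F^K(\pi,s)=P(s)$.

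I expect this cancellation-avoidance to be the main obstacle; the rest is formal bookkeeping with Propositions \ref{div} and \ref{div2} and Corollary \ref{gammauptounit}. An alternative, and perhaps more robust, route is the deformation of Subsection \ref{subsectionder}: by Theorem \ref{asgen} one has $L_F^K(\pi_v,s)=P_v(s)$ for all $v$ in general position, while by Proposition \ref{rat} the normalised integrals $I(W_{f,v},\phi,s)/P_v(s)$ are pole-free elements of $\mathbb{C}[q_F^{\pm v},q_F^{\pm s}]$; specialising $v$ to the Langlands point $u$, the only way $L_F^K(\pi,s)$ could be a proper divisor of $P(s)$ is for these finitely many normalising Laurent polynomials to acquire a common zero in $q_F^{-s}$ at $v=u$, and one must show that the ordering $u_1\geq\dots\geq u_t$ excludes this. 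Either way, controlling this phenomenon is the crux of the proof.
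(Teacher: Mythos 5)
Your proposal follows essentially the same route as the paper's proof: reduce to showing the cofactor $A$ (the paper's $P$) is trivial, induct on $t$ using Propositions \ref{div} and \ref{div2} applied to $\pi=\Gamma_1\times\pi^-$ and to the contragredient written in Langlands order, invoke Corollary \ref{gammauptounit} to force $A$ and its dual analogue to agree up to a unit, and then use the decreasing order of the $u_i$'s together with Corollary \ref{polediscrete} and the location of poles of $L$-functions of pairs of discrete series to place the zeros of the two cofactors in disjoint half-planes, forcing both to be constant. This is precisely the argument in the paper, including the half-plane separation $\{Re(s)\leq -u_1-u_t\}$ versus $\{Re(s)\geq 1-u_1-u_t\}$; the alternative deformation route you sketch at the end is not what the paper does, but the main argument coincides.
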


\begin{proof}
 We denote by $L'_{W}(\pi,s)$ the function $$\displaystyle\prod_{1\leq i<j \leq t} \!\!\!\!L(\Delta_i \times \Delta_{j}^{\sigma},s+u_i+u_j)\!\!\displaystyle\prod_{1\leq k\leq t}L_F^K(\Delta_k,s+2u_k).$$ We prove the theorem by induction on $t$.\\
There is nothing to prove when $n=1$.\\  
$t-1\rightarrow t$: we assume that the result is true for any $k\leq t-1$. Let $\pi$ be the Langlands type representation $\Delta_1|\ |^{u_1}\times \dots \times \Delta_1|\ |^{u_t}$. Fom Proposition \ref{div}, there are two polynomials such $P$ and $\tilde{P}$ that $L_F^K(\pi,s)=P(q_F^{-s})L'_{W}(\pi,s)$ and $L_F^K(\pi^{\vee},s)=\tilde{P}(q_F^{-s})L'_{W}(\pi^{\vee},1-s)$. We want to show that $P$ is $1$.\\
 From Proposition \ref{gammauptounit}, we deduce $P(q_F^{-s})\sim \tilde{P}(q_F^{-s})$. Now we denote by $\pi_2$ the representation $\Delta_2|\ |^{u_2}\times \dots \times \Delta_t|\ |^{u_t}$, hence $\pi$ is equal to $\Delta_1|\ |^{u_1}\times \pi_2$. By induction hypothesis, we have the equality $L_F^K(\pi_2,s)=L'_{W}(\pi_2,s)$ and we know from Proposition \ref{div2} that there exists a polynomial $Q$ such that $L_F^K(\pi_2,s)= Q(q_F^{-s})L'_{W}(\pi,s)$.  We thus have $L_F^K(\pi_2,s)=Q(q_F^{-s})P(q_F^{-s})L'_{W}(\pi,s)$, and $P(q_F^{-s})$ divides $[\prod_{j\geq 2} L_F^K(\Delta_1\times \Delta_j^{\sigma},s+u_1+u_j)L_F^K(\Delta_1,s+2u_1)]^{-1}$. Corollary of Theorem 2.3 of \cite{CP} implies that $[\prod_{i\geq 2} L_F^K(\Delta_1\times \Delta_j^{\sigma},s+u_1+u_j)]^{-1}$ has its zeros in the set $\cup_{j=2}^t\left\lbrace s/ Re(s)\leq -u_1 -u_j  \right\rbrace$ and Corollary \ref{polediscrete}, that the function $[L_F^K(\Delta_1,s+2u_1)]^{-1}$ has its zeros in the set $\left\lbrace s/Re(s)\leq -2 u_1\right\rbrace$. Finally $P(q_F^{-s})$ has its zeros in the set $\left\lbrace s/Re(s)\leq -u_1- u_t\right\rbrace$.\\
Now we denote by $\pi_3$ the representation $\Delta_1|\ |^{u_1}\times \dots \times \Delta_{t-1}|\ |^{u_{t-1}}$, hence $\pi$ is equal to $\pi_3 \times \Delta_{t}|\ |^{u_{t}}$. We obtain in the same manner that $\tilde{P}(q_F^{-s})$ divides $[\prod_{j\leq t-1} L_F^K(\Delta_j^{\vee}\times (\Delta_t^{\sigma})^{\vee},1-s-u_t-u_j)L_F^K(\Delta_t,1-s-2u_t)]^{-1}$, hence its zeroes must have real part greater than or equal to $1-u_1-u_t$.\
This implies that $P(q_F^{-s})$ and $\tilde{P}(q_F^{-s})$ have no zero in common, but are equal up to a unit in $\mathbb{C}[q_F^{-s}]$, hence both are constant, and as $P(0)=\tilde{P}(0)=1$, both are equal to one.\end{proof}

As any generic representation, is isomorphic to a representation of Langlands type, we have the following corollary.

\begin{cor}\label{asaigen}
 Assuming Conjecture \ref{distgen}, if the representation $\pi$ is a generic representation of the group $G_n(K)$, then the functions $L_F^K(\pi,s)$ and $L_F^{K,W}(\pi,s)$ are equal.
\end{cor}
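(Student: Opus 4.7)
The plan is to combine Theorem \ref{asailang} with an explicit computation of the multiplicatively induced Langlands parameter, and then match factors using what is already known for discrete series. First I would write the generic representation $\pi$ in Langlands form $\pi=\Delta_1|\ |_K^{u_1}\times\dots\times\Delta_t|\ |_K^{u_t}$, with $\Delta_i$ square-integrable and the real numbers $u_i$ ordered decreasingly; every generic representation can be put in this shape by Theorem \ref{classgen}. Under Conjecture \ref{distgen}, Theorem \ref{asailang} then gives the Rankin-Selberg side as
\begin{equation*}
L_F^K(\pi,s) \;=\; \prod_{1\leq i<j\leq t}L(\Delta_i\times\Delta_j^{\sigma}, s+u_i+u_j)\;\prod_{1\leq k\leq t}L_F^K(\Delta_k, s+2u_k).
\end{equation*}

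Next I would compute $L_F^{K,W}(\pi,s)$ on the Galois side. Let $\rho_k$ denote the Langlands parameter of $\Delta_k|\ |_K^{u_k}$, so the parameter of $\pi$ is $\rho=\rho_1\oplus\dots\oplus\rho_t$. Writing $V_i$ for the space of $\rho_i$, the space $V\otimes V$ of $M_{W'_K}^{W'_F}(\rho)$ splits as $\bigoplus_i V_i\otimes V_i \;\oplus\; \bigoplus_{i<j}(V_i\otimes V_j \oplus V_j\otimes V_i)$. Using the explicit formulas for the actions of $W'_K$ and of the coset representative $\sigma$ recalled before Conjecture \ref{conjasai}, one checks that each diagonal piece $V_i\otimes V_i$ is a $W'_F$-submodule isomorphic to $M_{W'_K}^{W'_F}(\rho_i)$, while each off-diagonal pair $(V_i\otimes V_j)\oplus(V_j\otimes V_i)$ assembles into the representation $\mathrm{Ind}_{W'_K}^{W'_F}(\rho_i\otimes\rho_j^{\sigma})$ induced from the index-two subgroup. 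Taking $L$-factors, and using that $L$ is unchanged under induction from $W'_K$ to $W'_F$, this yields
\begin{equation*}
L_F^{K,W}(\pi,s) \;=\; \prod_{k=1}^t L_F^K(\rho_k, s)\;\prod_{1\leq i<j\leq t} L(\rho_i\otimes\rho_j^{\sigma}, s).
\end{equation*}

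Finally I would match the two products term by term. For the diagonal contributions, Proposition \ref{asaidiscrete} (which rests on the results of \cite{AR} and \cite{He}) establishes Conjecture \ref{conjasai} for essentially square-integrable representations and gives $L_F^K(\rho_k,s)=L_F^K(\Delta_k|\ |_K^{u_k},s)=L_F^K(\Delta_k, s+2u_k)$. For the off-diagonal contributions, the standard compatibility of local Langlands for $GL$ with Rankin-Selberg $L$-functions for pairs of discrete series yields $L(\rho_i\otimes\rho_j^{\sigma},s)=L(\Delta_i\times\Delta_j^{\sigma}, s+u_i+u_j)$. Substituting these identifications makes the Galois product coincide term by term with the formula of Theorem \ref{asailang}, finishing the proof.

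The one non-formal step is the decomposition of $M_{W'_K}^{W'_F}$ on a direct sum parameter into diagonal multiplicative-induction pieces and off-diagonal ordinary inductions; this is a general property of multiplicative induction (cf.\ Section 7 of \cite{P}), so no serious obstacle remains beyond faithfully importing the discrete series inputs to Conjecture \ref{conjasai}.
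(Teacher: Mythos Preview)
Your proposal is correct and follows essentially the same route as the paper's own proof: both invoke Theorem \ref{asailang} for the Rankin--Selberg side, decompose $M_{W'_K}^{W'_F}(\rho_1\oplus\cdots\oplus\rho_t)$ into diagonal multiplicative-induction pieces and off-diagonal ordinary inductions via Section~7 of \cite{P} (the paper cites Lemma~7.1 there), and then match factors using the known discrete-series case and the compatibility of local Langlands with pair $L$-functions. Your version spells out the decomposition and the induction-invariance of $L$ a bit more explicitly, and carries the twists $u_i$ through rather than absorbing them into the $\Delta_i$, but these are purely cosmetic differences.
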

\begin{proof} We know that this equality is true for quasi-square-integrable representations. Let $\pi=\Delta_1\times \dots \times \Delta_t$ be a generic representation. Each quasi-square-integrable representation $\Delta_i$ is associated by Langlands correspondence to a representation $\rho_i$ of the Weil-Deligne group $W'_K$, and $\pi$ is thus associated with $\rho_1 \oplus \dots \oplus \rho_n$. From Lemma 7.1 of \cite{P}, one deduces that $L_F^{K,W}(\Delta_1\times \dots \times \Delta_n)$, which is by definition $L_F^K(\rho_1 \oplus \dots \oplus \rho_t)$, is equal to $\prod_{1\leq i<j \leq t} L(\rho_i \times \rho_{j}^{\sigma},s)\prod_{1\leq k\leq t}L_F^K(\rho_k,s)$, and the latter is known to be equal to $\prod_{1\leq i<j \leq t} L(\Delta_i \times \Delta_{j}^{\sigma},s)\prod_{1\leq k\leq t}L_F^K(\Delta_k,s)$. Conclusion then follows from Theorem \ref{asailang}.
\end{proof}

Finally, we know from the main theoem of \cite{M1}, that Conjecture \ref{distgen} is true for principal series representations of $G_n(K)$. The theorem is the following.

\begin{thm}
\label{distprincgn}
 
Let $\chi=(\chi_1,\dots,\chi_n)$ be a character of $T_n(K)$ with unlinked characters $\chi_i$ of $K^*$, the irreducible principal series representation $\pi(\chi)$ is distinguished if and only if
 there exists $r \leq n/2$, such that $ \chi _{i+1}^{\sigma} = {{\chi _i}}
 ^{-1} $ for $i=1,3,..,2r-1$, and that ${\chi _i} _{|F^*} =1$ for $ i > 2r$.

\end{thm}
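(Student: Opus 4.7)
The plan is to study the space $Hom_{G_n(F)}(\pi(\chi), \mathbb{C})$ via a Mackey / geometric-lemma filtration on $C^\infty(B_n(K) \backslash G_n(K) / G_n(F))$ induced by the $B_n(K) \times G_n(F)$-orbit stratification of $G_n(K)$. By Frobenius reciprocity applied to $\pi(\chi) = Ind_{B_n(K)}^{G_n(K)}(\delta_{B_n}^{1/2}\chi)$, one has
\[
Hom_{G_n(F)}(\pi(\chi), \mathbb{C}) \simeq Hom_{B_n(K)}(\delta_{B_n}^{1/2}\chi, C^\infty(B_n(K) \backslash G_n(K)/G_n(F))).
\]
A Springer-type computation identifies the double coset space $B_n(K) \backslash G_n(K) / G_n(F)$ with a finite set of $\sigma$-twisted involutions $\tau \in S_n$, with a natural closure order making the orbit attached to the longest twisted involution $\tau_0$ open. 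The filtration of $C^\infty(B_n(K) \backslash G_n(K) / G_n(F))$ by orbit closures produces graded pieces that are smooth inductions from the stabilizers $B_n(K) \cap g_\tau G_n(F) g_\tau^{-1}$, and each resulting Hom contribution is at most one-dimensional.

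The second step is to compute each orbit's constraint. For a twisted involution $\tau$, the stabilizer restricts on the diagonal torus to the elements $(t_1,\dots,t_n)$ with $t_{\tau(i)} = \sigma(t_i)^{-1}$ on paired indices ($\tau(i)\ne i$) and $t_i \in F^*$ on fixed indices. Hence the associated Hom piece is nonzero precisely when $\chi_i\,\chi_{\tau(i)}^\sigma = 1$ on the pairs and $\chi_i|_{F^*}=1$ on the fixed points, subject to a modular character compatibility which, after the $\delta_{B_n}^{1/2}$ normalization, is automatic for the open orbit. The building blocks are the base cases $n=1$ (where distinction is equivalent to $\chi|_{F^*}=1$) and $n=2$ (where an explicit convergent integral on the big cell of $B_2(K)\backslash G_2(K)/G_2(F)$ produces a nonzero invariant form whenever $\chi_2^\sigma = \chi_1^{-1}$).

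For the if direction, take $\tau_0$ to be the involution pairing $(1,2),(3,4),\dots,(2r-1,2r)$ and fixing the remaining indices, and assemble the nonzero invariant linear form as a regularized integral over the corresponding open orbit; convergence and non-vanishing factor through the $n=1$ and $n=2$ base cases block by block. For the only if direction, use unlinkedness to show that any non-open orbit would force identifications among the $\chi_i$'s of the form $\chi_i/\chi_j = |\ |_K^k$ for some nonzero $k$, contradicting the unlinkedness of the characters; hence any nonzero $G_n(F)$-invariant form must come from the open orbit, and its constraints are exactly the ones in the statement. The main obstacle I expect is precisely the only if step: under unlinkedness, one must rule out contributions from every non-open stratum in the filtration, and this requires a careful combinatorial argument that simultaneously tracks the modular characters on the stabilizer $B_n(K) \cap g_\tau G_n(F) g_\tau^{-1}$ and the resulting torus equations on $\chi$, uniformly over all $\sigma$-twisted involutions $\tau\neq \tau_0$.
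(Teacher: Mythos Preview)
The paper does not prove this theorem; it simply quotes it from \cite{M1} and uses it as input to Theorem~\ref{asaiprinc}. So there is no proof in the present paper to compare against. Your outline is in fact the standard route, and it is essentially the approach of \cite{M1}: analyse $Hom_{G_n(F)}(\pi(\chi),\mathbb{C})$ through the Mackey/geometric-lemma filtration of $\pi(\chi)|_{G_n(F)}$ coming from the $B_n(K)$-orbit stratification of $G_n(K)/G_n(F)$, and read off the torus conditions on $\chi$ orbit by orbit.

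That said, two points in your sketch need tightening. First, your description of ``the open orbit'' is ambiguous: you introduce a family of involutions $\tau_0$ depending on $r$, but there is a single open $B_n(K)$-orbit on $G_n(K)/G_n(F)$, and the freedom in $r$ and in the pairing in the statement comes from the fact that $\pi(\chi)$ is independent (up to isomorphism) of the ordering of the $\chi_i$, not from a choice of orbit. You should separate clearly (i) the combinatorial parametrisation of all $B_n(K)$-orbits by twisted involutions, (ii) the torus/modular-character constraint attached to each orbit, and (iii) the passage from the constraint on a fixed ordering to the ``there exists a reordering'' form of the statement.

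Second, and more seriously, your ``only if'' argument is not yet a proof. The assertion that a contribution from any non-open stratum would force $\chi_i/\chi_j=|\ |_K^k$ with $k\neq 0$ is exactly the heart of the matter and is not automatic: the modular character on $B_n(K)\cap g_\tau G_n(F)g_\tau^{-1}$ mixes $\delta_{B_n}^{1/2}$ with the modulus of the stabiliser, and one must compute this explicitly for every twisted involution $\tau$ and check that the resulting equalities on the $\chi_i$ either (a) are incompatible with unlinkedness, or (b) already imply the conditions of the theorem after reordering. You flag this as the main obstacle, which is correct, but you have not indicated how to carry out this case analysis; without it the ``only if'' direction is a plan, not a proof. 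For the ``if'' direction your block-by-block construction is fine in spirit, but you should make explicit that it rests on the fact that parabolic induction of a distinguished representation from a Levi subgroup is again distinguished, together with the $n=1$ and $n=2$ base cases.
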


Now as a consequence of this theorem, and of the results of this section, we have the following:

\begin{thm}\label{asaiprinc}
Let $\pi=\lambda_1 \times \dots \times \lambda_n$ be an irreducible principal series representation, where the $\lambda_i$'s are unlinked characters of $F^*$. Then we have the following equality of $L$-functions: $$L_F^K(\pi,s)= \prod_{1\leq i < j \leq n}L(\lambda_i \times \lambda_j^{\sigma},s)\prod_{k=1}^n L_F^K(\lambda_k,s) = L_F^{K,W}(\pi,s)$$
\end{thm}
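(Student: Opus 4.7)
The strategy is to derive this theorem directly from Theorem \ref{asailang}, Corollary \ref{asaigen}, and Theorem \ref{distprincgn}. First I would observe that $\pi = \lambda_1 \times \dots \times \lambda_n$ is a generic representation of $G_n(K)$, since each $\lambda_i$ is quasi-square-integrable (characters of $K^*$ are segments of length one) and the $\lambda_i$ are assumed unlinked, so Theorem \ref{classgen} applies. Moreover, by Theorem \ref{distprincgn}, Conjecture \ref{distgen} holds for $\pi$, so the conclusions of Theorem \ref{asailang} and Corollary \ref{asaigen} become unconditional in this setting.

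Next, to match the hypotheses of Theorem \ref{asailang}, I would write each $\lambda_i = \chi_i |\ |_K^{u_i}$ with $\chi_i$ unitary and $u_i \in \mathbb{R}$, viewing $\chi_i$ as a square-integrable representation of $G_1(K)$. By Theorem \ref{classgen}, any permutation of unlinked segments gives an isomorphic representation, so after relabelling we may assume $u_1 \geq u_2 \geq \dots \geq u_n$, making $\pi$ of Langlands type. Applying Theorem \ref{asailang} then gives
$$L_F^K(\pi,s) = \prod_{1\leq i<j \leq n} L(\chi_i \times \chi_j^{\sigma}, s + u_i + u_j) \prod_{k=1}^{n} L_F^K(\chi_k, s + 2u_k).$$
The identities $L(\chi_i |\ |_K^{u_i} \times \chi_j^{\sigma} |\ |_K^{u_j}, s) = L(\chi_i \times \chi_j^{\sigma}, s+u_i+u_j)$ and $L_F^K(\chi_k |\ |_K^{u_k}, s) = L_F^K(\chi_k, s + 2u_k)$ (the factor $2$ in the latter coming from $|\ |_K = |\ |_F^2$ on $F^*$) let me reabsorb the shifts $u_i$ into the original $\lambda_i$, yielding exactly $\prod_{i<j} L(\lambda_i \times \lambda_j^{\sigma}, s) \prod_k L_F^K(\lambda_k, s)$, which is the first claimed equality.

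The second equality then follows immediately: Corollary \ref{asaigen} applied to $\pi$ gives $L_F^K(\pi,s) = L_F^{K,W}(\pi,s)$. I do not anticipate any real obstacle, since the deep content lies entirely in the already-established Theorem \ref{asailang} (conditional on Conjecture \ref{distgen}) and the unconditional Theorem \ref{distprincgn}; the remaining work is the bookkeeping of twists and a reordering of the inducing data permitted by Theorem \ref{classgen}.
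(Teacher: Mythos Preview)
Your proposal has a genuine gap in how you invoke Theorem \ref{asailang} and Corollary \ref{asaigen}. These results are proved under the standing assumption that Conjecture \ref{distgen} holds for \emph{all} generic representations, not just for the single representation $\pi$ at hand. Theorem \ref{distprincgn} only gives you Conjecture \ref{distgen} for irreducible principal series representations, so you cannot apply Theorem \ref{asailang} as a black box. Concretely, the proof of Theorem \ref{asailang} rests on Theorem \ref{asgen} and the deformation machinery of Subsection \ref{subsectionder}; the induction in Theorem \ref{asgen} appeals to Proposition \ref{distgen2} (equivalently, Conjecture \ref{distgen}) not for $\pi$ itself but for the irreducible constituents $\pi_i^{(n-k)}$ of its derivatives, and Proposition \ref{div} needs Theorem \ref{asgen} for the deformed representations $\pi_u$. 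None of these auxiliary representations is $\pi$.

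The fix is precisely the point the paper's own proof makes: when $\pi$ is principal series, every representation that appears in this chain of arguments is again principal series. The constituents of the derivatives of $\lambda_1 \times \dots \times \lambda_n$ are products of sub-collections of the $\lambda_i$ (Proposition \ref{calculder}), and the deformations $\pi_u$ are principal series by construction. So the entire proof of Theorem \ref{asgen} and the deformation arguments can be rerun inside the class of principal series representations, where Conjecture \ref{distgen} is known unconditionally via Theorem \ref{distprincgn}. This closure property is the substantive content you are missing; once it is stated, the rest of your argument (the bookkeeping of twists and the reordering) is fine and indeed unnecessary, since the rerun argument already yields the product formula directly.
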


\begin{proof}
One only has to show that the proof of Theorem \ref{asgen} adapts to the subclass of representations of the principal series. The only thing to notice to adapt the proof, is that in this case, the irreducible components of the derivatives of $\pi$ are still principal series representations. The validity of the result for $\pi$ not in general position, follows from the deformation arguments of Subsection \ref{subsectionder}. \end{proof}

Eventually, we proove the converse of Corollary \ref{asaigen}.

\begin{thm}\label{equiv}
The inductivity relation for Rankin-Selberg type Asai $L$-functions (or equivalently the equality of Asai $L$-functions of generic representations, and Asai $L$-functions of their Langlands parameter), is equivalent to the truth of Conjecture \ref{distgen}. 
\end{thm}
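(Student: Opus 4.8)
The forward implication is exactly Corollary \ref{asaigen}: assuming Conjecture \ref{distgen}, the inductivity relation $L_F^K(\Delta_1\times\dots\times\Delta_t,s)=\prod_{i<j}L(\Delta_i\times\Delta_j^{\sigma},s)\prod_k L_F^K(\Delta_k,s)$ holds for every generic representation, and this is equivalent to $L_F^K=L_F^{K,W}$ by Lemma 7.1 of \cite{P} together with the known equalities of Rankin--Selberg $L$-functions of pairs with their Artin counterparts and of $L_F^K$ with $L_F^{K,W}$ for discrete series (\cite{AR},\cite{He}). So the content of the theorem is the converse, and the plan is: \emph{assume} the inductivity relation for all generic representations and deduce Conjecture \ref{distgen}, essentially by running the proof of Theorem \ref{asgen} in the opposite logical direction.

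By Proposition \ref{distgen2}, Conjecture \ref{distgen} is equivalent to the identity (\ref{radex}) for $L_{Rad(ex)}(\pi,s)$, so it suffices to prove (\ref{radex}) for every generic $\pi=\Delta_1\times\dots\times\Delta_t$. Writing $L_F^K(\pi,s)=L_{(0)}(\pi,s)\,L_{Rad(ex)}(\pi,s)$ and inserting the assumed product formula for $L_F^K(\pi,s)$, one obtains $L_{Rad(ex)}(\pi,s)=L_F^K(\pi,s)/L_{(0)}(\pi,s)$ with an explicit numerator; since both sides of (\ref{radex}) are Euler factors with simple poles (Proposition \ref{LRad(ex)} and the remark following Proposition \ref{distgen2}, via Theorem 2.3 of \cite{CP}), it is enough to check that this quotient has exactly the poles predicted by (\ref{radex}). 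I would argue by induction on $n$. First I would reduce to $\pi$ in general position using the deformation $\pi_u$ of Subsection \ref{subsectionder}: $\pi_u$ is in general position outside finitely many hyperplanes (Definition-Proposition \ref{gen2}), and Propositions \ref{rat}, \ref{div} together with Corollary \ref{gammauptounit} let me pass from $\pi_u$ back to $\pi=\pi_0$. For $\pi$ in general position I would compute $L_{(0)}(\pi,s)$ by Theorem \ref{L0} as $\vee_{k\geq 1,\,i}L_{ex}(\pi_i^{(n-k)},s)$, where by Proposition \ref{calculder} each $\pi_i^{(n-k)}$ is a product $\Delta_1^{(d_1)}\times\dots\times\Delta_t^{(d_t)}$ of derivatives, hence a strictly smaller generic representation to which the induction hypothesis and the assumed formula apply (together with Corollaries \ref{divdiscrete}, \ref{L0discr}).

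With these ingredients the pole matching is the same two-case analysis as in the proof of Theorem \ref{asgen}, read backwards. A pole $s_0$ of the explicit numerator $\prod_{i<j}L(\Delta_i\times\Delta_j^{\sigma},s)\prod_k L_F^K(\Delta_k,s)$ is either absorbed, with the same order, into $L_{(0)}(\pi,s)$ whenever it propagates to a proper derivative (the case where some derivative index $d_i$ is positive, or $2p+q<t$), or else it survives in $L_{Rad(ex)}(\pi,s)$ with precisely the order predicted by (\ref{radex}) (the case $2p+q=t$ with all derivative indices zero, corresponding to $p$ genuine pairings $\Delta_j^{\sigma}=\Delta_i^{\vee}$ and $q$ distinguished factors $\Delta_k$, via Theorems \ref{distsplit} and \ref{pole2}). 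Once (\ref{radex}) is established, Proposition \ref{distgen2} gives Conjecture \ref{distgen}; I note that this automatically resolves the distinction versus $\eta_{K/F}$-distinction ambiguity left open by the classical partial result preceding Conjecture \ref{distgen}, because a pole of $L_{Rad(ex)}(\Delta_{i'},s)$ at $0$ forces $\Delta_{i'}$ to be genuinely distinguished (Propositions \ref{LRad(ex)} and \ref{K}), not merely $\eta_{K/F}$-distinguished.

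The hard part will be exactly this combinatorial disentangling: given $L_F^K(\pi,s)$ as a product of pair-factors $L(\Delta_i\times\Delta_j^{\sigma},s)$ and Asai factors $L_F^K(\Delta_k,s)$, one must decide which of its poles are ``derivative poles'' lying in $L_{(0)}(\pi,s)$ and which are ``distinction poles'' lying in $L_{Rad(ex)}(\pi,s)$, and keep track of orders through the induction. This is the delicate bookkeeping already carried out in the proof of Theorem \ref{asgen}; the substance of the present theorem is that this bookkeeping is reversible, so that the inductivity relation and Conjecture \ref{distgen} encode exactly the same information.
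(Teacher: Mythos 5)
Your plan is genuinely different from the paper's and, as written, it has a real gap. Let me first describe what the paper actually does, then say where your scheme runs into trouble.

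The paper does \emph{not} reverse the proof of Theorem \ref{asgen}. It gives a short, direct argument that proves only the missing implication of Conjecture \ref{distgen} (``$\pi$ distinguished $\Rightarrow$ the form holds''), delegating the other direction to \cite{M3}. Let $\pi=\Delta_1\times\dots\times\Delta_t$ be distinguished. By Galois-autoduality (Proposition \ref{autodual}) together with Proposition \ref{distdiscr}, one may reorder so that $\Delta_{i+1}^\sigma=\Delta_i^\vee$ for $i\le 2r$ odd, the next $p$ of the $\Delta_i$ are distinguished and pairwise non-isomorphic, and the last $q$ are $\eta_{K/F}$-distinguished and pairwise non-isomorphic. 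Write $\pi_d=\Delta_1\times\dots\times\Delta_{2r+p}$ and $\pi_u=\Delta_{2r+p+1}\times\dots\times\Delta_t$. The assumed inductivity gives $L_F^K(\pi)=L_F^K(\pi_d)L_F^K(\pi_u)L(\pi_d\times\pi_u^\sigma)$, and a direct inspection of poles at $0$ (Proposition \ref{K}, Theorem \ref{distsplit}, and the fact that a discrete series cannot be simultaneously distinguished and $\eta_{K/F}$-distinguished) shows that the order of $0$ in $L_F^K(\pi)$ equals its order in $L_F^K(\pi_d)$. The crux is then an \emph{unconditional} divisibility: if $\pi_u\ne 0$, so that $\pi_d$ lives on $G_m(K)$ with $m<n$, then $L_F^K(\pi_d)$ divides $L_{(0)}(\pi)$. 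This is proved directly from Proposition 9.1 and Lemma 9.2 of \cite{JPS} and an integration formula over a parabolic, with no appeal to derivatives, general position, or deformation. It forces the entire pole of $L_F^K(\pi)$ at $0$ into $L_{(0)}(\pi)$, so $0$ is not an exceptional pole, contradicting Theorem \ref{pole2}. Hence $q=0$, which is exactly the desired form.

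Your scheme, in contrast, proposes to establish the full formula (\ref{radex}) for $L_{Rad(ex)}(\pi,s)$ by ``reversing'' the proof of Theorem \ref{asgen} and then quoting Proposition \ref{distgen2}. Two issues. First, (\ref{radex}) is strictly more than what is needed: it encodes both directions of the conjecture, one of which is already supplied by \cite{M3}, so the extra labour buys nothing. Second, and more seriously, your reversal requires computing $L_{(0)}(\pi,s)$ via Theorem \ref{L0}, which applies only to representations whose derivatives are completely reducible (in practice, in general position). In the forward direction the paper passes from general position to arbitrary generic $\pi$ by a deformation argument plus rationality of the Asai gamma factor (Propositions \ref{rat}, \ref{div}, Corollary \ref{gammauptounit}), but that passage controls $L_F^K(\pi_u,s)$ as $u\to 0$, not $L_{(0)}(\pi_u,s)$ or $L_{Rad(ex)}(\pi_u,s)$; these are defined by pole-counting conditions that do not specialize continuously. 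You invoke the deformation machinery to ``pass from $\pi_u$ back to $\pi=\pi_0$,'' but you do not say how you would recover $L_{(0)}(\pi_0,s)$ or $L_{Rad(ex)}(\pi_0,s)$ from the deformed family, and I do not see how to do this without a genuinely new argument. The paper avoids the problem entirely: the key divisibility $L_F^K(\pi_d)\mid L_{(0)}(\pi)$ is proved directly and unconditionally, with no general-position hypothesis. To make your approach work you would at minimum need to incorporate such an unconditional ingredient; as it stands, the proposal is a plausible outline but not a self-contained proof.
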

 
\begin{proof} There is one implication left.\\
Suppose that we have the inductivity relation for the Rankin-Selberg type Asai $L$-function. Let $\pi=\Delta_1\times \dots \times \Delta_t$ be a distinguished generic representation of $G_n(K)$. As $\pi$ is Galois-autodual, we can assume that there are three non negative integers $p$, $q$, and $r$, with $2r+p+q=t$, such that the $q$ last $\Delta_i$'s are $\eta_{K/F}$-distinguished, and non isomorphic to one another, the $q$ previous $\Delta_i$'s are distinguished, and non isomorphic to one another, and $\Delta_{i+1}^{\sigma} = \Delta_i^{\vee}$ for $i=1,3,..,2r-1$. We denote by $\pi_d$ the representation $\Delta_1\times \dots \times \Delta_{2r+p}$, and by $\pi_u$ the representation $\Delta_{2r+p+1}\times \dots \times \Delta_t$, hence one has $\pi=\pi_d\times \pi_u$. 
 Now by hypothesis, we have the equality $L_F^K(\pi)= L_F^K(\pi_u)L_F^K(\pi_d)L(\pi_u \times \pi_d^{\sigma})$. We are going to show that $0$ is an exceptional pole of $L_F^K(\pi)$ if and only if the representation $\pi_u$ is zero.\\
If $0$ is a pole of $L_F^K(\pi)$, its order in $L_F^K(\pi)$ is the same as its order in $L_F^K(\pi_d)$. Indeed, zero doesn't occur as a pole in $L_F^K(\pi_u)$, otherwise it would occur either as a pole in $L_F^K(\Delta_i)$, for $i>2r+p$, which is impossible as $\Delta_i$ cannot be distinguished and $\eta_{K/F}$-distinguished at the same time, or as a pole of $L_F^K(\Delta_j\times \Delta_k^\sigma)$, for $k>j>2r+p$, which cannot be because we supposed that the $\Delta_i$'s were distinct for $i>2r+p$ (we recall that $\eta_{K/F}$-distinguished irreducible representations are Galois-autodual). For similar reasons, zero doesn't occur as a pole in $L_F^K(\pi_d\times \pi_u^{\sigma})$.\\ 
But if $\pi_u$ is not zero, then $\pi_d$ is a representation of $G_r(K)$, for $r\leq n-1$. We are going to show that this implies that the function $L_F^K(\pi_d)$ divides $L_{(0)}(\pi)$, which will imply that the order of $0$ in the Asai $L$-function of $\pi$, is equal to its order in $L_{(0)}(\pi)$, so that zero won't be an exceptional pole of $L_F^K(\pi)$.\\
To do this, we recall three facts:
\begin{itemize}
 \item From Proposition 9.1 of \cite{JPS}, given $\phi$ in $C_c^\infty (F^r)$, and $W_d$ in $W(\pi_d,\psi)$, there is $W$ in $W(\pi,\psi)$, such that $W\left(\begin{array}{cc} g & 0 \\ 0 & I_{n-r} \end{array}\right)= W_d(g)\phi(\eta_r g)|det(g)|_F^{n-r}$ for $g$ in $G_r(F)$.

\item From Lemma 9.2 of \cite{JPS}, given $W$ in $W(\pi,\psi)$, there exist compact open subgroups $U_1$ and $U_2$ of $M_{n-1-r,r}(K)$ and $B_{n-1-r}^- (K)$ (the image of $B_{n-1-r}(K)$ under the transpose map) respectively, and $W^0$ in $W(\pi,\psi)$, such that $W^0\left(\begin{array}{ccc} g & 0 & 0 \\ x & b^- & 0 \\ 0 & 0 & 1 \end{array}\right)$ is equal to $W\left(\begin{array}{cc} g & 0 \\ 0 & I_{n-r} \end{array}\right)$ for $(g,x,b^-)$ in $G_r(F)\times U_1 \times U_2$, and zero for $(g,x,b^-)$ in the complementary set of $ G_r(F)\times U_1 \times U_2$ in $G_r(F)\times M_{n-1-r,r}(K)\times B_{n-1-r}^- (K)$. 

\item The following integration formula is valid for positive measurable functions on $N_{n-1}(F)\backslash G_{n-1}(F)$:
$$\int_{N_{n-1}(F)\backslash G_{n-1}(F)}\phi(g) dg=\!\!\!\!\!\!\!\!\!\!\!\!\!\underset{(h,x,b^-)\in N_{r}(F)\backslash G_{r}(F)\times M_{n-1-r,r}(F)\times B_{n-1-r}^- (F)}{\int}\!\!\!\!\! \phi \left(\begin{array}{ccc} h & 0 & 0 \\ x & b^- & 0 \\ 0 & 0 & 1 \end{array}\right)|det(h)|_F^{r-n+1} dx \ dh \ d_r(b^-)$$ for right invariant Haar measures $ dg$, $dx$, $dh$, and $d_r(b^-)$.  
\end{itemize}

From this we deduce if $W_d$ is in $W(\pi_d,\psi)$, and $\phi$ is in $C_c^\infty (F^r)$, one can choose $W$ in $W(\pi,\psi)$, and $W^0$ in $W(\pi,\psi)$ as above, such that $I(W_d,\phi,s)=\int_{N_{r}(F)\backslash G_{r}(F)}W_d(g)\phi(\eta_r g)|det(g)|_F^{s} dg=\int_{N_{r}(F)\backslash G_{r}(F)}W\left(\begin{array}{cc} g & 0 \\ 0 & I_{n-r} \end{array}\right)|det(g)|_F^{s +r-n} dg=I_{(n-r-1)}(W,s) $. This meromorphic function is in turn equal to $$\underset{(h,x,b^-)\in N_{r}(F)\backslash G_{r}(F)\times M_{n-1-r,r}(F)\times B_{n-1-r}^- (F)}{\int}\!\!\!\!\! W\left(\begin{array}{ccc} h & 0 & 0 \\ x & b^- & 0 \\ 0 & 0 & 1 \end{array}\right)|det(h)|_F^{s+r-n} dx \ dh \ d_r(b^-),$$ which is up to a positive constant equal to $I_{(0)}(W^0,s)= \int_{N_{n-1}(F)\backslash G_{n-1}(F)} W^0(g)|det(g)|_F^{s-1} dg$.\\
Finally, the Asai function $L_F^K(\pi_d)$ divides the Euler factor $L_{(0)}(\pi)$.\\
Eventually, if $\pi_u$ is equal to zero, it is a consequence of \cite{M3} that $\pi=\pi_d$ is distinguished.\end{proof}

\section*{Acknowledgements}

I would like to thank Professor James Cogdell for sending me the proof of Bernstein's theorem.

\end{document}